\PassOptionsToPackage{pdftex}{graphicx}
\documentclass[10pt,leqno]{amsart}
\usepackage{graphicx} 

\DeclareUnicodeCharacter{0302}{\^{}}

\usepackage{eucal}
\baselineskip=16pt


\topmargin= .5cm
\textheight= 20cm
\textwidth= 32cc
\baselineskip=16pt

\evensidemargin= .9cm
\oddsidemargin= .9cm

\usepackage[utf8]{inputenc}
\usepackage[english]{babel}
\usepackage{microtype}
\usepackage{mathrsfs}
\usepackage{amsthm, thmtools}
\usepackage{amsmath, centernot}
\usepackage{amssymb}
\usepackage{indentfirst,csquotes}
\usepackage{amscd}
\usepackage{mathtools}
\usepackage{wasysym}
\usepackage{braket}
\usepackage{hyperref}
\usepackage{appendix}
\usepackage{dsfont}
\usepackage{enumitem}
\usepackage{framed}
\usepackage{caption}
\usepackage{subfig}
\usepackage[all]{xy}
\usepackage{xfrac}
\usepackage{hyperref}
\usepackage{color}
\usepackage{bbm}
\usepackage{stmaryrd}
\usepackage{fancyhdr}
\usepackage{tikz-cd}
\usepackage{graphicx}
\usepackage{nomencl}
\usepackage{ esint }
\usepackage{tikz}
\usepackage{multicol}

\DeclareMathAlphabet{\pazocal}{OMS}{zplm}{m}{n}

\numberwithin{equation}{section}

\allowdisplaybreaks

\fancyhf{}
\fancyhead[L]{}
\fancyhead[R]{}
\fancyfoot[L]{}
\fancyfoot[R]{\thepage}

\newtheorem{teorema}{Theorem}[section]
\newtheorem{prop}[teorema]{Proposition}
\newtheorem{co}[teorema]{Corollary}
\newtheorem{lemma}[teorema]{Lemma}

\newtheorem{es}[teorema]{Example}

\newtheorem{df}[teorema]{Definition}

\newtheorem{oss}[teorema]{Remark}

\renewcommand{\L}{\mathcal{L}}

\newcommand{\R}{\mathbb{R}}

\newcommand{\N}{\mathbb{N}}

\newcommand{\PP}{\mathcal{P}}

\newcommand{\m}{\mathbf{m}}

\newcommand{\CE}{\operatorname{CE}}

\renewcommand{\d}{\mathrm{d}}

\usepackage[backend=biber,style=alphabetic,doi=true,isbn=true, maxalphanames=4, 
  minalphanames=4,maxbibnames=99, url = false]{biblatex}
\addbibresource{bibliography.bib}

\title[Superposition for random measures and geometry of the WoW space]{Nested superposition principle for  random measures and the geometry of the Wasserstein on Wasserstein space}
\author{Alessandro Pinzi and Giuseppe Savar\'e}
\date{\today}

\begin{document}

\begin{abstract}
    We study the geometric structure of the space of random measures \(\mathcal{P}_p(\mathcal{P}_p(X))\), endowed with the Wasserstein 
    on Wasserstein metric, where \((X, d)\) is a complete separable metric space. In this setting, we prove a metric superposition principle, in the spirit of \cite{lisini2007characterization}, that will allow us to recover important geometric features of the space.
    \\
    When $X$ is $\R^d$, we study the 
    differential structure of $\PP_p(\PP_p(\R^d))$ in analogy with the simpler Wasserstein space $\PP_p(\R^d)$. We 
    show that 
    continuity equations for random measures 
    involving the abstract concept of derivation acting on cylinder functions 
    can be more conveniently described by 
    suitable non-local vector fields $b:[0,T]\times \R^d \times \PP(\R^d) \to \R^d$. In this way, we can 
    \begin{itemize}
        \item characterize the absolutely continuous curves on the Wasserstein on Wasserstein space;
        \item define and characterize its tangent bundle;
        \item prove a superposition principle for the solutions to the standard non-local continuity equation in terms of solutions of interacting particle systems.
    \end{itemize}
\end{abstract}

\maketitle

{\small
		\keywords{\noindent {\bf Keywords}: random measures, Wasserstein space, superposition principle, continuity equation}.
		\par
		\subjclass{\noindent {\textbf{2020 MSC}: 49Q22, 35R06, 60G57, 53C23}.
			
		}
	}

\tableofcontents
\newcommand{\GGG}{\color{blue}}
\newcommand{\nc}{\normalcolor}
\section{Introduction}

The study of measure-valued solutions to the continuity equation in Euclidean spaces, 
\begin{equation}\label{eq: 1}
    \partial_t \mu_t + \operatorname{div}(v_t\mu_t) = 0, \quad v:[0,T]\times \R^d \to \R^d, \ \mu_t\in \PP(\R^d), 
\end{equation}
has become of central interest in the last decades, in particular for its connections to optimal transport \cite{benamou2000computational} and to Wasserstein gradient flows \cite{jordan1998variational}, \cite{ambrosio2005gradient}. A crucial role is played by the so-called superposition principle \cite{ambrosio2004transport} (see also \cite{ambrosio2005gradient}, \cite{ambrosio2014continuity}), that represents solutions of the continuity equation as time marginals of a probability measure concentrated on the associated characteristics system of ODEs, that is 
\begin{equation}\label{eq: 2}
    \lambda \in \PP(C([0,T],\R^d)) \ \text{ s.t. } \ \lambda\text{-a.e. } \gamma \text{ is in } AC([0,T],\R^d) \text{ and solves }\dot{\gamma}(t) = v_t(\gamma(t)).
\end{equation}
It was first proved by L. Ambrosio for studying well-posedness of a Lagrangian system for ODE under non-smooth assumptions of the vector field, started by the seminal work \cite{diperna1989cauchy}, and then extended from Sobolev to BV in \cite{ambrosio2004transport}; we also refer the reader to the subsequent works \cite{crippa2008estimates, brue2021positive, brue2024sharp}. 

The superposition principle actually holds in much greater generality, substituting the ground space $\R^d$ with a general (complete, separable) metric space $X$. A first result in this setting was proved by \cite{lisini2007characterization} for absolutely continuous curves of measures in $\PP_p(X)$, the space of probability measures on $X$ endowed with the $p$-Wasserstein metric: such evolution can be represented in terms of measures over $p$-absolutely continuous curves in $X$. This result was useful to prove the equivalence of definitions for Sobolev spaces on metric measure space, in particular for the one using test-plans, that were introduced in \cite{ambrosio2013density, ambrosio2014calculus}. For a further understanding of the topic, we refer the reader to \cite{AmIkLuPa24}. 
\\
The superposition result was then refined 
by taking into account the
non-smooth `differential structure' of the space: thanks to the concept of derivations, introduced in \cite{weaver2000lipschitz} (see also \cite{di2014recent}), we may give meaning to \eqref{eq: 1} and \eqref{eq: 2}, and the superposition result still holds. These kinds of results can be found in \cite{ambrosio2014well}, in which they work on a metric measure space $(X,d,\m)$, and in \cite{stepanov2017three}, working on general metric spaces, also comparing it with a Smirnov-type decomposition for normal metric currents, \cite{smirnov1994decomposition,paolini2012decomposition, paolini2013structure}. It is worth citing also these kinds of results obtained on Wiener spaces \cite{ambrosio2009flows, trevisan2015lagrangian}, or in a stochastic setting \cite{figalli2008existence, trevisan2016well}, that aimed to prove existence and uniqueness for Lagrangian flows associated to Sobolev and BV coefficients. 

One of the main application of these results is to study the geometry of the Wasserstein space. In \cite[Chapters 6-8]{ambrosio2005gradient}, the authors extensively studied the geometry of the Wasserstein space $(\PP_p(\R^d),W_p)$: the (static) optimal transport problem, with generic convex costs; the characterization of absolutely continuous curves and geodesics on the Wasserstein space; the characterization of the OT problem in terms of the continuity equation (the so-called Benamou-Brenier formulation); the characterization of its tangent bundle; etc... 

In this paper, one of the main goals is to reproduce this theory for the continuity equation for measures on $\PP_p(\R^d)$ 
and for corresponding evolutions in the \textit{Wasserstein on Wasserstein space} $(\PP_p(\PP_p(\R^d)),\mathcal{W}_p)$. For a generic metric space $(X,d)$, the set $\PP_p(\PP_p(X))$ is defined as the collection of random measures $M\in \PP(\PP(X))$ that satisfy
\[\int_{\PP(X)} W_p^p(\mu,\delta_{x_0}) dM(\mu) = \int_{\PP(X)} \int_X d^p(x,x_0) d\mu(x)dM(\mu)<+\infty,\]
for some (and then all) $x_0\in X$. Then, the Wasserstein on Wasserstein distance is defined as 
\begin{equation}\label{eq: static p-Wass prob}
    \mathcal{W}_p^p(M,N) := \inf \left\{ \int_{\PP(X)\times \PP(X)} W_p^p(\mu,\nu)d\Pi(\mu,\nu) \, : \, \Pi \in \Gamma(M,N) \right\},
\end{equation}
for all $M,N\in \PP_p(\PP_p(X))$, where $\Gamma(M,N)\subset \PP(\PP(X)\times \PP(X))$ is the set of couplings between $M$ and $N$. In \cite{PS25convex}
(see also \cite{emami2025optimal, beiglbock2025brenier}), 
we study the 
structure of solutions to the static problem \eqref{eq: static p-Wass prob}
for $p=2$ and $X$ Hilbert space
and we show its link with the theory of \textit{totally convex functionals}, that allows (surprisingly) to recover many features of convex analysis in this infinite dimensional and non-linear setting. 

In the present paper, we focus on the \textit{dynamic aspects} of the Wasserstein on Wasserstein space:
\begin{enumerate}
    \item its absolutely continuous curves and geodesics (Section \ref{metric structure}), under the 
    solely general assumption that $(X,d)$ is a complete and separable metric space;
    \item its `differential structure', i.e. the description of absolutely continuous curves in terms of an abstract continuity equation for random measures, Sections \ref{CE: derivations vs VF} and \ref{superposition}, when $X=\R^d$;
    \item the description of its tangent bundle $\operatorname{Tan}\PP_p(\PP_p(\R^d))$, Section \ref{subsec: tangent}, again when $X=\R^d$.
\end{enumerate}
One of the main tools we 
develop to pursue these objectives is 
a \textit{nested superposition principle}: it comes both in the metric form (in the spirit of \cite{lisini2007characterization}) and its differential form (in the spirit of \cite[Theorem 8.2.1]{ambrosio2005gradient}). The term \textit{nested}, comes from the particular structure of the space considered. Indeed, we will prove that starting from an absolutely continuous curve of random measures $(M_t)_{t\in[0,T]}\in C([0,T],\PP(\PP(\R^d)))$ 
(possibly arising as a solution of a continuity equation), we can lift it to two ``dynamic''
measures: the first one, $\Lambda \in \PP(C([0,T],\PP(\R^d)))$,
is a measure on curves of probability measures
on $\PP(\R^d)$,
the second one, $\mathfrak{L}\in \PP(\PP(C([0,T],\R^d)))$, 
is a law on random curves in $\R^d$. 


We describe such a nested superposition principle in both the metric and the differential setting, in the symplified case of 
of the evolution of 
a particle system.

\newcommand{\myX}{X}
\newcommand{\pmyX}{X}
\bigskip
\noindent
\textbf{Interacting $N$-particle systems.} Consider an interacting system of $N\in \N$ particles in 
$\myX:=\R^d$, described by a vector $\boldsymbol x(t)=(x_1(t),\cdots,x_N(t))\in \pmyX^N$;
the velocity of each particle $x_i(t)$ is expressed by $v(t,x_i(t),\boldsymbol x(t))$, where
$v:[0,T]\times \myX \times \pmyX^N \to \myX$ is a smooth (for the easy of presentation) vector field symmetric in the last $N$ components, i.e.~denoting by $S_N$ the usual symmetric group of permutations of $\{1, \dots,N \}$, it holds
\[v(t,x,\boldsymbol{x}) = v(t,x,\sigma(\boldsymbol{x})) \quad \forall \sigma\in S_N, \quad \sigma(x_1,\dots,x_N) := (x_{\sigma(1)},\dots,x_{\sigma(N)}).\]
Given any initial position $\boldsymbol x_{0} = (x_{1,0},\dots,x_{N,0})\in \pmyX^N$
of the $N$ particles, there exists a unique solution $\boldsymbol x:[0,T]\to \pmyX^N$ 
whose components solve
\begin{equation}\label{int part system intro}
\begin{cases}
    \dot{x}_i(t) = v(t,x_i(t),\boldsymbol x(t))
    \\
    x_i(0) = x_{i,0}
\end{cases}
\end{equation}
The evolution can also be described either by time-dependent flow map 
 $\boldsymbol X:[0,T]\times \pmyX^N \to \pmyX^N$ such that $X(t,\boldsymbol x_0) = \boldsymbol x(t)=(x_1(t),\dots,x_N(t))$, or by the point-to-curve
evolution map $\Gamma:\pmyX^N \to 
C([0,T],\pmyX^N)$ such that $\Gamma(\boldsymbol{x}_0) = \boldsymbol{x}(\cdot)
=\boldsymbol X(\cdot,\boldsymbol x_0)$. Both these maps are invariant with respect permutations of the initial distribution of particles. 

Suppose that we describe a distribution
on the initial configurations of particles by assigning a symmetric
probability measure $m \in \PP(\pmyX^N)$,
thus satisfying $\sigma_\#m = m$ for all permutations $\sigma\in S_N$.
The above maps $\boldsymbol X,\Gamma$, 
can be used to describe the evolution of 
$m$ driven by the system \eqref{int part system intro}: 
\begin{enumerate}
    \item for all $t\in [0,T]$, $m_t := (X(t,\cdot))_\#m \in \PP(\pmyX^N)$ is a curve of probability measures that solves the continuity equation 
    \begin{equation}\label{eq: ce on R^d^N}
    \partial_t m_t + \operatorname{div}_{\boldsymbol{x}}(\boldsymbol{v}_tm_t) = 0,
    \end{equation}
    where $\boldsymbol v:[0,T]\times \pmyX^N\to \pmyX^N$ 
    whose components are $\boldsymbol v_i(t,\boldsymbol x)=v(t,x_i,\boldsymbol x)$;
    \item $\boldsymbol{\eta}:= \Gamma_\#m \in \PP(C([0,T],\pmyX^N))$ is a probability measure over curves of $\pmyX^N$, concentrated over curves $\boldsymbol{x}(\cdot)$ that solve the system \eqref{int part system intro}; 
    \item the canonical isomorphism 
    between $C([0,T],\pmyX^N)$ and $C([0,T],\myX)^N$  allows us to define a probability measure $\boldsymbol{\theta} \in \PP(C([0,T],\myX)^N)$.
\end{enumerate}
Thanks to the invariance with respect to permuations, it is natural
to consider the projected evolution
in the quotient space $\myX^N/S_N$,
that we can identify with the 
space of uniform discrete measures 
of the form $\frac{1}{N}\sum_{i=1}^N\delta_{x_i}.$
To this aim, for every space $\mathcal X$, we 
introduce the function
\begin{equation}\label{eq: 3}
    \mathcal{J}:
    \mathcal X^N \to \PP(\mathcal X)\quad 
    \mathcal{J}(x_1,\dots, x_N)
    \GGG :=
    \nc \frac{1}{N}\sum_{i=1}^N\delta_{x_i},
\end{equation}
and we 
transform the measures
$(m_t),\boldsymbol \eta,\boldsymbol \theta$ 
under the action of 
the corresponding versions of $\mathcal J$:
\begin{enumerate}
    \item 
    starting from the curve $(m_t)$
    of measures in $\myX^N$
    we 
    obtain the curve of probability measures over probability measures (that we call (law of) \textit{random measures})
    \begin{equation}
    \label{eq: random measures associated to N-particle}
        M_t:=\mathcal{J}_\#m_t \in \PP(\PP(\myX)),\quad t\in [0,T];
    \end{equation}
    \item 
    starting from $\boldsymbol\eta$ 
    and using the map $\mathcal J':
    C([0,T];\myX^N)\to 
    C([0,T];\PP(\myX))$ 
    defined as 
    \[ \mathcal J'[\boldsymbol x](t):=\mathcal J(\boldsymbol x(t))\quad\text{for every }
    \boldsymbol x\in C([0,T];\myX^N),
    \]
    we get 
    $\Lambda := \mathcal{J}'_\#\boldsymbol{\eta} \in \PP(C([0,T],\PP(\R^d)))$;
    \item 
    starting from $\boldsymbol{\theta}$
    and using the map $\mathcal J$
    in the space
    $\mathcal X:=
    C([0,T],\myX)$
    we obtain
    $\mathfrak{L}:= \mathcal{J}_\#\boldsymbol{\theta}\in \PP(\PP(C([0,T],\R^d)))$.
\end{enumerate}
Thanks to the symmetry assumption, the original structure that we had between the measures $(m_t)_{t\in [0,T]}$, $\boldsymbol{\eta}$, and $\boldsymbol{\theta}$ is maintained, and no information have been lost about the evolution.
Moreover, interpreting $v(t,x,\boldsymbol x)$
as the restriction to discrete probability measures of a nonlocal vector field 
$b:[0,T]\times \R^d \times \PP(\R^d) \to \R^d$ 
via the formula 
\begin{equation}\label{non-local vf intro finite part system}
    v(t,x,\boldsymbol x)= b(t,x,\mathcal{J}(\boldsymbol x)),
\end{equation}
we can provide intrinsic differential characterizations of 
$M,\Lambda,\mathfrak{L}$ as follows:
\begin{enumerate}
    \item 
     the curve of random measures $(M_t)_{t\in [0,T]}$ solves an abstract continuity equation 
     (see Example \ref{example: N particles CERM})
\begin{equation}\label{CERM intro}
    \partial_tM_t + \operatorname{div}_{\PP}(b_tM_t) = 0,
\end{equation}
in duality with smooth cylinder functions (see \eqref{eq:continuity-test-intro} and Section \ref{CE: derivations vs VF}).
The latter
are functions
$F:\PP(\myX) \to \R$
of the form 
$F(\mu) = \Psi(\int_{\myX}\phi_1 d\mu,\ldots, \int_{\myX}\phi_kd\mu)$
and admit a 
Wasserstein gradient 
defined as 
\[\nabla_W F(x,\mu) := \sum_{j=1}^k \partial_j\Psi\left(\int_{\myX}\phi_1 d\mu,\ldots, \int_{\myX}\phi_kd\mu\right) \nabla\phi_j(x) \quad \forall (x,\mu)\in \myX\times \PP(\myX).\]
    \item the probability measure $\Lambda \in \PP(C([0,T],\PP(\myX)))$ is concentrated over curves $(\mu_t)_{t\in[0,T]}$ solutions of the non-local continuity equation on $\myX$ given by 
    \begin{equation}
        \partial_t\mu_t + \operatorname{div}_{\myX}(b_t(\cdot,\mu_t) \mu_t) = 0.
    \end{equation}
    We can think that each $\mu_t$ is an empirical measure associated to a group of $N$ particles following the flow given by $b$. 
    \item the probability measure $\mathfrak{L}\in \PP(\PP(C([0,T],\myX)))$ is concentrated over 
    dynamic measures
    $\lambda \in \PP(C([0,T],\myX))$ that, in turn, are concentrated over solutions of 
    \[\dot{\gamma}(t) = b(t,\gamma(t), (e_t)_\#\lambda),\]
    where $(e_t)_\#\lambda$ is the marginal at time $t$ of $\lambda$. This means that $\lambda$ 
    is the collective 
    distribution of the 
    trajectories of single particles
    evolving according to the system \eqref{int part system intro}.
\end{enumerate}

\bigskip
\noindent 
\textbf{Nested superposition principles.} 
Our goal is to recover the same structure
for general evolution of 
laws of random measures, 
also including the
general setting of a complete and separable metric space $(X,d)$ for the 
metric-variational aspects 
(we use the shorthand $C_T(\mathcal X)$ for 
$C([0,T],\mathcal X)$ for any space $\mathcal X$):
\begin{itemize}
    \item a (absolutely) continuous curve of random measures $(M_t)_{t\in [0,T]}\in C_T(\PP(\PP(X)))$ (see §\ref{subsec: topology over random measures});
    \item a probability measure over (absolutely) continuous curves of measure $\Lambda \in \PP(C_T(\PP(X)))$;
    \item a 
    law of random measures over 
    (absolutely) continuous curves $\mathfrak{L}\in \PP(\PP(C_T(X)))$.
\end{itemize}
%
There is a natural hierarchy 
``$\mathfrak{L} \implies \Lambda \implies M_t$'' between these objects,
in the sense that any measure $\mathfrak{L}\in \PP(\PP(C_T(X)))$ naturally induces a $\Lambda\in \PP(C_T(\PP(X)))$, which in turn induces a curve $(M_t)_{t\in [0,T]} \in C_T(\PP(\PP(\R^d)))$.
This relation can be described from the standard viewpoint of Bayesian sampling schemes. Recall that, for a general random probability measure \(M \in \PP(\PP(X))\), the associated sampling procedure is: first draw \(\mu \in \PP(X)\) with law \(M\); then, conditional on \(\mu\), draw \(x \in X\) with law \(\mu\).

In particular, we will apply this scheme to the three objects introduced above. 
Given $t\in [0,T]$ 
we have:
\begin{align*}
        & \mu \sim M_t & & \boldsymbol{\mu}  \sim \Lambda &  &\lambda  \sim \mathfrak{L} 
        \intertext{and, conditionally,}
        & x \ | \ \mu   \sim \mu & & y \ | \ \boldsymbol{\mu}  \sim \mu_t & & \gamma \ | \ \lambda  \sim \lambda
\end{align*}
where $\mu\in \PP(X)$, $\boldsymbol{\mu} = (\mu_t)_{t\in [0,T]} \in C_T(\PP(X))$, $\lambda \in \PP(C_T(X))$, $\gamma \in C_T(X)$ and $x,y \in X$.
Therefore, 
\begin{itemize}
    \item[(i)] given a sample $(\lambda,\gamma)$ from the third model, a sample from the second model is 
    \begin{equation}\label{model from L to Lambda}
    \boldsymbol{\mu} := E\big(\lambda\big), \quad y:= e_t(\gamma) =[\gamma](t),
    \end{equation}
    where $E:\PP(C_T(X))\to C_T(\PP(X))$ is defined as $E(\lambda) := \big((e_t)_\#\lambda\big)_{t\in[0,T]}$. In other words, $\Lambda := E_\# \mathfrak{L}$;
    \item[(ii)] given a sample $(\boldsymbol{\mu},y)$ from the second model, we recover a sample from the first model as 
    \[\mu := \mu_t, \quad x := y.\]
    In other words, $M_t := (\mathfrak{e}_t)_\#\Lambda$, where $\mathfrak{e}_t(\boldsymbol{\mu}) := \mu_t$;
    \item[(iii)] Clearly, we could also 
    pass directly from $\mathfrak{L}$ to $M_t$, by defining
    \[\mu := \mathfrak{e}_t\circ E (\lambda), \quad x:= \gamma(t).\]
    Notice also that $E_t := \mathfrak{e}_t \circ E: \PP(C_T(X)) \to \PP(X)$ coincides with the push forward of the evaluation map at time $t$, $(e_t)_\#$. Then, $M_t := (E_t)_\# \mathfrak{L}=((e_t)_\sharp)_\sharp
    \mathfrak L$, 
    is obtained by a nested push-forward construction.
\end{itemize}

In Section \ref{metric structure}, we show that such a hierarchy can be reversed, preserving
relevant regularity and structural properties. 
More precisely, 
given 
a 
curve of random measure $(M_t)_{t\in [0,T]} \in AC_T^p(\PP_p(\PP_p(X)))$, $p>1$,
\nc 
we show that there exist $\Lambda \in \PP(C_T(\PP(X)))$ and $\mathfrak{L}\in \PP(\PP(C_T(X)))$ 
consistent with the hierarchical structure above and 
satisfying the additional properties that are summarized in the following theorem, 
whose complete proof is a byproduct of Sections \ref{from ac(pp) to P(ac(p))}---%
\ref{comp of maps}. 

\begin{teorema}[Nested metric superposition and minimal energy liftings]\label{metric superposition}
    Let $(X,d)$ be a complete and separable metric space and $p>1$. Let $\boldsymbol{M}=(M_t)_{t\in[0,T]} \in AC_T^p(\PP_p(\PP_p(X)))$.
    Then, there exist $\Lambda \in \PP(C_T(\PP(X)))$ and $\mathfrak{L}\in \PP(\PP(C_T(X)))$ satisfying:
    \begin{enumerate}
        \item[(1)] $(\mathfrak{e}_t)_\#\Lambda = M_t$ for all $t\in [0,T]$, 
        $\Lambda$-a.e. $\boldsymbol{\mu}$ is in $AC_T^p(\PP_p(X))$, and 
        \begin{equation}
            \int_0^T |\dot{\boldsymbol{M}}|_{\mathcal{W}_p}^p(t) dt = \int \int_0^T |\dot{\boldsymbol{\mu}}|_{W_p}^p(t) dt\, d\Lambda(\boldsymbol{\mu});
        \end{equation}
        \item[(2)] $(E_t)_\#\mathfrak{L} = M_t$ for all $t\in [0,T]$,  $\mathfrak{L}$-a.e. $\lambda \in \PP(C_T(X))$ is concentrated over $AC_T^p(X)$, and 
        \begin{equation}
            \int_0^T |\dot{\boldsymbol{M}}|_{\mathcal{W}_p}^p(t) dt = \int \int \int_0^T |\dot{\boldsymbol{\gamma}}|^p \,dt
        \,d\lambda(\boldsymbol{\gamma}) \,d\mathfrak{L}(\lambda);
        \end{equation}
        \item[(3)] $\Lambda = E_\#\mathfrak{L}$ and there exists a $\Lambda$-measurable map $G:C_T(\PP(\R^d)) \to \PP(C_T(\R^d))$ such that $\mathfrak{L}= G_\# \Lambda$ and $E(G(\boldsymbol{\mu})) = \boldsymbol{\mu}$ for $\Lambda$-a.e. $\boldsymbol{\mu}$.
    \end{enumerate}
\end{teorema}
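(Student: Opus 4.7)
The plan is to build the two liftings by iterating Lisini's metric superposition principle from \cite{lisini2007characterization}, joined by a measurable selection that makes the ``inner'' lifting depend measurably on the ``outer'' sample. The base observation is that $(X,d)$ being Polish implies $(\PP_p(X),W_p)$ is Polish as well, so Lisini's theorem is applicable at both levels.

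For part (1), I view $\boldsymbol M = (M_t)_{t\in[0,T]}$ as a $p$-absolutely continuous curve in the Polish metric space $(\PP_p(X), \mathcal{W}_p)$. Lisini's theorem directly yields $\Lambda \in \PP(C_T(\PP(X)))$, concentrated on $AC_T^p(\PP_p(X))$, with $(\mathfrak{e}_t)_\#\Lambda = M_t$ for every $t\in[0,T]$, and the energy identity
\[
\int_0^T |\dot{\boldsymbol M}|_{\mathcal{W}_p}^p(t)\,dt
\;=\; \int_{C_T(\PP(X))} \int_0^T |\dot{\boldsymbol\mu}|_{W_p}^p(t)\,dt\,d\Lambda(\boldsymbol\mu).
\]
This proves (1).

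For (2) and (3), the heart of the argument is to construct a $\Lambda$-measurable map $G: C_T(\PP(X)) \to \PP(C_T(X))$ such that, for $\Lambda$-a.e. $\boldsymbol\mu \in AC_T^p(\PP_p(X))$, the measure $\lambda = G(\boldsymbol\mu)$ is a Lisini lifting of $\boldsymbol\mu$ in the base space $X$: namely $\lambda$ is concentrated on $AC_T^p(X)$, $(e_t)_\#\lambda = \mu_t$ for every $t$, and $\int\int_0^T |\dot\gamma|^p\,dt\,d\lambda(\gamma) = \int_0^T |\dot{\boldsymbol\mu}|_{W_p}^p(t)\,dt$. Pointwise existence of such $\lambda$ is again Lisini's theorem; the main obstacle is measurability, since the lifting is not canonically determined. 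I would handle this by revisiting the constructive proof of Lisini: approximate $\boldsymbol\mu$ at dyadic times $kT/2^n$ by piecewise geodesic interpolants built from optimal couplings between consecutive samples $\mu_{kT/2^n}$ and $\mu_{(k+1)T/2^n}$. Measurable selection of optimal plans (via Kuratowski--Ryll-Nardzewski applied to the closed-valued multifunction of optimal couplings, which is upper semicontinuous with respect to narrow convergence) yields a measurable approximating map $G_n: C_T(\PP(X)) \to \PP(C_T(X))$, and Lisini's tightness and equicontinuity estimates give narrow pre-compactness of $\{G_n(\boldsymbol\mu)\}_n$; a further measurable selection of a narrow cluster point produces the desired $G$.

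Once $G$ is in hand, I set $\mathfrak{L} := G_\#\Lambda$. The identity $E(G(\boldsymbol\mu)) = ((e_t)_\# G(\boldsymbol\mu))_{t\in[0,T]} = (\mu_t)_{t\in[0,T]} = \boldsymbol\mu$ holds $\Lambda$-a.e.\ by construction, proving (3) and also $\Lambda = E_\#\mathfrak{L}$. The marginal identity $(E_t)_\#\mathfrak{L} = M_t$ in (2) then follows from $E_t = \mathfrak{e}_t\circ E$ and (1). The nested energy identity in (2) comes from integrating the inner Lisini identity against $\Lambda$ and composing with the outer one established in (1). Finally, the concentration of $\mathfrak{L}$-a.e.\ $\lambda$ on $AC_T^p(X)$ is built into each pointwise Lisini lifting, hence preserved by the push-forward.
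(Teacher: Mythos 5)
Your overall architecture --- apply Lisini at the outer level $(\PP_p(X), W_p)$ to produce $\Lambda$, then build a measurable map $G$ assigning to each $\boldsymbol{\mu}$ a minimal-energy Lisini lifting in $X$, and set $\mathfrak{L}:=G_\#\Lambda$ --- is precisely the paper's plan (Propositions~\ref{prop: acpp to pacp}, \ref{3.8}--\ref{3.10}). Part (1) and the deduction of (2)--(3) from the existence of $G$ are correct as you argue them. The divergence is in \emph{how} you produce $G$, and that is also where your argument has gaps.

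The paper never re-enters Lisini's proof. It observes that the set $\PP_{\overline{a}_p}^{\operatorname{min}}(C_T(X))$ of minimal-energy liftings is a Borel subset of $\PP(C_T(X))$ (measurability of $\lambda\mapsto\int a_p\,d\lambda$ and of $\lambda\mapsto\mathcal{A}_p(E(\lambda))$), that $AC^p_T(\PP_p(X))$ is Borel in $C_T(\PP(X))$, and that $E$ restricted to $\PP_{\overline{a}_p}^{\operatorname{min}}$ is a continuous surjection onto $AC^p_T(\PP_p(X))$ --- surjectivity being exactly Lisini's theorem. A single application of an abstract measurable-selection theorem for Borel surjections between Souslin spaces (Theorem~\ref{measurable selection}) yields a Souslin--Borel measurable right inverse $G$, whose universal measurability makes $G_\#\Lambda$ well-defined. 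This is Theorem~\ref{metric theorem from Lambda to mathfrak{L}}, and it simultaneously delivers the energy identity needed for (2) because the codomain was restricted to minimal liftings from the outset.

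Your proposal instead attempts to make Lisini's \emph{construction} measurable in $\boldsymbol{\mu}$, and two steps are not sound as stated. First, ``piecewise geodesic interpolants built from optimal couplings'' presuppose that $X$ is geodesic; the theorem assumes only that $(X,d)$ is complete and separable, and Lisini handles the general case by building a cylinder measure on $X^{[0,T]}$ from compatible gluings of optimal plans along refining partitions, with no continuous interpolation available. Second, the final ``measurable selection of a narrow cluster point'' is not automatic: before Kuratowski--Ryll-Nardzewski applies you must show that $\boldsymbol{\mu}\mapsto\{\text{narrow cluster points of }(G_n(\boldsymbol{\mu}))_n\}$ is a closed-valued, weakly measurable multifunction, which is itself delicate; and even granting the selection, you still need a lower-semicontinuity argument for $\lambda\mapsto\int a_p\,d\lambda$ to conclude that the selected limit satisfies the \emph{equality} $\int a_p\,d\lambda=\mathcal{A}_p(\boldsymbol{\mu})$ rather than only $\geq$. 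You do not address this, yet the equality is exactly what gives the nested energy identity in (2). These holes are presumably fillable, but they are the very technicalities that the paper's abstract-surjection argument is designed to bypass.
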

 
In the particular case of the Euclidean setting,
we can further leverage the fine-grained information encoded in the continuity equation
\eqref{CERM intro}.
The corresponding main result
when $X=\R^d$ 
is a byproduct of Sections \ref{subsec: first superposition result} and \ref{superposition}
and can be summarized as follows
(we keep the notation for the 
evaluations maps $e_t,E,E_t,\mathfrak e_t$ introduced above).

\begin{teorema}[Nested superposition principle for the random continuity equation in $\R^d$]\label{main theorem}
    Let $\myX=\R^d$ and let $\boldsymbol{M}=(M_t)_{t\in [0,T]}\in C_T(\PP(\PP(\myX)))$ and $b:[0,T]\times \myX \times \PP(\myX) \to \myX $ be a 
    Borel non-local vector field satisfying the integrability condition
    $$\int_0^T \int \int |b(t,x,\mu)| d\mu(x) dM_t(\mu) dt <+\infty.$$
    \nc%
    Assume that 
    $(\boldsymbol{M},b)$
    satisfy the continuity equation \eqref{CERM intro}
    \begin{equation}
    \label{eq:CERM2}
        \partial_tM_t + \operatorname{div}_{\PP}(b_tM_t)=0\quad t\in (0,T),
    \end{equation}
    is satisfied 
    in duality with cylinder functions (see Definition \ref{cyl functions}), in the sense that for any $F\in \operatorname{Cyl}_c^1(\PP(\myX))$ it holds
    \begin{equation}
        \label{eq:continuity-test-intro}
    \frac{d}{dt}\int_{\PP(\myX)}F(\mu) dM_t(\mu) = \int_{\PP(\myX)}\int_{\myX} b_t(x,\mu)\cdot \nabla_W F(x,\mu) d\mu(x) dM_t(\mu)\quad\text{in }\mathscr D'(0,T).
    \end{equation}
    Then, there exists $\Lambda\in \PP(C_T(\PP(\myX)))$ and $\mathfrak{L}\in \PP(\PP(C_T(\myX)))$ such that:
    \begin{enumerate}
        \item\label{prop of Lambda R^d} $(\mathfrak{e}_t)_{\#}\Lambda = M_t$ for any $t\in [0,T]$, 
        and $\Lambda$-a.e.~$\boldsymbol{\mu}$ belongs to $
        AC_T(\PP(\myX))$ 
        and solves 
        $$\partial_t\mu_t + \operatorname{div}(b_t(\cdot,\mu_t)\mu_t)=0
        \quad\text{in }\mathscr D'((0,T)\times \myX)$$
        \item\label{properties of L R^d} $(E_t)_\#\mathfrak{L}=M_t$
        and $\mathfrak{L}$-a.e.~$\lambda\in \PP(C_T(\myX))$
        is concentrated over 
        absolutely continuous curves 
        $\gamma$ that are solutions of
        $$\dot{\gamma}(t) = b(t,\gamma_t,(e_t)_\#\lambda)\quad\text{in }(0,T).$$
        \item\label{third property} $\Lambda = E_\#\mathfrak{L}$.
    \end{enumerate}
\end{teorema}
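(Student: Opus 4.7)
My plan is a two-step lifting: first produce $\Lambda$ by applying the metric superposition principle (Theorem~\ref{metric superposition}) to $(M_t)$, then construct $\mathfrak L$ by a fiberwise application of the classical Ambrosio superposition on $\R^d$. Property~(\ref{third property}) will drop out of the construction, so the content is in the first two steps, and the key technical point is the identification of the fiber vector field with~$b$.

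I would first upgrade the duality continuity equation~\eqref{eq:continuity-test-intro} to metric absolute continuity of $(M_t)$. Testing against linear cylinder functions $F(\mu)=\int\phi\,d\mu$ with $\phi$ $1$-Lipschitz and applying Kantorovich--Rubinstein duality yields
\[
\mathcal{W}_1(M_s,M_t)\;\le\;\int_s^t\!\!\int\!\!\int |b(r,x,\mu)|\,d\mu(x)\,dM_r(\mu)\,dr,
\]
so that $\boldsymbol M$ is metric absolutely continuous on $[0,T]$. Theorem~\ref{metric superposition} then produces $\Lambda\in\PP(C_T(\PP(\R^d)))$ with $(\mathfrak{e}_t)_\#\Lambda=M_t$ and $\Lambda$-a.e.~$\boldsymbol\mu$ absolutely continuous and saturating the metric-derivative energy.

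The central step, and the point where I expect the main difficulty, is to show that $\Lambda$-a.e.~$\boldsymbol\mu$ actually solves the fiberwise CE $\partial_t\mu_t + \operatorname{div}(b_t(\cdot,\mu_t)\mu_t)=0$. Each absolutely continuous curve $\boldsymbol\mu\in\operatorname{supp}\Lambda$ carries its own tangent vector field $v^{\boldsymbol\mu}_t\in\operatorname{Tan}_{\mu_t}\PP(\R^d)$ generating a continuity equation on $\R^d$. Integrating the fiberwise CE against $\Lambda$, using $(\mathfrak{e}_t)_\#\Lambda=M_t$, and comparing with~\eqref{eq:continuity-test-intro} tested against cylinder functions shows that the conditional expectation $\bar v_t(x,\mu):=\mathbb{E}_\Lambda[v^{\boldsymbol\mu}_t(x)\mid \mu_t=\mu]$ generates the same abstract CE on $M_t$ as $b$. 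A convexity argument on the conditional $L^p$-norms, combined with the energy-saturation property of the metric lift, collapses the conditional distribution of $v^{\boldsymbol\mu}$ onto a single element, forcing $v^{\boldsymbol\mu}_t$ to coincide with the Wasserstein tangent projection of $b_t(\cdot,\mu_t)$ for $\Lambda$-a.e.~$\boldsymbol\mu$; since the CE only depends on the tangent component of the velocity, the fiberwise CE with $b$ itself is established, giving~(\ref{prop of Lambda R^d}).

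Once $\Lambda$ with property~(\ref{prop of Lambda R^d}) is in hand, I build $\mathfrak L$ fiberwise. For $\Lambda$-a.e.~$\boldsymbol\mu$, the Borel vector field $(t,x)\mapsto b(t,x,\mu_t)$ satisfies the standard integrability hypothesis of Ambrosio's superposition principle on $\R^d$, yielding $\lambda_{\boldsymbol\mu}\in\PP(C_T(\R^d))$ concentrated on absolutely continuous solutions of $\dot\gamma(t)=b(t,\gamma(t),\mu_t)$ with $(e_t)_\#\lambda_{\boldsymbol\mu}=\mu_t$. A Kuratowski--Ryll-Nardzewski selection produces a Borel map $G:\boldsymbol\mu\mapsto \lambda_{\boldsymbol\mu}$; setting $\mathfrak L:=G_\#\Lambda$ and using the marginal identity $\mu_t=(e_t)_\#\lambda_{\boldsymbol\mu}$ to rewrite $b(t,\gamma,\mu_t)=b(t,\gamma,(e_t)_\#\lambda_{\boldsymbol\mu})$ yields~(\ref{properties of L R^d}), while the identity $E\circ G = \operatorname{id}$ on $\operatorname{supp}\Lambda$ gives~(\ref{third property}).
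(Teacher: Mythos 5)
Your proposal takes a genuinely different route from the paper's, and it contains gaps that undermine the central step.

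First, your construction of $\Lambda$ is not available under the theorem's hypotheses. Theorem~\ref{metric superposition} requires $p>1$ and $M_0\in\PP_p(\PP_p(\R^d))$; the present theorem assumes only $L^1$-integrability of $b$ and $M_0\in\PP(\PP(\R^d))$. Your Kantorovich--Rubinstein estimate gives $\hat{\mathcal{W}}_1$-absolute continuity of $\boldsymbol M$, but Lisini's lifting (Theorem~\ref{lisini lifting}) has no $p=1$ version in the direction you need, so there is no metric lift to start from.

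Second, and more fundamentally, the metric lift carries no information about $b$: it is a functional of the curve $(M_t)$ alone, hence the same $\Lambda$ for every $b$ generating the same abstract CE. Your convexity/energy-saturation argument, even if fleshed out, would at best identify the fiberwise velocity of $\Lambda$ with the conditional-mean tangent field $\bar v_t(\cdot,\mu)$, which by minimality coincides with the $L^p$-minimal non-local vector field generating the abstract CE. But the orthogonality of $\bar v_t - b_t$ to $\operatorname{CoTan}_{M_t}\PP_p(\PP_p(\R^d))$ (a subspace of $L^{p'}(\widetilde{M_t})$ built from coupled coefficients $\partial_i\Psi(L_\Phi(\mu))$) is a priori strictly weaker than the $\mu$-by-$\mu$ orthogonality of $\bar v_t(\cdot,\mu)-b_t(\cdot,\mu)$ to $\operatorname{CoTan}_\mu\PP_p(\R^d)$ that the fiberwise CE demands. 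That localization is precisely the nontrivial step your sketch glosses over, and it is the point the paper flags as the main difficulty in the proof of Theorem~\ref{derivation induced by a vf theorem}. The paper sidesteps it entirely in the construction of $\Lambda$ by working differentially from the start: it embeds $\PP(\R^d)$ into $\R^\infty$ via $\iota(\mu)=(L_{\phi_k}(\mu))_k$, defines componentwise velocities $v^{(k)}_t(\iota(\mu))=\int b_t(x,\mu)\cdot\nabla\phi_k(x)\,d\mu(x)$, and applies the superposition principle in $\R^\infty$ (Theorem~\ref{superposition R^infty}). The resulting curves $\tilde\gamma$ satisfy $\dot{\tilde\gamma}^{(k)}(t)=v^{(k)}_t(\tilde\gamma(t))$ pointwise, and pulling back through $\iota^{-1}$ yields $\Lambda$ concentrated on curves solving the fiberwise CE with $b$ directly. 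This is a differential lift, not a metric one, and that distinction is essential.

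Third, your passage from $\Lambda$ to $\mathfrak L$ aligns with the paper in spirit, but invoking a Kuratowski--Ryll-Nardzewski selection elides the actual work: the paper needs, and proves, Borel measurability of $\operatorname{CE}(b)$ and $\operatorname{SPS}(b)$ (Propositions~\ref{meas of CE_b} and~\ref{meas of SPSb}), after which Theorem~\ref{measurable selection} applies to the surjection $E|_{\operatorname{SPS}(b)}:\operatorname{SPS}(b)\to\operatorname{CE}(b)$. Those measurability statements occupy a substantial part of Section~\ref{superposition} and cannot be taken for granted.
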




\bigskip

\textbf{Links with the Wasserstein on Wasserstein geometry.}
As a consequence of Theorems \ref{metric superposition} and \ref{main theorem}, we obtain important geometric
properties of $(\PP_p(\PP_p(\myX)),\mathcal{W}_p)$. In particular:
\begin{enumerate}
    \item 
When $(X,d)$ is 
a complete, separable, and \emph{geodesic}
 we characterize 
 geodesics in
 $(\PP_p(\PP_p(\myX)),\mathcal{W}_p)$ as superposition of 
 laws of random 
 geodesics of $X$, see Section \ref{subsec: geodesics}. 
 The geodesics are also related to optimal couplings and optimal random couplings presented in \cite{PS25convex} (in the Hilbertian case with $p=2$) 
 and in Section \ref{couplings};
    \item Absolutely continuous curves of random measures $(M_t)_{t\in[0,T]}$ in $AC_T^p(\PP_p(\PP_p(\R^d)))$ 
    can be represented 
     as solutions to 
    the continuity equation 
    \eqref{eq:CERM2} 
    driven by 
    a unique non-local vector field $b:[0,T]\times \R^d \times \PP(\R^d) \to \R^d$ 
    satisfying suitable variational and integrability conditions, 
    see Section \ref{subsec: corr AC with CERM}. 
    \item
    We can fully justify the definition 
    of the cotangent space 
    of $\PP_p(\PP_p(\R^d))$ at $M$ as the closure of the Wasserstein gradient of cylinder functions (see \eqref{eq: tangent} and Section \ref{subsec: tangent}):
    \begin{equation}\label{eq: cotangent-intro}
        \operatorname{CoTan}_M \PP_p(\PP_p(\R^d)) := 
        \overline{
        \Big\{\nabla_W F \ : \ F\in\operatorname{Cyl}_c(\PP(\R^d)) \Big\} }^{L^{p'}(\widetilde{M};\R^d)},
    \end{equation}
    where the measure $\widetilde{M} \in \PP(\R^d \times \PP(\R^d))$ is defined as 
    \[\widetilde{M} := \int_{\PP(\R^d)} \mu \otimes \delta_{\mu}\, dM(\mu),
\qquad
\widetilde{M}(A \times B) = \int_{B} \mu(A)\, dM(\mu)
\]
for all Borel sets \(A \subseteq \R^d\) and \(B \subseteq \PP(\R^d)\).
    In fact, its corresponding tangent space 
    in $L^{p}(\widetilde{M};\R^d)$
    obtained by the duality map from $L^{p'}$ to $L^p$ is generated by 
    all the non-local vector fields of minimal velocity, thus representing
    the infinitesimal behaviour of
    all the absolutely continuous curves according to the previous point (2). In this way,
    we reproduce the results in \cite[Chapter 8.4]{ambrosio2005gradient} 
    at the level of random measures.
\end{enumerate}
A remarkable corollary of the above results is the \textit{Benamou-Brenier like formula} for the $p$-Wasserstein distance on random measures
(see also \cite{HUESMANN2025104633} for a similar setting):
\begin{teorema}[Benamou-Brenier formula]\label{thm: BB}
    Let $p>1$. For all $M_0,M_1\in \PP_p(\PP_p(\R^d))$ it holds
    \begin{equation}\label{eq: BB intro}
    \begin{aligned}
        \mathcal{W}_p^p(M_0,M_1) = \operatorname{min}\bigg\{
        &\int_0^1 \int_{\PP}\int_{\R^d} |b_t(x,\mu)|^p d\mu(x)dM_t(\mu)dt \, :
        M\in AC^p(0,1;\PP_p(\PP_p(\R^d)),
        \\&\quad 
        M(0)=M_0,\ M(1)=M_1,\ 
        \partial_t M_t + \operatorname{div}_\PP(b_tM_t) = 0 \bigg\}.
    \end{aligned}
    \end{equation}
\end{teorema}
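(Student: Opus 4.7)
The argument follows the classical two-directions strategy for Benamou--Brenier formulae: every admissible pair $(M,b)$ in the right hand side of \eqref{eq: BB intro} satisfies a kinetic estimate bounding $\mathcal{W}_p^p(M_0,M_1)$ from above via the nested superposition principle, and the minimum is then attained by pairing a constant speed geodesic in $(\PP_p(\PP_p(\R^d)),\mathcal{W}_p)$ with the minimal non-local vector field provided by the characterization of $AC^p$ curves in Section \ref{subsec: corr AC with CERM}.

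\textbf{Upper bound.} Given an admissible pair $(M,b)$ solving \eqref{eq:CERM2} on $[0,1]$ with prescribed endpoints, Theorem \ref{main theorem} yields $\Lambda\in \PP(C_T(\PP(\R^d)))$ with $(\mathfrak{e}_t)_\#\Lambda=M_t$, concentrated on curves $\boldsymbol{\mu}=(\mu_t)$ solving the standard non-local continuity equation on $\R^d$ with velocity $b_t(\cdot,\mu_t)$. For $\Lambda$-a.e.~$\boldsymbol{\mu}$, the classical Benamou--Brenier bound on $\PP_p(\R^d)$ gives $|\dot{\boldsymbol{\mu}}|_{W_p}^p(t)\le \int_{\R^d}|b_t(x,\mu_t)|^p\,d\mu_t(x)$ for a.e.~$t$. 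Using the coupling $\Pi:=(\mathfrak{e}_0,\mathfrak{e}_1)_\#\Lambda\in \Gamma(M_0,M_1)$ together with the elementary H\"older estimate $W_p^p(\mu_0,\mu_1)\le \int_0^1 |\dot{\boldsymbol{\mu}}|_{W_p}^p(t)\,dt$, I chain
\begin{equation*}
\mathcal{W}_p^p(M_0,M_1)\le \int W_p^p(\mu_0,\mu_1)\,d\Lambda(\boldsymbol{\mu})\le \int \int_0^1 |\dot{\boldsymbol{\mu}}|_{W_p}^p(t)\,dt\,d\Lambda\le \int_0^1\!\int_{\PP}\!\int_{\R^d} |b_t(x,\mu)|^p\,d\mu(x)\,dM_t(\mu)\,dt,
\end{equation*}
where the last step combines Fubini with $(\mathfrak{e}_t)_\#\Lambda = M_t$.

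\textbf{Attainment.} Since $\R^d$ is a geodesic space, so are $(\PP_p(\R^d),W_p)$ and, by the geodesic structure discussed in Section \ref{subsec: geodesics} together with the metric nested superposition of Theorem \ref{metric superposition}, also $(\PP_p(\PP_p(\R^d)),\mathcal{W}_p)$. I pick a constant speed geodesic $(M_t)_{t\in[0,1]}$ joining $M_0$ to $M_1$, so that $|\dot{M}|_{\mathcal{W}_p}(t)=\mathcal{W}_p(M_0,M_1)$ for a.e.~$t$. Invoking the representation of $AC^p$ curves of random measures in Section \ref{subsec: corr AC with CERM}, there exists a unique minimal non-local vector field $b$ driving $M$ through \eqref{eq:CERM2} and satisfying $\int_{\PP}\int_{\R^d}|b_t(x,\mu)|^p\,d\mu\,dM_t(\mu)=|\dot{M}|_{\mathcal{W}_p}^p(t)$ for a.e.~$t$. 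Integrating over $t\in[0,1]$ produces equality in \eqref{eq: BB intro}, giving both the reverse inequality and attainment.

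\textbf{Main obstacle.} The heart of the argument is the existence and $L^p$-optimality of the minimal non-local vector field associated with a general AC curve of random measures (item (2) in the discussion of the Wasserstein on Wasserstein geometry, established in Section \ref{subsec: corr AC with CERM}); once this characterization and the nested superposition of Theorem \ref{main theorem} are in place, Theorem \ref{thm: BB} follows from the chain of inequalities above. A secondary technical point is the joint measurability in $(t,x,\mu)$ of $b$ and in $(t,\boldsymbol{\mu})$ of $|\dot{\boldsymbol{\mu}}|_{W_p}^p$, which must be verified to legitimately apply Fubini; these properties are already built into the setup of the continuity equation \eqref{eq:CERM2} and the construction of $\Lambda$.
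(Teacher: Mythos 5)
Your proof is correct and follows essentially the same two-direction strategy as the paper's proof of Theorem \ref{thm: BB2}: obtain the upper bound from the superposition/CE-to-AC machinery, and attain the minimum by pairing a constant-speed geodesic with the minimal non-local vector field from Proposition \ref{correspondence AC to CERM}. The only slight variation is in the upper bound: the paper directly quotes the pointwise metric-speed inequality \eqref{eq: energy ineq CERM nlvf}, $|\dot{M}|_{\mathcal{W}_p}^p(t)\le\int_{\PP}\int_{\R^d}|b_t|^p\,d\mu\,dM_t(\mu)$, and then integrates, whereas you descend one level further into the lift $\Lambda$ and chain through the coupling $(\mathfrak{e}_0,\mathfrak{e}_1)_\#\Lambda$ — but both routes pass through the same underlying result (Proposition \ref{p-superposition}), so the argument is substantively identical.
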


\noindent
\textbf{Other results and literature.} 
The Wasserstein on Wasserstein metric has been studied in recent years, mostly to quantify convergence properties for non-parametric statistical problems \cite{nguyen2016borrowing, catalano2024hierarchical}. In the recent paper \cite{bonet2025flowing}, the authors use the gradient flow theory on the $L^2$-Wasserstein on Wasserstein space to solve learning tasks (e.g. the multi-classification problem) through the minimization of suitable functions defined over random measures. For this aim, it was crucial to define a notion of tangent space, a continuity equation, and their link with absolutely continuous curves. In this paper, we completely characterize these objects, extending some of their results, even in the case $\PP_p(\PP_p(\R^d))$ with $p>1$.

In \cite{acciaio2025absolutely} the authors prove a metric superposition principle in the spirit of \cite{lisini2007characterization} for absolutely continuous curves of stochastic processes with respect to the adapted Wasserstein metric \cite{bartl2024wasserstein}. The strong relation between the iterated Wasserstein space and the one of filtered processes endowed with the adapted Wasserstein metric has been highlighted in \cite{beiglbock2025brenier}, where they study a Monge-Brenier theorem for the static iterated optimal transport in the $N$-iterated $2$-Wasserstein space. We plan to further develop the techniques presented in this paper for the study of the geometry of the $N$-iterated $p$-Wasserstein space, with possible applications to the space of filtered processes.  

In \cite{pinziatomic2025}, the first author studies the nested superposition principle adding the requirements that all the random measures are absolutely continuous with respect to suitable reference random measures $Q\in \PP_p(\PP_p(\R^d))$ (see also \cite{dello2022dirichlet,delloschiavo2024massive}), with applications to the study of the metric measure space $(\PP_p(\R^d),W_p,Q)$. The same technique is then applied to the Wasserstein space over a compact Riemannian manifold, and a version of Theorem \ref{main theorem} is proved in this setting as well.

In Theorem \ref{main theorem}, claim (1) can be seen as a particular case of the main result of \cite{stepanov2017three}, 
under the hypotheses
\begin{equation}\label{eq: 4}
\int_0^T \int_{\PP(\R^d)}\int_{\R^d} |b_t(x,\mu)|^pd\mu(x) dM_t(\mu)dt<+\infty, \quad M_0 \in \PP_p(\R^d),
\end{equation}
for some $p>1$. Indeed, as shown in \cite{sodini2023general}, the local Lipschitz constant in the space $(\PP_p(\R^d),\mathcal{W}_p)$ of a cylinder function $F:\PP(\R^d) \to \R$ is given by 
\[ \left(\int_{\R^d} |\nabla_WF(x,\mu)|^pd\mu(x)\right)^{1/p}\]
Exploiting it and our definition of derivation (see Definition \ref{def derivations}), one can rewrite our setting only using the metric properties needed to apply the results in \cite{stepanov2017three}. Anyway, the differences are in the fact that we do not need the additional integrability assumption (actually, in \cite{pinzifirst2025} it is relaxed to the integrability of $\frac{b_t(x,\mu)}{1+|x|}$), and exploiting the particular structure of the space of probability measures, we can perform the other lifting as well, as in Claims (2) and (3).

A similar result to Theorem \ref{main theorem} was already obtained in \cite{lacker2022superposition}. In addition to a non-local vector field, they have also two operators $\sigma:[0,T]\times \R^d \times \PP(\R^d) \to \R^{d\times d}$ and $\gamma:[0,T]\times \R^d \times \PP(\R^d) \to \R^{d\times d}$, that describe diffusive terms associated with two independent Brownian motions. The novelties in the present paper are various:
\begin{itemize}
    \item to directly apply the result of \cite{lacker2022superposition}, one should ask the $p$-integrability assumption for the vector field $b$, as in \eqref{eq: 4};
    \item the proof of Claim (1) is proved identifying $\PP(\R^d)$ with $\R^\infty$ in both cases. On the other hand, to prove Claim (2), in \cite{lacker2022superposition} the authors reproduce the approximation procedure that is commonly used to prove superposition results. Here, we propose a different approach, based on using as a black-box the finite dimensional superposition principle to perform a measurable selection, that will give us, as a byproduct, Claim (3). In doing this, we need to prove that the set of curves of measures that are solutions of the continuity equation $\partial_t\mu_t + \operatorname{div}(b_t(\cdot,\mu_t)\mu_t) = 0$ is Borel, and similarly for the set of $\lambda\in \PP(C_T(\R^d))$ that are concentrated over solutions of $\dot{\gamma}(t) = b_t(\gamma(t),(e_t)_\#\gamma)$. These measurability results are the main results in Section \ref{subsec: from mathfrak L to Lambda} and may have their own independent interest;
    \item we use this specific setting as a tool to study the geometry of the Wasserstein on Wasserstein space. In particular, Theorem \ref{main theorem} put in relation the purely-metric setting of the Wasserstein on Wasserstein space with the non-local continuity equations of the form $\partial_t\mu_t + \operatorname{div}(b_t(\cdot,\mu_t)\mu_t) = 0$, that have been intensively studied in recent years, e.g. \cite{bonnet2021differential,bonnet2024caratheodory, cavagnari2023dissipative, cavagnari2023lagrangian}.
\end{itemize}


Regarding uniqueness, in Section \ref{uniqueness section} we assume $p\geq 1$, $M_0\in \PP_p(\R^d)$ and $b$ such that
\begin{equation}\label{eq: intro lip assum}
    \int_{\R^d\times \R^d}|b(t,x_0,\mu_0) - b(t,x_1,\mu_1)|^p d\pi(x_0,x_1) \leq L(t)  W_p^p(\mu_0,\mu_1),
\end{equation}
for all $\mu_0,\mu_1 \in \PP_p(\R^d)$ and some optimal coupling $\pi$ between $\mu_0$ and $\mu_1$, with $L\in L^1(0,T)$. Under this Lipschitz assumption, we show uniqueness of $\boldsymbol{M}$, $\Lambda$ and $\mathfrak{L}$ that start from $M_0$. As already pointed out in \cite{carmona2018probabilistic,lacker2022superposition}, if the vector field is Lipschitz in $x$, uniformly with respect to $(t,\mu)$, then well-posedness for the interacting particle system easily follows by standard techniques, from which uniqueness of the previous objects follows. On the other hand, this is less trivial assuming only \eqref{eq: intro lip assum}: our proof actually shows that such a Lipschitz property is rigid enough to imply that the map $\operatorname{supp}(\mu)\ni x\mapsto b_t(x,\mu)$ is $L(t)$-Lipschitz for any $\mu\in \PP_p(\R^d)$, and now we can proceed by proving uniqueness of the interacting particle system by the previously cited techniques.

\textbf{Plan of the paper.}
In \textbf{Section \ref{preliminary section}}, we recall the main known ingredients that we need to develop our results. In particular, we fix natural (Polish) topologies over the space of probability measures over a Polish space, the space of continuous curves and all their possible combinations, that will be fixed for the rest of the paper.

In \textbf{Section \ref{metric structure}}, we prove Theorem \ref{metric superposition}, introducing the method used for the proof in Section \ref{couplings}, that shows how the optimal transport problem between random measures $M,N \in \PP_p(\PP_p(\R^d))$ can be seen as a minimum problem either over couplings $\Pi \in \PP(\PP(\R^d) \times \PP(\R^d))$ or over random couplings $\mathfrak{P}\in\PP(\PP(\R^d\times \R^d))$. In Section \ref{subsec: geodesics}, we then apply Theorem \ref{metric superposition} to study the geodesics of the Wasserstein on Wasserstein space $\PP_p(\PP_p(\R^d))$.

In \textbf{Section \ref{CE: derivations vs VF}}, we introduce the continuity equation on random measures, that follows either the dynamics of a family of derivations defined over cylinder functions or the dynamics led by a non-local vector field. In this setting, we prove Claim (1) in Theorem \ref{main theorem}, following the strategy developed in \cite{ambrosio2014well}. This result, together with Theorem \ref{metric superposition}, will allow us to characterize absolutely continuous curves of random measures as solutions of a continuity equation, in Section \ref{subsec: corr AC with CERM}.
In Section \ref{subsec: tangent}, we define the tangent and cotangent spaces of $\PP_p(\PP_p(\R^d))$ at a fixed random measure $M$, and we give a characterization of it in terms of non-local vector fields of minimal energy solving the continuity equation. Finally, we prove a representation result for derivations as non-local vector field, see Theorem \ref{derivation induced by a vf theorem}.

In \textbf{Section \ref{superposition}}, we complete the proof of Theorem \ref{main theorem}, using a similar strategy used for Theorem \ref{metric superposition}. In particular, we first show the measurability of the sets $\operatorname{CE}(b)$ and $\operatorname{SCE}(b)$ (see Definition \ref{def: CE and SCE}) that we exploit to apply a measurable selection argument to define the map $G_b$. 

In \textbf{Section \ref{uniqueness section}}, we study the case of a Lipschitz non-local vector field, in the measure variable. 

In the appendices, we collect some technical results. In particular: in \textbf{Appendix \ref{appendix souslin}} we recall the definitions and properties of Lusin and Souslin sets, together with a measurable selection theorem; in \textbf{Appendix \ref{appendix R^infty}} a natural topological-metric structure of $\R^\infty$ is highlighted, and its relation to the space of probability measures as well; in \textbf{Appendix \ref{appendix meas for curves}} some results about curves in $\PP(\R^\infty)$ and $\PP(\PP(\R^d))$ are collected;
in \textbf{Appendix \ref{app: measurability}} we show some results concerning measurability on the space of probability measures. 

\bigskip

\noindent\textbf{Acknowledgements.} We wish to thank Anna Korba, Eugenio Regazzini and Luciano Tubaro
for stimulating and insightful discussions. We also thank Beno\^it Bonnet-Weill and Martin Huesmann for their helpful comments on a draft of the present paper. 
\\
GS has been supported by the MIUR-PRIN 202244A7YL project Gradient Flows and Non-Smooth
Geometric Structures with Applications to Optimization and Machine Learning, by the INDAM project
E53C23001740001, and by the Institute for Advanced Study of the Technical University of Munich, funded
by the German Excellence Initiative.

\section{Preliminaries}\label{preliminary section}

\noindent Here's a list of the main notations used throughout the paper.

\smallskip
\halign{$#$\hspace{0.7cm}\ & #\hfil
\cr
C_b(X), \, (C_b(X;\R^n) )&continuous and bounded functions from $X$ to $\R$ (resp. $\R^n$)
\cr
C_c(X), \, (C_c(X;\R^n) )&cont. functions with compact support from $X$ to $\R$ (resp. $\R^n$)
\cr
C_c^k(\R^d), \, (C_c^k(\R^d;\R^n) )&$k$-times differentiable functions in $C_c(\R^d)$ (resp. $C_c(\R^d;\R^n)$) 
\cr
L^p(\sigma;\R^d), & functions $\R^d$-valued that are $p$-integrable in a measure space $(X,\sigma)$
\cr
\mathcal{M}_+(Y)&finite positive Borel measures on Polish space
$Y$
\cr
\mathcal{P}(Y)&Borel probability measures on a Polish space
$Y$
\cr
\mathcal{M}(Y)&signed Borel measures with finite total variation on $Y$
\cr
\mathcal{M}(Y;\R^d)&Borel measures on Y with values in $\R^d$ and finite total variation
\cr
\mathcal{P}_F(Y) & see Def. \ref{finite energies prob}
\cr
\mathcal{L}^1, \, (\mathcal{L}^1_T) &Lebesgue measure on $\R$ (resp. $[0,T]$)
\cr
C_T(X)&continuous curves from $[0,T]$ in a topological space $X$
\cr
AC_T(X)&absolutely continuous curves from $[0,T]$ to a metric space $X$ 
\cr
AC_T^p(X) & absolutely continuous curves with finite $p$-energy
\cr
D_d&$\sup$ distance in $C_T(X)$ w.r.t. the distance $d$ 
\cr
a_p,\Bar{a}_p& see Def. \ref{ac energies} and \eqref{eq: ac energy with initial moment}
\cr
\mathcal{A}_p,\Bar{\mathcal{A}}_p& see Def. \ref{energy curve of meas} and \eqref{eq: ac energy on measures with initial moment}
\cr
\Bar{\mathfrak{A}}_p&see \eqref{eq: def of mathfrak A_p}
\cr
\PP_p(X)&prob. measures on a metric space $(X,d)$ with finite $p$-moment
\cr
W_{p,d}&$p$-Wasserstein distance on $\PP_p(X)$ built on the distance $d$
\cr
\hat{W}_{1,d}&$W_{1,d\wedge 1}$, i.e. $1$-Wasserstein distance built on truncated distance
\cr
\mathcal{W}_p&$W_{p,W_p}$, distance on random measures (see §\ref{subsec: topology over random measures})
\cr
\hat{\mathcal{W}}_1&$W_{1,\hat{W}_{1,d}}$, the Wasserstein distance built over $\hat{W}_{1,d}$
\cr
e_t&evaluation at time $t$ of a curve
\cr
\mathfrak{e}_t&the evaluation at time $t$ of a curve of measures $(\mu_t)_{t\in[0,T]} \in C_T(\PP(X))$
\cr
E_t &$(e_t)_\sharp$, i.e. push-forward with respect to the map $e_t$
\cr
\operatorname{Cyl}_c^1(\PP(\R^d))&see Def. \ref{def cyl func and wass grad}
\cr
\operatorname{Cyl}_b^1(\PP(\R^d))&see Def. \ref{def cyl func and wass grad}
\cr
\nabla_W F&see Def. \ref{def cyl func and wass grad}
\cr
\widetilde{M}&see Remark \ref{rem: tilde{M}}
\cr
M_t\otimes \,\d t,\,\widetilde{M}_t\otimes \,\d t \hspace{0.5cm}&see §\ref{subsub cont curve in prob space} 
\cr
\CE(b),\,\operatorname{SPS}(b)&see Def. \ref{CEb}
\cr
}
\vspace{1cm}
\noindent
In this section, we introduce the notation about spaces of measures and spaces of curves that we will use in the following. 
We will reserve the notation
$(X,d)$ for a reference complete and separable metric space and we will use the letter $Y$ 
to denote a generic space which typically arise from suitable, possibly iterated, topological, metric or
measure-theoretic constructions
starting from $(X,d)$. 
We take some care to distinguish 
notions which solely depend on the 
(Polish) topology $\tau_Y$ of $Y$ from concepts
that also depend on the choice of a metric
$d_Y$ on $Y$.

\subsection{Spaces of curves}

\subsubsection{Space of continuous curves.}\label{subsub space of cont curves}
Let $(Y,\tau_Y)$ be a Polish topological space. We will denote with $C_T(Y):= C([0,T],Y)$ the space of continuous curves $\boldsymbol{y}:[0,T]\to Y$, naturally endowed with the \textit{compact-open topology}.

Such a topology can be metrized as well, resulting as Polish (see \cite[Theorem 2.4.3]{srivastava2008course} for separability): 
it is sufficient to 
choose the 
usual sup-metric 
\begin{equation}
    D(\boldsymbol{y}_1, \boldsymbol{y}_2):= \sup_{t\in [0,T]} \delta_Y(\boldsymbol{y}_1(t),\boldsymbol{y}_2(t)) \quad \forall \boldsymbol{y}_1,\boldsymbol{y}_2 \in  C_T(Y).
\end{equation}
associated with
any metric $\delta_Y$ over $Y$ that induces its topology $\tau_Y$. Clearly, by 
using a bounded metric $\delta_Y$,
we may assume that $D$ is bounded as well.

We will denote by $e_t:C_T(Y)\to Y$
the (continuous) evaluation map
$e_t(\boldsymbol{y}):=\boldsymbol{y}(t).$


\subsubsection{Space of absolutely continuous curves in metric spaces}\label{subsub ac curve on metric spaces}
Let us collect some definitions and results about absolutely continuous curves taking values in 
the (complete, separable) metric space
$(X,d_X)$:
note that, in this case, the metric matters, not only its induced topology.

\begin{df}[Absolutely continuous curves]
   A curve $\boldsymbol{x}:[0,T] \to X$ is said to be absolutely continuous, and we write $\boldsymbol{x}\in 
   AC_T(X)$, if there exists a function $g\in L^1(0,T)$ such that 
   \begin{equation}\label{abs cont def}
        d_X(\boldsymbol{x}(t),\boldsymbol{x}(s)) \leq \int_s^t g(r) dr \quad \text{whenever}\quad 0\leq s \leq t \leq T.
   \end{equation}
   If $g\in L^p(0,T)$, for some $p\in (1,+\infty]$, we say that $\boldsymbol{y}\in 
   AC^p_T(X)$. 
\end{df}

\noindent The space $AC^p_T(X)$, for $p\in[1,+\infty]$ is a 
\nc 
Borel subsets of $C_T(X)$ (see Appendix \ref{appendix meas for curves}).

\begin{prop}
    Let $\boldsymbol{x} 
    \in AC_T(X)$. Then the limit 
    \begin{equation}
        \lim_{s\to t} \frac{d_X(
        \boldsymbol{x}(s),
        \boldsymbol{x}(t))}{|t-s|} =:|\dot{\boldsymbol{x}}|_{d_X}
        \kern-2pt(t)
    \end{equation}
    exists for $\mathcal{L}^1$-a.e. $t\in [0,T]$ 
    and it 
    provides the smallest $g$ such that \eqref{abs cont def} is satisfied.
    We will omit 
    the subscript $d_X$ 
    when it will be clear from the context.
\end{prop}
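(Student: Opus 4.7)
The plan is to follow the classical argument (essentially Theorem 1.1.2 in Ambrosio--Gigli--Savar\'e) reducing the vector (metric) derivative to a countable supremum of scalar derivatives of real-valued absolutely continuous functions. The key observation is that the distance from $\boldsymbol x$ to any fixed point is $1$-Lipschitz in the target variable, so composing with $\boldsymbol x$ produces real absolutely continuous functions whose scalar derivatives can be handled by the classical one-dimensional theory.

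First, I would pick a countable dense subset $\{x_n\}_{n\in\N}\subset \boldsymbol x([0,T])$: this is possible since the image is the continuous image of the compact set $[0,T]$, hence compact and separable. I then set
\[
\varphi_n(t):=d_X(\boldsymbol x(t),x_n),\qquad t\in[0,T].
\]
Each $\varphi_n$ is absolutely continuous on $[0,T]$ because the triangle inequality gives $|\varphi_n(s)-\varphi_n(t)|\le d_X(\boldsymbol x(s),\boldsymbol x(t))\le \int_s^t g(r)\,dr$ whenever $s\le t$, and $g\in L^1(0,T)$. Therefore $\varphi_n$ is differentiable $\mathcal L^1$-a.e.\ with $|\varphi_n'(t)|\le g(t)$ for a.e.\ $t$. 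Setting
\[
m(t):=\sup_{n\in\N}|\varphi_n'(t)|,
\]
we get a Borel function with $m\le g$ a.e., in particular $m\in L^1(0,T)$.

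Next I would prove the key inequality $d_X(\boldsymbol x(s),\boldsymbol x(t))\le \int_s^t m(r)\,dr$ for every $0\le s\le t\le T$. By density of $\{x_n\}$ in $\boldsymbol x([0,T])$ and continuity of the distance,
\[
d_X(\boldsymbol x(s),\boldsymbol x(t))=\sup_{n}|\varphi_n(s)-\varphi_n(t)|
=\sup_{n}\Bigl|\int_s^t \varphi_n'(r)\,dr\Bigr|\le \int_s^t m(r)\,dr,
\]
where I used that each $\varphi_n$ is absolutely continuous in the classical real-valued sense. This simultaneously shows that $m$ is an admissible choice of $g$ in \eqref{abs cont def} and that any admissible $g$ dominates every $|\varphi_n'|$ a.e., hence dominates $m$ a.e.: minimality follows.

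It remains to identify $m$ with the metric derivative. Fix a Lebesgue point $t\in(0,T)$ of $m$ at which $\varphi_n'(t)$ exists for every $n$ (the complement of this set has Lebesgue measure zero, being a countable union of null sets). The upper bound just proven gives
\[
\limsup_{s\to t}\frac{d_X(\boldsymbol x(s),\boldsymbol x(t))}{|s-t|}\le \lim_{s\to t}\frac{1}{|s-t|}\int_{\min(s,t)}^{\max(s,t)}m(r)\,dr = m(t),
\]
by Lebesgue differentiation; for the opposite inequality, for every fixed $n$ the scalar bound $d_X(\boldsymbol x(s),\boldsymbol x(t))\ge |\varphi_n(s)-\varphi_n(t)|$ yields
\[
\liminf_{s\to t}\frac{d_X(\boldsymbol x(s),\boldsymbol x(t))}{|s-t|}\ge |\varphi_n'(t)|,
\]
and taking the sup over $n$ gives $\liminf \ge m(t)$. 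Hence the limit exists a.e.\ and coincides with $m(t)$, which is the minimal admissible $g$ by the previous step. I expect the only mildly delicate point to be the use of separability of $\boldsymbol x([0,T])$ to pass from a supremum over a dense sequence to the actual distance; everything else reduces to the one-dimensional theory of absolutely continuous functions and Lebesgue's differentiation theorem.
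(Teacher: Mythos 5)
Your proof is correct and reproduces the standard argument for the existence of the metric derivative (Theorem 1.1.2 in Ambrosio--Gigli--Savar\'e), which is exactly the reference the paper implicitly relies on: the paper recalls this proposition as a known fact and does not include a proof. All the steps — reducing to the countable family $\varphi_n(t)=d_X(\boldsymbol x(t),x_n)$ via separability of the compact image $\boldsymbol x([0,T])$, using the identity $d_X(\boldsymbol x(s),\boldsymbol x(t))=\sup_n|\varphi_n(s)-\varphi_n(t)|$, applying the Lebesgue differentiation theorem for the upper bound and the scalar derivatives for the lower bound, and deducing minimality from $|\varphi_n'|\le g$ a.e.\ for every admissible $g$ — are carried out correctly and in the standard way.
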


\begin{df}[$p$-action of a curve]\label{ac energies}
    Let $\boldsymbol{x}\in C_T(X)$ and $p\in [1,+\infty)$. The $p$-action of $\boldsymbol{x}$ is defined as  
    \begin{equation}
        a_p(\boldsymbol{x}):= 
        \begin{cases}
            \displaystyle
            \int_0^T |\dot{\boldsymbol{x}}|^p(t) \,\d t \quad & \text{if } \boldsymbol{x}\in AC_T(X),
            \\
            +\infty & \text{otherwise.}
        \end{cases}
    \end{equation}
\end{df}
\subsubsection{Geodesics}\label{subsub: geod} A (minimal, constant speed)
geodesic in $(X,d_X)$ 
is a curve  $\boldsymbol{x}\in C([0,1],X)$ that satisfies 
\begin{equation}\label{eq: geodesic def}
    d(\boldsymbol{x}(t),\boldsymbol{x}(s)) = |t-s|d(\boldsymbol{x}(0),\boldsymbol{x}(1)) \quad 
    \text{for every } s,t\in[0,1].
\end{equation}
Observe that in \eqref{eq: geodesic def} it is enough to require 
$\le$, since any strict inequality somewhere would contradict the triangle inequality.

We denote by $\operatorname{Geo}(X)\subset C([0,1],X)$ the 
closed (thus Borel) subset of constant speed geodesics. 

We say that $X$ is a \textit{geodesic space} if for all $x,y \in X$ there exists $\boldsymbol{x}\in \operatorname{Geo}(X)$ such that $\boldsymbol{x}(0) = x$ and $\boldsymbol{x}(1) = y$. In a geodesic space, using a measurable selection argument (see Theorem \ref{measurable selection}), one can always find a Souslin-Borel measurable map 
\begin{equation}\label{eq: geo map}
    \operatorname{geo}:X \times X \to \operatorname{Geo}(X)
\end{equation}
that selects a constant speed geodesic given the starting and ending points. Moreover, if the geodesic property is enforced with uniqueness, i.e. if for all $x,y \in X$ there exists a unique $\boldsymbol{x}\in \operatorname{Geo}(X)$ such that $\boldsymbol{x}(0) = x$ and $\boldsymbol{x}(1) = y$, then the map $\operatorname{geo}$ is uniquely defined and Borel measurable (see \cite[Lemma 6.7.1]{Bogachev07}). 

Finally, for all $t\in[0,1]$, we denote by $\operatorname{geo}_t:X \times X \to X$ the map $e_t\circ\operatorname{geo}$, that is the evaluation at time $t$ of $\operatorname{geo}$.

\subsection{Spaces of measures}
\subsubsection{Narrow topology over the spaces of measures}\label{subsub space of measures}
Let $(Y,\tau_Y)$ be a Polish space. 
We denote by $\mathcal B(Y)$
the Borel $\sigma$-algebra generated by $\tau_Y$.
We denote with $\PP(Y)$ the set of Borel probability measures on $Y$.
More generally, we introduce the sets $\mathcal{M}_+(Y)$, $\mathcal{M}(Y)
=\mathcal M(Y,\R)$, and $\mathcal{M}(Y,\R^d)$, that are, respectively, the set of all positive finite measures, the set of all signed measures with finite total variation, and the set of all measures taking values in $\R^d$ with finite total variation. 
Recall that the total variation $|\nu|\in \mathcal{M}_+(Y)$ of a measure $\nu \in \mathcal{M}(Y;\R^d)$ is defined as 
\[|\nu|(A) := \sup\left\{ \sum_{n=1}^{+\infty} |\nu(E_n)| \ : \ \bigcup E_n = A, \ E_i \cap E_j = \emptyset \text{ as } i\neq j \right\}.\]
Note that $\PP(Y) \subset \mathcal{M}_+(Y) \subset \mathcal{M}(Y)$.
The space $\mathcal{M}(Y;\R^d)$ is endowed with the \textit{narrow topology} $\tau_N$, i.e. the coarsest topology under which
the functions let $ \mathcal{M}(Y;\R^d) \ni \nu \mapsto \int \phi\cdot \,\d\nu$ are continuous for all $\phi\in C_b(Y;\R^d)$. $\PP(Y)$ and $\mathcal{M}_+(Y)$ are closed subsets of $\mathcal{M}(Y)$. 

Recall that, given a measurable function $f:Z_1 \to Z_2$, where $(Z_i,\mathcal{S}_i)$ are general measurable spaces, and a measure $\mu\in \mathcal{M}_+(Z_1)$, we denote with $f_\sharp\mu \in \mathcal{M}_+(Z_2)$ the push-forward measure, defined as 
\[f_\sharp\mu(S) := \mu(f^{-1}(S)) \quad \forall S\in \mathcal{S}_2.\]

Under the Polish assumption on the ambient space $Y$, the narrow topology is completely characterized by the narrow convergence \cite[ Theorem 8.9.4(ii)]{Bogachev07}: given $\nu_n,\nu \in \mathcal{M}(Y;\R^d)$, we say that $\nu_n$ narrowly converges to $\nu$ (we write $\nu_n \to \nu$) if 
\[\int_Y \phi \cdot \,\d\nu_n \to \int_Y \phi \cdot \,\d\nu \quad \forall \phi \in  C_b(Y;\R^d).\]

A nice characterization of compactness in the narrow topology has been given by Prohorov (see e.g. \cite[Theorem 8.6.2]{Bogachev07}). The theorem is stated for measures in $\mathcal{M}(Y;\R^d)$, and it is also true for $\PP(Y)$ and $\mathcal{M}_+(Y)$, since they are closed subsets of $\mathcal{M}(Y)$.

\begin{teorema}
    Let $\mathcal{F}\subset \mathcal{M}(Y;\R^d)$. Then the following are equivalent:
    \begin{enumerate}
        \item $\mathcal{F}$ is relatively compact in the narrow topology;
        \item $\mathcal{F}$ is equi-bounded in total variation and equi-tight, i.e. 
        \[ \sup_{\nu\in \mathcal{F}} |\nu|(Y) <+\infty\quad \text{and}\quad\forall \varepsilon>0 \ \exists K_{\varepsilon}\subset Y \text{ compact s.t. } \sup_{\nu\in \mathcal{F}}|\nu|(Y\setminus K_\varepsilon) <\varepsilon.\]
    \end{enumerate}
\end{teorema}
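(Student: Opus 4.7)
The plan is to split the biconditional into the easy direction $(1)\Rightarrow(2)$ and the classical Prokhorov direction $(2)\Rightarrow(1)$. For the first, I would argue equi-boundedness by the lower semicontinuity of $\nu\mapsto |\nu|(Y)$ along narrow convergence (itself a consequence of the duality formula $|\nu|(Y)=\sup\{\int \phi\cdot d\nu : \phi\in C_b(Y;\R^d),\ \|\phi\|_\infty\le 1\}$) together with the fact that a relatively compact set must sit inside a sublevel set of any narrowly lower semicontinuous functional. Equi-tightness I would prove by contradiction: if $\mathcal F$ were not tight, there would exist $\varepsilon_0>0$, a sequence $\nu_n\in\mathcal F$ and an exhausting sequence of compacts $K_n\uparrow Y$ with $|\nu_n|(Y\setminus K_n)\ge\varepsilon_0$. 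Extracting a narrowly convergent subsequence $\nu_{n_k}\to\nu$ and exploiting that on the Polish space $Y$ every single finite Borel measure is tight (inner regularity, via a standard Polish-space argument using a dense countable set and total boundedness of balls), one derives a contradiction with the uniformity failure.

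For the harder direction $(2)\Rightarrow(1)$, the strategy is to reduce to probability measures: decompose each $\R^d$-valued measure into $2d$ positive parts via the Jordan decomposition componentwise, normalize by total mass, and observe that uniform tightness and total-variation boundedness transfer to each component family. This reduces matters to $\mathcal F\subset\PP(Y)$. I would then fix an exhausting sequence of compacts $K_m$ with $\nu(Y\setminus K_m)<1/m$ uniformly in $\nu\in\mathcal F$, pick a countable dense family $\{\phi_k\}\subset C_b(Y)$ (possible since $Y$ is Polish), and run a standard diagonal extraction to obtain a subsequence along which $\int \phi_k\,d\nu_{n_j}$ converges for every $k$. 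The resulting linear functional, defined initially on $\{\phi_k\}$ and extended by density to $C_b(Y)$, would then need to be represented by a finite Borel probability measure $\nu$; I would achieve this via a Daniell/Riesz-type argument applied first to continuous functions supported in a neighborhood of each $K_m$, and then by letting $m\to\infty$, using the uniform tail estimate from tightness to glue the pieces together consistently.

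The main obstacle I expect is exactly this last construction step — promoting a limit of integrals on a countable dense family to an actual $\sigma$-additive Borel measure with the correct total mass. Without tightness, the candidate limit could fail to be a probability measure (mass could escape to infinity) and the limiting functional on $C_b(Y)$ could fail to be $\sigma$-smooth; tightness is precisely the ingredient that prevents mass loss and secures representability. Once the limit measure is in hand, narrow convergence against a general $\phi\in C_b(Y)$ follows from a triangle-inequality estimate combining uniform tightness (to control tails outside large $K_m$) with the convergence on the countable dense family (to control the bulk via uniform approximation on $K_m$). Finally, undoing the normalization and the Jordan decomposition returns the statement in $\mathcal M(Y;\R^d)$, completing the argument.
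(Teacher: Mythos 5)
The paper does not prove this statement: it is stated as a known result and cited to \cite[Theorem 8.6.2]{Bogachev07}, so there is no internal proof to compare against. Taken on its own terms, your sketch contains two genuine gaps.

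First, in the direction $(1)\Rightarrow(2)$, the argument for equi-boundedness of total variation is incorrect. You claim that a relatively compact set must sit inside a sublevel set of any narrowly lower semicontinuous functional; but lower semicontinuity on a compact set bounds the functional from \emph{below} (infimum attained), not from above — an l.s.c.\ functional such as $\nu\mapsto|\nu|(Y)$ can perfectly well be unbounded on a compact set in principle. The correct tool here is the uniform boundedness principle: for each fixed $\phi\in C_b(Y;\R^d)$ the continuous map $\nu\mapsto\int\phi\cdot\,d\nu$ has relatively compact (hence bounded) image on $\mathcal F$, so $\mathcal F$, viewed as a family of bounded linear functionals on the Banach space $C_b(Y;\R^d)$ with operator norm equal to total variation, is pointwise bounded and therefore norm bounded by Banach--Steinhaus. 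Once this is secured, the narrow topology is metrizable on TV-bounded sets (cf.\ Remark \ref{measures are Polish/Lusin}), which is what licenses the sequential extraction you use in the tightness step; as written, you invoke sequential compactness before having boundedness in hand, a circularity you should repair by doing the UBP step first.

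Second, in the direction $(2)\Rightarrow(1)$, the claim that one can ``pick a countable dense family $\{\phi_k\}\subset C_b(Y)$ (possible since $Y$ is Polish)'' is false for non-compact Polish $Y$: $C_b(Y)$ with the sup norm is not separable (e.g.\ $C_b(\N)=\ell^\infty$). The diagonal argument must instead be run against a countable \emph{convergence-determining} class that is separable, for instance bounded Lipschitz functions for a fixed bounded metric inducing $\tau_Y$, or the pullbacks of a countable dense subset of $C([0,1]^{\N})$ under a homeomorphic embedding of $Y$ into the Hilbert cube. Tightness then lets you upgrade convergence of $\int\phi_k\,d\nu_{n_j}$ on that smaller class to narrow convergence, exactly as you anticipate in your last paragraph; but the starting separability claim, as stated, is a gap. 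The reduction to probability measures by componentwise Jordan decomposition is fine in spirit, though you must also extract convergent subsequences of the (bounded) masses $\nu_n^{i,\pm}(Y)$ and handle degenerate zero-mass components before normalizing; otherwise recombination of the normalized limits does not reproduce narrow convergence of the original sequence.
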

Hereafter, unless otherwise stated, these spaces will always be equipped with the narrow topology; 
this, in turn, generates the 
corresponding Borel $\sigma$-algebra.

\subsubsection{Wasserstein metric}\label{OT and Wass}
Assume now the reference space is endowed with a (complete and separable) metric, and we refer to it with $(X,d_X)$. Given 
$p\geq 1$,
we introduce the space 
\[\PP_p(X):= \left\{ \sigma \in \PP(X) \ : \ \int_X d_X^p(x,\bar{x}) \,\d\sigma(x) \text{ for some } \bar{x}\in Y\right\}.\]
The set $\PP_p(X)$ is 
endowed with the $p$-Wasserstein metric defined as
    \[W_{p,d_X}^p(\sigma_1,\sigma_2):=  {\min}  \left\{ 
    \int_{X\times X} d_X^p(x_1,x_2) \,\d\pi(x_1,x_2) \ : \ \pi \in \Gamma(\sigma_1,\sigma_2) \right\}, \]
where $\Gamma(\sigma_1,\sigma_2)$ is the collection of all the transport plans 
(or couplings) $\pi$ 
with marginals $\sigma_1$ and $\sigma_2$, i.e. all the probability measures $\pi \in \PP(X\times X)$ satisfying $\pi(A\times X) = \sigma_1(A)$ and $\pi(X\times B) = \sigma_2(B)$ for all $A,B \in \mathcal{B}(X)$. When the distance $d_X$ is clear from the context, we will simply write $W_p$. $(\PP_p(X),W_p)$ is a complete and separable metric space.

By Kantorovich duality we 
have:
    \begin{equation}
    \begin{aligned}
        W_p^p(\sigma_1,\sigma_2) = \sup\bigg\{ 
        \int_X \phi \,\d\sigma_1 + \int_X \psi \,\d\sigma_2 \ : \ & \phi,\psi\in C_b(X),
        & \phi(x_1) + \psi(x_2) \leq d^{p}(x_1,x_2) \bigg\}.
    \end{aligned}
    \end{equation}
    In the particular case $p=1$, the duality formula can be rewritten as 
    \begin{equation}
        W_1(\sigma_1,\sigma_2) = \sup\bigg\{ 
        \int_X \phi \,\d\sigma_1 - \int_X \phi \,\d\sigma_2 \, : \, \phi\in\operatorname{Lip}_1(X) \bigg\},
    \end{equation}
where $\operatorname{Lip}_1(X)$ is the family of real Lipschitz functions 
with Lipschitz constant less or equal than $1$.

When $d_X$ is bounded,
then $\PP_p(X)=\PP(X)$ 
and every metric 
$W_{p,d_X}$ metrizes the narrow topology
in $\PP(X)$.
\nc 
\begin{oss}\label{equiv of Borel for polish}
    So far, given $(X,d_X) $ 
    a complete and separable metric space, we introduced the Polish space $\PP(X)$ 
    (for which we only care about the narrow topology $\tau_N$) and the Wasserstein spaces $\PP_p(X)$ for any $p\geq 1$, inducing the Wasserstein
    topology $\tau_p = \tau_{W_p}$. 
    When $d_X$ is unbounded,
    it is well-known that $\tau_p$ is strictly 
    finer than the restriction of 
     $\tau_N$ to $\PP_p(X)$. 
     However, since 
     $\PP_p(X)$ is a Borel subset of $\PP(X)$
     and thus a Lusin space
     with respect to the narrow topology,
     and the $p$-Wasserstein topology is finer, 
     then thanks to \cite[Corollary 2, pp. 101]{schwariz1973radon} 
     the induced Borel $\sigma$-algebras $\mathcal{B}(\PP_p(X))$ coincide. 
\end{oss}
\subsubsection{Laws of random probability measures}\label{subsec: topology over random measures}

Given a Polish space $(Y,\tau_Y)$, the main objects of our study will be the so-called \textit{laws of random probability measures}, or just \textit{random measures}, 
$M \in \PP(\PP(Y))$. 
Since $\PP(Y)$ is a Polish space,
we observe that \nc 
\begin{itemize}
    \item over $\PP(\PP(Y))$, we consider the narrow topology, 
    induced by 
    the underlying (Polish) narrow topology over $\PP(Y)$. 
    If $\delta_Y$ is 
    any \emph{bounded} metric
    inducing $\tau_Y$,
    the narrow topology on $\PP(\PP(Y))$
    is induced by the 
    bounded Wasserstein metric
    $\widehat{\mathcal{W}}_1:= 
    W_{1,W_{1,\delta}}$. 
    \item 
    When 
    a (complete, separable) metric $d_Y$ 
    is assigned on the underlying space $Y$, 
    then we can endow
    $\PP_p(\PP_q(Y))$, with $p,q\geq 1$,
    with the 
    \emph{Wasserstein on Wasserstein} 
    metric 
    $\mathcal{W}_{p,q}:= W_{p,W_{q,d_Y}}$.
    We will only 
    deal with 
    the case $p=q$, and we use the notation $\mathcal{W}_p = \mathcal{W}_{p,p}
    =W_{p,W_{p,d_Y}}$.
\end{itemize}
Thanks to Remark \ref{equiv of Borel for polish}, it is equivalent to consider a random measure $M\in \PP(\PP(Y))$ concentrated over $\PP_p(Y)$ or a random measure $M\in\PP(\PP_p(Y))$, since the Borel $\sigma$-algebras induced on $\PP_p(Y)$ by the narrow topology 
concides with 
the Borel $\sigma$-algebra 
induced by the $L^p$-Wasserstein 
metric. In particular, we can always work with random measures $M\in \PP(\PP(Y))$, possibly specifying later that it is concentrated over measures with finite $p$-moments.

It is worth highlighting a structure that will appear often in the following: given two Polish spaces $Y,Z$ and a Borel map $f:Y\to Z$, we can define
\begin{equation}\label{push forward of push forward}
\begin{aligned}
    F:= f_\sharp  : \PP(Y) &\to \PP(Z)
    \\
    F_\sharp=f_{\sharp\sharp} : \PP(\PP(Y)) &\to \PP(\PP(Z)),
\end{aligned}
\end{equation}
that are Borel maps with the topologies we considered (see Proposition \ref{meas of push forward}). A property of this nested push-forward is the following: for all $M\in \PP(\PP(Y))$ and $g:Z\to [0,+\infty]$
\begin{equation}\label{property of push for of push for}
    \int_{\PP(Z)} \int_Z g(z) \,\d\nu(z) \,\d F_\sharp M(\nu) = \int_{\PP(Y)} \int_{Y} g\circ f(y) \,\d\mu(y) \,\d M(\mu).
\end{equation}

\begin{oss}\label{rem: tilde{M}}
    Any random measure $M \in \PP(\PP(Y))$ can also be identified through the measure $\widetilde{M}:= \int \mu\otimes \delta_\mu  \,\d M(\mu)\in \PP(Y \times \PP(Y))$, i.e.~the only measure for which, for all 
    Borel $g:Y \times \PP(Y) \to [0,+\infty)$ bounded it holds
    \begin{equation}
        \int_{Y\times \PP(Y)} g(y,\mu) \,\d\widetilde{M}(y,\mu) = \int_{\PP(Y)} \int_Y g(y,\mu) \,\d\mu(y) \,\d M(\mu).
    \end{equation}
\end{oss}

\begin{oss}\label{measures are Polish/Lusin}
The narrow topology is Polish 
when restricted either to $\PP(Y)$ or $\mathcal{M}_+(Y)$ (see Section \ref{OT and Wass} and Lemma \ref{weak conv is metrizable}). On the other hand, the narrow topology over $\mathcal{M}(Y)$ and $\mathcal{M}(Y;\R^d)$ cannot be metrized. However, the narrow topology is metrizable when restricted to sets of measures with bounded total variation, in analogy with the weak (and $\text{weak}^*$) topology on Banach spaces (see \cite[Chapter 3]{brezis2011functional}). Thanks to this fact, we obtain that the space $\mathcal{M}(Y;\R^d)$ is a Lusin space (see Appendix \ref{appendix souslin}), which will allow us to recover some useful properties about Borel measurability in these spaces. 
\end{oss}

\subsection{Curves of measures}
\subsubsection{Continuous curves}\label{subsub cont curve in prob space}
Here we specialize the notation of \ref{subsub space of cont curves} to the Polish spaces $\PP(Y)$ or $\PP(\PP(Y))$, where $(Y,\tau)$ is a Polish space. 

In both the spaces $C_T(\PP(Y))$ and $C_T(\PP(\PP(Y)))$, the compact-open topology is Polish. Indeed, using the distances described above, the compact-open topology over $C_T(\PP(Y))$ is induced by the sup distance $D_{\hat{W}_1}$, where $\hat{W}_1:= W_{1,\hat{d}}$ and $\hat{d}$ is a bounded distance inducing $\tau$ (notice that $\hat{W}_1 \neq W_1 \wedge 1$). Similarly, for $C_T(\PP(\PP(Y)))$, its compact-open topology is induced by the sup distance $D_{\hat{\mathcal{W}}_1}$, where $\hat{\mathcal{W}}_1 := W_{1,\hat{W}_1}$.

These distances are just a possible choice for inducing such topologies, but these specific choices will be useful for our purposes.

Following Remark \ref{rem: tilde{M}}, it is important to notice the identification between any curve of random measures $(M_t)_{t\in[0,T]}\in C_T(\PP(\PP(X)))$ with the measure $\widetilde{M}_t\otimes \,\d t \in \mathcal{M}_+([0,T]\times X \times \PP(X))$, defined through the integration formula 
\begin{equation}\label{eq: tilde M otimes dt}
    \int f(t,x,\mu) \,\d\big(\widetilde{M}_t\otimes \,\d t\big)(t,x,\mu) = \int_0^T \int_{\PP(X)}\int_X f(t,x,\mu) \,\d\mu(x) \,\d M_t(\mu)\,\d t,\end{equation}
for all $f:[0,T]\times X \times \PP(X) \to [0,1]$ Borel measurable. Similarly, we introduce the measure $M_t\otimes \,\d t \in \mathcal{M}_+([0,T]\times \PP(X))$ as 
\begin{equation}\label{eq: M otimes dt}
    \int g(t,\mu) \,\d\big(M_t\otimes \,\d t\big)(t,\mu) = \int_0^T \int_{\PP(X)}g(t,\mu)  \,\d M_t(\mu)\,\d t,
\end{equation}
for all $g:[0,T] \times \PP(X) \to [0,1]$ Borel measurable.

\subsubsection{Topology over $\PP(C_T(Y))$}\label{subsub top over P(C_T)}
Given $(Y,\tau)$ a Polish space, the natural topology over $\PP(C_T(Y))$ is the narrow topology with the compact-open as ground topology. With this topology, the space $\PP(C_T(Y))$ is Polish as well, and a convenient metric that induces it is given by 
$W_{1,D_{\hat{d}}}$,
where $\hat{d}$ is any bounded distance inducing $\tau$ (in particular, again we can take $\hat{d}= d\wedge 1$ where $d$ is any distance on $Y$ inducing $\tau$).

\subsubsection{Absolutely continuous curves}\label{subsub ac curve in prob space}
Let $(X,d)$ be a complete and separable metric space. Consider the metric spaces $(\PP_q(X),W_q)$ and $(\PP_p(\PP_q(X)),\mathcal{W}_{p,q})$, where $p,q\in [1,+\infty)$. To avoid confusion in the following, we restate the definition of the action of a curve in these cases.

\begin{df}\label{energy curve of meas}
    Let $p,q\in [1,+\infty)$ and $\boldsymbol{\mu}=(\mu_t)_{t\in[0,T]}\in C_T(\PP_q(X))$. The $p$-action of $\boldsymbol{\mu}$ is defined as 
    \begin{equation}
        \mathcal{A}_{p,q}(\boldsymbol{\mu}):= \begin{cases}
            \int_0^T |\dot{\boldsymbol{\mu}}|^p_{W_q}(t) \,\d t \quad & \text{ if }\boldsymbol{\mu} \in AC_T(\PP_q(X))
            \\
            +\infty \ & \text{ otherwise}
        \end{cases}
    \end{equation}
    In the case $p=q$, we simply denote $\mathcal{A}_p=\mathcal{A}_{p,p}$.
\end{df}

\begin{df}[Absolutely continuous curves of random measures]
    Let $p,q,r \in [1,+\infty)$ and $\boldsymbol{M} = (M_t)_{t\in [0,T]} \in C_T(\PP_p(\PP_q(X)))$. Its $r$-action is then defined as 
    \begin{equation}
        \boldsymbol{A}_{r,p,q}(\boldsymbol{M}) :=
        \begin{cases}
            \int_0^T |\dot{\boldsymbol{M}}|^r_{\mathcal{W}_{p,q}}(t) \,\d t \quad & \text{ if }\boldsymbol{M} \in AC_T(\PP_p(\PP_q(X)))
            \\
            +\infty \ & \text{ otherwise}
        \end{cases}
    \end{equation}
    In the case $p=q=r$, we simply denote $\boldsymbol{A}_p = \boldsymbol{A}_{r,p,q}$.
\end{df}
In this paper, we will always deal with the case $p=q=r$.
A fundamental theorem for our analysis is a lifting result due to \cite{lisini2007characterization}.

\begin{teorema}\label{lisini lifting}
    Let $(X,d)$ be a complete and separable metric space and $p\in (1,+\infty)$. Let $\boldsymbol{\mu}=(\mu_t)_{t\in [0,T]} \in AC^p_T(\PP_p(X))$. Then, there exists a lifting $\lambda\in \PP(C_T(X))$ such that 
    \begin{enumerate}
        \item $(e_t)_{\sharp}\lambda = \mu_t$ for any $t\in [0,T]$, where $e_t (\boldsymbol{x}) = \boldsymbol{x}(t)$ for any $\boldsymbol{x}\in C_T(X)$;
        \item $\lambda$ is concentrated over $AC_T^p(X)$ and
        \begin{equation}\label{minimal lifting}
            \int a_p(\boldsymbol{x}) \,\d\lambda (\boldsymbol{x}) = \mathcal{A}_p(\boldsymbol{\mu}) < +\infty.
        \end{equation}
    \end{enumerate}
    On the other hand, for any $p\in [1,+\infty)$, given $\lambda\in \PP(C_T(X))$ concentrated over absolutely continuous curves, with $(e_0)_\sharp\lambda\in \PP_p(X)$, the curve $\boldsymbol{\mu}:= ((e_t)_\sharp\lambda)_{t\in [0,T]}$ belongs to $AC^p_T(\PP_p(X))$ and it satisfies
    \begin{equation}\label{ineq energy}
        |\dot{\boldsymbol{\mu}}|_{W_p}^p (t) \leq \int |\dot{\boldsymbol{x}}|^p(t) \,\d\lambda(\boldsymbol{x})
    \end{equation}
    for $\mathcal{L}^1$-a.e. $t\in (0,T)$.
\end{teorema}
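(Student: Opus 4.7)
The plan is to establish the easier converse direction first, then construct the lifting by a discretization-plus-compactness argument, following the strategy of \cite{lisini2007characterization}.

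\emph{Converse direction.} Given $\lambda \in \PP(C_T(X))$ concentrated on $AC_T(X)$ with $(e_0)_\sharp \lambda \in \PP_p(X)$, for every $s,t \in [0,T]$ the plan $\pi_{s,t} := (e_s,e_t)_\sharp \lambda$ belongs to $\Gamma(\mu_s,\mu_t)$, so
\[
W_p^p(\mu_s,\mu_t) \le \int d^p(\boldsymbol{x}(s),\boldsymbol{x}(t))\,\d\lambda(\boldsymbol{x}).
\]
Combining $d(\boldsymbol{x}(s),\boldsymbol{x}(t)) \le \int_s^t |\dot{\boldsymbol{x}}|(r)\,\d r$ with Jensen's inequality in $L^p(\d r)$, dividing by $(t-s)^p$, and passing to Lebesgue points, I obtain the pointwise bound \eqref{ineq energy}. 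Integrating this on $(0,t)$ in combination with $(e_0)_\sharp \lambda \in \PP_p(X)$ propagates finite $p$-moments of $\mu_t$ to every $t$, so $\boldsymbol{\mu} \in AC^p_T(\PP_p(X))$.

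\emph{Forward direction, discretization.} Set $g(t) := |\dot{\boldsymbol\mu}|_{W_p}(t) \in L^p(0,T)$. For each $n \in \N$ fix a partition $0 = t^n_0 < \cdots < t^n_{N_n} = T$ with vanishing mesh, select optimal couplings $\pi^n_i \in \Gamma(\mu_{t^n_i}, \mu_{t^n_{i+1}})$ realizing $W_p^p(\mu_{t^n_i},\mu_{t^n_{i+1}})$, and iterate the gluing lemma to construct $\alpha^n \in \PP(X^{N_n+1})$ whose consecutive two-marginals coincide with the $\pi^n_i$. Since $X$ may fail to be geodesic, one cannot directly interpolate the sampled points inside $X$; instead I introduce the strictly increasing reparameterization $\rho_n:[0,T]\to[0,T_n]$ with $\rho_n'(t) \approx 1 + g_n^p(t)$, where $g_n$ is a piecewise constant approximation of $g$ adapted to the partition, so that an isometric Kuratowski embedding $X \hookrightarrow \ell^\infty$ permits piecewise-affine interpolation of the sampled tuples in the stretched time; pulling back by $\rho_n$ produces a law $\lambda^n \in \PP(C_T(X))$ (after an argument that the interpolated curves remain sufficiently close to $X$ in the limit).

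\emph{Tightness, limit, and optimality.} The telescoping of optimal costs together with Hölder yields, for every $0 \le s < t \le T$,
\[
\int d^p(\boldsymbol{x}(s),\boldsymbol{x}(t))\,\d\lambda^n(\boldsymbol{x}) \le (t-s)^{p-1}\int_s^t g^p(r)\,\d r + o_n(1),
\]
which gives uniform $p$-equi-absolute continuity in average and, via an Arzel\`a--Ascoli/Prokhorov criterion on $C_T(X)$, tightness of $\{\lambda^n\}$. Up to a subsequence, $\lambda^n$ converges narrowly to some $\lambda$. The marginal identities $(e_{t^n_i})_\sharp \lambda^n = \mu_{t^n_i}$ pass to the limit on a dense set and extend to every $t$ by continuity of $t\mapsto \mu_t$ in $W_p$. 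Lower semicontinuity of $a_p$ under narrow convergence (obtained by writing $a_p$ as a supremum of sums of difference quotients) gives $\int a_p\,\d\lambda \le \liminf_n \int a_p\,\d\lambda^n \le \mathcal{A}_p(\boldsymbol\mu)$, while the converse direction applied to $\lambda$ supplies the reverse inequality, yielding equality \eqref{minimal lifting}. The main obstacle is precisely the absence of intrinsic interpolation in $X$: the reparameterization trick, which absorbs the nonuniform speed of $\boldsymbol\mu$ into the time variable before embedding, together with careful bookkeeping of the telescoped discrete optimal costs, is what makes the tightness and the sharp action upper bound simultaneously attainable.
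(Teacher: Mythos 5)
This theorem is Lisini's lifting result; the paper does not prove it but states it as a known black box, citing \cite{lisini2007characterization}. Your sketch reconstructs the standard Lisini strategy (discretize, glue optimal plans, embed, interpolate, pass to the limit), and the converse direction is essentially complete, so there is no clash of approach to compare. That said, there are a few points where your forward direction is imprecise or carries unnecessary machinery.

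First, in the forward direction you write that the interpolation "produces a law $\lambda^n\in\PP(C_T(X))$": it does not. The piecewise-affine interpolations live in the ambient Banach space, not in $X$, so $\lambda^n$ is concentrated on $C_T(B)$ for a separable Banach subspace $B\subset\ell^\infty$ containing the closed convex hull of the image of $X$ (one must pass to a separable $B$, since $\ell^\infty$ itself is not Polish and Prokhorov would not apply). Only the narrow limit $\lambda$ is concentrated on $C_T(X)$, and this requires an explicit argument: the marginal identities $(e_t)_\sharp\lambda=\mu_t$ force $\lambda$-a.e.\ curve into $X$ at a countable dense set of times, and then completeness of $X$ (hence closedness of $X$ in $B$) together with continuity of the curve upgrades this to all $t$. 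You gesture at this in a parenthetical but present it as if $\lambda^n$ itself were already in $\PP(C_T(X))$.

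Second, the reparameterization with $\rho_n'\approx 1+g_n^p$ appears to be superfluous for $p>1$. Telescoping the discrete optimal costs and H\"older already give, with no time change,
\begin{equation*}
\int a_p\,\d\lambda^n=\sum_i (t^n_{i+1}-t^n_i)^{1-p}\,W_p^p(\mu_{t^n_i},\mu_{t^n_{i+1}})
\le \sum_i \int_{t^n_i}^{t^n_{i+1}} g^p(r)\,\d r=\mathcal{A}_p(\boldsymbol\mu),
\end{equation*}
and this same bound, combined with uniform integrability of $g^p\in L^1$ (automatic since $g\in L^p$, $p>1$) and tightness of the marginals, already yields tightness of $\{\lambda^n\}$. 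The time-stretching device is the kind of tool needed when $p=1$, which the theorem explicitly excludes for the forward direction; including it without explaining what it repairs is a distraction.

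Finally, in the converse direction the conclusion $\boldsymbol\mu\in AC_T^p(\PP_p(X))$ and the propagation of $p$-moments implicitly require $\int a_p\,\d\lambda<+\infty$, which the theorem statement does not spell out; you should state that you are reading $\lambda\in\PP_{a_p}(C_T(X))$ into the hypotheses, otherwise \eqref{ineq energy} holds trivially with $+\infty$ on the right and the membership claim fails.
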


\begin{oss}\label{remark minimal}  
    Putting together the formulas \eqref{minimal lifting} and \eqref{ineq energy}, we can see that a lifting $\lambda$ as in the first part of the Theorem, is of minimal energy among all the possible lifting satisfying only the first condition. 
\end{oss}

\subsubsection{Continuity equation over \texorpdfstring{$\R^d$}{}}\label{subsub CE prob over R^d}

Given a Borel time-dependent vector field $v:[0,T]\times \R^d \to \R^d$, we say that a curve of measure $\boldsymbol{\mu}=(\mu_t)_{t\in [0,T]} \in C_T(\PP(\R^d))$ solves $\partial_t\mu_t +\operatorname{div}(v_t\mu_t)=0$ if $\int_0^T\int |v(t,x)|\,\d\mu_t(x) \,\d t <+\infty$ and for all $\psi\in C_c^{1}((0,T)\times \R^d)$ it holds
\begin{equation}\label{fin dim ce}
    \int_0^T \int_{\R^d} \partial_t\psi(t,x) + \nabla\psi(t,x)\cdot v(t,x) \,\d\mu_t(x) \,\d t = 0.
\end{equation}
Thanks to \cite[Lemma 8.1.2]{ambrosio2005gradient}, it is not restrictive to assume that $t\mapsto \mu_t$ is continuous.

If the vector field is smooth enough to have that the ordinary differential equation given by 
\begin{equation}\label{eq: ODE system prel}
    \begin{cases}
        \dot{\boldsymbol{x}}(t) = v(t,\boldsymbol{x}(t))
        \\
        \boldsymbol{x}(0) = \bar{x}
    \end{cases}
\end{equation}
admits a unique solution $[0,T] \ni t\mapsto \boldsymbol{X}_t(\bar{x})$ for any $\bar{x}\in \R^d$, it holds that the unique solution of the continuity equation starting from $\mu_0\in \PP(\R^d)$ is 
\begin{equation}
    \mu_t := (\boldsymbol{X}_t)_\sharp\mu_0.
\end{equation}

A similar scheme is still valid in a non-smooth setting. The so-called \textit{finite dimensional superposition principle}, highlights it in a completely non-smooth setting (see e.g. \cite{ambcri06}).

\begin{teorema}[Finite dimensional superposition principle]\label{finite dim superposition principle}
    Let $v:[0,T]\times \R^d \to \R^d$ be a Borel vector field and $\boldsymbol{\mu}\in C_T(\PP(\R^d))$ be satisfying \eqref{fin dim ce} and $\int_0^T \int |v| \,\d\mu_t \,\d t<+\infty$. Then there exists a \textit{superposition solution} $\lambda\in \PP(C_T(\R^d))$ satisfying
    \begin{enumerate}
        \item $(e_t)_\sharp\lambda= \mu_t$ for all $t\in [0,T]$;
        \item $\lambda(AC_T(\R^d)) = 1$ and $\lambda$-a.e. $\boldsymbol{x}\in AC_T(\R^d)$ solves the integral formulation of \eqref{eq: ODE system prel}
        \[\boldsymbol{x}(t) = \boldsymbol{x}(0) + \int_0^t v(s,\boldsymbol{x}(s)) ds \quad \forall t\in [0,T].\]
    \end{enumerate}
    Conversely, given $\lambda\in \PP(C_T(\R^d))$ satisfying (2) and 
    \[ \int \int_0^T |v(t,\boldsymbol{x}(t))| \,\d t \,\d\lambda(\boldsymbol{x})<+\infty,\] 
    then the curve of measures $\mu_t^{\lambda}:= (e_t)_{\sharp}\lambda$ solves the continuity equation.
\end{teorema}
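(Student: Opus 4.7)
Given $\lambda$ concentrated on absolutely continuous solutions of the integral ODE and satisfying the integrability bound, for any $\psi \in C_c^1((0,T)\times \R^d)$ and $\lambda$-a.e.\ $\boldsymbol{x}$ the scalar map $t\mapsto \psi(t,\boldsymbol{x}(t))$ is absolutely continuous with derivative $\partial_t\psi(t,\boldsymbol{x}(t))+\nabla\psi(t,\boldsymbol{x}(t))\cdot v(t,\boldsymbol{x}(t))$. The compact support of $\psi$ in $(0,T)\times\R^d$ kills the boundary terms in $t$, so integrating from $0$ to $T$ and then in $d\lambda$ (Fubini is justified by the integrability of $v$ against $d\lambda\otimes dt$, inherited from $\mu_t^{\lambda}$ via $(e_t)_\sharp\lambda=\mu_t^\lambda$) yields \eqref{fin dim ce} for $\mu_t^\lambda$.

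\textbf{Direct direction, regularization.} I would follow the scheme of \cite[Theorem 8.2.1]{ambrosio2005gradient}. Fix a standard mollifier $\rho^\varepsilon$ on $\R^d$, and set
\[
\mu_t^\varepsilon := \mu_t * \rho^\varepsilon, \qquad v_t^\varepsilon := \frac{(v_t\mu_t)*\rho^\varepsilon}{\mu_t^\varepsilon}.
\]
A direct computation shows that $(\mu^\varepsilon,v^\varepsilon)$ still solves the continuity equation, while Jensen's inequality gives the uniform estimate
\[
\int_0^T\!\!\int_{\R^d} |v_t^\varepsilon(x)|\,d\mu_t^\varepsilon(x)\,dt \le \int_0^T\!\!\int_{\R^d} |v_t(x)|\,d\mu_t(x)\,dt =: V < +\infty.
\]
Since $v^\varepsilon$ is locally Lipschitz in $x$ (with locally integrable time-dependence), Cauchy--Lipschitz produces a flow $X^\varepsilon:[0,T]\times\R^d\to\R^d$. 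Define $\Gamma^\varepsilon:\R^d\to C_T(\R^d)$, $\Gamma^\varepsilon(\bar x)(t):=X_t^\varepsilon(\bar x)$, and $\lambda^\varepsilon := \Gamma^\varepsilon_\sharp \mu_0^\varepsilon$. By construction $(e_t)_\sharp\lambda^\varepsilon=\mu_t^\varepsilon$ and $\lambda^\varepsilon$ is concentrated on $AC_T(\R^d)$-curves that solve the integral ODE for $v^\varepsilon$ exactly.

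\textbf{Tightness and narrow extraction.} The family $\{\mu_0^\varepsilon\}$ is tight (it converges narrowly to $\mu_0$), and the uniform bound $\int a_1(\boldsymbol{x})\,d\lambda^\varepsilon \le V$ on the length functional gives equi-modulus-of-continuity estimates for $\lambda^\varepsilon$-typical curves. A standard Ascoli--Arzel\`a-type argument (as in \cite[Lemma 5.2.2]{ambrosio2005gradient}) turns this into tightness of $\{\lambda^\varepsilon\}$ in $\PP(C_T(\R^d))$. Extracting a subsequence, we obtain a narrow limit $\lambda$; continuity of $e_t$ gives $(e_t)_\sharp\lambda=\mu_t$ for every $t\in[0,T]$, and lower semicontinuity of $a_1$ under narrow convergence combined with the bound $\int a_1\,d\lambda^\varepsilon\le V$ forces $\lambda$ to be concentrated on $AC_T(\R^d)$.

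\textbf{Identification of the limit (main obstacle).} The delicate step is proving that $\lambda$-a.e.\ curve solves the integral ODE for the original $v$, which is only Borel. Consider, for each $t\in[0,T]$,
\[
\Phi_t(\lambda') := \int_{C_T(\R^d)} \Big| \boldsymbol{x}(t)-\boldsymbol{x}(0) - \int_0^t v(s,\boldsymbol{x}(s))\,ds \Big|\wedge 1\,\,d\lambda'(\boldsymbol{x}).
\]
By construction $\Phi_t(\lambda^\varepsilon) = \int|\int_0^t (v^\varepsilon - v)(s,\boldsymbol{x}(s))\,ds|\wedge 1\,d\lambda^\varepsilon$, which by $(e_s)_\sharp\lambda^\varepsilon=\mu_s^\varepsilon$ and Fubini is controlled by $\int_0^t\!\int |v^\varepsilon-v|\,d\mu_s^\varepsilon\,ds$; the standard commutator/mollification estimate yields $\Phi_t(\lambda^\varepsilon)\to 0$ as $\varepsilon\to 0$. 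To pass to the limit in $\Phi_t$ one approximates $v$ in $L^1(\mu_s\,ds)$ by continuous compactly supported vector fields $w^n$ and uses a triangle inequality: on one hand $|\int_0^t w^n(s,\boldsymbol{x}(s))\,ds|\wedge 1$ is a bounded continuous functional of $\boldsymbol{x}$, passing to the limit under narrow convergence; on the other, the $L^1(\mu_s^\varepsilon ds)$ and $L^1(\mu_s ds)$ errors between $v$ and $w^n$ are uniformly small in $\varepsilon$ thanks to narrow convergence of $\mu_s^\varepsilon ds$ to $\mu_s ds$ and a truncation argument (cf.\ \cite[Lemma 8.1.10]{ambrosio2005gradient}). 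One concludes $\Phi_t(\lambda)=0$ for every $t$, hence $\lambda$-a.e.\ $\boldsymbol{x}$ solves the integral ODE for $v$, completing the proof.
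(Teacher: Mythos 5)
The paper does not prove Theorem \ref{finite dim superposition principle}: it is quoted as a known result with the pointer ``(see e.g.\ \cite{ambcri06})'', and no proof is given in the text. So there is no paper proof to compare against; I am assessing your argument on its own.

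Your converse direction is correct and is the standard computation. Your forward direction follows the mollification--flow--compactness template of \cite[Theorem~8.2.1]{ambrosio2005gradient}, which is the right blueprint, but that theorem assumes $\int_0^T\!\int |v_t|^p\,d\mu_t\,dt<\infty$ for some $p>1$, and the adaptation to the $L^1$ hypothesis of the present statement is not a matter of ``setting $p=1$''. The gap is in your tightness step. From $\int a_1(\boldsymbol{x})\,d\lambda^\varepsilon\le V$ and a moment/tightness bound on $\mu_0^\varepsilon$ you cannot conclude tightness of $\{\lambda^\varepsilon\}$ in $\PP(C_T(\R^d))$: the sublevel sets of $a_1$ are \emph{not} compact in $C_T(\R^d)$. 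An $L^1$ bound on $|\dot{\boldsymbol{x}}|$ does not produce a modulus of continuity (one can fit a fixed jump into an arbitrarily short time interval while keeping the total length $\le V$), so the claim that the bound ``gives equi-modulus-of-continuity estimates for $\lambda^\varepsilon$-typical curves'' is simply false, and the subsequent appeal to \cite[Lemma~5.2.2]{ambrosio2005gradient} does not apply as stated (that lemma needs a coercive functional with compact sublevels, which $a_1$ is not).

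The missing ingredient is the De la Vallée Poussin trick, which is precisely how Ambrosio treats the $L^1$ case. Since $\int_0^T\!\int |v_t|\,d\mu_t\,dt<\infty$, one can choose a convex, superlinear, nondecreasing $\Theta:[0,\infty)\to[0,\infty)$ such that
\[
\int_0^T\!\!\int_{\R^d}\Theta\bigl(|v_t(x)|\bigr)\,d\mu_t(x)\,dt<\infty .
\]
Jensen's inequality (the same computation you used for $|\cdot|$) then gives the \emph{uniform in $\varepsilon$} bound
\[
\int_{C_T(\R^d)}\Bigl(\int_0^T\Theta\bigl(|\dot{\boldsymbol{x}}(t)|\bigr)\,dt\Bigr)\,d\lambda^\varepsilon(\boldsymbol{x})
=\int_0^T\!\!\int_{\R^d}\Theta\bigl(|v_t^\varepsilon(x)|\bigr)\,d\mu_t^\varepsilon(x)\,dt
\le \int_0^T\!\!\int_{\R^d}\Theta\bigl(|v_t(x)|\bigr)\,d\mu_t(x)\,dt ,
\]
and, unlike $a_1$, the functional $\boldsymbol{x}\mapsto |\boldsymbol{x}(0)|+\int_0^T\Theta(|\dot{\boldsymbol{x}}|)\,dt$ \emph{does} have compact sublevels in $C_T(\R^d)$ (superlinearity of $\Theta$ gives equicontinuity). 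This is what makes $\{\lambda^\varepsilon\}$ tight and lets your extraction, lower-semicontinuity, and limit-identification steps go through; the same superlinear bound also helps in the identification step by giving uniform integrability of $|v^\varepsilon|\mu^\varepsilon$. Your outline of the identification argument (approximation of $v$ by $C_c$ fields, triangle inequality, narrow convergence of $\mu_t^\varepsilon\,dt$) is otherwise on the right track. A minor additional point you should not leave implicit: local Lipschitz regularity of $v^\varepsilon$ in $x$ does not by itself yield a globally defined flow on $[0,T]$; one has to argue that for $\mu_0^\varepsilon$-a.e.\ starting point the maximal solution does not blow up before time $T$ (or modify the regularization, e.g.\ by also convolving against a Gaussian and truncating), as in the cited AGS proof.
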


The structure of this result is very similar to the one of the lifting described in Theorem \ref{lisini lifting}. The main difference is that the first result cares in selecting a minimal energy lifting, while the second takes in account the leading velocity vector field of the evolution. This finite dimensional superposition principle will play a fundamental role in the proof of Theorem \ref{main theorem}.

\subsection{Recap}\label{recap} 
Given a Polish space $(Y,\tau)$, we fixed a Polish topology on the following spaces:
\begin{itemize}
    \item[(i)] $C_T(Y)$, see \ref{subsub space of cont curves};
    \item[(ii)] $\PP(Y)$, $\mathcal{M}_+(Y)$ and $\mathcal{M}(Y,\R^d)$, see \ref{subsub space of measures};
    \item[(iii)] $C_T(\PP(Y))$ and $C_T(\PP(\PP(Y)))$, see \ref{subsub cont curve in prob space};
    \item[(iv)] $\PP(C_T(Y))$, see \ref{subsub top over P(C_T)}.
\end{itemize}

Dealing with a specific metric structure $(X,d)$, so that we care about the distance itself and not just its induced topology, we defined some subspaces of the spaces mentioned above. In some cases, they can be seen using a more general notation. 

\begin{df}\label{finite energies prob}
    Let $Y$ be a Polish space and $F:Y\to[0,+\infty] $ a Borel function. Define the set 
    \begin{equation}
        \mathcal{P}_F(Y):= \left\{\mu\in \PP(Y) \ : \ \int_Y F(y) \,\d\mu(y) <+\infty\right\}.
    \end{equation}
\end{df}

Notice that, thanks to Lemma \ref{measurability int g d mu}, $\PP_F(Y)$ is a Borel subset of $\PP(Y)$. When $F$ is lower semicontinuous, we can also say that $\PP_F(Y)$ is an $F_{\sigma}$ set, i.e. it is union of closed sets.

Then, given a metric space $(X,d)$, we have:
\begin{itemize}
    \item[(v)] for any $p\geq 1$, $\PP_p(X) = \PP_F(X)$, with $F(x) := d^{p}(x,\bar{x})$ for some $\bar{x}\in X$. Its natural metric is $W_p$, see \ref{OT and Wass};
    \item[(vi)] for any $p,q\geq 1$, $\PP_p(\PP_q(X)) = \PP_F(\PP(X))\subset \PP(\PP(X))$, with $F(\mu) = W_q^p(\mu,\delta_{\bar{x}})$, for some $\bar{x}\in X$. Its natural metric is $\mathcal{W}_{p,q}$, see \ref{OT and Wass};
    \item[(vii)] as in Remark \ref{remark minimal}, an object $\lambda \in \PP(C_T(X))$ satisfying 
    \[\int \int_0^T |\dot{\boldsymbol{x}}|^p(t) \,\d t \,\d\lambda(\boldsymbol{x})<+\infty\]
    can be simply identified by writing $\lambda \in \PP_{a_p}(C_T(X))$, where $a_p$ is the finite energy of a continuous curve, see Definition \ref{ac energies}.
\end{itemize}

Regarding the sets of absolutely continuous curves, notice that thanks to Lemma \ref{meas of AC^p} and Remark \ref{equiv of Borel for polish}, we have: 
\begin{itemize}
    \item[(viii)] $AC_T^p(X) \subset C_T(X)$ is a Borel subset, for $p>1$;
    \item[(ix)] $AC_T^p(\PP_q(X))\subset C_T(\PP(X))$ is a Borel subset, for $q\geq 1$ and $p>1$;
    \item[(x)] $AC_T^r(\PP_p(\PP_q(X))) \subset C_T(\PP(\PP(X)))$ is a Borel subset, for $p,q\geq 1$ and $r>1$.
\end{itemize}
\section{Nested lifting for an absoltuely continuous curve of random measures}\label{metric structure}

Let $(X,d)$ be a complete and separable metric space, $\overline{x} \in X$ and $p\geq 1$. The goal of this section is to study the structure of absolutely continuous curves of random measures valued in $\PP_p(\PP_p(X))$, i.e. $\boldsymbol{M} = (M_t)_{t\in[0,T]} \in AC_T^p(\PP_p(\PP_p(X)))$. Referring to the previous section for all the topological and metric notions, let us fix the notation that will be used for the rest of this section:
\begin{itemize}
    \item a generic element of $\PP_p(X)$ will be indicated as $\mu$;
    \item a generic element of $\PP_p(\PP_p(X))$ will be indicated as $M$;
    \item a generic element of $C_T(X)$ will be indicated as $\boldsymbol{x}=(x_t)_{t\in [0,T]}$;
    \item a generic element of $AC_T^p(\PP_p(X))$ will be indicated as $\boldsymbol{\mu}:=(\mu_t)_{t\in[0,T]}$; 
    \item a generic element of $AC_T^p(\PP_p(\PP_p(X)))$ will be indicated as $\boldsymbol{M}:=(M_t)_{t\in[0,T]}$; 
    \item following the notation of Definition \ref{finite energies prob}, a generic element of $\PP_{\overline{a}_p}\big(C_T(X)\big)$ will be indicated as $\lambda$, where
    \begin{equation}\label{eq: ac energy with initial moment}
        \overline{a}_p(\boldsymbol{x}):= d^p(\overline{x},x_0) +  a_p(\boldsymbol{x})
    \end{equation}
    for a fixed $\overline{x}\in X$.
    Note that $\PP_{\overline{a}_p}\big(C_T(X)\big) \subset \PP_{a_p}(C_T(X))$, because in addition we are asking that the marginal at time $t=0$ (and then every marginal) is in $\PP_p(X)$. We also introduce the notation 
    \begin{equation}\label{minimal optimal lifting}
        \lambda \in \PP_{\overline{a}_p}^{\operatorname{min}}(C_T(X)) \subset \PP_{\overline{a}_p}(C_T(X))
    \end{equation}
    for the liftings that, in addition, satisfy the minimality condition \eqref{minimal lifting} too, i.e. 
    \[\PP_{\overline{a}_p}^{\operatorname{min}}(C_T(X)):= \left\{\lambda 
    \in \PP_{\overline{a}_p}(C_T(X)) \ : \ \int a_p(\boldsymbol{x}) d\lambda(\boldsymbol{x}) = \int_0^T|\dot{\boldsymbol{\lambda}}|^p(t) dt\right\},\]
    where $\boldsymbol{\lambda} = \big( (e_t)_\sharp \lambda\big)_{t\in[0,T]}$.
\end{itemize}

Using again Definition \ref{finite energies prob}, we also introduce two sets of probability measures, that are, respectively, Borel subsets of $\PP(C_T(\PP(X)))$ and $\PP(\PP(C_T(X)))$:
\begin{itemize}
    \item $\Lambda \in \PP_{\bar{\mathcal{A}}_p}\big(
    C_T(\PP(X))\big)$, where 
    \begin{equation}\label{eq: ac energy on measures with initial moment}
        \bar{\mathcal{A}}_p(\boldsymbol{\mu}) := W_p^p(\mu_0, \delta_{\overline{x}}) + \mathcal{A}_p(\boldsymbol{\mu}).
    \end{equation}
    In particular, each $\Lambda \in \PP_{\bar{\mathcal{A}}_p}\big(C_T(\PP(X))\big)$ is concentrated on $AC^p_T(\PP_p(X))$ and $(\mathfrak{e}_t)_\sharp \Lambda \in \PP_p(\PP_p(X))$ for all $t\in[0,T]$;
    \item $\mathfrak{L}\in \PP_{\overline{\mathfrak{A}}_p}\big(\PP(C_T(X))\big)$ where 
    \begin{equation}\label{eq: def of mathfrak A_p}
        \overline{\mathfrak{A}}_p (\lambda):= \int \overline{a}_p d\lambda =  \int d^p(\overline{x},x_0) d\lambda(\boldsymbol{x}) + \int a_p(\boldsymbol{x})d\lambda(\boldsymbol{x}).
    \end{equation}
    Notice that each $\mathfrak{L}\in \PP_{\overline{\mathfrak{A}}_p}\big(\PP(C_T(X))\big)$ is concentrated on the set $\PP_{\overline{a}_p}\big(C_T(X)\big)$.
\end{itemize}

In this section, we prove Theorem \ref{metric superposition}, that links $\boldsymbol{M} \in AC_T^p(\PP_p(\PP_p(X)))$ with $\Lambda \in \PP_{\overline{\mathcal{A}}_p}(C_T(\PP(X)))$ and $\mathfrak{L}\in \PP_{\overline{\mathfrak{A}}_p}\big(\PP(C_T(X))\big)$. To better understand the strategy of the proof, in the next subsection we expose it in an easier scenario.

\subsection{Couplings}\label{couplings}

We discuss briefly how, given $M_0,M_1 \in \PP_p(\PP_p(X))$, we can associate to them:
\begin{itemize}
    \item a coupling $\Pi \in \PP_p\big(\PP_p(X)\times\PP_p(X)\big) = \PP_F(\PP(X)\times \PP(X))$, with 
    \[F(\mu,\nu) = W_{p}^p(\mu,\delta_{\overline{x}}) + W_{p}^p(\nu,\delta_{\overline{x}}),\]
    i.e. it is the $p$-Wasserstein space built over the product metric space $\PP_p(X)\times\PP_p(X)$. We say that $\Pi\in \Gamma(M_0,M_1)$ if its marginals are $M_0$ and $M_1$;
    \item a random coupling, i.e. a probability measure $\mathfrak{P}\in\PP_p\big(\PP_p(X\times X)\big) = \PP_F\big(\PP(X\times X)\big)$, with \[F(\pi) := \int \Big(d^p(x,\overline{x}) + d^p(y,\overline{x}) \Big) d\pi(x,y).\]
    We say that $\mathfrak{P}\in \mathrm{R}\Gamma(M_0,M_1)$ if $P^1_\sharp \mathfrak{P} = M_0$ and $P^2_\sharp \mathfrak{P} = M_1$, where 
    \begin{equation}
        P^i:\PP(X\times X) \to \PP(X), \quad P^i(\pi) = p^i_\sharp \pi.
    \end{equation}
\end{itemize}

The plan $\Pi \in \PP_p\big(\PP_p(X)\times\PP_p(X)\big)$ is simply selected as a $\mathcal{W}_p$-optimal plan between $M_0$ and $M_1$ , i.e. such that its marginals are $M_0$ and $M_1$, and it realizes the distance $\mathcal{W}_p$. 

Given such $\Pi \in \PP_p\big(\PP_p(X)\times\PP_p(X)\big)$, we build $\mathfrak{P}\in \PP_p\big(\PP_p(X \times X)\big)$ by defining a map $Q:\PP_p(X)\times \PP_p(X) \to \PP_p(X\times X)$ that for any pair $\mu,\nu\in \PP_p(X)$ gives (in a measurable way) an optimal plan $Q(\mu,\nu)\in \Gamma_0(\mu,\nu)$. To define such $Q$, let 
\begin{equation}\label{eq: 3.6}
    \begin{aligned}
        P:\PP(X\times X) & \to \PP(X)\times \PP(X)
        \\
        \pi \quad & \mapsto (p^1,p^2)_\sharp \pi.
    \end{aligned}
\end{equation}
This map is continuous, so it is Borel.
Consider the space of optimal couplings 
\begin{equation}
    \mathcal{P}^{\operatorname{opt}}_p (X\times X) := \left\{\pi \in \PP_p(X\times X) \ : \ \int d^p(x,y) d\pi(x,y) = W_p^p(p^1_\sharp \pi,p^2_\sharp \pi)\right\}.
\end{equation}
Notice that the map $P: \mathcal{P}^{\operatorname{opt}}_p(X\times X) \to \PP_p(X) \times \PP_p(X)$ is Borel and surjective, since for any couple $(\mu,\nu) \in \PP_p(X) \times \PP_p(X)$ there exists an optimal coupling. Then thanks to the measurable selection theorem \ref{measurable selection}, there exists a (Souslin-Borel measurable, see Appendix \ref{appendix souslin}) right inverse $P^{-1}$. Then, let $Q:= P^{-1}$ and define 
\begin{equation}
\begin{aligned}
    Q_\sharp : \PP_p\big(\PP_p(X) \times \PP_p(X)\big) & \to \PP_p\big(\PP_p(X\times X)\big)
    \\
    \Pi \quad & \mapsto \quad \mathfrak{P}:= Q_\sharp  \Pi
\end{aligned}
\end{equation}

By construction, each $\mathfrak{P}$ obtained in this way is concentrated on the set of optimal couplings $\PP^{\operatorname{opt}}_p(X\times X)$. Finally, we have that $\mathfrak{P}\in \mathrm{R}\Gamma(M_0,M_1)$. Summing up, we have this result.

\begin{prop}\label{prop: coupling and rand couplings}
    Let $M_0,M_1\in\PP_p(\PP_p(X))$. Then, there exist $\Pi \in \Gamma(M_0,M_1)$ and $\mathfrak{P}\in \mathrm{R}\Gamma(M_0,M_1)$ such that 
    \begin{equation}\label{eq: opt}
        \mathcal{W}_p^p(M_0,M_1) = \int_{\PP(X)\times \PP(X)} \hspace{-0.05cm}W_p^p(\mu,\nu) d\Pi(\mu,\nu) = \int_{\PP(X\times X)}\int_{X\times X} \hspace{-0.05cm}d^p(x,y)\pi(x,y)d\mathfrak{P}(\pi).
    \end{equation}
    Whenever \eqref{eq: opt} is satisfied, we say that $\Pi\in \Gamma_0(M_0,M_1)$ and $\mathfrak{P}\in \mathrm{R}\Gamma_0(M_0,M_1)$.
\end{prop}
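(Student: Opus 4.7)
My plan is to construct $\Pi$ as an optimal plan for the static Kantorovich problem on $(\PP_p(X), W_p)$ and to lift it to a random coupling $\mathfrak{P}$ via a measurable selection of optimal couplings between marginals, exactly along the lines sketched in the paragraph preceding the statement. For the first identity in \eqref{eq: opt}, I invoke standard existence for the Kantorovich problem on the Polish metric space $(\PP_p(X), W_p)$: $\Gamma(M_0, M_1)$ is narrowly compact (since $M_0, M_1$ are tight) and $W_p^p$ is lower semicontinuous on $\PP_p(X)\times \PP_p(X)$, so $\Pi \mapsto \int W_p^p\,\d\Pi$ attains its minimum on $\Gamma(M_0, M_1)$, producing $\Pi \in \Gamma_0(M_0, M_1)$ with $\mathcal{W}_p^p(M_0,M_1) = \int W_p^p\,\d\Pi$.

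Next, I upgrade $\Pi$ to a random coupling following the construction around \eqref{eq: 3.6}. The map $P$ from \eqref{eq: 3.6} is continuous, hence Borel, and its restriction to the Borel subset $\PP^{\operatorname{opt}}_p(X\times X)\subset \PP(X\times X)$ remains surjective onto $\PP_p(X)\times \PP_p(X)$, since every pair $(\mu,\nu)$ admits at least one $W_p$-optimal coupling. Borel measurability of $\PP^{\operatorname{opt}}_p(X\times X)$ follows from joint Borel measurability of $\pi\mapsto \int d^p\,\d\pi$ and of $(\sigma,\tau)\mapsto W_p^p(\sigma,\tau)$, available from Appendix \ref{app: measurability}. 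Applying the measurable selection Theorem \ref{measurable selection} to $P$ restricted to $\PP^{\operatorname{opt}}_p(X\times X)$ yields a Souslin-Borel right inverse $Q:\PP_p(X)\times \PP_p(X) \to \PP^{\operatorname{opt}}_p(X\times X)$ with $P\circ Q = \mathrm{id}$. Setting $\mathfrak{P} := Q_\sharp \Pi$ and observing that $P^i = \mathrm{pr}^i\circ P$, where $\mathrm{pr}^i$ denotes the $i$-th coordinate projection on $\PP(X)\times \PP(X)$, I obtain $P^i_\sharp\mathfrak{P} = (\mathrm{pr}^i\circ P\circ Q)_\sharp\Pi = \mathrm{pr}^i_\sharp\Pi$, which coincides with $M_0$ for $i=1$ and $M_1$ for $i=2$; hence $\mathfrak{P}\in \mathrm{R}\Gamma(M_0,M_1)$.

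Finally, the second identity in \eqref{eq: opt} follows from the change-of-variables formula \eqref{property of push for of push for} together with the concentration of $\mathfrak{P}$ on optimal couplings:
\[
\int_{\PP(X\times X)}\int_{X\times X} d^p(x,y)\,\d\pi(x,y)\,\d\mathfrak{P}(\pi) = \int \Bigl(\int d^p\,\d Q(\mu,\nu)\Bigr)\,\d\Pi(\mu,\nu) = \int W_p^p(\mu,\nu)\,\d\Pi(\mu,\nu) = \mathcal{W}_p^p(M_0,M_1),
\]
where the middle equality uses $Q(\mu,\nu)\in \Gamma_0(\mu,\nu)$ for $\Pi$-a.e.\ $(\mu,\nu)$. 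The main technical obstacle is verifying Borelness of $\PP^{\operatorname{opt}}_p(X\times X)$ and the applicability of the measurable selection theorem to $P$ restricted to this set; both points rest on the general results on Souslin spaces and on measurability of push-forwards and of Wasserstein costs collected in Appendices \ref{appendix souslin} and \ref{app: measurability}. Once the selector $Q$ is in hand, the rest of the argument is a routine push-forward computation.
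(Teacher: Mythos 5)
Your proposal is correct and follows exactly the same route as the paper: optimality of $\Pi$ from narrow compactness and lower semicontinuity, the map $P$ from \eqref{eq: 3.6}, restriction to $\PP^{\operatorname{opt}}_p(X\times X)$, measurable selection (Theorem \ref{measurable selection}) to get the Souslin--Borel right inverse $Q$, and $\mathfrak{P}:=Q_\sharp\Pi$. You spell out the marginal check and the final change-of-variables identity in more detail than the paper does, but the argument is the same.
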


This result shows the strategy we will follow to prove our nested superposition principle (both the metric and the differential one) and will play an important role for characterizing the geodesics of $(\PP_p(\PP_p(X)),\mathcal{W}_p)$ for $p>1$ (see §\ref{subsec: geodesics}).

\subsection{From \texorpdfstring{$\operatorname{AC}(\PP(\PP))$}{} to \texorpdfstring{$\PP(\operatorname{AC}(\PP))$}{}}\label{from ac(pp) to P(ac(p))}

A similar strategy can be applied to the case of an absolutely continuous curve $\boldsymbol{M}\in AC_T^p(\PP_p(\PP_p(X)))$.

Notice that we can always associate to $\boldsymbol{M}$ a measure $\Lambda \in \PP_{\bar{\mathcal{A}}_p}(C_T(\PP(X)))$, using Theorem \ref{lisini lifting} with $(Y,d) = (\PP_p(X),W_p)$, here is the specific statement.

\begin{prop}\label{prop: acpp to pacp}
    For any curve $\boldsymbol{M}=(M_t)_{t\in[0,T]} \in AC^p\big(\PP_p(\PP_p(X))\big)$ there exists a lifting $\Lambda \in \mathcal{P}_{\bar{\mathcal{A}}_p}\big( C_T(\mathcal{P}_p(X))\big)$ such that
    \begin{equation}\label{mimimality property of Lambda}
        (\mathfrak{e}_t)_\sharp \Lambda = M_t \
    \text{ and }
    \ \int\mathcal{A}_p(\boldsymbol{\mu}) d\Lambda(\boldsymbol{\mu}) = \int_0^T |\dot{\boldsymbol{M}}|_{\mathcal{W}_p}^p(t) dt<+\infty,
    \end{equation}
    where $\mathfrak{e}_t(\boldsymbol{\mu})=\mu_t$.
\end{prop}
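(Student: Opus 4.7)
The plan is to apply Theorem \ref{lisini lifting} directly to the curve $\boldsymbol{M}$, viewed as an absolutely continuous curve in the Wasserstein space built over the complete and separable metric space $(\PP_p(X), W_p)$. Indeed, by construction of the Wasserstein on Wasserstein metric, one has $\mathcal{W}_p = W_{p,W_p}$, so $\PP_p(\PP_p(X))$ is literally the $p$-Wasserstein space over the ground metric space $(Y, d_Y) := (\PP_p(X), W_p)$, which is itself complete and separable. Hence the hypothesis of Theorem \ref{lisini lifting} applied with this choice of $(Y,d_Y)$ is satisfied by $\boldsymbol{M} \in AC^p_T(\PP_p(Y))$.

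The application yields a probability measure $\Lambda$ on $C_T(Y) = C_T(\PP_p(X))$ such that
\begin{enumerate}
    \item $(\mathfrak{e}_t)_\sharp \Lambda = M_t$ for every $t \in [0,T]$, where $\mathfrak{e}_t$ is the evaluation map at time $t$ on curves of measures;
    \item $\Lambda$ is concentrated on $AC^p_T(\PP_p(X))$ and
    \[
        \int \mathcal{A}_p(\boldsymbol{\mu})\, d\Lambda(\boldsymbol{\mu}) \;=\; \int_0^T |\dot{\boldsymbol{M}}|_{\mathcal{W}_p}^p(t)\, dt \;<\; +\infty,
    \]
\end{enumerate}
which is exactly the minimality identity \eqref{mimimality property of Lambda}.

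To conclude that $\Lambda \in \PP_{\bar{\mathcal{A}}_p}(C_T(\PP(X)))$, by \eqref{eq: ac energy on measures with initial moment} it remains to check that $\int W_p^p(\mu_0, \delta_{\bar{x}})\, d\Lambda(\boldsymbol{\mu}) < +\infty$. This is immediate from (1) at $t=0$: the push-forward identity $(\mathfrak{e}_0)_\sharp \Lambda = M_0$ combined with $M_0 \in \PP_p(\PP_p(X))$ yields
\[
    \int W_p^p(\mu_0, \delta_{\bar{x}})\, d\Lambda(\boldsymbol{\mu}) \;=\; \int_{\PP_p(X)} W_p^p(\mu, \delta_{\bar{x}})\, dM_0(\mu) \;<\; +\infty.
\]
Together with the finiteness of $\int \mathcal{A}_p\, d\Lambda$ from (2), this proves $\int \bar{\mathcal{A}}_p\, d\Lambda < +\infty$.

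There is essentially no obstacle here: the statement is a direct instantiation of Lisini's characterization in the metric space $(\PP_p(X), W_p)$, and the only care one has to take is in checking that the Borel/topological setup matches, i.e.\ that continuous (resp.\ absolutely continuous) curves for $W_p$ on the ground space give rise to measurable objects in $C_T(\PP(X))$. This is guaranteed by Remark \ref{equiv of Borel for polish} and by Lemma \ref{meas of AC^p} (items (ix)--(x) of the recap), so the image of Lisini's lifting, originally produced on $C_T(\PP_p(X))$, is a bona fide element of $\PP(C_T(\PP(X)))$.
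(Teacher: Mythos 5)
Your proposal is correct and is exactly the paper's approach: the paper states (in the sentence immediately preceding the proposition) that the result follows by applying Theorem \ref{lisini lifting} with ground space $(Y,d)=(\PP_p(X),W_p)$, which is precisely what you do, including the verification of the initial-moment condition needed to conclude $\Lambda\in\PP_{\bar{\mathcal{A}}_p}(C_T(\PP(X)))$.
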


As in the general case of Theorem \ref{lisini lifting}, the measure $\Lambda$ is possibly non-unique. Any possible selection $\Lambda$ will be indicated as $\operatorname{Lift}(\boldsymbol{M})$, i.e. $\operatorname{Lift}(\boldsymbol{M}) := \{\Lambda \in \PP_{\bar{\mathcal{A}}_p}\big( C_T(\mathcal{P}_p(X))\big) \ : \ \eqref{mimimality property of Lambda} \text{ holds}\}$. The Proposition \ref{prop: acpp to pacp} can be restated as: if $\boldsymbol{M}\in AC_T^p(\PP_p(\PP_p(X)))$, then $\operatorname{Lift}(\boldsymbol{M}) \neq \emptyset$.

\subsection{From \texorpdfstring{$\PP(\operatorname{AC}(\PP))$}{} to \texorpdfstring{$\PP(\PP(\operatorname{AC}))$}{}}\label{metric from P(AC(P)) to P(P(AC))}
In this subsection, we want to define a map that associates an element $\mathfrak{L}\in \PP_{\bar{\mathfrak{A}}_p}(\PP(C_T(X)))$ to $\Lambda\in \PP_{\bar{\mathcal{A}}_p}(C_T(\PP(X)))$. First, we need to define the map
\begin{equation}\label{def of E, section 3}
    \begin{aligned}   
        E:\PP\big(C_T(X)\big) & \to C_T(\PP(X))
        \\
        \lambda \quad & \mapsto\big((e_t)_\sharp  \lambda\big)_{t\in[0,T]}
    \end{aligned}
    \end{equation}
\begin{lemma}\label{well posedeness of E}
    For all $\lambda \in \PP(C_T(X))$, it holds $E[\lambda] \in C_T(\PP(X))$ and the map $E$ is continuous. 
    Moreover, $E$ is surjective from $\PP^{\operatorname{min}}_{\overline{a}_p}(C_T(X))$ (see \eqref{minimal optimal lifting}) to $AC_T^p(\PP_p(X))$, i.e.
    \begin{equation}
        E\big(\PP_{\overline{a}_p}(C_T(X))\big) = E\big(\PP_{\overline{a}_p}^{\operatorname{min}}(C_T(X))\big) = AC^p_T(\PP_p(X)).
    \end{equation}
\end{lemma}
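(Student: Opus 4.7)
The plan is to prove the three claims in order: (i) $E[\lambda] \in C_T(\PP(X))$ for every $\lambda$; (ii) $E$ is continuous; (iii) the image identification. Part (i) will come from dominated convergence; part (ii) from Skorokhod's representation and the fact that the compact-open topology on $C_T(\PP(X))$ is induced by the bounded sup-metric $D_{\hat{W}_1}$; part (iii) will be a direct rearrangement of Theorem \ref{lisini lifting}.

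For (i), I fix $\lambda \in \PP(C_T(X))$ and $t_n \to t$ in $[0,T]$. For each $\phi \in C_b(X)$, the maps $\boldsymbol{x} \mapsto \phi(\boldsymbol{x}(t_n))$ are uniformly bounded by $\|\phi\|_\infty$ and converge pointwise to $\boldsymbol{x} \mapsto \phi(\boldsymbol{x}(t))$, since each $\boldsymbol{x}$ is continuous; dominated convergence gives $(e_{t_n})_\sharp \lambda \to (e_t)_\sharp \lambda$ narrowly in $\PP(X)$, i.e.~$E[\lambda] \in C_T(\PP(X))$.

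For (ii), I would exploit that $\PP(C_T(X))$ is Polish, so a narrow convergence $\lambda_n \to \lambda$ can be realized via Skorokhod on a common probability space $(\Omega,\mathcal{F},\mathbb{P})$ by $C_T(X)$-valued random variables $Y_n, Y$ with laws $\lambda_n, \lambda$, and $Y_n \to Y$ $\mathbb{P}$-almost surely in the compact-open (hence uniform) topology, namely $\sup_{s \in [0,T]} \hat{d}(Y_n(s), Y(s)) \to 0$ a.s. For every $t$, the joint law of $(Y_n(t), Y(t))$ is a coupling between $(e_t)_\sharp \lambda_n$ and $(e_t)_\sharp \lambda$; hence
\[
\hat{W}_1\big((e_t)_\sharp \lambda_n, (e_t)_\sharp \lambda\big) \le \mathbb{E}\big[\hat{d}(Y_n(t), Y(t))\big] \le \mathbb{E}\Big[\sup_{s \in [0,T]} \hat{d}(Y_n(s), Y(s))\Big].
\]
Since $\hat{d}$ is bounded by $1$, dominated convergence makes the right-hand side vanish as $n \to \infty$. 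Taking the supremum over $t$ on the left yields $D_{\hat{W}_1}(E[\lambda_n], E[\lambda]) \to 0$, which is exactly convergence in $C_T(\PP(X))$.

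For (iii), the inclusion $E(\PP_{\overline{a}_p}(C_T(X))) \subseteq AC^p_T(\PP_p(X))$ is given by the second part of Theorem \ref{lisini lifting}: finiteness of $\int \overline{a}_p\, d\lambda$ forces $\lambda$ to be concentrated on $AC^p_T(X)$ and $(e_0)_\sharp \lambda \in \PP_p(X)$. The reverse inclusion $AC^p_T(\PP_p(X)) \subseteq E(\PP_{\overline{a}_p}^{\operatorname{min}}(C_T(X)))$ is given by the first part of the same theorem: any $\boldsymbol{\mu} \in AC^p_T(\PP_p(X))$ admits a minimal lifting $\lambda$ with $E[\lambda] = \boldsymbol{\mu}$, $\int a_p\, d\lambda = \int_0^T |\dot{\boldsymbol{\mu}}|^p\, dt < \infty$, and $(e_0)_\sharp \lambda = \mu_0 \in \PP_p(X)$, so $\lambda \in \PP_{\overline{a}_p}^{\operatorname{min}}(C_T(X))$. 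Combined with the trivial inclusion $\PP_{\overline{a}_p}^{\operatorname{min}} \subseteq \PP_{\overline{a}_p}$, this closes the chain of equalities. The main delicate point is the uniformity in $t$ in step (ii): pointwise narrow convergence is immediate from continuity of each $e_t$, but uniform convergence requires either an Arzel\`a--Ascoli argument based on tightness and ensuing equi-continuity, or, as preferred here, the a.s.~uniform coupling from Skorokhod, which transfers convergence of random trajectories to convergence of their time-marginals in $\hat{W}_1$ uniformly in $t$.
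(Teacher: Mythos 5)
Your proof is correct, and all three parts are sound. Parts (i) and (iii) match the paper's argument: dominated convergence for time-continuity, and a two-sided use of Lisini's lifting theorem for the image identification.

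The only genuine difference is in (ii). The paper does not invoke Skorokhod's representation; instead, it picks an optimal transport plan $\Pi$ realizing $W_{1,D_{\hat d}}(\lambda,\rho)$ and observes that $(e_t,e_t)_\sharp\Pi$ is a (sub-optimal) coupling of the time-$t$ marginals, giving directly
\[
D_{\hat W_1}\big(E[\lambda],E[\rho]\big)=\sup_t W_{1,\hat d}\big((e_t)_\sharp\lambda,(e_t)_\sharp\rho\big)\le \sup_t \int \hat d(\gamma_t,\eta_t)\,d\Pi\le W_{1,D_{\hat d}}(\lambda,\rho),
\]
i.e.\ $E$ is $1$-Lipschitz, not merely continuous. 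Your Skorokhod route works — the joint law of $(Y_n(t),Y(t))$ is exactly the kind of coupling the paper uses, and the dominated-convergence step is fine since $\hat d\le 1$ — but it only yields sequential continuity, whereas the deterministic coupling argument is shorter, avoids the auxiliary probability space, and gives the sharper metric bound. In fact, if you replace the Skorokhod coupling by an optimal plan you recover exactly the paper's estimate, so the two arguments are really the same idea with the probabilistic machinery left in one case and stripped away in the other. One last minor remark: the paper's proof of well-definedness on $\PP_{\overline a_p}(C_T(X))$ also records the quantitative bound $\mathcal A_p(E[\lambda])\le\int a_p\,d\lambda$ via a Hölder and Lebesgue-differentiation argument; you delegated this to Theorem \ref{lisini lifting}, which is legitimate but slightly less self-contained.
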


\begin{proof}
    Consider the distances $D_{\hat{W}_1}$ and $W_{1,D_{\hat{d}}}$, respectively, on $C_T(\PP(X))$ and $\PP(C_T(X))$ to induce their topologies (see §\ref{recap}), where $\hat{d} := d\wedge 1$ and $\hat{W}_1 := W_{1,\hat{d}}$. We prove that the map $E$ is $1$-Lipschitz with these choices.
    \\
    For all $\lambda \in \PP\big(C_T(X)\big)$ and for all sequence $t_n\in [0,T]$ converging to $t$, it holds
    \[W_{1,\hat{d}}\big((e_t)_\sharp \lambda,(e_{t_n})_\sharp \lambda\big) \leq \int \hat{d}(\gamma_t,\gamma_{t_n}) d\lambda(\gamma) \to 0\]
    as $n\to +\infty$ by the dominated convergence theorem. Considering $\lambda,\rho \in \PP\big(C_T(X)\big)$, we have
    \[D_{W_{1,\hat{d}}}\big(E[\lambda],E[\rho]\big) = \sup_{t\in [0,T]} W_{1,\hat{d}}\big((e_t)_\sharp \lambda,(e_{t})_\sharp \rho\big) \leq W_{1,D_{\hat{d}}}(\lambda,\rho),\]
    because $(e_t)_\sharp $ is a contraction, indeed saying that $\Pi$ is a $W_{1,D_{\hat{d}}}$-optimal coupling between $\lambda$ and $\rho$ we have that 
    \[
    \begin{aligned} W_{1,\hat{d}}\big((e_t)_\sharp \lambda,(e_{t})_\sharp \rho\big) \leq & \int \hat{d}(x^1_t, x^2_t ) d\Pi(\boldsymbol{x}^1,\boldsymbol{x}^2) 
    \leq \int D_{\hat{d}}(\boldsymbol{x}^1,\boldsymbol{x}^2) d\Pi(\boldsymbol{x}^2,\boldsymbol{x}^2) = W_{1,D_{\hat{d}}}(\lambda, \rho).
    \end{aligned}
    \]
    Regarding the second part of the statement, first of all we need to prove that $E$ is well defined: consider $\lambda \in \PP_{\overline{a}_p}(C_T(X))$, then
    \begin{align*}
        W_{p,d}^p((e_t)_{\sharp }\lambda,(e_s)_{\sharp }\lambda ) 
        &\leq 
        \int d^p(x_t,x_s) d\lambda(\boldsymbol{x})
        \leq 
        \int \left(\int_s^t |\dot{\boldsymbol{x}}|(r) dr \right)^p\hspace{-0.2cm} d\lambda(\boldsymbol{x})
        \\
        &\leq 
        |t-s|^{p-1}\int_s^t\int |\dot{\boldsymbol{x}}|^p(r) d\lambda(\boldsymbol{x}) dr.
    \end{align*}
    Moreover, $(e_0)_\sharp \lambda \in \PP_p(X)$. Putting everything together, the curve $\big((e_t)_\sharp \lambda\big)_{t\in[0,T]}$ is in $AC^p_T(\PP_p(X))$, and by the Lebesgue theorem, for a.e. $t\in[0,T]$ it holds
    \[\lim_{h\to 0}\frac{W_{p,d}^p((e_{t+h})_{\sharp }\lambda,(e_t)_{\sharp }\lambda )}{|h|^p} \leq \int |\dot{\boldsymbol{x}}|^p(t) d\lambda(\boldsymbol{x})<+\infty, \]
    and in particular
    \(\mathcal{A}_p(E[\lambda]) \leq \int_0^T a_p(\boldsymbol{x}) d\lambda(\boldsymbol{x}).\)  
    The surjectivity from $\PP_{\overline{a}_p}^{\operatorname{min}}(C_T(X))$ to $AC_T^p(\PP_p(X))$ is implied by Theorem \ref{lisini lifting}.
\end{proof}

The goal is to find a measurable right inverse of the map $E$, in particular we would like to apply Theorem \ref{measurable selection}. Recall that we know that 
$E$ defined in \eqref{def of E, section 3}
is not surjective: for example, consider two distinct points $x_0,x_1\in X$ and define $\mu_t:= (T-t)\delta_{x_0} + t \delta_{x_1}$ for $t\in [0,T]$. It is clear that $\boldsymbol{\mu}= (\mu_t)_{t\in [0,T]} \in C_T(\PP(X))$, but it cannot be lifted to any measure $\PP(C_T(X))$.

The situation is nicer if we restrict $E$ as a map $E:\PP_{\overline{a}_p}^{\operatorname{min}}\big(C_T(X)\big)  \to AC^p_T \big
(\PP_p(X)\big)$, which is surjective thanks to Lemma \ref{well posedeness of E}.

\begin{teorema}\label{metric theorem from Lambda to mathfrak{L}}
    There exists a Souslin-Borel measurable (see Appendix \ref{appendix souslin}) map $G:AC_T^p(\PP_p(X))\to \PP_{\bar{a}_p}^{\operatorname{min}}(C_T(X))$ such that $E\circ G[\boldsymbol{\mu}] = \boldsymbol{\mu}$ for any $\boldsymbol{\mu}\in AC_T^p(\PP_p(X))$, i.e. $G(\boldsymbol{\mu})$ is a lifting of $\boldsymbol{\mu}$. Moreover, for any $\boldsymbol{\mu}\in AC^p_T(\PP_p(X))$ it holds
    \begin{equation}\label{minimal meas selection}
        \int a_p(\boldsymbol{x}) d\big(G[\boldsymbol{\mu}]\big)(\boldsymbol{x}) = \mathcal{A}_p(\boldsymbol{\mu}).
    \end{equation}
    In particular, the following map is well defined: 
    \begin{equation}\label{min energy lifting equation}
    \begin{aligned}
        G_\sharp  : \mathcal{P}_{\bar{\mathcal{A}}_p}\big(C_T(\mathcal{P}(X))\big)& \to \mathcal{P}_{\bar{\mathfrak{A}}_p}\big(\mathcal{P}(C_T(X))\big)
        \\
        \Lambda\quad & \mapsto \quad  \mathfrak{L}:= G_{\sharp }\Lambda.
    \end{aligned}
    \end{equation}
\end{teorema}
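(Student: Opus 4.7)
The plan is to realise $G$ as a Souslin--Borel right inverse of the continuous surjection
$\tilde E := E\bigl|_{\PP_{\bar a_p}^{\min}(C_T(X))}$
furnished by Lemma~\ref{well posedeness of E}, via the measurable selection theorem (Theorem~\ref{measurable selection}). Once such a section is in place, the minimality identity \eqref{minimal meas selection} is automatic from the very definition of $\PP_{\bar a_p}^{\min}(C_T(X))$, and the well-definedness of the push-forward map in \eqref{min energy lifting equation} reduces to a direct computation.

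First, I would verify the structural hypotheses needed for the selection theorem. The codomain $AC_T^p(\PP_p(X))$ is a Borel subset of the Polish space $C_T(\PP(X))$ by item~(ix) of \S\ref{recap}, hence a Lusin space. The ambient $\PP(C_T(X))$ is Polish, and $\PP_{\bar a_p}(C_T(X))$ is Borel in it by Definition~\ref{finite energies prob} applied to the lower semicontinuous functional $\bar a_p$. The key technical point is that the minimal subset can be rewritten as
\[
\PP_{\bar a_p}^{\min}(C_T(X)) = \Bigl\{ \lambda \in \PP_{\bar a_p}(C_T(X)) \ : \ \int a_p(\boldsymbol x)\,d\lambda(\boldsymbol x) \leq \mathcal A_p\bigl(E(\lambda)\bigr) \Bigr\},
\]
the reverse inequality being automatic by integrating \eqref{ineq energy} of Theorem~\ref{lisini lifting} in time. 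The left-hand side is lower semicontinuous in $\lambda$ by narrow convergence together with the lower semicontinuity of $a_p$ on $C_T(X)$; the right-hand side is lower semicontinuous in $\lambda$ as well, since $\mathcal A_p$ is lower semicontinuous on $C_T(\PP(X))$ and $E$ is continuous by Lemma~\ref{well posedeness of E}. Hence $\PP_{\bar a_p}^{\min}(C_T(X))$ is Borel in $\PP(C_T(X))$, and in particular Lusin.

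With $\tilde E$ a continuous surjection between Lusin spaces, Theorem~\ref{measurable selection} produces a Souslin--Borel measurable section $G\colon AC_T^p(\PP_p(X)) \to \PP_{\bar a_p}^{\min}(C_T(X))$ satisfying $E\circ G = \mathrm{id}$; the inclusion $G(\boldsymbol\mu) \in \PP_{\bar a_p}^{\min}(C_T(X))$ then encodes exactly \eqref{minimal meas selection}.

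Finally, for $G_\sharp$ I would verify the energy constraint by the computation
\[
\bar{\mathfrak A}_p\bigl(G(\boldsymbol\mu)\bigr) = \int d^p(\bar x, x_0)\,dG(\boldsymbol\mu)(\boldsymbol x) + \int a_p\,dG(\boldsymbol\mu) = W_p^p(\mu_0,\delta_{\bar x}) + \mathcal A_p(\boldsymbol\mu) = \bar{\mathcal A}_p(\boldsymbol\mu),
\]
using $(e_0)_\sharp G(\boldsymbol\mu) = \mu_0$ and \eqref{minimal meas selection}. Integrating in $\Lambda$ yields $\int \bar{\mathfrak A}_p \,d(G_\sharp \Lambda) = \int \bar{\mathcal A}_p\,d\Lambda < +\infty$, so $G_\sharp$ indeed maps $\mathcal{P}_{\bar{\mathcal A}_p}\bigl(C_T(\mathcal P(X))\bigr)$ into $\mathcal{P}_{\bar{\mathfrak A}_p}\bigl(\mathcal P(C_T(X))\bigr)$. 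The main obstacle I anticipate is the Borel measurability of $\PP_{\bar a_p}^{\min}(C_T(X))$, which as indicated reduces to the lower semicontinuity of the two functionals $\lambda \mapsto \int a_p\,d\lambda$ and $\lambda \mapsto \mathcal A_p(E(\lambda))$; everything else is a direct application of the measurable selection machinery and the structural facts collected in \S\ref{recap}.
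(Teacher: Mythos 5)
Your proof is correct and follows the paper's strategy: realise $G$ via the measurable selection theorem applied to the continuous surjection $E\colon \PP_{\bar a_p}^{\min}(C_T(X))\to AC_T^p(\PP_p(X))$, then read off \eqref{minimal meas selection} from the codomain and verify the $\bar{\mathfrak A}_p$--finiteness of $G_\sharp\Lambda$ by a direct computation. Your reformulation of $\PP_{\bar a_p}^{\min}$ as a sublevel set $\{\lambda\in\PP_{\bar a_p}(C_T(X)):\int a_p\,d\lambda\leq\mathcal A_p(E(\lambda))\}$ (the reverse inequality being automatic from \eqref{ineq energy}) together with the lower semicontinuity argument is actually more explicit than the paper's terse citation of Lemma~\ref{measurability int g d mu}, which by itself does not cover the $\lambda\mapsto\mathcal A_p(E(\lambda))$ side of the equality; the only point you leave implicit is that $G_\sharp\Lambda$ is a well-defined Borel probability measure because $G$ is only Souslin--Borel, which requires the universal measurability of Souslin sets (Proposition~\ref{univ meas of souslin} and Corollary~\ref{push-forward by a souslin-borel map}) as the paper notes.
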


\begin{proof}
The subset $\PP_{\overline{a}_p}^{\operatorname{min}}(C_T(X))$ is a Borel subset of $\PP(C_T(X))$, thanks to Lemma \ref{measurability int g d mu}. The same holds for the subset $AC_T^p(\PP_p(X))\subset C_T(\PP(X))$. Then, because $E$ is a surjection, we can apply Theorem \ref{measurable selection} to obtain a Souslin-Borel measurable map $G:AC_T^p(\PP_p(X)) \to \PP_{\overline{a}_p}^{\operatorname{min}}(C_T(X))$ such that $E\circ G[\boldsymbol{\mu}] = \boldsymbol{\mu}$ for all $\boldsymbol{\mu} \in AC_T^p(\PP_p(X))$.
\\
Given $\Lambda\in \PP_{\bar{\mathcal{A}}_p}(C_T(\PP(X)))$, thus concentrated on $AC_T^p(\PP_p(X))$, thanks to Proposition \ref{univ meas of souslin} and Corollary \ref{push-forward by a souslin-borel map}, since $G$ is Souslin-Borel measurable, we have that 
    \(\mathfrak{L}:= G_{\sharp }\Lambda\)
    is a probability measure over Borel sets of $\mathcal{P}(C_T(X))$.
    It remains to show that $\mathfrak{L} \in \PP_{\mathfrak{A}_p}(\mathcal{P}(C_T(X)))$. First of all, given $\overline{x}\in X$, it holds
    \[
    \begin{aligned}\int\int d^p(\overline{x},x_0) d\lambda(\boldsymbol{x})d\mathfrak{L}(\lambda) = & \int\int_X d^p(x,\overline{x})d\mu_0(x)d\Lambda(\boldsymbol{\mu}) 
    =  \int W_p^p(\mu_0,\delta_{\overline{x}}) d\Lambda(\boldsymbol{\mu})<+\infty.
    \end{aligned}\]
    
    \noindent Then 
    \begin{equation}\label{equality energy}
        \begin{aligned}
        \int \int &a_p(\boldsymbol{x}) d\lambda(\boldsymbol{x}) d\mathfrak{L}(\lambda) = 
        \int_0^T \int\int|\dot{\boldsymbol{x}}|^p d\lambda(\boldsymbol{x})d\mathfrak{L}(\lambda)dt 
        \\
        = & \int_0^T \int\int|\dot{\boldsymbol{x}}|_d^p d\big(G[\boldsymbol{\mu}]\big)(\boldsymbol{x})d\Lambda(\boldsymbol{\mu})dt
        =  \int_0^T \int |\dot{\boldsymbol{\mu}}|^p_{W_{p}}d\Lambda(\boldsymbol{\mu}) dt <+\infty,
    \end{aligned}
    \end{equation}
    where the last equality follows from $G[\boldsymbol{\mu}] \in \PP_{\overline{a}_p}^{\operatorname{min}}(C_T(X))$ for all $\boldsymbol{\mu}\in AC_T^p(\PP_p(X))$.
\end{proof}

\begin{oss}
    Looking closely at the previous proof, the last step highlights why it is important to invert the map $E$ from the domain $\PP^{\operatorname{opt}}_{\bar{a}_p}(C_T(X))$, instead of $\PP_{\bar{a}_p}(C_T(X))$, otherwise, we could not conclude that the measure $\mathfrak{L}$ belongs to $\PP_{\bar{\mathfrak{A}}_p}(\PP(C_T(X)))$. 
\end{oss}

\subsection{From \texorpdfstring{$\PP(\PP(\operatorname{AC}))$}{} to \texorpdfstring{$\operatorname{AC}(\PP(\PP))$}{}}\label{section from P(P(AC) to AC(P(P)) metric}
We conclude our construction by discussing the natural projection from $\mathfrak{L}\in \PP_{\overline{\mathfrak{A}}_p}(\PP(C_T(X)))$ to $AC_T^p(\PP_p(\PP_p(X)))$. We use the nested push-forward described in \eqref{push forward of push forward} using the evaluation map. For any $t \in [0,T]$, we define the maps 
\begin{equation}\label{eq: ppac to acpp}
    \begin{aligned}
        E_t: \PP\big(C_T(X)\big) & \to \PP(X) \qquad \mathfrak{E}: \PP(\PP(C_T(X))) \to C_T(\PP(\PP(X)))
        \\
        \lambda \quad & \mapsto (e_t)_\sharp \lambda, \hspace{2.85cm} \mathfrak{L}  \mapsto \big((E_t)_\sharp \mathfrak{L}\big)_{t\in[0,T]}. 
    \end{aligned}
\end{equation}

\begin{prop}\label{well def from L to M}
    Let $\mathfrak{L}\in \PP_{\mathcal{A}_p}\big(\PP(C_T(X))\big)$ and define $M_t:= (E_t)_\sharp \mathfrak{L}$ for any $t\in [0,T]$. Then, $\boldsymbol{M}= (M_t)_{t\in [0,T]} \in AC^p_T\big(\PP_p(\PP_p(X))\big)$ and it holds
    \begin{equation}
         |\dot{M}|_{\mathcal{W}_p}^p(t) dt \leq \int\int |\dot{\boldsymbol{x}}|^p(t) d\lambda(\boldsymbol{x}) d\mathfrak{L}(\lambda) <+\infty
    \end{equation}
    for a.e. $t\in (0,T)$. In particular
    \begin{equation}
        \int_0^T |\dot{M}|_{\mathcal{W}_p}^p(t) dt \leq \int\int a_p(\boldsymbol{x}) d\lambda(\boldsymbol{x}) d\mathfrak{L}(\lambda) <+\infty.
    \end{equation}
\end{prop}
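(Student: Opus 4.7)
\textbf{Proof proposal for Proposition \ref{well def from L to M}.}

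The key idea is to construct a canonical coupling between $M_s$ and $M_t$ out of $\mathfrak{L}$ and reduce everything to pointwise estimates on $\lambda$-a.e.~curves in $X$. Concretely, for every $s,t\in[0,T]$ the product evaluation $(E_s,E_t):\PP(C_T(X))\to\PP(X)\times\PP(X)$ is continuous (by Lemma~\ref{well posedeness of E}), so $\Pi_{s,t}:=(E_s,E_t)_\sharp\mathfrak{L}$ is well defined and belongs to $\Gamma(M_s,M_t)$ by construction. Using $\Pi_{s,t}$ as a competitor in the definition of $\mathcal{W}_p$ and then, for each fixed $\lambda$, using $(e_s,e_t)_\sharp\lambda\in\Gamma(E_s(\lambda),E_t(\lambda))$ as a competitor in the inner $W_p$, I get
\begin{equation*}
    \mathcal{W}_p^p(M_s,M_t)\le\int W_p^p(E_s(\lambda),E_t(\lambda))\,d\mathfrak{L}(\lambda)\le\int\int d^p(x_s,x_t)\,d\lambda(\boldsymbol{x})\,d\mathfrak{L}(\lambda).
\end{equation*}

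Now I observe that the finite-energy hypothesis on $\mathfrak{L}$ forces $\mathfrak{L}$-a.e.~$\lambda$ to be concentrated on $AC_T^p(X)$ (and to have $(e_0)_\sharp\lambda\in\PP_p(X)$); this is exactly what lets me invoke the chain of inequalities already appearing in the proof of Lemma~\ref{well posedeness of E}:
\begin{equation*}
    d^p(x_s,x_t)\le\Big(\int_s^t|\dot{\boldsymbol{x}}|(r)\,dr\Big)^p\le|t-s|^{p-1}\int_s^t|\dot{\boldsymbol{x}}|^p(r)\,dr
\end{equation*}
for $\lambda$-a.e.~$\boldsymbol{x}$. Plugging this in and applying Fubini (which is justified since $\int\int a_p\,d\lambda\,d\mathfrak{L}<+\infty$), I obtain
\begin{equation*}
    \mathcal{W}_p^p(M_s,M_t)\le|t-s|^{p-1}\int_s^t g(r)\,dr,\qquad g(r):=\int\int|\dot{\boldsymbol{x}}|^p(r)\,d\lambda(\boldsymbol{x})\,d\mathfrak{L}(\lambda)\in L^1(0,T).
\end{equation*}

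To upgrade this estimate into $\boldsymbol{M}\in AC_T^p(\PP_p(\PP_p(X)))$ with the claimed bound on the metric derivative, I first use Lebesgue's differentiation theorem at every Lebesgue point $t\in(0,T)$ of $g$ to deduce
\begin{equation*}
    \limsup_{h\to 0}\frac{\mathcal{W}_p^p(M_t,M_{t+h})}{|h|^p}\le\limsup_{h\to 0}\frac{1}{|h|}\int_t^{t+h}g(r)\,dr=g(t),
\end{equation*}
which identifies a representative of the metric derivative satisfying $|\dot{M}|_{\mathcal{W}_p}^p(t)\le g(t)$ a.e.~on $(0,T)$. Integrating yields the second displayed inequality, and since $g\in L^1(0,T)$ we also see that $|\dot{M}|_{\mathcal{W}_p}\in L^p(0,T)$, which gives absolute continuity with $p$-integrable metric derivative (the auxiliary bound $\mathcal{W}_p(M_s,M_t)\le|t-s|^{(p-1)/p}(\int_s^t g)^{1/p}$ also guarantees continuity of $t\mapsto M_t$). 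Finally, to ensure $M_t\in\PP_p(\PP_p(X))$ for every $t$, it suffices to check it at $t=0$, for which $\int W_p^p(\mu,\delta_{\overline{x}})\,dM_0(\mu)\le\int\int d^p(x_0,\overline{x})\,d\lambda(\boldsymbol{x})\,d\mathfrak{L}(\lambda)<+\infty$ by the initial-moment part of the hypothesis; the remaining times follow by the triangle inequality in $\mathcal{W}_p$ together with the just-proved absolute continuity. The only genuinely delicate point is the passage from the Hölder-type bound on $\mathcal{W}_p^p(M_s,M_t)$ to the $L^p$-integrability of the metric speed, which is handled by the Lebesgue differentiation argument above.
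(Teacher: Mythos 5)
Your proof is correct and follows essentially the same route as the paper: you push $\mathfrak{L}$ forward under $(E_s,E_t)$ to produce a competitor coupling, reduce to the inner coupling $(e_s,e_t)_\sharp\lambda$ and the pointwise bound $d(x_s,x_t)\le\int_s^t|\dot{\boldsymbol x}|$, apply H\"older and Fubini to get a $|t-s|^{p-1}\int_s^t g$ bound, then invoke Lebesgue differentiation to identify the metric derivative bound a.e.\ and check the initial $p$-moment for $M_0$. The extra commentary you add (justifying Fubini, recording that $\mathfrak{L}$-a.e.\ $\lambda$ is concentrated on $AC^p_T(X)$, and noting continuity and the triangle-inequality argument for $M_t\in\PP_p(\PP_p(X))$) merely makes explicit steps the paper leaves implicit.
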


\begin{proof}
    First, notice that $\boldsymbol{M} \in C_T(\PP(\PP(X)))$, since $\mathfrak{L} \in \PP(\PP(C_T(X)))$. Now, we have to prove that $M_0 \in \PP_p(\PP_p(X))$ and $\int_0^T|\dot{M}|_{\mathcal{W}_p}^p(t) dt <+\infty$.
    First of all,
    \[
    \int W_{p}^p(\mu,\delta_{\overline{x}}) dM_0(\mu) =\int W_{p}^p((e_0)_\sharp \lambda, \delta_{\overline{x}}) d\mathfrak{L}(\lambda)  \leq 
    \int \int d^p(x_0,\overline{x}) d\lambda(\boldsymbol{x}) d\mathfrak{L}(\lambda) <+\infty, 
    \]
    which implies that $M_0 \in \PP_p(\PP_p(X))$. Then
    \begin{align*}
        \mathcal{W}_p^p(M_t,M_s) \leq & \int_{\PP\times\PP} W_{p}^p(\mu,\nu) d\big((E_t,E_s)_\sharp \mathfrak{L}\big)(\mu,\nu) 
        =  \int W_{p}^p\big((e_t)_\sharp \lambda, (e_s)_\sharp \lambda\big) d\mathfrak{L}(\lambda)
        \\
        \leq &
        \int \int d(x_t,x_s)^pd\lambda(\boldsymbol{x})d\mathfrak{L}(\lambda) 
        \leq 
        |t-s|^{p-1} \int_s^t \int \int |\dot{\boldsymbol{x}}|^p_d(r) d\lambda(\boldsymbol{x}) d\mathfrak{L}(\lambda) dr,
    \end{align*}
    where we used $d(x_t, x_s) \leq \int_s^t|\dot{\boldsymbol{x}}|_d(r) dr$, Holder inequality and Fubini's theorem.
    This implies that $(M_t)_{t\in[0,T]}$ is absolutely continuous, and by Lebesgue theorem it holds
    \[|\dot{\boldsymbol{M}}|_{\mathcal{W}_p}^p(t)\leq \int\int |\dot{\boldsymbol{x}}|_{d}^p(t) d\lambda(\boldsymbol{x}) d\mathfrak{L}(\lambda)\quad \text{ for a.e. }t\in(0,T).\qedhere\]
\end{proof}

\subsection{Composition of maps}\label{comp of maps}
In this section, we analyze the relations between the (possibly multivalued) map $\operatorname{Lift}$, and the maps $G_\sharp $ and $\mathfrak{E}$.

\begin{prop}\label{3.8}
    The composition $\mathfrak{E}\circ (G_\sharp ) \circ \operatorname{Lift}$, represented by the following diagram
    \begin{center}
    \color{black}
    \begin{tikzcd}[row sep = large ,column sep=large]
        & \boxed{\mathfrak{L} \in \PP_{\overline{\mathfrak{A}}_p} \big(\PP(C_T(X))\big)} \arrow[ddr, bend left, "\mathfrak{E}" description]
        \\
        & \boxed{ \Lambda \in\PP_{\bar{\mathcal{A}}_p}\big(\PP(C_T(X))\big) }\arrow[u, "G_\sharp "]      
        \\
        & \boxed{ \boldsymbol{M} \in AC^p_T\big(\PP_p(\PP_p(X))\big) }   \arrow[u,"\operatorname{Lift}",shift left=1]\arrow[u, shift right=1]
             & \boxed{\boldsymbol{M} \in AC^p_T\big(\PP_p(\PP_p(X))\big)} \arrow[l, dotted, "id"']
    \end{tikzcd}
    \normalcolor
    \end{center}
    is the identity, i.e. $\mathfrak{E}\circ G_\sharp  \circ \operatorname{Lift}   (\boldsymbol{M}) = \boldsymbol{M}$ for any $\boldsymbol{M}\in AC_T^p(\PP_p(\PP_p(X)))$. In other words, given $\boldsymbol{M}\in AC_T^p(\PP_p(\PP_p(X)))$, for any $\Lambda\in\operatorname{Lift}(\boldsymbol{M})$ and defining $\mathfrak{L}:= G_\sharp \Lambda$, it holds $\mathfrak{E}(\mathfrak{L}) = \boldsymbol{M}$.
\end{prop}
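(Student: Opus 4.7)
The plan is to unwind the definitions and reduce everything to the identity $E_t \circ G = \mathfrak{e}_t$, after which the result follows from the defining property of the lift.

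First I would observe that for any $\boldsymbol{\mu} \in AC_T^p(\PP_p(X))$, the map $G$ satisfies $E(G(\boldsymbol{\mu})) = \boldsymbol{\mu}$ by Theorem \ref{metric theorem from Lambda to mathfrak{L}}. Since $E(\lambda) = ((e_t)_\sharp \lambda)_{t\in [0,T]}$ by \eqref{def of E, section 3}, evaluating the curve $E(G(\boldsymbol{\mu}))$ at time $t$ via $\mathfrak{e}_t$ yields
\[
\mu_t = \mathfrak{e}_t(\boldsymbol{\mu}) = \mathfrak{e}_t\bigl(E(G(\boldsymbol{\mu}))\bigr) = (e_t)_\sharp G(\boldsymbol{\mu}) = E_t(G(\boldsymbol{\mu})),
\]
by the definition of $E_t$ in \eqref{eq: ppac to acpp}. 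In other words, $E_t \circ G = \mathfrak{e}_t$ pointwise on $AC_T^p(\PP_p(X))$.

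Now fix $\boldsymbol{M} \in AC_T^p(\PP_p(\PP_p(X)))$ and any $\Lambda \in \operatorname{Lift}(\boldsymbol{M})$. Since $\Lambda$ is concentrated on $AC_T^p(\PP_p(X))$, the identity above holds $\Lambda$-a.e., and therefore, using the functoriality of push-forward together with $\mathfrak{L} := G_\sharp \Lambda$,
\[
(E_t)_\sharp \mathfrak{L} = (E_t)_\sharp (G_\sharp \Lambda) = (E_t \circ G)_\sharp \Lambda = (\mathfrak{e}_t)_\sharp \Lambda = M_t,
\]
where the last equality is the defining property \eqref{mimimality property of Lambda} of $\Lambda \in \operatorname{Lift}(\boldsymbol{M})$. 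As this holds for every $t \in [0,T]$, we conclude $\mathfrak{E}(\mathfrak{L}) = ((E_t)_\sharp \mathfrak{L})_{t \in [0,T]} = (M_t)_{t \in [0,T]} = \boldsymbol{M}$, which is exactly the claim.

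There is essentially no obstacle here: the only points to check are that $G_\sharp \Lambda$ is well-defined as a Borel probability measure (guaranteed by the Souslin--Borel measurability of $G$ together with Proposition \ref{univ meas of souslin} and Corollary \ref{push-forward by a souslin-borel map}), and that the elementary identity $E_t = \mathfrak{e}_t \circ E$ holds, which is immediate from the definitions.
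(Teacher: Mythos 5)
Your proof is correct and follows essentially the same route as the paper's: the key identity is $E_t\circ G=\mathfrak{e}_t\circ E\circ G=\mathfrak{e}_t$ on $AC^p_T(\PP_p(X))$, obtained from $E\circ G=\operatorname{id}$, after which one pushes forward through $\Lambda$ and uses $(\mathfrak{e}_t)_\sharp\Lambda=M_t$. The paper writes out the final push-forward chain via integration against an arbitrary nonnegative Borel test function $F$, whereas you express it directly as a composition of push-forwards; this is only a cosmetic difference, and your remark on the Souslin--Borel measurability of $G$ being what makes $G_\sharp\Lambda$ well defined matches the paper's own justification.
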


\begin{proof}
    Let $\boldsymbol{M} \in AC^p_T\big(\PP_p(\PP_p(X))\big)$ and apply to it the maps following the diagram above. Then for any $F:\PP(X) \to [0,+\infty]$ Borel and for any $t\in[0,T]$, it holds
    \begin{align*}
         \int_{\PP(X)} F(\mu) d\big((E_t)_\sharp \mathfrak{L}\big)(\mu) = & \int F((e_t)_{\sharp }\lambda) d\mathfrak{L}(\lambda) 
         =  \int F \big( (e_t)_\sharp (G(\boldsymbol{\mu}) \big) d\Lambda(\boldsymbol{\mu})
         \\
         = & \int F(\mu_t) d\Lambda(\boldsymbol{\mu}) 
         =  \int_{\PP(X)} F(\mu) dM_t(\mu),
    \end{align*}
    where the second equality in the second line comes from $\mathfrak{L} = G_\sharp \Lambda$, the third one follows from $(e_t)_\sharp \circ G = E_t\circ G = \mathfrak{e}_t$, since $E\circ G = \operatorname{id}$, and the last one is a consequence of $(\mathfrak{e}_t)_\sharp \Lambda = M_t$.
\end{proof}

\begin{prop}\label{starting from Lambda}
    Let $\Lambda \in \PP_{\mathcal{A}_p}\big(C_T(\PP_p(X))$ and define $\mathfrak{L} = G_\sharp \Lambda$, $\boldsymbol{M} = \mathfrak{E}(\mathfrak{L})$ and take any $\Lambda_{\boldsymbol{M}} \in \operatorname{Lift}(\boldsymbol{M})$, according to the following diagram
    \begin{center}
    \color{black}
    \begin{tikzcd}[column sep=large,row sep = large]
        & \boxed{ \mathfrak{L} \in \PP_{\overline{\mathfrak{A}}_p}(\PP(C_T(X)))} \arrow[ddr,"\mathfrak{E}" description]
        \\
        & \boxed{\Lambda \in \PP_{\bar{\mathcal{A}}_p}(C_T(\PP(X))) }\arrow[u,"G_\sharp "]
            &[1cm] \boxed{\Lambda_{\boldsymbol{M}} \in \PP_{\bar{\mathcal{A}}_p}(C_T(\PP(X)))}
        \\
        & 
        & \boxed{ \boldsymbol{M} \in AC_T^p(\PP_p(\PP_p(X))) }\arrow[u,"\operatorname{Lift}"]\arrow[u, shift right=2]
    \end{tikzcd}
    \normalcolor
    \end{center}
  Then, both $\Lambda $ and $\Lambda_{\boldsymbol{M}}$ are lifting of $\boldsymbol{M}$, i.e. $(\mathfrak{e}_t)_\sharp \Lambda = (\mathfrak{e}_t)_\sharp \Lambda_{\boldsymbol{M}} = M_t$ for all $t\in [0,T]$, and for a.e. $t\in [0,T]$ it holds
    \begin{equation}
        |\dot{\boldsymbol{M}}|_{\mathcal{W}_p}^p(t) = \int |\dot{\boldsymbol{\mu}}|_{W_{p,d}}^p(t) d\Lambda_{\boldsymbol{M}}(\boldsymbol{\mu}) \leq \int |\dot{\boldsymbol{\mu}}|_{W_{p,d}}^p(t) d\Lambda(\boldsymbol{\mu}).
    \end{equation}
\end{prop}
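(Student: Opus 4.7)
The plan is to verify the marginal identity $(\mathfrak{e}_t)_\sharp \Lambda = M_t$ directly from the construction of $\mathfrak{L}$ and $\boldsymbol{M}$, and then obtain the two-action comparison by combining the converse direction of Theorem \ref{lisini lifting} (applied in the ground space $(\PP_p(X),W_p)$) with the minimality built into $\operatorname{Lift}(\boldsymbol{M})$.

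\textbf{Step 1: showing $(\mathfrak{e}_t)_\sharp \Lambda = M_t$.} Fix $t\in[0,T]$ and a Borel function $F:\PP(X)\to[0,+\infty]$. By definition of $\boldsymbol{M}=\mathfrak{E}(\mathfrak{L})$ and $\mathfrak{L}=G_\sharp\Lambda$, together with the key property $E\circ G=\mathrm{id}$ from Theorem \ref{metric theorem from Lambda to mathfrak{L}} (which yields $(e_t)_\sharp G(\boldsymbol{\mu})=\mu_t$), we compute
\begin{align*}
\int F\,dM_t
&= \int F\bigl((e_t)_\sharp\lambda\bigr)\,d\mathfrak{L}(\lambda)
= \int F\bigl((e_t)_\sharp G(\boldsymbol{\mu})\bigr)\,d\Lambda(\boldsymbol{\mu}) \\
&= \int F(\mu_t)\,d\Lambda(\boldsymbol{\mu})
= \int F\,d(\mathfrak{e}_t)_\sharp\Lambda,
\end{align*}
which gives the desired identity. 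For $\Lambda_{\boldsymbol{M}}$, the marginal identity is part of the definition of $\operatorname{Lift}(\boldsymbol{M})$ in Proposition \ref{prop: acpp to pacp}.

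\textbf{Step 2: the pointwise inequality for both $\Lambda$ and $\Lambda_{\boldsymbol{M}}$.} Since both $\Lambda$ and $\Lambda_{\boldsymbol{M}}$ are concentrated on $AC^p_T(\PP_p(X))$ and both have $\boldsymbol{M}$ as their curve of marginals through the evaluation $(\mathfrak{e}_t)_\sharp$, I apply the converse direction of Theorem \ref{lisini lifting} in the Polish metric space $(\PP_p(X),W_p)$ (so that curves $\boldsymbol{\mu}$ play the role of $\boldsymbol{x}$ and $(\mathfrak{e}_t)_\sharp$ plays the role of $(e_t)_\sharp$). This yields, for $\mathcal{L}^1$-a.e. $t\in(0,T)$,
\begin{equation}\label{eq:two-ineq-plan}
|\dot{\boldsymbol{M}}|_{\mathcal{W}_p}^p(t) \le \int |\dot{\boldsymbol{\mu}}|_{W_p}^p(t)\,d\Lambda_{\boldsymbol{M}}(\boldsymbol{\mu})
\quad\text{and}\quad
|\dot{\boldsymbol{M}}|_{\mathcal{W}_p}^p(t) \le \int |\dot{\boldsymbol{\mu}}|_{W_p}^p(t)\,d\Lambda(\boldsymbol{\mu}).
\end{equation}
The second inequality in \eqref{eq:two-ineq-plan} is already the second claim of the proposition.

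\textbf{Step 3: upgrading the first inequality in \eqref{eq:two-ineq-plan} to equality.} By the definition of $\operatorname{Lift}(\boldsymbol{M})$ (see \eqref{mimimality property of Lambda} in Proposition \ref{prop: acpp to pacp}) and Fubini's theorem,
\begin{equation*}
\int_0^T |\dot{\boldsymbol{M}}|_{\mathcal{W}_p}^p(t)\,dt
= \int \mathcal{A}_p(\boldsymbol{\mu})\,d\Lambda_{\boldsymbol{M}}(\boldsymbol{\mu})
= \int_0^T\!\!\int |\dot{\boldsymbol{\mu}}|_{W_p}^p(t)\,d\Lambda_{\boldsymbol{M}}(\boldsymbol{\mu})\,dt.
\end{equation*}
Combined with the pointwise inequality in \eqref{eq:two-ineq-plan} for $\Lambda_{\boldsymbol{M}}$, this forces equality $\mathcal{L}^1$-a.e.\ in $(0,T)$, completing the proof.

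The argument is essentially bookkeeping: the only non-routine input is the correct invocation of Theorem \ref{lisini lifting} at the second level (curves in $\PP_p(X)$ rather than in $X$), which is justified because $(\PP_p(X),W_p)$ is itself a complete separable metric space, so Lisini's result applies verbatim.
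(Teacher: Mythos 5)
Your proof is correct and follows essentially the same approach as the paper: Step 1 is the same direct computation via $E\circ G=\mathrm{id}$, and Steps 2--3 obtain the pointwise inequality and then upgrade it to a.e.\ equality using the integrated minimality of $\operatorname{Lift}(\boldsymbol{M})$. The only cosmetic difference is that where you cite the converse direction \eqref{ineq energy} of Theorem \ref{lisini lifting} applied in the ambient space $(\PP_p(X),W_p)$, the paper re-derives that inequality inline via the coupling $(\mathfrak e_t,\mathfrak e_s)_\sharp\Lambda'$, Hölder, and Lebesgue differentiation -- the same argument, just not packaged as a black box.
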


\begin{proof}
    For all $t\in [0,T]$ and for all $F:\PP_p(X) \to [0,+\infty)$ bounded Borel, we have 
    \begin{align*}
        \int F(\mu) dM_t(\mu) 
        = & 
        \int F(\mu) d\big((E_t)_\sharp  \mathfrak{L}\big)(\mu)  
        =  
        \int F\big((e_t)_\sharp  \lambda \big) d\mathfrak{L}(\lambda) 
        =  
        \int F\big((e_t)_\sharp  \lambda \big) d\big(G_\sharp  \Lambda\big)(\lambda)
        \\
        = &
        \int F\big((e_t)_\sharp  G[\boldsymbol{\mu}] \big) d \Lambda(\boldsymbol{\mu}) 
        = 
        \int F(\mu_t) d \Lambda(\boldsymbol{\mu}) 
        = 
        \int F(\mu) d\big((\mathfrak{e}_t)_\sharp \Lambda\big)(\mu).
    \end{align*}
    For the second part, notice that 
    \[W_p^p(M_t,M_s) \leq \int \mathcal{W}_p^p(\mu_t,\mu_s) d\Lambda'(\boldsymbol{\mu}) \leq |t-s|^{p-1} \int \int_s^t |\dot{\boldsymbol{\mu}}|^p(r) dr d\Lambda'(\boldsymbol{\mu})\]
    for any $\Lambda' \in  \PP_{\mathcal{A}_p}\big(C_T(\PP(X)) $ with marginal $M_t$ at any time $t\in [0,T]$. This implies that 
    \[|\dot{\boldsymbol{M}}|^p(t) \leq \int |\dot{\boldsymbol{\mu}}|^p(t) d\Lambda(\boldsymbol{\mu}) \quad  \text{ and } \quad |\dot{\boldsymbol{M}}|^p(t) \leq \int |\dot{\boldsymbol{\mu}}|^p(t) d\Lambda_{\boldsymbol{M}}(\boldsymbol{\mu})\]
    for a.e. $t\in [0,T]$. Moreover, thanks to Proposition \ref{prop: acpp to pacp}, we have that 
    \[\int \int_0^T |\dot{\boldsymbol{\mu}}|^p(t) dt d\Lambda_{\boldsymbol{M}}(\boldsymbol{\mu)} = \int_0^T |\dot{\boldsymbol{M}}|^p(t) dt.\]
\end{proof}

\begin{prop}\label{3.10}
    Let $\mathfrak{L} \in \PP_{\overline{\mathfrak{A}}_p}\big(\PP_p(C_T(X))\big)$. Define $\boldsymbol{M} = \mathfrak{E}(\mathfrak{L})$, take any $\Lambda \in \operatorname{Lift}(\boldsymbol{M})$ and define $\mathfrak{L}_{\Lambda} = G_\sharp  \Lambda$, according to the following diagram
\begin{center}
    \color{black}
    \begin{tikzcd}[column sep=large,row sep = large]
        & \boxed{ \mathfrak{L} \in \PP_{\overline{\mathfrak{A}}_p}(\PP(C_T(X))) }\arrow[ddr,"\mathfrak{E}" description]
            &[1cm] \boxed{ \mathfrak{L}_{\Lambda} \in \PP_{\bar{\mathcal{A}}_p}(C_T(\PP(X)))}
        \\
        &
            & \boxed{\Lambda \in \PP_{\bar{\mathcal{A}}_p}(C_T(\PP(X))) }\arrow[u,"G_\sharp "']
        \\
        & 
            & \boxed{ \boldsymbol{M} \in AC_T^p(\PP_p(\PP_p(X))) }\arrow[u,"\operatorname{Lift}"', shift right=1]\arrow[u, shift left=1]
    \end{tikzcd}
    \normalcolor
    \end{center}
    Then $(E_t)_\sharp \mathfrak{L} = (E_t)_\sharp \mathfrak{L}_{\Lambda} = M_t$ for all $t\in [0,T]$, and moreover
    \begin{equation}
        |\dot{\boldsymbol{M}}|_{\mathcal{W}_p}^p(t) = \int \int |\dot{\boldsymbol{x}}|^p(t)d\lambda(\boldsymbol{x}) d\mathfrak{L}_\Lambda(\lambda) \leq  \int \int |\dot{\boldsymbol{x}}|^p(t) d\lambda(\boldsymbol{x}) d\mathfrak{L}(\lambda),
    \end{equation}
    for a.e. $t\in [0,T]$.
\end{prop}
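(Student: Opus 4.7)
The plan is to reduce Proposition \ref{3.10} to Propositions \ref{well def from L to M}, \ref{3.8}, \ref{starting from Lambda}, and Theorem \ref{metric theorem from Lambda to mathfrak{L}}. First, the identity $(E_t)_\sharp \mathfrak{L} = M_t$ is immediate from the very definition $\boldsymbol{M} := \mathfrak{E}(\mathfrak{L})$. For the companion identity $(E_t)_\sharp \mathfrak{L}_\Lambda = M_t$ I would apply Proposition \ref{3.8} to the chosen lift $\Lambda \in \operatorname{Lift}(\boldsymbol{M})$: that Proposition asserts exactly $\mathfrak{E}(G_\sharp \Lambda) = \boldsymbol{M}$, and reading it componentwise in $t$ gives the claim.

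For the equality in (2), the key step is the pointwise-in-$t$ identity
\begin{equation*}
|\dot{\boldsymbol{\mu}}|_{W_p}^p(t) \;=\; \int |\dot{\boldsymbol{x}}|^p(t)\, d G[\boldsymbol{\mu}](\boldsymbol{x}),
\end{equation*}
valid for $\Lambda$-a.e.\ $\boldsymbol{\mu}$ and $\mathcal{L}^1$-a.e.\ $t \in (0,T)$. By Theorem \ref{metric theorem from Lambda to mathfrak{L}}, $G[\boldsymbol{\mu}] \in \PP_{\bar{a}_p}^{\operatorname{min}}(C_T(X))$, so $\int a_p\, dG[\boldsymbol{\mu}] = \mathcal{A}_p(\boldsymbol{\mu})$; combining this integral equality with the pointwise bound $|\dot{\boldsymbol{\mu}}|^p(t) \leq \int |\dot{\boldsymbol{x}}|^p(t)\, dG[\boldsymbol{\mu}](\boldsymbol{x})$ from Theorem \ref{lisini lifting} and integrating in $t$ forces equality $\mathcal{L}^1$-a.e.\ for each fixed $\boldsymbol{\mu}$. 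A Fubini argument then yields, for $\mathcal{L}^1$-a.e.\ $t$,
\begin{equation*}
\int \int |\dot{\boldsymbol{x}}|^p(t)\, d\lambda(\boldsymbol{x})\, d\mathfrak{L}_\Lambda(\lambda) \;=\; \int \int |\dot{\boldsymbol{x}}|^p(t)\, dG[\boldsymbol{\mu}](\boldsymbol{x})\, d\Lambda(\boldsymbol{\mu}) \;=\; \int |\dot{\boldsymbol{\mu}}|_{W_p}^p(t)\, d\Lambda(\boldsymbol{\mu}).
\end{equation*}
Invoking Proposition \ref{starting from Lambda} on $\Lambda \in \operatorname{Lift}(\boldsymbol{M})$, which gives $|\dot{\boldsymbol{M}}|_{\mathcal{W}_p}^p(t) = \int |\dot{\boldsymbol{\mu}}|_{W_p}^p(t)\, d\Lambda(\boldsymbol{\mu})$ for a.e.\ $t$, closes the chain of equalities.

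The residual inequality $|\dot{\boldsymbol{M}}|_{\mathcal{W}_p}^p(t) \leq \int\int |\dot{\boldsymbol{x}}|^p(t)\, d\lambda(\boldsymbol{x})\, d\mathfrak{L}(\lambda)$ will be nothing but the pointwise velocity bound from Proposition \ref{well def from L to M} applied directly to $\mathfrak{L}$, together with the fact that $\mathfrak{E}(\mathfrak{L}) = \boldsymbol{M}$. The one delicate point is the Fubini exchange in the central step: it crucially requires the \emph{pointwise-in-$t$} refinement of minimality of $G[\boldsymbol{\mu}]$ (not merely the total action identity \eqref{minimal meas selection}), so that the pointwise equality holds $\Lambda \otimes \mathcal{L}^1$-almost everywhere and can then be disintegrated in $\boldsymbol{\mu}$ without loss.
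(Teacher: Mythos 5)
Your proof is correct and takes essentially the same route as the paper: reduce to the pointwise-in-$t$ velocity identity $|\dot{\boldsymbol{\mu}}|_{W_p}^p(t) = \int |\dot{\boldsymbol{x}}|^p(t)\,dG[\boldsymbol{\mu}]$ for $\Lambda$-a.e.\ $\boldsymbol{\mu}$, Fubini/Lebesgue-differentiate, and close with Propositions \ref{starting from Lambda} and \ref{well def from L to M}. You are actually slightly more explicit than the paper, which restricts \eqref{equality energy} to an arbitrary subinterval $[s,t]$ and then invokes Lebesgue differentiation, silently relying on the very pointwise-refinement-of-minimality observation you spell out; both arguments rest on the same combination of the Lisini a.e.\ velocity bound and the total-action equality from $G[\boldsymbol{\mu}] \in \PP_{\bar a_p}^{\operatorname{min}}(C_T(X))$.
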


\begin{proof}
    The first part follows the same strategy above. Regarding the second part, from (\ref{equality energy}) and Proposition \ref{starting from Lambda}, we know that for any $s<t$
    \[\int \int \int_s^t |\dot{\boldsymbol{x}}|_d^p(r) dr d\lambda(\boldsymbol{x}) d\mathfrak{L}_\Lambda(\lambda) = \int \int_s^t |\dot{\boldsymbol{\mu}}|_{W_{p}}^p(r)drd\Lambda(\boldsymbol{\mu}) = \int_s^t |\dot{\boldsymbol{M}}|_{\mathcal{W}_p}^p(r) dr.\]
    Using Lebesgue theorem, we easily obtain that for a.e. $t\in [0,T]$ it holds 
    \[\int \int  |\dot{\boldsymbol{x}}|_d^p(t) d\lambda(\boldsymbol{x}) d\mathfrak{L}_\Lambda(\lambda) = \int |\dot{\boldsymbol{\mu}}|_{W_{p}}^p(t)d\Lambda(\boldsymbol{\mu}) =  |\dot{\boldsymbol{M}}|_{\mathcal{W}_p}^p(t) dt.\]
    Moreover, from Proposition \ref{well def from L to M}, for a.e. $t\in [0,T]$ it holds
    $ |\dot{\boldsymbol{M}}|_{\mathcal{W}_p}^p(t) dt \leq \int \int |\dot{\boldsymbol{x}}|_d^p(t) d\lambda(\boldsymbol{x}) d\mathfrak{L}(\lambda).$
\end{proof}

\subsection{Geodesics of random measures}\label{subsec: geodesics}
In this subsection, we want to give a characterization for the \textit{geodesics} in the space of random measures $(\PP_p(\PP_p(X)), \mathcal{W}_p)$. Assume that $(X,d)$ is a complete, separable and geodesic metric space. It is well known that, under these assumptions on $X$, the space $(\PP_p(X),W_p)$ is geodesic as well, for $p>1$ (see e.g. \cite[Theorem 10.6]{ambrosio2021lectures}). Reiterating it, we already know that $(\PP_p(\PP_p(X)),\mathcal{W}_p)$ is geodesic. 

With the notation introduced in §\ref{subsub: geod}, thanks to Corollary \ref{push-forward by a souslin-borel map}, we define
\begin{equation}
\begin{aligned}
    \hfill\operatorname{GEO}: \PP(&X\times X) \to \PP(C([0,1],X)), \quad \operatorname{GEO}(\pi):= \operatorname{geo}_\sharp \pi, \hfill
    \\
    \hfill \operatorname{GEO}_t & : \PP(X\times X) \to \PP(X), \quad \operatorname{GEO}_t(\pi):= (\operatorname{geo}_t)_\sharp \pi. \hfill
\end{aligned}
\end{equation}

Notice that, since $\operatorname{geo}$ is not Borel measurable in general, we cannot use Proposition \ref{meas of push forward} to conclude that $\operatorname{GEO}$ (and $\operatorname{GEO}_t$) is measurable. One way to deal with this would be to obtain $\operatorname{GEO}$ again using a measurable selection argument. For the sake of the presentation, we do not enter into details and we stick with the previous definitions, that work well whenever the geodesics are uniquely determined so that $\operatorname{geo}$ and $\operatorname{GEO}$ are Borel measurable.

On the other hand, we denote by $\operatorname{geo}_{\PP_p}:\PP_p(X)\times \PP_p(X) \to \operatorname{Geo}(\PP_p(X))$ the map defined in \eqref{eq: geo map} in the geodesic space $\PP_p(X)$. Similarly, $\operatorname{geo}_{t,\PP_p}$ is its evaluation at time $t$, for all $t\in[0,1]$.

First, we show how a geodesic connecting two given random measures $M_0,M_1\in \PP_p(\PP_p(X))$ can be obtained either from $\Pi\in \Gamma_0(M_0,M_1)$ or $\mathfrak{P}\in \mathrm{R}\Gamma_0(M_0,M_1)$ (see §\ref{couplings}).

\begin{lemma}\label{lemma: geodesics through opt couplings}
    Let $M_0,M_1\in \PP_p(\PP_p(X))$. If $\mathfrak{P}\in \mathrm{R}\Gamma_0(M_0,M_1)$, then $M_t^{\mathfrak{P}}:=(\operatorname{GEO}_t)_\sharp \mathfrak{P}$ is a geodesic in $(\PP_p(\PP_p(X)),\mathcal{W}_p)$ connecting $M_0$ and $M_1$. 
    \\
    If $\Pi\in \Gamma_0(M_0,M_1)$, then $M_t^{\Pi}:= (\operatorname{geo}_{t,\PP_p})_\sharp \Pi$ is a geodesic in $(\PP_p(\PP_p(X)),\mathcal{W}_p)$ connecting $M_0$ and $M_1$.
\end{lemma}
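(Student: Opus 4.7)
The plan is, in both items, to exhibit an explicit coupling between the endpoints of the proposed curve at times $s$ and $t$, obtain the Lipschitz bound $\mathcal{W}_p(M_s,M_t)\le |t-s|\,\mathcal{W}_p(M_0,M_1)$, and then promote it to equality via the standard triangle-inequality argument for constant-speed geodesics.

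\emph{Endpoint conditions.} Since $\operatorname{geo}_0=p^1$ and $\operatorname{geo}_1=p^2$, we have $\operatorname{GEO}_0=P^1$ and $\operatorname{GEO}_1=P^2$, so the random-coupling constraint $\mathfrak P\in\mathrm R\Gamma_0(M_0,M_1)$ immediately gives $M_0^{\mathfrak P}=M_0$ and $M_1^{\mathfrak P}=M_1$. Analogously, evaluating $\operatorname{geo}_{\PP_p}(\mu,\nu)$ at $t=0,1$ returns $\mu,\nu$, and the marginal condition on $\Pi$ yields $M_0^\Pi=M_0$, $M_1^\Pi=M_1$.

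\emph{Lipschitz estimate for item (1).} I would use $(\operatorname{GEO}_s,\operatorname{GEO}_t)_\sharp\mathfrak P\in\Gamma(M_s^{\mathfrak P},M_t^{\mathfrak P})$ to get
\[
\mathcal{W}_p^p(M_s^{\mathfrak P},M_t^{\mathfrak P})\le \int W_p^p\bigl((\operatorname{geo}_s)_\sharp\pi,(\operatorname{geo}_t)_\sharp\pi\bigr)\,d\mathfrak P(\pi),
\]
and then bound each integrand by the coupling $(\operatorname{geo}_s,\operatorname{geo}_t)_\sharp\pi$. Because $\operatorname{geo}(x,y)$ is a constant-speed geodesic of $X$, one has $d(\operatorname{geo}_s(x,y),\operatorname{geo}_t(x,y))=|t-s|\,d(x,y)$, whence
\[
\mathcal{W}_p^p(M_s^{\mathfrak P},M_t^{\mathfrak P})\le |t-s|^p\int\!\!\int d^p(x,y)\,d\pi(x,y)\,d\mathfrak P(\pi)=|t-s|^p\,\mathcal{W}_p^p(M_0,M_1),
\]
using Proposition \ref{prop: coupling and rand couplings} for the last equality. (A clean alternative that I would also mention, to connect with the rest of the section, is to set $\mathfrak L:=\operatorname{GEO}_\sharp\mathfrak P\in\PP(\PP(C([0,1],X)))$, observe that $\mathfrak{E}(\mathfrak L)=M^{\mathfrak P}$ and that $\int\!\!\int a_p(\boldsymbol x)\,d\lambda\,d\mathfrak L=\mathcal{W}_p^p(M_0,M_1)$, and then apply Proposition \ref{well def from L to M}.)

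\emph{Lipschitz estimate for item (2).} The coupling $(\operatorname{geo}_{s,\PP_p},\operatorname{geo}_{t,\PP_p})_\sharp\Pi\in\Gamma(M_s^\Pi,M_t^\Pi)$ yields
\[
\mathcal{W}_p^p(M_s^\Pi,M_t^\Pi)\le \int W_p^p\bigl(\operatorname{geo}_{s,\PP_p}(\mu,\nu),\operatorname{geo}_{t,\PP_p}(\mu,\nu)\bigr)\,d\Pi(\mu,\nu)=|t-s|^p\int W_p^p(\mu,\nu)\,d\Pi(\mu,\nu),
\]
which equals $|t-s|^p\,\mathcal{W}_p^p(M_0,M_1)$ since $\Pi\in\Gamma_0(M_0,M_1)$. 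In both cases, the triangle inequality applied to the chain $M_0\to M_s\to M_t\to M_1$ forces the inequality to be an equality at every pair $s<t$, establishing the constant-speed geodesic property.

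\emph{Main obstacle.} The only delicate point is the measurability of $\operatorname{geo}$ (and hence of $\operatorname{GEO}$, $\operatorname{GEO}_t$, $\operatorname{geo}_{\PP_p}$, $\operatorname{geo}_{t,\PP_p}$) used in the push-forward definitions: as the authors note before \eqref{eq: geo map}, in general $\operatorname{geo}$ is only Souslin--Borel measurable, so the pushforwards must be interpreted in the sense of Corollary \ref{push-forward by a souslin-borel map}. When geodesics in $X$ (resp.\ in $\PP_p(X)$) are unique, $\operatorname{geo}$ (resp.\ $\operatorname{geo}_{\PP_p}$) is genuinely Borel and there is nothing to check; in the general case the same selection argument used throughout the paper guarantees that $(\operatorname{GEO}_t)_\sharp\mathfrak P$ and $(\operatorname{geo}_{t,\PP_p})_\sharp\Pi$ are well-defined Borel probability measures, and the computations above go through verbatim.
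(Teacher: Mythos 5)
Your proof is correct and follows essentially the same route as the paper: the paper's proof proves the one-sided Lipschitz bound $\mathcal{W}_p(M_s,M_t)\le|t-s|\,\mathcal{W}_p(M_0,M_1)$ via the explicit couplings $(\operatorname{geo}_s,\operatorname{geo}_t)_\sharp\pi$ (for item (1)) and $(\operatorname{geo}_{s,\PP_p},\operatorname{geo}_{t,\PP_p})_\sharp\Pi$ (for item (2)), then relies on the remark in §\ref{subsub: geod} that $\le$ already forces the constant-speed geodesic identity. The only cosmetic differences are that you spell out the endpoint check and the triangle-inequality upgrade (which the paper leaves implicit), and that you split the first estimate into two displayed steps where the paper merges the random coupling $\bigl((\operatorname{geo}_s,\operatorname{geo}_t)_\sharp\bigr)_\sharp\mathfrak P$ and the inner plan into one integral.
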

\begin{proof}
    Regarding the first claim, for all $0\leq s<t \leq 1$, it holds
    \begin{align*}
        \mathcal{W}_p^p & (M_s^{\mathfrak{P}},M_t^{\mathfrak{P}}) \leq  \int \int d^p(\operatorname{geo}_s(x,y), \operatorname{geo}_t(x,y)) d\pi(x,y)d\mathfrak{P}(\pi) 
        \\
        = & (t-s)^p \int \int d^p(x,y)d\pi(x,y)d\mathfrak{P}(\pi) = (t-s)^p \mathcal{W}_p^p(M_0,M_1).
    \end{align*}
    Similarly for the second claim:
    \begin{align*}
        \mathcal{W}_p^p & (M_s^{\Pi}, M_t^{\Pi}) \leq  \int W_p^p (\operatorname{geo}_{s,\PP}(\mu,\nu),\operatorname{geo}_{t,\PP}(\mu,\nu)) d\Pi(\mu,\nu) 
        \\
        = & (t-s)^p \int  W_p^p(\mu,\nu)d\Pi(\mu,\nu) = (t-s)^p \mathcal{W}_p^p(M_0,M_1).
    \end{align*}
\end{proof}

The next proposition completely characterizes geodesics in terms of their liftings $\mathfrak{L}\in \PP(\PP(C([0,1],X)))$ and $\Lambda \in \PP(C([0,1],\PP(X)))$.

\begin{prop}\label{prop: char geo}
    Let $\boldsymbol{M} = (M_t)_{t\in[0,1]} \in C([0,1],\PP_p(\PP_p(X)))$. The following are equivalent
    \begin{enumerate}
        \item $\boldsymbol{M}$ is a geodesic in $(\PP_p(\PP_p(X)),\mathcal{W}_p)$;
        \item there exists $\mathfrak{L}\in \PP(\PP(C([0,1],X)))$ lifting of $\boldsymbol{M}$, concentrated over $\lambda \in \PP(C([0,1],X))$ that are, in turn, supported over $\operatorname{Geo}(X)$, and such that $(E_{0,1})_\sharp \mathfrak{L} \in \mathrm{R}\Gamma_0(M_0,M_1)$, where 
        \begin{equation}
            E_{0,1}: \PP(C([0,1],X)) \to \PP(X \times X), \quad E_{0,1}(\lambda) := (e_0,e_1)_\sharp \lambda;
        \end{equation}
        \item there exists $\Lambda \in \PP(C([0,1],\PP(X)))$ lifting of $\boldsymbol{M}$ that is supported on $\operatorname{Geo}(\PP_p(X))$ and such that $(\mathfrak{e}_0,\mathfrak{e}_1)_\sharp \Lambda \in \Gamma_0(M_0,M_1)$.
    \end{enumerate}
\end{prop}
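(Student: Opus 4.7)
The plan is to close a circular chain of implications $(2)\Rightarrow(1)$, $(3)\Rightarrow(1)$, $(1)\Rightarrow(3)$, and $(1)\Rightarrow(2)$, with each implication reducing to a kinetic-energy chain that collapses to equalities thanks to the nested lifting machinery of Sections \ref{from ac(pp) to P(ac(p))}--\ref{comp of maps} and the duality between couplings and random couplings of §\ref{couplings}.

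For $(2)\Rightarrow(1)$, given $\mathfrak{L}$ as in (2), I would estimate, for $0\le s<t\le 1$,
\[
\mathcal{W}_p^p(M_s,M_t) \le \int W_p^p\big((e_s)_\sharp\lambda,(e_t)_\sharp\lambda\big)\,d\mathfrak{L}(\lambda) \le \int\!\!\int d^p(\gamma_s,\gamma_t)\,d\lambda(\gamma)\,d\mathfrak{L}(\lambda),
\]
and then exploit $d(\gamma_s,\gamma_t)=|t-s|\,d(\gamma_0,\gamma_1)$ for $\lambda$-a.e.~$\gamma$ together with the identity $\int\!\int d^p(\gamma_0,\gamma_1)\,d\lambda\,d\mathfrak{L}=\mathcal{W}_p^p(M_0,M_1)$, which comes from $(E_{0,1})_\sharp\mathfrak{L}\in\mathrm{R}\Gamma_0(M_0,M_1)$. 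This yields $\mathcal{W}_p^p(M_s,M_t)\le|t-s|^p\mathcal{W}_p^p(M_0,M_1)$, hence $\boldsymbol{M}$ is a constant-speed geodesic. The proof of $(3)\Rightarrow(1)$ is the mirror argument at the level of $\Lambda$: one uses $W_p(\mu_s,\mu_t)=|t-s|W_p(\mu_0,\mu_1)$ for $\Lambda$-a.e.~$\boldsymbol{\mu}$ and $\int W_p^p(\mu_0,\mu_1)\,d\Lambda=\mathcal{W}_p^p(M_0,M_1)$, which follows from $(\mathfrak{e}_0,\mathfrak{e}_1)_\sharp\Lambda\in\Gamma_0(M_0,M_1)$.

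For $(1)\Rightarrow(3)$, I would apply Proposition \ref{prop: acpp to pacp} to take a minimal-action lifting $\Lambda\in\operatorname{Lift}(\boldsymbol{M})$, so $\int\mathcal{A}_p(\boldsymbol{\mu})\,d\Lambda(\boldsymbol{\mu})=\int_0^1|\dot{\boldsymbol{M}}|_{\mathcal{W}_p}^p\,dt=\mathcal{W}_p^p(M_0,M_1)$. Integrating the pointwise chain $W_p^p(\mu_0,\mu_1)\le\bigl(\int_0^1|\dot{\boldsymbol{\mu}}|\,dt\bigr)^p\le\int_0^1|\dot{\boldsymbol{\mu}}|^p\,dt=\mathcal{A}_p(\boldsymbol{\mu})$ against $\Lambda$, and combining with the coupling lower bound $\int W_p^p(\mu_0,\mu_1)\,d\Lambda\ge\mathcal{W}_p^p(M_0,M_1)$ (valid since $(\mathfrak{e}_0,\mathfrak{e}_1)_\sharp\Lambda\in\Gamma(M_0,M_1)$), every inequality must be an equality $\Lambda$-a.e. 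The Jensen equality forces $t\mapsto|\dot{\boldsymbol{\mu}}|(t)$ to be constant; together with $W_p(\mu_0,\mu_1)=\int_0^1|\dot{\boldsymbol{\mu}}|\,dt$ and the triangle inequality applied on subintervals, this gives $\boldsymbol{\mu}\in\operatorname{Geo}(\PP_p(X))$ for $\Lambda$-a.e.~$\boldsymbol{\mu}$, while $\int W_p^p(\mu_0,\mu_1)\,d\Lambda=\mathcal{W}_p^p(M_0,M_1)$ promotes $(\mathfrak{e}_0,\mathfrak{e}_1)_\sharp\Lambda$ to $\Gamma_0(M_0,M_1)$.

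Finally, for $(1)\Rightarrow(2)$, starting from the $\Lambda\in\operatorname{Lift}(\boldsymbol{M})$ just produced, I set $\mathfrak{L}:=G_\sharp\Lambda$ via Theorem \ref{metric theorem from Lambda to mathfrak{L}}: Proposition \ref{3.8} gives $(E_t)_\sharp\mathfrak{L}=M_t$, and the computation (\ref{equality energy}) together with Proposition \ref{3.10} yields $\int\!\int a_p(\gamma)\,d\lambda(\gamma)\,d\mathfrak{L}(\lambda)=\int_0^1|\dot{\boldsymbol{M}}|_{\mathcal{W}_p}^p\,dt=\mathcal{W}_p^p(M_0,M_1)$. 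Since $(E_{0,1})_\sharp\mathfrak{L}\in\mathrm{R}\Gamma(M_0,M_1)$, Proposition \ref{prop: coupling and rand couplings} forces
\[
\mathcal{W}_p^p(M_0,M_1)\le\int\!\!\int d^p(\gamma_0,\gamma_1)\,d\lambda\,d\mathfrak{L}\le\int\!\!\int a_p(\gamma)\,d\lambda\,d\mathfrak{L}=\mathcal{W}_p^p(M_0,M_1),
\]
so both inequalities are equalities: the first promotes $(E_{0,1})_\sharp\mathfrak{L}$ to $\mathrm{R}\Gamma_0(M_0,M_1)$, and the second gives $d(\gamma_0,\gamma_1)^p=\int_0^1|\dot{\gamma}|^p\,dt$ for $\mathfrak{L}$-a.e.~$\lambda$ and $\lambda$-a.e.~$\gamma$, hence $\gamma\in\operatorname{Geo}(X)$. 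The main challenge is bookkeeping: one has to orient each transport inequality correctly so that the kinetic energy is sandwiched by $\mathcal{W}_p^p(M_0,M_1)$ on both sides; once this is arranged, every step is an immediate consequence of the tools already available.
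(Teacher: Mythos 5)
Your proposal is correct, and every inequality is oriented the right way; the kinetic-energy sandwich technique you use is exactly the one the paper exploits. However, the logical architecture differs from the paper's. The paper closes the triangle $(1)\Rightarrow(2)\Rightarrow(3)\Rightarrow(1)$ in three implications, with $(2)\Rightarrow(3)$ being a short structural argument: one defines $\Lambda:=E_\sharp\mathfrak{L}$, checks pointwise that $E$ carries a geodesic-supported $\lambda$ with optimal endpoints into an element of $\operatorname{Geo}(\PP_p(X))$, and observes $(\mathfrak{e}_0,\mathfrak{e}_1)_\sharp\Lambda = P_\sharp\big((E_{0,1})_\sharp\mathfrak{L}\big)$ with $P$ as in \eqref{eq: 3.6}, so that optimality of the random coupling passes to the coupling. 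You instead prove a star-shaped equivalence with the four implications $(2)\Rightarrow(1)$, $(3)\Rightarrow(1)$, $(1)\Rightarrow(3)$, $(1)\Rightarrow(2)$, where $(1)\Rightarrow(3)$ is established from scratch by Jensen-equality/no-backtracking on a minimal lifting $\Lambda\in\operatorname{Lift}(\boldsymbol{M})$, and $(1)\Rightarrow(2)$ then repeats the analogous argument one level deeper through $G_\sharp$. Both routes are valid. The paper's chain is more economical (three implications, and the downward projection $P$ does the work of transferring geodesic and optimality structure from random couplings to couplings essentially for free), whereas your decomposition is more redundant but makes the symmetric role of the two liftings explicit, and gives direct proofs of the converses $(2)\Rightarrow(1)$ and $(3)\Rightarrow(1)$ that the paper obtains only by chaining through the cycle. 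One tiny stylistic gap: in $(1)\Rightarrow(2)$ you jump from the integral equality $\int\!\int d^p(\gamma_0,\gamma_1)\,d\lambda\,d\mathfrak{L}=\int\!\int a_p(\gamma)\,d\lambda\,d\mathfrak{L}$ directly to $\gamma\in\operatorname{Geo}(X)$; this is correct because the pointwise inequality $d^p(\gamma_0,\gamma_1)\le a_p(\gamma)$ factors through $\bigl(\int_0^1|\dot\gamma|\bigr)^p$, and equality forces both $d(\gamma_0,\gamma_1)=\int_0^1|\dot\gamma|$ and constancy of $|\dot\gamma|$, but the reader has to unpack that composition; the paper spells out these two sub-equalities explicitly.
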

\begin{proof}
    (1)$\implies$(2): $\boldsymbol{M}$ being a geodesic implies that $\boldsymbol{M}\in AC_1^p(\PP_p(\PP_p(X)))$. Then, consider any $\mathfrak{L}\in \PP(\PP(C([0,1],X)))$ built as in Proposition \ref{3.10}, and we show it shares the wanted properties. By construction, it is a lifting of $\boldsymbol{M}$. Regarding the geodesic property, it holds
    \begin{align*}\mathcal{W}_p^p (M_0,M_1) \leq & \int \int d^p(x_0,x_1)d\lambda(\boldsymbol{x}) d\mathfrak{L}(\lambda) \leq \int \int \left( \int_0^1 |\dot{\boldsymbol{x}}|(r)dr\right)^pd\lambda(\boldsymbol{x}) d\mathfrak{L}(\lambda)
    \\ \leq &\int \int \int_0^1 |\dot{\boldsymbol{x}}|^p(r)dr d\lambda(\boldsymbol{x}) d\mathfrak{L}(\lambda) = \int_0^1 |\dot{\boldsymbol{M}}|_{\mathcal{W}_p}^p (r) dr = \mathcal{W}_p^p(M_0,M_1),
    \end{align*}
    where the last equality follows from the fact that $\boldsymbol{M}$ is a geodesic and the second last one from Proposition \ref{3.10}. The previous computation forces all inequalities to be equalities. In particular, the first implies that $(E_{0,1})_\sharp \mathfrak{L}\in \mathrm{R}\Gamma_0(M_0,M_1)$, while the second and the third one imply, respectively, that for $\mathfrak{L}$-a.e. $\lambda$ and for $\lambda$-a.e. $\boldsymbol{x}$, it holds 
    \[d(x_0,x_1) = \int_0^1 |\dot{\boldsymbol{x}}|(r)dr \quad \text{ and } \quad |\dot{\boldsymbol{x}}| \text{ is constant},\]
    that together imply that $\boldsymbol{x}\in \operatorname{Geo}(X)$.

    (2)$\implies$(3): define $\Lambda:= E_\sharp \mathfrak{L}$. It is immediate to verify that, if $\lambda \in \PP(C([0,1],X))$ is supported over $\operatorname{Geo}(X)$ and $(e_0,e_1)_\sharp \lambda$ is optimal, then $\boldsymbol{\mu}:= E(\lambda)\in \operatorname{Geo}(\PP_p(X))$. We conclude observing $(\mathfrak{e}_0, \mathfrak{e}_1)_\sharp \Lambda = P_\sharp \big((E_{0,1})_\sharp \mathfrak{L}\big)$, where $P$ is defined as in \eqref{eq: 3.6}.

    (3)$\implies$(1): thanks to Proposition \ref{starting from Lambda} and the assumptions, it holds
    \begin{align*}
    \mathcal{W}_p^p(M_0,M_1)\leq & \int_0^1|\dot{\boldsymbol{M}}|^p(r)dr \leq \int \int_0^1 |\dot{\boldsymbol{\mu}}|_{W_p}^p(r)dr d\Lambda(\boldsymbol{\mu}) 
    \\
    = & \int W_p^p(\mu_0,\mu_1)d\Lambda(\boldsymbol{\mu}) = \mathcal{W}_p^p(M_0,M_1).
    \end{align*}
    Reasoning as above, it implies that $\boldsymbol{M}$ is a constant speed geodesic in $(\PP_p(\PP_p(X)), \mathcal{W}_p)$.
\end{proof}

The previous result can be improved when $X = \R^d$ (for simplicity, but the same holds whenever $X$ is a Polish space for which there is uniqueness of geodesics), showing that all the objects involved are unique whenever we restrict ourselves to either $[0,t]$ or $[t,1]$, for $t\in(0,1)$, giving a non-branching property for geodesics of random measures.

First we need the operation of composition of random couplings, following \cite[Lemma 5.3.2]{ambrosio2005gradient}.

\begin{prop}\label{prop: comp rand couplings}
    Let $X$ be a Polish space, and $M_1,M_2,M_3 \in \PP(X)$. Let $\mathfrak{P}_{1,2} \in \mathrm{R}\Gamma(M_1,M_2)$ and $\mathfrak{P}_{2,3} \in \mathrm{R}\Gamma(M_2,M_3)$. Then, there exists $\mathfrak{P}_{1,2,3}\in \PP(\PP(X\times X \times X))$ such that 
    \begin{equation}
        P^{1,2}_\sharp  \mathfrak{P}_{1,2,3} = \mathfrak{P}_{1,2}, \quad P^{2,3}_\sharp  \mathfrak{P}_{1,2,3} = \mathfrak{P}_{2,3},
    \end{equation}
    where, for all $i,j = 1,2,3$ 
    \[P^{i,j}:\mathcal{P}(X\times X \times X) \to \PP(X\times X), \quad P^{i,j}(\theta) = p^{i,j}_\sharp \theta.\]
    In particular, $P^{1,3}_\sharp \mathfrak{P}_{1,2,3}\in \mathrm{R}\Gamma(M_1,M_3)$.
\end{prop}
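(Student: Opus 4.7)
The argument is a two-level application of the classical gluing lemma, mirroring the usual construction of a three-marginal coupling but performed on the enlarged ``random'' spaces. Both random couplings share $M_2 = P^2_\sharp \mathfrak{P}_{1,2} = P^1_\sharp \mathfrak{P}_{2,3}$ as their common middle marginal in $\PP(\PP(X))$, and I would exploit this at both the outer (random-coupling) and inner (pointwise-coupling) levels.

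At the outer level, I would disintegrate $\mathfrak{P}_{1,2}$ and $\mathfrak{P}_{2,3}$ with respect to $M_2$ via the continuous maps $P^2$ and $P^1$, obtaining Borel families $\{\mathfrak{P}_{1,2}^{\mu_2}\}$ and $\{\mathfrak{P}_{2,3}^{\mu_2}\}$ of probability measures on $\PP(X\times X)$, concentrated respectively on $\{\pi : p^2_\sharp \pi = \mu_2\}$ and $\{\pi' : p^1_\sharp \pi' = \mu_2\}$. The measure
\[
\mathfrak{R} := \int_{\PP(X)} \mathfrak{P}_{1,2}^{\mu_2} \otimes \mathfrak{P}_{2,3}^{\mu_2} \, dM_2(\mu_2) \in \PP\bigl(\PP(X\times X) \times \PP(X\times X)\bigr)
\]
is then a coupling of $\mathfrak{P}_{1,2}$ and $\mathfrak{P}_{2,3}$ supported on pairs $(\pi_{12}, \pi_{23})$ with matching middle marginal $p^2_\sharp \pi_{12} = p^1_\sharp \pi_{23}$.

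At the inner level, for each such pair the classical gluing lemma furnishes some $\theta \in \PP(X^3)$ satisfying $p^{1,2}_\sharp \theta = \pi_{12}$ and $p^{2,3}_\sharp \theta = \pi_{23}$. To assemble these into an element of $\PP(\PP(X^3))$ I need a measurable selection. I would apply Theorem \ref{measurable selection} to the continuous projection onto the first and third factors from the closed set
\[
\mathcal{G} := \bigl\{(\pi_{12}, \theta, \pi_{23}) : p^{1,2}_\sharp \theta = \pi_{12},\ p^{2,3}_\sharp \theta = \pi_{23}\bigr\} \subset \PP(X\times X) \times \PP(X^3) \times \PP(X\times X),
\]
whose image is precisely the Borel subset of pairs with matching middle marginal. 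This yields a Souslin--Borel measurable map $\mathrm{Glue}:\PP(X\times X) \times \PP(X\times X) \to \PP(X^3)$, and setting $\mathfrak{P}_{1,2,3} := \mathrm{Glue}_\sharp \mathfrak{R}$ defines a Borel probability measure on $\PP(X^3)$, thanks to Proposition \ref{univ meas of souslin} and Corollary \ref{push-forward by a souslin-borel map}.

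The marginal conditions then reduce to a Fubini-type computation: for bounded Borel $\Phi$ on $\PP(X\times X)$,
\[
\int \Phi\bigl(p^{1,2}_\sharp \theta\bigr)\, d\mathfrak{P}_{1,2,3}(\theta) = \int \Phi(\pi_{12})\, d\mathfrak{R}(\pi_{12}, \pi_{23}) = \int \Phi\, d\mathfrak{P}_{1,2},
\]
and symmetrically for $P^{2,3}_\sharp \mathfrak{P}_{1,2,3}$. The final ``in particular'' claim is immediate since $P^1 \circ P^{1,3} = P^1 \circ P^{1,2}$ and $P^2 \circ P^{1,3} = P^2 \circ P^{2,3}$ as maps on $\PP(X^3)$, giving $P^1_\sharp(P^{1,3}_\sharp \mathfrak{P}_{1,2,3}) = M_1$ and $P^2_\sharp(P^{1,3}_\sharp \mathfrak{P}_{1,2,3}) = M_3$. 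The main obstacle is to ensure that all selections and disintegrations are genuinely measurable; rather than invoking an explicit canonical (conditionally independent) gluing and tracking the measurable dependence of disintegrations on the measure, I would rely on the Souslin--Borel measurable selection machinery of Appendix \ref{appendix souslin}, exactly as in the analogous random-coupling construction of Section \ref{couplings}.
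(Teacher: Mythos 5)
Your argument is correct and is essentially the paper's own proof: both disintegrate $\mathfrak{P}_{1,2}$ and $\mathfrak{P}_{2,3}$ against their common $M_2$-marginal, form the fiberwise tensor product integrated against $M_2$, and push forward by a Souslin--Borel measurable selection of the gluing lemma (you select from the closed graph set $\mathcal G$ and project, while the paper directly selects a right inverse of $\mathrm{P}=(P^{1,2},P^{2,3})$ on $\PP(X^3)$; these are the same construction). The only discrepancy is cosmetic: your $\mathrm{Glue}$ is actually defined on the Borel set $\mathcal A$ of compatible pairs rather than all of $\PP(X\times X)^2$, but this is harmless since $\mathfrak R$ is concentrated there.
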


\begin{proof}
    The proof again uses a measurable selection argument. Let $\mathcal{A}\subset\PP(X\times X)\times \PP(X\times X)$ be the Borel subset defined as 
    \[\mathcal{A} = \left\{(\pi,\sigma) \in \PP(X\times X) \times \PP(X\times X) \, : \, p^2_\sharp \pi=p^1_\sharp \sigma \right\}.\]
    Define the map 
    \[\mathrm{P} = (P^{1,2},P^{2,3}):\PP(X\times X \times X) \to \mathcal{A}, \quad \mathrm{P}(\theta):= (p^{1,2}_\sharp \theta, p^{2,3}_\sharp \theta).\]
    It is Borel thanks to Proposition \ref{meas of push forward}. Moreover, thanks to \cite[Lemma 5.3.2 \& Remark 5.3.3]{ambrosio2005gradient}, the map $\mathrm{P}$ is surjective. Thus, we can apply Theorem \ref{measurable selection} to obtain a Souslin-Borel measurable map $\mathrm{Q}:\mathcal{A}\to \PP(X\times X \times X)$ that is a right-inverse of $\mathrm{P}$.
    \\
    Now, consider the disintegration of $\mathfrak{P}_{1,2}$ and $\mathfrak{P}_{2,3}$, respectively, with respect to the maps $P^1$ and $P^2$, to obtain that
    \[\mathfrak{P}_{1,2} = \int_{\PP} \mathfrak{P}_{1,2,\mu} dM_t(\mu), \quad \mathfrak{P}_{2,3} = \int_{\PP} \mathfrak{P}_{2,3,\mu} dM_t(\mu),\]
    where $\mathfrak{P}_{1,2,\mu} \in \PP(\PP(X\times X))$ is concentrated over couplings $\pi$ for which $p^2_\sharp \pi = \mu$, for $M_t$-a.e. $\mu\in \PP(X)$, and similarly for $\mathfrak{P}_{2,3,\mu}$. Notice that, for $M_t$-a.e. $\mu$, the product measure $\mathfrak{P}_{1,2,\mu} \otimes \mathfrak{P}_{2,3,\mu} $ is concentrated over $\mathcal{A}$. Thus, we can define 
    \begin{equation}
        \mathfrak{P}_{1,2,3}:= \mathrm{Q}_\sharp \left( \int_\PP \mathfrak{P}_{1,2,\mu} \otimes \mathfrak{P}_{2,3,\mu} dM_t(\mu) \right) =  \int_\PP \mathrm{Q}_\sharp \left(\mathfrak{P}_{1,2,\mu} \otimes \mathfrak{P}_{2,3,\mu} \right) dM_t(\mu),
    \end{equation}
    It is not hard to show that $\mathfrak{P}_{1,2,3}$ shares the wanted properties.
\end{proof}

\begin{teorema}
    Let $\boldsymbol{M} = (M_t)_{t\in[0,1]} \in C([0,1],\PP_p(\PP_p(\R^d)))$ be a geodesic in $(\PP_p(\PP_p(\R^d)),\mathcal{W}_p)$. Then, for every $t\in(0,1)$, $\mathrm{R}\Gamma_0(M_0,M_t)$ (resp. $\mathrm{R}\Gamma_0(M_t,M_1)$) contains a unique optimal random coupling $\mathfrak{P}_{0,t}$ (resp. $\mathfrak{P}_{t,1}$). 
    \\
    Moreover, there exists a unique $\mathfrak{L}_{0,t} \in \PP(\PP(C([0,t],\R^d)))$ (resp. $\mathfrak{L}_{t,1} \in \PP(\PP(C([t,1],\R^d)))$) lifting of $(M_s)_{s\in[0,t]}$ (resp. $(M_s)_{s\in[t,1]}$) satisfying property (2) in Proposition \ref{prop: char geo}.
    \\
    Similarly, there exists a unique $\Lambda_{0,t} \in \PP(C([0,t],\PP(\R^d)))$ (resp. $\Lambda_{t,1}\in \PP(C([t,1],\PP(\R^d))))$) lifting of $(M_s)_{s\in[0,t]}$ (resp. $(M_s)_{s\in[t,1]}$) satisfying property (3) in Proposition \ref{prop: char geo}.
\end{teorema}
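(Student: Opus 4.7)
The existence of $\mathfrak{L}_{0,t}$, $\Lambda_{0,t}$ and $\mathfrak{P}_{0,t}$ will be obtained by restriction. Denoting by $r_{0,t}:C([0,1],\R^d)\to C([0,t],\R^d)$ the restriction of curves, I pick any lifting $\mathfrak{L}$ of $\boldsymbol{M}$ provided by Proposition \ref{prop: char geo}(2) and set $\mathfrak{L}_{0,t}:=((r_{0,t})_\sharp)_\sharp \mathfrak{L}$, $\Lambda_{0,t}:=E_\sharp \mathfrak{L}_{0,t}$ and $\mathfrak{P}_{0,t}:=(E_{0,t})_\sharp \mathfrak{L}_{0,t}$, where $E_{0,t}(\lambda):=(e_0,e_t)_\sharp \lambda$. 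Since the restriction to $[0,t]$ of a constant-speed geodesic of $\R^d$ is again a constant-speed geodesic on $[0,t]$, $\mathfrak{L}_{0,t}$-a.e.~$\lambda$ is concentrated on such geodesics, $\Lambda_{0,t}$-a.e.~$\boldsymbol{\mu}$ is a geodesic in $\PP_p(\R^d)$, and the optimality of $\mathfrak{P}_{0,t}$ in $\mathrm{R}\Gamma_0(M_0,M_t)$ follows from the geodesic identity $d(x_0,x_t)=t\,d(x_0,x_1)$ combined with $\mathcal{W}_p(M_0,M_t)=t\,\mathcal{W}_p(M_0,M_1)$, exactly as in the computation (1)$\Rightarrow$(2) of Proposition \ref{prop: char geo}.

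The heart of the argument is the uniqueness of $\mathfrak{P}_{0,t}$. Given any competitor $\mathfrak{P}'_{0,t}\in \mathrm{R}\Gamma_0(M_0,M_t)$ and the fixed $\mathfrak{P}_{t,1}:=(E_{t,1})_\sharp\mathfrak{L}\in \mathrm{R}\Gamma_0(M_t,M_1)$, Proposition \ref{prop: comp rand couplings} yields a three-point random coupling $\mathfrak{P}'_{0,t,1}$ with the prescribed $(1,2)$- and $(2,3)$-marginals. The triangle inequality on $\R^d$ combined with the Minkowski inequality in $L^p$ with respect to the integrated measure gives
\begin{equation*}
    \mathcal{W}_p^p(M_0,M_1)\leq \int \int d^p(x_0,x_1)\,d\pi\, d\mathfrak{P}'_{0,t,1}(\pi) \leq \big(\mathcal{W}_p(M_0,M_t)+\mathcal{W}_p(M_t,M_1)\big)^p = \mathcal{W}_p^p(M_0,M_1),
\end{equation*}
where the first inequality uses $P^{1,3}_\sharp \mathfrak{P}'_{0,t,1}\in \mathrm{R}\Gamma(M_0,M_1)$ and the last equality uses the geodesic property of $\boldsymbol{M}$. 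Since every inequality is saturated, the combined equality cases of Minkowski (crucially for $p>1$) and of the triangle inequality in $\R^d$ force $\mathfrak{P}'_{0,t,1}$-a.e.~$\pi$ to be supported on the affine set $\{(x_0,x_t,x_1):x_t=(1-t)x_0+tx_1\}$. On this set the map $(x_0,x_t,x_1)\mapsto (x_t,x_1)$ is a bijection with continuous inverse $T_3(x_t,x_1):=((x_t-tx_1)/(1-t),x_t,x_1)$, so that $\pi=(T_3)_\sharp (p^{2,3}_\sharp \pi)$ for $\mathfrak{P}'_{0,t,1}$-a.e.~$\pi$. Consequently $\mathfrak{P}'_{0,t,1}=((T_3)_\sharp)_\sharp \mathfrak{P}_{t,1}$ is independent of the initial choice of $\mathfrak{P}'_{0,t}$, and projecting via $P^{1,2}_\sharp$ yields uniqueness of $\mathfrak{P}'_{0,t}$; the uniqueness of $\mathfrak{P}_{t,1}$ follows by the symmetric argument.

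From the uniqueness of $\mathfrak{P}_{0,t}$, the remaining uniqueness statements follow through two deterministic bijections. Since geodesics of $\R^d$ are determined by their values at $0$ and $t$, the evaluation $\lambda\mapsto (e_0,e_t)_\sharp\lambda$ is a continuous bijection on the class of laws on $C([0,t],\R^d)$ supported on constant-speed geodesics, with continuous inverse given by push-forward along the affine interpolation; hence $\mathfrak{L}_{0,t}$ is uniquely reconstructible from $\mathfrak{P}_{0,t}$. For $\Lambda_{0,t}$, I combine this with the standard deterministic fact that in $(\PP_p(\R^d),W_p)$ with $p>1$ a geodesic $\boldsymbol{\mu}$ is generated by a unique optimal coupling $\pi^{\boldsymbol{\mu}}\in \Gamma_0(\mu_0,\mu_t)$ (itself a consequence of the non-branching of $\R^d$, via the same three-plan argument applied fibrewise): this yields a Borel map $\boldsymbol{\mu}\mapsto \lambda^{\boldsymbol{\mu}}\in \PP(C([0,t],\R^d))$ whose push-forward provides the inverse of $E_\sharp$ on the relevant class, and thus realizes the bijection $\mathfrak{L}_{0,t}\leftrightarrow \Lambda_{0,t}$. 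The main obstacle is the propagation of the scalar Minkowski--triangle equality to pointwise almost-everywhere information both for the outer measure $\mathfrak{P}'_{0,t,1}$ and for each inner coupling $\pi$; this is where the strict convexity of $r\mapsto r^p$ for $p>1$ and the strict convexity of the norm on $\R^d$ play the decisive role.
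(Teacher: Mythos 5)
Your proposal is correct and follows essentially the same route as the paper: compose the random couplings via Proposition~\ref{prop: comp rand couplings}, saturate the Minkowski/triangle inequality chain using the geodesic identity $\mathcal{W}_p(M_0,M_t)+\mathcal{W}_p(M_t,M_1)=\mathcal{W}_p(M_0,M_1)$, deduce $t$-alignment $\mathfrak{P}'_{0,t,1}$-a.e.\ and $\pi$-a.e., and reconstruct $\mathfrak{P}'_{0,t}$ deterministically from $\mathfrak{P}_{t,1}$ via the affine map (the paper's $\ell_1$, your $T_3$). The only cosmetic difference is in deriving uniqueness of $\Lambda_{0,t}$: the paper passes through uniqueness of $\mathfrak{L}_{0,t}$ and the identity $E\circ G=\operatorname{id}$, while you invoke fibrewise uniqueness of the optimal plan generating a $W_p$-geodesic for $p>1$; both arguments are sound and amount to the same bijection.
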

\begin{proof}
    Let $\mathfrak{P}_{0,t} \in \mathrm{R}\Gamma_0(M_0,M_t)$ and $\mathfrak{P}_{t,1}\in \mathrm{R}\Gamma_{0}(M_t,M_1)$ be any optimal random coupling. Applying Proposition \ref{prop: comp rand couplings}, we obtain $\mathfrak{P}_{0,t,1}\in \PP(\PP(\R^d \times \R^d \times \R^d))$ satisfying
    \begin{equation}
        P^{1,2}_\sharp  \mathfrak{P}_{0,t,1} =\mathfrak{P}_{0,t}, \quad P^{2,3}_\sharp  \mathfrak{P}_{0,t,1} = \mathfrak{P}_{t,1}, \quad \mathfrak{P}_{0,1}:=P^{1,3}_\sharp  \mathfrak{P}_{0,t,1} \in \mathrm{R}\Gamma(M_0,M_1).
    \end{equation}
    Exploiting the geodesic property, we can also show that $\mathfrak{P}_{0,1}\in \mathrm{R}\Gamma_0(M_0,M_1)$:
    \begin{align*}
        \mathcal{W}_p& (M_0,M_1) \leq \left(\iint |x-y|^p d\pi(x,y) d\mathfrak{P}_{0,1}(\pi)\right)^{\frac{1}{p}} = \left( \iint |x_1-x_3|^p d\theta(x_1,x_2,x_3) d\mathfrak{P}_{0,t,1}(\theta) \right)^{\frac{1}{p}}
        \\
        \leq &
        \left(\iint  |x_1-x_2|^pd\theta(x_1,x_2,x_3) d\mathfrak{P}_{0,t,1}(\theta) \right)^{\frac{1}{p}} + \left(\iint  |x_2-x_3|^pd\theta(x_1,x_2,x_3) d\mathfrak{P}_{0,t,1}(\theta) \right)^{\frac{1}{p}} 
        \\
        = & \mathcal{W}_p(M_0,M_t) + \mathcal{W}_p(M_t,M_1) = \mathcal{W}_{p}(M_0,M_1).
    \end{align*}
    An important property is also hidden in the previous computation: since the last inequality is actually an equality, we know that for $\mathfrak{P}_{0,t,1}$-a.e. $\theta\in \PP(\R^d \times \R^d \times \R^d)$ and for $\theta$-a.e. $(x_1,x_2,x_3)$, the three points are aligned, in the sense that there exists $\alpha\in(0,1)$ such that $x_2 - x_1 = \alpha(x_3-x_1)$. Moreover, using again the geodesic property, it is not hard to show that $\alpha = t$. In particular, for $\mathfrak{P}_{0,t,1}$-a.e. $\theta\in \PP(\R^d \times \R^d \times \R^d)$ and for $\theta$-a.e. $(x_1,x_2,x_3)$, it holds $x_1 = \frac{x_2 - t x_3}{1-t}$ and $x_3 = \frac{x_2 - (1-t)x_1}{t}$. Thus, define the functions
    \[\ell_1(x_2,x_3) := \left(  \frac{x_2 - t x_3}{1-t},x_2,x_3 \right), \quad \mathrm{L}_1:= (\ell_1)_\sharp : \PP(\R^d \times \R^d) \to \PP(\R^d \times \R^d \times \R^d), \]
    \[\ell_3(x_1,x_2) := \left(  x_1,x_2,\frac{x_2 - (1-t)x_1}{t} \right), \quad \mathrm{L}_3:= (\ell_3)_\sharp : \PP(\R^d \times \R^d) \to \PP(\R^d \times \R^d \times \R^d).\]
    Thanks to the previous observations, we can conclude that $\mathfrak{P}_{0,t} = P^{1,2}_\sharp \mathfrak{P}_{0,t,1} = P^{1,2}_\sharp \big( (\mathrm{L}_1)_\sharp  \mathfrak{P}_{t,1} \big)$, that makes us conclude that $\mathfrak{P}_{0,t}$ is unique, since it has been chosen independently of $\mathfrak{P}_{t,1}$. Similarly, the uniqueness for $\mathfrak{P}_{t,1}$ is implied by $\mathfrak{P}_{t,1} = P^{2,3}_\sharp \big( (\mathrm{L}_3)_\sharp  \mathfrak{P}_{0,t} \big)$.

    The uniqueness of $\mathfrak{L}_{0,t}$ follows from Proposition \ref{prop: char geo} and observing that the required properties force it to be equal to $P^{0,t}_\sharp  \mathfrak{P}_{0,t}$, where 
    \[P^{0,t}:= p^{0,t}_\sharp , \quad p^{0,t}:\R^d \times \R^d \to C([0,t],\R^d), \ p^{0,t}(x_1,x_2) = [s\mapsto (1-s)x_1 + s x_2].\]
    The uniqueness of $\Lambda_{0,t}$ now follows, and similarly, the same holds for $\mathfrak{L}_{t,1}$ and $\Lambda_{t,1}$.
\end{proof}

\subsection{Barycenters}\label{subsec: barycenters}
In this subsection, we apply our result to show that, if $\boldsymbol{M}\in AC^p_T(\PP(\PP(X)))$, then the curves of its barycenters is in $AC_T^p(\PP(X))$.
\begin{df}\label{def: barycenter}
    Let $M\in \PP(\PP(X))$. The barycenter of $M$ is the measure $\operatorname{bar}[M]\in \PP(X)$ defined such that it satisfies 
    \begin{equation}\label{mean def}
        \int_X f(x) d\big(\operatorname{bar}[M]\big)(x) = \int \int_X f(x) d\mu(x) dM(\mu) 
    \end{equation}
    for all Borel functions $f:X \to[0,+\infty]$.
\end{df}

Recalling the definition of $\widetilde{M}\in \PP(X\times \PP(X))$ given in Remark \ref{rem: tilde{M}}, we notice that the first marginal of $\widetilde{M}$ is indeed $\operatorname{bar}[M]$. Now, we show a nice property of the barycenter with respect to the nested push-forward described in \eqref{push forward of push forward}.

\begin{lemma}\label{lemma barycenters and push-forward}
    Let $X,Y$ be two Polish spaces, $M\in \PP(\PP(X))$ and $f:X\to Y$ a Borel map. Define the map $F:\PP(X) \to \PP(Y)$ as the push-forward of $f$, i.e. $F= f_\sharp $. Then it holds 
    \begin{equation}
        \operatorname{bar}[F_\sharp  M] = f_\sharp (\operatorname{bar}[M]).
    \end{equation}
\end{lemma}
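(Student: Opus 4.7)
The plan is to verify the identity by integrating an arbitrary Borel test function $g\colon Y\to[0,+\infty]$ against both sides and showing the results agree, which forces the equality of the two measures in $\PP(Y)$.

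More concretely, I would fix such a $g$ and compute, starting from the definition \eqref{mean def} of the barycenter applied to the random measure $F_\sharp M\in\PP(\PP(Y))$:
\begin{equation*}
\int_Y g(y)\,d\bigl(\operatorname{bar}[F_\sharp M]\bigr)(y)
= \int_{\PP(Y)}\int_Y g(y)\,d\nu(y)\,d(F_\sharp M)(\nu).
\end{equation*}
The right-hand side is exactly the nested push-forward identity \eqref{property of push for of push for} applied with the Borel map $f\colon X\to Y$ and its induced $F=f_\sharp\colon\PP(X)\to\PP(Y)$, so it equals
\begin{equation*}
\int_{\PP(X)}\int_X g\circ f(x)\,d\mu(x)\,dM(\mu).
\end{equation*}
Applying \eqref{mean def} in the opposite direction to $M\in\PP(\PP(X))$ with the Borel test function $g\circ f\colon X\to[0,+\infty]$, and then unfolding the definition of push-forward of the measure $\operatorname{bar}[M]\in\PP(X)$ under $f$, yields
\begin{equation*}
\int_{\PP(X)}\int_X g\circ f(x)\,d\mu(x)\,dM(\mu)
= \int_X g\circ f(x)\,d(\operatorname{bar}[M])(x)
= \int_Y g(y)\,d\bigl(f_\sharp\operatorname{bar}[M]\bigr)(y).
\end{equation*}
Since this chain of equalities holds for every Borel $g\colon Y\to[0,+\infty]$, the two Borel probability measures $\operatorname{bar}[F_\sharp M]$ and $f_\sharp(\operatorname{bar}[M])$ coincide on $Y$.

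There is essentially no obstacle: the only non-trivial ingredient is the nested push-forward formula \eqref{property of push for of push for}, which is already available from the preliminaries, together with the measurability of $F=f_\sharp$ (Proposition \ref{meas of push forward}) ensuring that $F_\sharp M$ is a well-defined element of $\PP(\PP(Y))$ whose barycenter makes sense.
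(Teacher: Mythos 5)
Your proof is correct and follows essentially the same chain of equalities as the paper's own: fix an arbitrary nonnegative Borel test function $g$ on $Y$, expand $\int g\,d\operatorname{bar}[F_\sharp M]$ via the defining identity \eqref{mean def}, invoke the nested push-forward formula \eqref{property of push for of push for}, and then fold back to $f_\sharp(\operatorname{bar}[M])$. There is nothing to add.
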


\begin{proof}
    Let $N = F_\sharp M$. For all $g: Y \to [0,+\infty]$ Borel measurable, thanks to \eqref{property of push for of push for}, it holds
    \begin{align*}
        \int_Y g(y) d\operatorname{bar}[N](y) = & \int_{\PP(Y)} \int_Y g(y) d\nu(y) dN(\nu)
        =  
        \int_{\PP(X)} \int_X g(f(x)) d\mu(x) dM(\mu)
        \\
        = &
        \int_X g(f(x)) d\big(\operatorname{bar}[M]\big)(x) = \int_Y g(y) df_\sharp (\operatorname{bar}[N])(y).
        \end{align*}
\end{proof}

Given a curve $\boldsymbol{M} = (M_t)_{t\in [0,T]} \in C_T(\PP(\PP(X)))$, we indicate with $\boldsymbol{\operatorname{bar}[M]}= (\operatorname{bar}[M_t])_{t\in [0,T]} \in C_T(\PP(X))$ the curve of the barycenters.

\begin{prop}
    If $\boldsymbol{M}\in AC^p_T\big(\PP_p(\PP_p(X))\big)$, then $\boldsymbol{m}:=\boldsymbol{\operatorname{bar}[M]}\in AC^p_T(\PP_p(X))$, and, for a.e. $t\in(0,T)$, it holds
    \begin{equation}
        | \dot{\boldsymbol{m}} |_{W_p}^p(t) \leq \int \int |\dot{\boldsymbol{x}}|^p(t) d\lambda(\boldsymbol{x}) d\mathfrak{L}(\lambda) = |\dot{\boldsymbol{M}}|_{\mathcal{W}_p}^p(t).
    \end{equation}
\end{prop}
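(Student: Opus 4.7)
The strategy is to lift $\boldsymbol{M}$ via the nested superposition principle, exploit the commutation of the barycenter with push-forwards, and apply the converse direction of Lisini's lifting.

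First, since $\boldsymbol{M} \in AC_T^p(\PP_p(\PP_p(X)))$, by Proposition \ref{prop: acpp to pacp} I choose some $\Lambda \in \operatorname{Lift}(\boldsymbol{M})$ and set $\mathfrak{L} := G_\sharp \Lambda \in \PP_{\bar{\mathfrak{A}}_p}(\PP(C_T(X)))$ as in Theorem \ref{metric theorem from Lambda to mathfrak{L}}. By Proposition \ref{3.8}, $(E_t)_\sharp \mathfrak{L} = M_t$ for every $t \in [0,T]$, and by \eqref{equality energy} we have $\int\!\int a_p(\boldsymbol{x}) \, d\lambda(\boldsymbol{x}) \, d\mathfrak{L}(\lambda) = \int_0^T |\dot{\boldsymbol{M}}|_{\mathcal{W}_p}^p(t)\, dt < +\infty$, together with the initial moment bound.

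Next, since $E_t = (e_t)_\sharp$, Lemma \ref{lemma barycenters and push-forward} applied to the map $e_t : C_T(X) \to X$ gives
\begin{equation*}
    \operatorname{bar}[M_t] = \operatorname{bar}\big[(E_t)_\sharp \mathfrak{L}\big] = (e_t)_\sharp \operatorname{bar}[\mathfrak{L}].
\end{equation*}
Set $\eta := \operatorname{bar}[\mathfrak{L}] \in \PP(C_T(X))$, so that $m_t = (e_t)_\sharp \eta$. Since $\mathfrak{L}$-a.e.~$\lambda$ is concentrated on $AC_T^p(X)$, testing the definition \eqref{mean def} of barycenter with the indicator of the Borel set $AC_T^p(X)$ shows $\eta(AC_T^p(X)) = 1$; similarly, testing with $\bar{a}_p$ gives
\begin{equation*}
    \int \bar{a}_p(\boldsymbol{x}) \, d\eta(\boldsymbol{x}) = \int\!\int \bar{a}_p(\boldsymbol{x}) \, d\lambda(\boldsymbol{x}) \, d\mathfrak{L}(\lambda) = \bar{\mathfrak{A}}_p(\mathfrak{L}) < +\infty,
\end{equation*}
so in particular $(e_0)_\sharp \eta = \operatorname{bar}[M_0] \in \PP_p(X)$.

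I then apply the second part of Theorem \ref{lisini lifting} directly to $\eta$: the curve $\boldsymbol{m} = ((e_t)_\sharp \eta)_{t\in[0,T]}$ belongs to $AC_T^p(\PP_p(X))$ and
\begin{equation*}
    |\dot{\boldsymbol{m}}|_{W_p}^p(t) \leq \int |\dot{\boldsymbol{x}}|^p(t) \, d\eta(\boldsymbol{x}) = \int\!\int |\dot{\boldsymbol{x}}|^p(t) \, d\lambda(\boldsymbol{x}) \, d\mathfrak{L}(\lambda)
\end{equation*}
for $\mathcal{L}^1$-a.e.~$t \in (0,T)$, where the last equality is again the definition of $\eta$ tested against the Borel function $\boldsymbol{x} \mapsto |\dot{\boldsymbol{x}}|^p(t)$ (measurable by Appendix \ref{appendix meas for curves}). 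Finally, Proposition \ref{3.10} applied to this specific $\mathfrak{L} = \mathfrak{L}_\Lambda$ (the one obtained from $\operatorname{Lift}(\boldsymbol{M})$ via $G_\sharp$) gives
\begin{equation*}
    \int\!\int |\dot{\boldsymbol{x}}|^p(t) \, d\lambda(\boldsymbol{x}) \, d\mathfrak{L}(\lambda) = |\dot{\boldsymbol{M}}|_{\mathcal{W}_p}^p(t) \quad \text{for a.e.~} t \in (0,T),
\end{equation*}
which yields the claimed inequality with equality in the middle term.

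There is no genuine obstacle: the one point requiring care is the choice of $\mathfrak{L}$. Using an arbitrary lifting would only give the weaker bound $\int\!\int |\dot{\boldsymbol{x}}|^p \, d\lambda \, d\mathfrak{L} \geq |\dot{\boldsymbol{M}}|_{\mathcal{W}_p}^p$, so the minimal-energy selection provided by $G_\sharp \circ \operatorname{Lift}$ is what makes the chain of (in)equalities in the statement sharp.
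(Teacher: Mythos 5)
Your proof is correct and arrives at the same estimate, but it organizes the final step differently from the paper. Both proofs use the same lift $\mathfrak{L} = G_\sharp(\operatorname{Lift}(\boldsymbol{M}))$ and the same commutation Lemma \ref{lemma barycenters and push-forward} applied to $e_t$. The paper, however, stops short of naming $\eta := \operatorname{bar}[\mathfrak{L}]$ explicitly: it instead barycentres the two-time random coupling $\mathfrak{P}_{t,s} = (E_t,E_s)_\sharp\mathfrak{L}$ to get a competitor plan $\pi_{t,s}\in\Gamma(\operatorname{bar}[M_t],\operatorname{bar}[M_s])$, and then redoes by hand the H\"older estimate $d(\gamma_t,\gamma_s)\le\int_s^t|\dot\gamma|$ followed by Lebesgue differentiation. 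You instead observe that $\eta=\operatorname{bar}[\mathfrak{L}]$ is itself a measure on $C_T(X)$ with finite $\bar a_p$-energy whose time marginals are exactly $\boldsymbol{m}$, so the converse direction of Theorem \ref{lisini lifting} immediately delivers $\boldsymbol{m}\in AC_T^p(\PP_p(X))$ together with the bound on $|\dot{\boldsymbol{m}}|_{W_p}$. This buys you a slightly cleaner argument (the H\"older/Lebesgue computation is already encapsulated in Lisini's theorem), at no cost; the two routes share the same key inputs and differ only in whether the last estimate is re-derived or delegated. One minor notational slip: you write $\bar{\mathfrak{A}}_p(\mathfrak{L})$, but $\bar{\mathfrak{A}}_p$ acts on elements of $\PP(C_T(X))$, so what you mean is $\int \bar{\mathfrak{A}}_p(\lambda)\,d\mathfrak{L}(\lambda)$, which is indeed finite by the choice of $\mathfrak{L}$.
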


\begin{proof}
    For all $t\in [0,T]$, $\operatorname{bar}[M_t]\in \PP_p(X)$, indeed
    \[\int_X  d^p(x,\bar{x}) d\operatorname{bar}[M_t](x) = \int \int_X d^p(x,\bar{x}) d\mu(x) dM_t(\mu) <+\infty. \]
    Consider $\mathfrak{L}\in \PP_{\mathcal{A}_p}\big(\PP(C_T(X))\big)$ defined as $\mathfrak{L}:= (G_\sharp )\circ \operatorname{Lift}(\boldsymbol{M})$. Define the map 
    \begin{align*}
        (E_t,E_s): \PP(C_T(X)) & \to \PP(X\times X)
        \\
        \lambda \ & \mapsto (e_t,e_s)_\sharp \lambda,
    \end{align*}
    and $\mathfrak{P}_{t,s}:= (E_t,E_s)_\sharp  \mathfrak{L} \in \PP\big(\PP(X\times X)\big)$. Notice that the barycenter of $\mathfrak{P}_{t,s}$, indicated as $\pi_{t,s}\in \PP(X\times X)$, is a coupling between $\operatorname{bar}[M_t]$ and $\operatorname{bar}[M_s]$: using Lemma \ref{lemma barycenters and push-forward}, indeed $\operatorname{bar}[(E_t)_\sharp  \mathfrak{L}] = (e_t)_\sharp (\operatorname{bar}[\mathfrak{L}])$, and same for $s$. Thus, we have
    \begin{align*}
        W_p^p(\operatorname{bar}[M_t] & ,\operatorname{bar}[M_s])
        \leq 
        \int \int d^p(x,y) d\pi(x,y) d\mathfrak{P}_{t,s}(\pi)
        \\
        = &
        \int \int d^p(\gamma_t,\gamma_s) d\lambda(\gamma) d\mathfrak{L}(\lambda)
        \leq 
        |t-s|^{p-1} \int_s^t \int \int |\dot\gamma|^p(r) d\lambda(\gamma) d\mathfrak{L}(\lambda) dr,
    \end{align*}
    which implies that $\boldsymbol{m}:=\boldsymbol{\operatorname{bar}[M]} \in AC_T(\PP_p(X))$, and using Lebesgue theorem, it holds 
    \begin{equation}
        | \dot{\boldsymbol{m}} |_{W_p}^p(t) \leq \int \int |\dot{\boldsymbol{x}}|^p(t) d\lambda(\boldsymbol{x}) d\mathfrak{L}(\lambda) = |\dot{\boldsymbol{M}}|_{\mathcal{W}_p}^p(t) \quad \text{for a.e. $t\in [0,T]$.}
    \end{equation}
\end{proof}
\section{Continuity equation on random measures}\label{CE: derivations vs VF}

\subsection{Derivations, continuity equation and a first superposition result}\label{subsec: first superposition result}
Derivations are the natural objects that can be used to define an abstract continuity equation over a metric space $X$, see \cite{stepanov2017three}. Here, we adapt this definition in the case $X= \PP(\R^d)$, endowed with the narrow topology.

\begin{df}[Cylinder functions and Wasserstein gradient]\label{def cyl func and wass grad}
    A functional $F:\PP(\R^d)\to\R$ is called a $C^1_c$-cylinder function if there exists $k\in \N$, $\Phi = (\phi_1,\dots,\phi_k)\in C^1_c(\R^d;\R^k)$ and $\Psi\in C^1_b(\R^k)$ such that 
    \begin{equation}\label{cyl functions}
        F(\mu) = \Psi\left( L_\Phi(\mu) \right), \quad L_\Phi(\mu) =  \big(L_{\phi_1}(\mu) ,\dots , L_{\phi_k}(\mu)\big), \quad  L_{\phi_i}(\mu) := \int_{\R^d} \phi_i(x) d\mu(x).
    \end{equation}
    Its Wasserstein gradient is then defined as 
    \begin{equation}\label{wass gradient cyl}
        \nabla_W F(x,\mu):= \sum_{i=1}^k \partial_{i}\Psi\left( L_{\Phi}(\mu)\right) \nabla \phi_i(x) \quad \forall x\in \R^d, \ \forall\mu \in \PP(\R^d).
    \end{equation}
    The collection of all the cylinder functions is called $\operatorname{Cyl}^1_c(\PP(\R^d))$. If $F = \Psi\circ L_\Phi$ with $\Phi \in C_b^1(\R^d)$ and $\Psi \in C_b^1(\R^k)$, we say that $F \in \operatorname{Cyl}^1_b(\PP(\R^d))$. 
\end{df}

Notice that, when $F\in \operatorname{Cyl}^1_c(\PP(\R^d))$, we can consider the outer function $\Psi \in C_c^1(\R^k)$. Moreover, we have the inclusion $\operatorname{Cyl}^1_c(\PP(\R^d))\subset \operatorname{Cyl}^1_b(\PP(\R^d)).$

\begin{oss}
    Since $\nabla_W F(x,\mu) = \nabla_x \left(\frac{d^+}{d\varepsilon}|_{\varepsilon=0} \ F((1-\varepsilon)\mu+ \varepsilon\delta_x)\right)$, the Wasserstein gradient does not depend on the representation chosen for $F$.
\end{oss}

\begin{df}[$L^p$ derivations]\label{def derivations}
    Let $M\in \PP(\PP(\R^d))$ and $p\geq 1$. An $L^p(M)$-derivation is a linear operator $B:\operatorname{Cyl}_c^1(\PP(\R^d))\to L^p(M)$ such that 
    \begin{equation}\label{leibniz rule}
        B[FG] = GB[F] + FB[G] \quad M\text{-a.e.} 
    \end{equation}
    and there exists a non-negative function $c\in L^p(M)$ such that 
    \begin{equation}\label{eq: int condition for derivations}
        |B[F](\mu)| \leq c(\mu) \|\nabla_WF(\cdot,\mu)\|_{L^{p'}(\mu)} \quad\forall F \in \operatorname{Cyl}_c^1(\PP(\R^d)), \ \text{ for } M\text{-a.e. } \mu \in \PP(\R^d),
    \end{equation}
    where $p'$ is the conjugate exponent of $p$.
\end{df}

We will extensively work with families of $L^p$-derivations: to be more specific, let $(M_t)_{t\in [0,T]}\subset \PP(\PP(\R^d))$ be a Borel family of random measures and $(B_t)_{t\in [0,T]}$ a family of $L^p(M_t)$-derivations such that $(t,\mu)\mapsto B_t[F](\mu)$ is Borel measurable for all $F\in \operatorname{Cyl}_c^1(\PP(\R^d))$ and there exists a non-negative function $c\in L^p(M_t\otimes dt)$ (recall the notation introduced in §\ref{subsub cont curve in prob space}) such that
\begin{equation}\label{time integrability family of derivations}
    |B_t[F](\mu)| \leq c_t(\mu) \|\nabla_W F\|_{L^{p'}(\mu)} \  M_t\otimes dt\text{-a.e.} \quad \text{and} \quad \int_0^T \int_{\PP(\R^d)} c_t^p(\mu)dM_t(\mu) dt <+\infty.
\end{equation}
We will refer to such a kind of family of derivations as $L^p(M_t\otimes dt)$-derivations. Notice that, since all the measures involved are probability measures, an $L^p(M)$-derivation is also an $L^q(M)$-derivation for each $q\in[1,p]$.

\begin{oss}\label{rem: negligible sets derivation}
    Thanks to \eqref{eq: int condition for derivations}, given a representative of $c$, there exists $\mathcal{N}\subset \PP(\R^d)$ such that $M(\mathcal{N}) = 0$ and $|B[F](\mu)|\leq c(\mu) \|\nabla_WF(\cdot,\mu)\|_{L^{p'}(\mu)}< +\infty$ for all $F \in \operatorname{Cyl}_c^1(\PP(\R^d))$ and $\mu\in\mathcal{N}^c$. 
    Similarly for $L^p(M_t\otimes dt)$-derivations, exploiting \eqref{time integrability family of derivations}, there exists $\widetilde{\mathcal{N}}\subset \PP(\R^d)\times [0,T]$ such that $M_t\otimes dt(\widetilde{\mathcal{N}}) = 0$ and $|B_t[F](\mu)|\leq c_t(\mu) \|\nabla_WF(\cdot,\mu)\|_{L^{p'}(\mu)}<+\infty$ for all $F \in \operatorname{Cyl}_c^1(\PP(\R^d))$ and $(\mu,t)\in\widetilde{\mathcal{N}}^c$. 
\end{oss}

\begin{df}[Continuity equation on random measures]\label{def CE}
    Let $(M_t)_{t\in[0,T]}$ be a continuous curve of Borel probability measures over $\mathcal{P}(\R^d)$, i.e. $(M_t)_{t\in[0,T]}\in C_T(\PP(\PP(\R^d)))$. Let $(B_t)_{t\in[0,T]}$ be a family of $L^1(M_t)$-derivations, according to \eqref{time integrability family of derivations}. We say that the continuity equation 
    \begin{equation}\label{CE meas}
        \frac{d}{dt}M_t + \operatorname{div}_{\mathcal{P}}\big( 
        B_t M_t \big)=0
    \end{equation}
    is satisfied if 
    \begin{equation}\label{CE meas 2}
    \begin{aligned}
        \forall F \in \operatorname{Cyl}_c^1(\PP(\R^d)) \quad \frac{d}{dt}\int_{\mathcal{P}} F(\mu) \ dM_t(\mu) 
        = \int_{\mathcal{P}}B_t[F](\mu) dM_t(\mu) 
    \end{aligned}
    \end{equation}
    in the sense of distributions in $(0,T)$, where $\PP$ is short for $\PP(\R^d)$.
\end{df}

\begin{oss}
    The assumption that the curve $t\mapsto M_t$ is continuous, is not restrictive. Indeed, if such curve is just Borel measurable, thanks to Lemma \ref{abs cont sol of CE}, we can always find a (unique) continuous representative of it.
\end{oss}

\noindent Before proceeding, we state and prove two technical, but useful, lemmas.

\begin{lemma}[Chain rule]\label{chain rule}
    Let $M\in \PP(\PP(\R^d))$ and $B$ be an $L^1(M)$-derivation. Then, for all $F\in \operatorname{Cyl}_c^1(\PP(\R^d))$ of the form $F = \Psi (L_\Phi(\mu))$ as in \eqref{cyl functions},  for $M$-a.e. $\mu\in \PP(\R^d)$ it holds 
    \begin{equation}\label{chain rule lemma}
        B[F](\mu) = \sum_{i=1}^k \partial_i\Psi(L_\Phi(\mu)) B[L_{\phi_i}](\mu).
    \end{equation}
\end{lemma}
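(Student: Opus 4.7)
The strategy is a standard three-step argument: reduce to constants, then to monomials in the linear functionals $\ell_i := L_{\phi_i}$ via the Leibniz rule, then to arbitrary $\Psi$ by polynomial approximation on the compact range of $L_\Phi$.

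First, any constant $c\in\R$ is realised as a cylinder function whose Wasserstein gradient vanishes identically, so the bound \eqref{eq: int condition for derivations} forces $B[c]=0$ $M$-a.e. Next, since each $\phi_i\in C_c^1$, the set $K:=L_\Phi(\PP(\R^d))$ is contained in the compact box $\prod_i[-\|\phi_i\|_\infty,\|\phi_i\|_\infty]$, so every monomial $\ell_{i_1}\cdots\ell_{i_m}$ can be realised as an element of $\operatorname{Cyl}_c^1(\PP(\R^d))$ by choosing its outer function to be a $C_b^1(\R^m)$ function agreeing with the monomial on a neighbourhood of $K$. A direct induction on $m$ based on the Leibniz rule \eqref{leibniz rule} then yields, $M$-a.e.,
$$B[\ell_{i_1}\cdots\ell_{i_m}](\mu)=\sum_{j=1}^m\Big(\prod_{r\ne j}\ell_{i_r}(\mu)\Big)B[\ell_{i_j}](\mu),$$
and by linearity this extends to the chain rule for every polynomial outer function $P$:
$$B[P\circ L_\Phi](\mu)=\sum_{i=1}^k(\partial_iP)(L_\Phi(\mu))\,B[\ell_i](\mu)\qquad M\text{-a.e.}$$

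For a general $\Psi\in C_b^1(\R^k)$, I would invoke Stone--Weierstrass (after a standard truncation/mollification) to find polynomials $P_n$ with $\varepsilon_n:=\sup_{y\in K}\bigl(|\Psi-P_n|(y)+|\nabla\Psi-\nabla P_n|(y)\bigr)\to 0$. Setting $F_n:=P_n\circ L_\Phi$ (realised as a $\operatorname{Cyl}_c^1$ function after truncating $P_n$ outside $K$, which does not affect its values on $\PP(\R^d)$), the bound \eqref{eq: int condition for derivations} applied to $F-F_n$ gives
$$|B[F-F_n]|(\mu)\le c(\mu)\,\varepsilon_n\sum_{i=1}^k\|\nabla\phi_i\|_\infty\qquad M\text{-a.e.,}$$
so $B[F_n](\mu)\to B[F](\mu)$ pointwise $M$-a.e. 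Simultaneously, the uniform convergence of $\partial_iP_n$ on $K$ gives
$$\sum_{i=1}^k(\partial_iP_n)(L_\Phi(\mu))\,B[\ell_i](\mu)\ \longrightarrow\ \sum_{i=1}^k(\partial_i\Psi)(L_\Phi(\mu))\,B[\ell_i](\mu)\qquad M\text{-a.e.,}$$
and passing to the limit in the polynomial chain rule established in Step~2 yields \eqref{chain rule lemma}.

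The only nontrivial point is the bookkeeping required to keep every intermediate object within $\operatorname{Cyl}_c^1$; this is made routine by the compactness of $K$, which allows monomial and polynomial outer functions to be replaced by $C_b^1$ truncations without changing the induced cylinder functions. Once this is in place, the Leibniz rule and the $L^{p'}$-continuity estimate \eqref{eq: int condition for derivations} do all the work.
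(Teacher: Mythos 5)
Your proof is correct and follows essentially the same route as the paper: establish the chain rule for polynomial outer functions via the Leibniz rule and induction, then pass to general $\Psi$ by approximating with polynomials uniformly on the compact range of $L_\Phi$ (together with first derivatives) and using the $L^{p'}$-continuity bound \eqref{eq: int condition for derivations}. The additional bookkeeping you carry out — checking that truncated monomials give legitimate elements of $\operatorname{Cyl}_c^1(\PP(\R^d))$ and noting $B$ annihilates constants — is implicit in the paper's terse sketch and is handled correctly here.
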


\begin{proof}
    Thanks to the Leibniz rule, by linearity and induction \eqref{chain rule lemma} holds when $\Psi$ is a polynomial. When $\Psi$ is not a polynomial, consider $(p_n)$ a sequence of polynomial approximating $\Psi$ uniformly on compact sets, together with its first derivatives. By the boundedness of $\Phi $ and its first derivatives, we conclude.
\end{proof}

\begin{lemma}\label{lemma: well posed for Lambda-a.e. curve}
    Let $(M_t)_{t\in [0,T]} \in C_T(\PP(\PP(\R^d)))$ and $\Lambda \in \PP(C_T(\PP(\R^d)))$ such that $(\mathfrak{e}_t)_\sharp \Lambda = M_t$ for all $t\in [0,T]$. Let $p\geq 1$. Let $(B_t)_{t\in [0,T]}$ be a family of $L^p(M_t \otimes dt)$-derivations and $c\in L^p(M_t\otimes dt)$ as in \eqref{time integrability family of derivations}. 
    Then the functions 
    \begin{equation}\label{eq: meas in (t,boldsymbol(mu))}(t,\boldsymbol{\mu})\mapsto B_t[F](e_t(\boldsymbol{\mu})), \quad (t,\boldsymbol{\mu})\mapsto c_t^p(e_t(\boldsymbol{\mu}))
    \end{equation}
    are $\mathcal{L}^1_T\otimes\Lambda$-measurable and well-defined. Moreover, for $\mathcal{L}^1_T\otimes\Lambda$-a.e. $(t,\boldsymbol{\mu})$ and for all $F\in \operatorname{Cyl}_c^1(\PP(\R^d))$, it holds
    \begin{equation}\label{eq: thesis lemma well posed Lambda curve}
        |B_t[F](e_t(\boldsymbol{\mu}))| \leq c_t(e_t(\boldsymbol{\mu}))|\nabla_W F(\cdot,e_t(\boldsymbol{\mu})\|_{L^{p'}(e_t(\boldsymbol{\mu}))}<+\infty, \quad \int_0^T c_t(e_t(\boldsymbol{\mu}))dt <+\infty.
    \end{equation}
\end{lemma}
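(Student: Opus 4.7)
\smallskip

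\textbf{Plan of proof.} The statement has three ingredients to verify: joint measurability of the two compositions in \eqref{eq: meas in (t,boldsymbol(mu))}, the pointwise bound \eqref{eq: thesis lemma well posed Lambda curve} holding simultaneously for every $F\in \operatorname{Cyl}_c^1(\PP(\R^d))$ outside an $\mathcal{L}^1_T\otimes\Lambda$-null set, and the finite time-integral of $c_t(e_t(\boldsymbol\mu))$. My plan is to reduce each point to a Fubini argument based on the marginal identity $(\mathfrak{e}_t)_\sharp\Lambda=M_t$, combined with the good representatives of $B_t[F]$ and $c_t$ supplied by Remark \ref{rem: negligible sets derivation}.

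\emph{Measurability.} The evaluation map $\mathcal{E}:[0,T]\times C_T(\PP(\R^d))\to [0,T]\times \PP(\R^d)$, $(t,\boldsymbol\mu)\mapsto (t,e_t(\boldsymbol\mu))$, is jointly continuous under the compact–open topology on $C_T(\PP(\R^d))$ and hence Borel. By the assumption \eqref{time integrability family of derivations} on the family of derivations, the maps $(t,\mu)\mapsto B_t[F](\mu)$ and $(t,\mu)\mapsto c_t^p(\mu)$ admit Borel representatives on $[0,T]\times \PP(\R^d)$. Composing with $\mathcal{E}$ yields Borel, hence $\mathcal{L}^1_T\otimes\Lambda$-measurable, functions on $[0,T]\times C_T(\PP(\R^d))$.

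\emph{Null set transport via Fubini.} By Remark \ref{rem: negligible sets derivation} there exists a Borel set $\widetilde{\mathcal{N}}\subset [0,T]\times \PP(\R^d)$ with $(M_t\otimes dt)(\widetilde{\mathcal{N}})=0$ such that outside $\widetilde{\mathcal{N}}$ the inequality
\[
|B_t[F](\mu)|\leq c_t(\mu)\|\nabla_W F(\cdot,\mu)\|_{L^{p'}(\mu)}<+\infty
\]
holds simultaneously for every $F\in \operatorname{Cyl}_c^1(\PP(\R^d))$. Set $\mathcal{N}':=\mathcal{E}^{-1}(\widetilde{\mathcal{N}})\subset [0,T]\times C_T(\PP(\R^d))$. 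Using the marginal condition $(\mathfrak{e}_t)_\sharp\Lambda=M_t$ and Fubini,
\[
(\mathcal{L}^1_T\otimes\Lambda)(\mathcal{N}')=\int_0^T\!\!\int_{C_T(\PP)} \mathbbm{1}_{\widetilde{\mathcal{N}}}(t,e_t(\boldsymbol\mu))\, d\Lambda(\boldsymbol\mu)\, dt=\int_0^T\!\!\int_{\PP(\R^d)}\mathbbm{1}_{\widetilde{\mathcal{N}}}(t,\mu)\, dM_t(\mu)\, dt=0,
\]
so the first part of \eqref{eq: thesis lemma well posed Lambda curve} holds for $\mathcal{L}^1_T\otimes\Lambda$-a.e.~$(t,\boldsymbol\mu)$ and for every $F$ at once.

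\emph{Time-integrability of $c_t(e_t(\boldsymbol\mu))$.} Applying once more Fubini and the marginal identity,
\[
\int_{C_T(\PP)}\!\int_0^T c_t^p(e_t(\boldsymbol\mu))\, dt\, d\Lambda(\boldsymbol\mu)=\int_0^T\!\!\int_{\PP(\R^d)} c_t^p(\mu)\, dM_t(\mu)\, dt<+\infty
\]
by \eqref{time integrability family of derivations}. Hence $t\mapsto c_t(e_t(\boldsymbol\mu))$ belongs to $L^p(0,T)\subset L^1(0,T)$ for $\Lambda$-a.e.~$\boldsymbol\mu$, which yields $\int_0^T c_t(e_t(\boldsymbol\mu))\,dt<+\infty$ for $\Lambda$-a.e.~$\boldsymbol\mu$ by Hölder's inequality on the finite-measure interval $[0,T]$. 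The only subtle point is the simultaneity in $F$ of the pointwise bound, and this is precisely what Remark \ref{rem: negligible sets derivation} takes care of; everything else is bookkeeping through $\mathcal{E}$ and Fubini.
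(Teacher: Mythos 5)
Your proof is correct and follows essentially the same route as the paper's: define the joint evaluation map $(t,\boldsymbol\mu)\mapsto(t,e_t(\boldsymbol\mu))$, note that it pushes $\mathcal{L}^1_T\otimes\Lambda$ forward to $M_t\otimes dt$ because $(\mathfrak{e}_t)_\sharp\Lambda=M_t$, pull back the null set $\widetilde{\mathcal{N}}$ from Remark \ref{rem: negligible sets derivation}, and conclude the integrability of $\int_0^T c_t^p(\mu_t)\,dt$ by Fubini. The only cosmetic difference is that you phrase the null-set step as an explicit Fubini integral rather than as the push-forward identity, and you add the observation that the evaluation map is jointly continuous in the compact–open topology (which the paper leaves implicit under "composition of measurable maps"); both are sound.
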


\begin{proof}
The functions in \eqref{eq: meas in (t,boldsymbol(mu))} are composition of measurable maps, so they are measurable. Now, consider $\widetilde{\mathcal{N}}$ as in Remark \ref{rem: negligible sets derivation} and define the function 
    \[
    \begin{aligned}
        \widetilde{E}:[0,T]\times C_T(\PP(\R^d)) & \to [0,T]\times \PP(\R^d)
        \\
        (t,\boldsymbol{\mu}) & \mapsto (t,\mu_t),
    \end{aligned}\]
    where we mean that $\boldsymbol{\mu} = (\mu_t)_{t\in[0,T]}$, so that $\mu_t = e_t(\boldsymbol{\mu})$.
    Notice that $\widetilde{E}_\sharp (\mathcal{L}_T^1\otimes \Lambda) = dt\otimes M_t$, so $\widetilde{E}^{-1}(\widetilde{\mathcal{N}})$ is a negligible set w.r.t. $\mathcal{L}_T^1 \otimes \Lambda$, which implies \eqref{eq: thesis lemma well posed Lambda curve}. To conclude, notice that 
    \[\int \int_0^T c_t^p(\mu_t) dt d\Lambda(\boldsymbol{\mu}) = \int_0^T \int c_t^p(\mu_t)  d\Lambda(\boldsymbol{\mu}) dt =  \int_0^T \int_\PP c_t^p(\mu)  dM_t(\mu) dt<+\infty.\]
\end{proof}

Now, we prove a first superposition result in terms of derivations.
Our proof strongly relies on the techniques developed in \cite{ambrosio2014well}: indeed, we embed the space $\PP(\R^d)$ in $\R^\infty$, where an infinite-dimensional version of Theorem \ref{finite dim superposition principle} holds (see Appendix \ref{appendix R^infty}). 

We are going to use a similar notation as for the purely metric setting:
\begin{itemize}
    \item $\boldsymbol{\gamma}:= (\gamma_t)_{t\in[0,T]} \subset C_T(\R^d)$;
    \item $\lambda \in \PP\big(C_T(\R^d)\big)$;
    \item $\boldsymbol{\mu}:=(\mu_t)_{t\in[0,T]}\in C_T\big( 
    \PP(\R^d) \big)$;
    \item $M\in \PP(\PP(\R^d))$;
    \item $\boldsymbol{M}:=(M_t)_{t\in[0,T]} \in C_T\big(\PP(\PP(\R^d))\big)$;
    \item $\Lambda \in \PP\big(
    C_T(\PP(\R^d))\big)$;
    \item $\mathfrak{L}\in \PP\big(\PP(C_T(\R^d))\big)$.
\end{itemize}

\begin{teorema}\label{superposition principle}
    Let $\boldsymbol{M} = (M_t)_{t\in [0,T]}$ be a continuous curve of probability measures over $\mathcal{P}(\R^d)$. Let $(B_t)_{t\in[0,T]}$ be a Borel family of derivations and $c \in L^1(M_t\otimes dt)$ as in \eqref{time integrability family of derivations} with $p=1$.
    Assume that $(M_t)_{t\in[0,T]}$ satisfies the continuity equation $\partial_tM_t+\operatorname{div}_\PP(B_tM_t) = 0$ in the sense of Definition \ref{def CE}.
    Then there exists a probability measure $\Lambda \in \mathcal{P}\big( C_T(\mathcal{P}(\R^d) \big)$, such that 
    \begin{itemize}
        \item $(\mathfrak{e}_t)_{\sharp }\Lambda = M_t$ for all $t\in [0,T]$;
        \item for $\Lambda$-a.e. curve $(\mu_t)_{t\in [0,T]}$ and a.e. $t\in[0,T]$, $B_t[F](\mu_t)$ and $c_t(\mu_t)$ are well defined and it holds
        \begin{equation}\label{CE classic}
            \int_0^T c_t(\mu_t) dt<+\infty \ \ \text{ and } \ \ \partial_t\mu_t + \operatorname{div}\big( B_t \mu_t \big)= 0 \quad \text{for } \Lambda\text{-a.e. }(\mu_t)_{t\in[0,T]}
        \end{equation}
        in the sense of distributions in duality with $C_c^1$ functions, i.e. for all $\phi \in C_c^1(\R^d)$ and $\psi \in C_c^1(0,T)$
        \begin{equation}
            \int_0^T \int_{\R^d} \psi'(t) \phi(x) d\mu_t(x) dt = -\int_0^T \psi(t) B_t[L_{\phi}](\mu_t) dt. 
        \end{equation}
    \end{itemize}
\end{teorema}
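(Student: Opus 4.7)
Following the strategy of \cite{ambrosio2014well}, I would reduce the problem to the superposition principle on $\R^\infty$ discussed in Appendix~\ref{appendix R^infty} via a suitable embedding. Choose a countable family $\{\phi_i\}_{i\in\N}\subset C_c^1(\R^d)$ that separates probability measures and is dense enough (for $C_c^1$-convergence on compact sets) to capture arbitrary cylinder functions, and define the injective continuous map $\iota:\PP(\R^d)\to\R^\infty$, $\iota(\mu):=(L_{\phi_i}(\mu))_{i\in\N}$. The image $\iota(\PP(\R^d))$ is a Borel subset of $\R^\infty$, and $\iota$ admits a Borel inverse on its image by the results of Appendix~\ref{appendix souslin}. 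Push forward to obtain $\widetilde{M}_t:=\iota_\sharp M_t\in C_T(\PP(\R^\infty))$ and, on $\iota(\PP(\R^d))$, define coordinate velocities $\widetilde{b}_t^i(\iota(\mu)):=B_t[L_{\phi_i}](\mu)$, extending by zero outside (a $\widetilde{M}_t$-negligible set). The integrability bound \eqref{time integrability family of derivations} together with $\|\nabla_W L_{\phi_i}(\cdot,\mu)\|_{L^\infty(\mu)}\le\|\nabla\phi_i\|_\infty$ shows $|\widetilde{b}_t^i|\le c_t\|\nabla\phi_i\|_\infty$ and yields the analogous $L^1(\widetilde{M}_t\otimes\d t)$ integrability for the vector field $\widetilde{b}_t$.

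Next I would translate the abstract continuity equation \eqref{CE meas 2} into the cylindrical continuity equation on $\R^\infty$ for the pair $(\widetilde{M}_t,\widetilde{b}_t)$. The key ingredient is the chain rule of Lemma~\ref{chain rule}: any cylinder test $\widetilde{F}$ on $\R^\infty$ of the form $\Psi(y_{i_1},\dots,y_{i_k})$ pulls back via $\iota$ to the cylinder function $F=\Psi\circ L_{(\phi_{i_1},\dots,\phi_{i_k})}\in\operatorname{Cyl}_c^1(\PP(\R^d))$, for which $B_t[F]$ equals $\sum_j\partial_j\Psi\,B_t[L_{\phi_{i_j}}]$, exactly the image under $\iota$ of $\nabla\widetilde{F}\cdot\widetilde{b}_t$. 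Plugging this into \eqref{CE meas 2} gives the distributional continuity equation on $\R^\infty$. Apply the infinite-dimensional superposition principle of Appendix~\ref{appendix R^infty}--\ref{appendix meas for curves} to produce $\widetilde{\eta}\in\PP(C_T(\R^\infty))$ with $(e_t)_\sharp\widetilde{\eta}=\widetilde{M}_t$ and such that $\widetilde{\eta}$-a.e. curve $\widetilde{\gamma}$ satisfies $\dot{\widetilde{\gamma}}^i(t)=\widetilde{b}_t^i(\widetilde{\gamma}(t))$ a.e. for every $i\in\N$.

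To lift back to $\PP(\R^d)$, observe that since $(e_t)_\sharp\widetilde{\eta}$ is concentrated on $\iota(\PP(\R^d))$ for every $t$, Fubini and a countable intersection over rationals combined with the continuity of $\widetilde{\gamma}$ and the Borel property of $\iota(\PP(\R^d))$ force $\widetilde{\eta}$-a.e. curve to take values in $\iota(\PP(\R^d))$ for every $t\in[0,T]$. On this set the inverse map $\iota^{-1}$ is Borel, so the induced map $(\widetilde{\gamma}(t))_t\mapsto(\iota^{-1}(\widetilde{\gamma}(t)))_t$ from $C_T(\iota(\PP(\R^d)))$ to $C_T(\PP(\R^d))$ is Borel measurable, and we set $\Lambda$ to be its push-forward of $\widetilde{\eta}$. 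The marginal property $(\mathfrak{e}_t)_\sharp\Lambda=M_t$ is immediate, and for $\Lambda$-a.e. curve $(\mu_t)_{t\in[0,T]}$ one has
\begin{equation*}
\frac{\d}{\d t}\int\phi_i\,\d\mu_t = B_t[L_{\phi_i}](\mu_t) \qquad \text{for every } i\in\N, \text{ a.e. } t\in(0,T).
\end{equation*}
Applying Lemma~\ref{lemma: well posed for Lambda-a.e. curve} provides $\int_0^T c_t(\mu_t)\,\d t<+\infty$ $\Lambda$-a.e.; together with the derivation bound $|B_t[L_\phi](\mu_t)|\le c_t(\mu_t)\|\nabla\phi\|_\infty$, this lets me extend the identity above to every $\phi\in C_c^1(\R^d)$ by density of $\{\phi_i\}$, yielding the desired distributional continuity equation.

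\textbf{Main obstacle.} The most delicate point is the transfer step from $\R^\infty$ back to $\PP(\R^d)$: one must check that the superposition measure $\widetilde{\eta}$ is concentrated not merely on curves passing through $\iota(\PP(\R^d))$ for each fixed $t$ (which is automatic from the marginals) but on curves whose entire trajectory lies in $\iota(\PP(\R^d))$, in order to apply $\iota^{-1}$ pathwise. This uses continuity of trajectories plus the Borel nature of $\iota(\PP(\R^d))$. A secondary technical nuisance is the passage from the countable generating family $\{L_{\phi_i}\}$ to all of $\operatorname{Cyl}_c^1(\PP(\R^d))$ in the final density step, where the uniform $L^1$ control by $c_t$ is essential to exchange limits with the time integral.
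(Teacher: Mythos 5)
Your overall strategy (embed into $\R^\infty$ via $\iota$, solve the cylindrical continuity equation there, then push the superposition measure back through $\iota^{-1}$) is exactly the route taken in the paper's proof of Theorem~\ref{superposition principle}, and Steps 1--3 of your outline match the paper faithfully, including the use of Lemma~\ref{chain rule} to convert the abstract equation \eqref{CE meas 2} into the coordinate-wise equation for $(\widetilde M_t, \widetilde b_t)$.

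However, there is a genuine gap in your resolution of the ``Main obstacle'' you yourself flag. You claim that ``Fubini and a countable intersection over rationals combined with the continuity of $\widetilde\gamma$ and the Borel property of $\iota(\PP(\R^d))$ force $\widetilde\eta$-a.e.\ curve to take values in $\iota(\PP(\R^d))$ for every $t$.'' This does not work: the superposition measure $\widetilde\eta$ produced by the $\R^\infty$ theorem (Appendix~\ref{appendix R^infty}) is concentrated on curves that are continuous only with respect to the product topology $\tau_w$, and by Lemma~\ref{lemma: im of iota is closed} the set $\iota(\PP(\R^d))$ is Borel but \emph{not closed} for $\tau_w$ (the sequence $\iota(\delta_{x_n})$ with $|x_n|\to\infty$ converges in $\tau_w$ to $0\notin\iota(\PP(\R^d))$). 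Knowing $\widetilde\gamma(t)\in\iota(\PP(\R^d))$ for all rational $t$, together with $\tau_w$-continuity, therefore does \emph{not} imply $\widetilde\gamma(t)\in\iota(\PP(\R^d))$ for all $t$ --- for that you would need $\iota(\PP(\R^d))$ to be $\tau_w$-closed, which it is not.

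The missing step --- which is exactly what the paper's Step~4 supplies --- is to first upgrade the a.e.\ curve from merely weakly absolutely continuous to absolutely continuous \emph{with respect to the uniform metric} $D_\infty$. This is where the specific normalization of the countable family $\{\phi_i\}$ matters: you must choose them so that $\|\nabla\phi_i\|_\infty\le 1$ for all $i$ (as guaranteed by $1$-Lipschitzianity w.r.t.\ $|\cdot|\wedge 1$), so that the derivation bound gives the \emph{uniform-in-$i$} estimate $\sup_i|\widetilde b_t^i(\iota(\mu))|\le c_t(\mu)$ and hence $D_\infty(\widetilde\gamma(t),\widetilde\gamma(s))\le\int_s^t\sup_i|\widetilde b_r^i(\widetilde\gamma(r))|\,\d r$, with the integrand in $L^1(\widetilde\eta\otimes\d t)$. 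Once you know a.e.\ $\widetilde\gamma$ lies in $AC_T(\R^\infty,D_\infty)$, the fact that $\iota(\PP(\R^d))$ \emph{is} closed w.r.t.\ $D_\infty$ (again Lemma~\ref{lemma: im of iota is closed}) combined with the rational-time argument yields $\widetilde\gamma(t)\in\iota(\PP(\R^d))$ for all $t$, and the rest of your transfer and density argument goes through. Your write-up also omits the requirement that the chosen family satisfy $\hat W_1(\mu,\nu)=\sup_k\int\phi_k\,\d(\mu-\nu)$, which is what makes $\iota$ an isometry and hence makes the $D_\infty$-closedness statement hold.
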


\begin{proof}
    \textbf{Step 1}: here we use the result presented in Appendix \ref{appendix R^infty}. Let $\mathcal{A}=\{\phi_1,\phi_2,\dots \}\subset C_c^1(\R^d)$ and $\iota$ as in Appendix \ref{appendix R^infty}, i.e.
    \begin{itemize}
        \item $\phi_k$ is Lipschitz with respect to $|\cdot |\wedge 1$ for all $k\in \N$. In particular, $\|\nabla\phi_k\|_\infty\leq 1$ for all $k\in \N$;
        \item $\operatorname{Span}(\mathcal{A})$ dense in $C_0^1(\R^d)$, i.e. functions that, together with their first derivatives, converge to $0$ at infinity;
        \item $\hat{W}_1(\mu,\nu):= W_{1,|\cdot|\wedge 1}(\mu,\nu) = \sup_k\int_{\R^d}\phi_k d(\mu-\nu)$;
    \end{itemize}
    and
    \begin{equation}\label{iota Rd}
    \iota:\PP(\R^d) \to \R^\infty, \quad \iota(\mu) = \big(L_{\phi_1}(\mu), L_{\phi_2}(\mu),\dots \big).
    \end{equation}

    \textbf{Step 2}: define $\mathtt{m}_t:= \iota_\sharp  M_t$. We prove that $(\mathtt{m}_t)\in AC_T(\PP(\R^\infty),W_{1,D_\infty})$ (see \eqref{eq: distance D_infty}).
    \\
    For any $s,t\in[0,T]$, consider $\Pi_{s,t}$ an optimal transport plan between $M_t$ and $M_s$ realizing $\hat{\mathcal{W}}_1(M_t,M_s):=W_{1,\hat{W}_1}(M_t,M_s)$. Then $(\iota,\iota)_\sharp \Pi_{t,s}$ is a transport plan between $\mathtt{m}_t$ and $\mathtt{m}_s$, so that 
    \begin{align*}
        W_{1,D_\infty} &(\mathtt{m}_t,\mathtt{m}_s) 
        \leq  \int D_{\infty}(\mathrm{x},\mathrm{y}) \ d(\iota,\iota)_\sharp \Pi_{t,s}(x,y)
        =  \int \sup_k |L_{\phi_k}(\mu) - L_{\phi_k}(\nu)| \wedge 1 \ d\Pi_{t,s}(\mu,\nu)
        \\
        = & \int \sup_k |L_{\phi_k}(\mu) - L_{\phi_k}(\nu)| d\Pi_{t,s}(\mu,\nu)
        =
        \int \hat{W}_1(\mu,\nu)d\Pi_{t,s}(\mu,\nu) 
        = \hat{\mathcal{W}}_1(M_t,M_s),
    \end{align*}
    and we are done thanks to Lemma \ref{abs cont sol of CE}.
    
    \textbf{Step 3}: define, component-wisely, the vector field $v:[0,T]\times \R^\infty \to \R^\infty$ as 
    \begin{equation}
        v_t^{(k)}(\mathrm{x}):=
        \begin{cases}
            B_t[L_{\phi_k}](\iota^{-1}(\mathrm{x})) & \text{ if }\mathrm{x}\in \iota(\PP(\R^d))
            \\
            0 & \text{ if }\mathrm{x}\notin \iota(\mathcal{P}(\R^d)).
        \end{cases}
    \end{equation}
    Notice that
    \begin{equation*}\label{eq: ineq superposition}
    \begin{aligned}
        \int_0^T\int_{\R^\infty}| v_t^{(k)}(\mathrm{x})|d\mathtt{m}_t(\mathrm{x}) dt
        =  
        \int_0^T \int_{\mathcal{P}}|B_t[L_{\phi_k}](\mu_t)|dM_t(\mu)dt
        \leq 
        \int_0^T \int_{\mathcal{P}} c_t(\mu)dM_t(\mu)dt<+\infty,
    \end{aligned} 
    \end{equation*}
    and for any cylinder function $F:\R^\infty\to \R$, i.e. such that there exists $n \in \N$ for which $F(\mathrm{x}) = F(x_1,\dots,x_n)$, it holds (thanks to Lemma \ref{chain rule lemma}) 
    \begin{align*}
        \frac{d}{dt}&\int_{\R^\infty} F(\mathrm{x})d\mathtt{m}_t(\mathrm{x}) 
        = 
        \frac{d}{dt} \int_{\mathcal{P}}F(\iota(\mu))dM_t(\mu) 
        = 
        \frac{d}{dt}\int_{\mathcal{P}}F(L_{\phi_1}(\mu),\dots,L_{\phi_n}(\mu)) dM_t(\mu)
        \\
        = & 
        \sum_{i=1}^n \int_{\mathcal{P}}\partial_i F(L_{\phi_1}(\mu),\dots,L_{\phi_n}(\mu)) \ B_t[L_{\phi_k}](\mu_t) dM_t(\mu) 
        =  
        \int_{\R^\infty}\nabla F(\mathrm{x}) \cdot \big( v_t^{(1)},\dots, v_t^{(n)}\big) d\mathtt{m}_t(\mathrm{x}).
   \end{align*}
   Then, we can apply Theorem \ref{superposition R^infty}, to obtain the existence of a measure $\mathtt{L} \in \mathcal{P}\big( C_T(\R^\infty,d_\infty) \big)$ satisfying
   \[(e_t)_\sharp \mathtt{L}  = \mathtt{m}_t\]
   and $\mathtt{L}$-a.e. $\tilde{\gamma} \in C_T(\R^\infty)$ is weakly absolutely continuous with
   \[\frac{d}{dt}\tilde{\gamma}(t) = v_t(\tilde{\gamma}(t)), \]
   in the sense that each component $\tilde{\gamma}^{(k)}$ of the curve is in $AC([0,T],\R)$ and $\frac{d}{dt}\tilde{\gamma}^{(k)} = v_t^{(k)}(\tilde{\gamma}(t))$.

    \textbf{Step 4}: we prove that for $\mathtt{L}$-a.e. $\tilde{\gamma}$, $\tilde{\gamma} \in AC_T(\iota(\PP(\R^d)), D_\infty)$. In particular, for every $t\in [0,T]$, $\tilde{\gamma}(t) \in \iota(\mathcal{P})$. First, observe that for $\mathtt{L}$-a.e. $\tilde\gamma$, $\tilde\gamma(t) \in \iota(\PP(\R^d))$ for any $t\in \mathbb{Q}\cap[0,T]$.
        Fix $t\in[0,T]\cap \mathbb{Q}$, then 
        \[\mathtt{L}\big( \{\tilde\gamma \ : \ \tilde\gamma(t)\in \iota(\PP(\R^d))\}\big) = \mathtt{m}_t(\iota(\PP(\R^d))) = 1,\]
        so we conclude because $[0,T]\cap\mathbb{Q}$ is countable.

        Now, for $\mathtt{L}$-a.e. $\big(\tilde\gamma(t)\big)_{t\in [0,T]}$, $\tilde\gamma\in AC_T(\R^\infty,D_\infty)$. In fact, $\mathtt{L}$-a.e. curve $\tilde\gamma$ and for any $s,t\in[0,T]$, it holds
        \begin{align*}
            D_{\infty}(\tilde\gamma(t),\tilde\gamma(s)) = & \sup_n |\tilde\gamma_n(t) - \tilde\gamma_n(s)|\wedge 1 
            \leq 
            \sup_n \int_s^t |v_r^{(n)}(\tilde\gamma(r))| dr \leq \int_s^t \sup_n|v_r^{(n)}(\tilde{\gamma}_r)|dr.
        \end{align*}
        Moreover, notice that
        \begin{align*}
            \int \int_0^T & \sup_n|v_r^{(n)}(\tilde\gamma_r) |dr d\mathtt{L}(\tilde\gamma) =  \int_0^T \int  \sup_n|v_r^{(n)}(\tilde\gamma_r)| d\mathtt{L}(\tilde\gamma) dr 
            = 
            \int_0^T \int \sup_n|v_r^{(n)}(\mathrm{x})| d\mathtt{m}_r(\mathrm{x}) dr
            \\
            \leq & \int_0^T \int \sup_n |v_r(n)(\iota(\mu))|  dM_r(\mu) dr
            \leq
            \int_0^T \int c_r(\mu) dM_r(\mu) dr
            <+\infty ,
        \end{align*}
        where we used that $ \sup_n |v_r(n)(\iota(\mu))| = \sup_n |B_r[L_{\phi_n}](\mu)| \leq c_r(\mu)$. This implies that for $\mathtt{L}$-a.e. $\tilde\gamma$ it holds that 
        \[\int_0^T \sup_n|v_r^{(n)}(\tilde\gamma_r) |dr<+\infty\]
        and thanks to the inequalities above, we have that for $\mathtt{L}$-a.e. $\tilde\gamma$, $\tilde\gamma\in AC_T(\R^\infty,D_\infty)$.
   The above properties show that for $\mathtt{L}$-a.e. $\tilde\gamma$, $\tilde\gamma(t) \in \iota(\PP(\R^d))$ for any $t\in [0,T]$, thanks to Lemma \ref{lemma: im of iota is closed}
   
    \textbf{Step 5}: thanks to the previous step, $\tilde{\Lambda}$ is concentrated over $C_T(\iota(\PP),D_\infty)$, thus, thanks to Lemma \ref{lemma: meas of curves in R^infty}, it can be seen as a probability measure over it, with its natural induced compact-open topology. Now, consider the function
    \begin{equation}
    \begin{aligned}
        \Theta: C_T(\iota(\mathcal{P}),D_\infty ) & \to  C_T(\mathcal{P}(\R^d))
        \\ \tilde{\gamma} \quad & \mapsto \quad \mu_t:= \iota^{-1}(\tilde{\gamma}(t)).        
    \end{aligned}
    \end{equation}
    It is well-defined, and thanks to the previous considerations we are allowed to define $\Lambda:= \Theta_\sharp  \mathtt{L}$, because $\mathtt{L}$ is concentrated over curves that are absolutely continuous with respect to $D_\infty$, and in particular on curves that are continuous with respect to it.

    By the properties of $\mathtt{L}$ and the fact that $e_t\circ \Theta (\tilde{\gamma})= \iota^{-1} (\tilde{\gamma}(t))$, it is straightforward that $(e_t)_\sharp \Lambda = M_t$, so that we can apply Lemma \ref{lemma: well posed for Lambda-a.e. curve} and then for $\Lambda$-a.e. $(\mu_t)_{t\in [0,T]}$ it holds
    \begin{equation}\label{eq: ce against phi_k}
    \frac{d}{dt}\int_X \phi_k(x) d\mu_t(x) = \frac{d}{dt}L_{\phi_k}(\mu_t) = B_t[L_{\phi_k}](\mu_t)\end{equation}
    in the sense of distribution in $(0,T)$, for all $k\in \N$. 
    
   By the density of $\operatorname{Span}(\mathcal{A})$ in $C_c^1(\R^d)$, it is immediate to prove that, if a curve $\boldsymbol{\mu} = (\mu_t)_{t\in[0,T]}$ satisfies \eqref{eq: ce against phi_k}, then \eqref{CE classic} holds. 
\end{proof}

\subsection{Correspondence between AC curves and solution to the continuity equation}\label{subsec: corr AC with CERM}

In this subsection we find a natural correspondence between solutions to the continuity equation associated with a family of $L^p$-derivations, and curves in $AC_T^p(\PP_p(\PP_p(\R^d)))$.

\begin{prop}[From CE to AC]\label{p-superposition}
    In the setting of Theorem \ref{superposition principle}, assume $p\geq 1$, $M_0 \in \PP_p(\PP_p(\R^d))$ and  
    \begin{equation}
        \int_0^T \int_\PP c_t^p(\mu) dM_t(\mu) dt <+\infty,
    \end{equation}
    i.e. $(B_t)_{t\in [0,T]}$ is a family of $L^p(M_t)$ derivations.
    Then, the probability measure $\Lambda$ given by Theorem \ref{superposition principle} is concentrated over $\boldsymbol{\mu}\in AC_T^p(\PP_p(\R^d))$ and
    \begin{equation}\label{eq: energy ineq derivations}
        \int_0^T |\dot{\boldsymbol{\mu}}|_{W_p}^p(t) dt  \leq \int_0^T c_t^p(\mu_t) dt <+\infty \quad \text{ for }\Lambda\text{-a.e. } \boldsymbol{\mu}.
    \end{equation}
    In particular, $\Lambda \in \PP_{\bar{\mathcal{A}}_p}(C_T(\PP(\R^d)))$ and $\boldsymbol{M} \in AC_T^p(\PP_p(\PP_p(\R^d)))$.
\end{prop}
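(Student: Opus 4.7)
The plan is to bootstrap the distributional conclusion of Theorem~\ref{superposition principle} by producing an explicit $L^p(\mu_t;\R^d)$-representation of the derivation $B_t$ along $\Lambda$-typical curves, and then deploying the classical continuity-equation estimate on $\R^d$. By Lemma~\ref{lemma: well posed for Lambda-a.e. curve} and Fubini applied to the new $L^p$-assumption, for $\mathcal{L}^1\otimes\Lambda$-a.e.~$(t,\boldsymbol{\mu})$ and every $F\in\operatorname{Cyl}_c^1(\PP(\R^d))$ one has
$$|B_t[F](\mu_t)|\le c_t(\mu_t)\,\|\nabla_W F(\cdot,\mu_t)\|_{L^{p'}(\mu_t)},\qquad \int_0^T c_t^p(\mu_t)\,dt<+\infty \quad \text{for }\Lambda\text{-a.e.~}\boldsymbol{\mu}.$$
Specializing to $F=L_\phi$ for $\phi\in C_c^1(\R^d)$, which is a legitimate cylinder function once composed with a smooth cutoff acting as the identity on the range of $L_\phi$, and using $\nabla_W L_\phi=\nabla\phi$, the assignment $\nabla\phi\mapsto B_t[L_\phi](\mu_t)$ is a well-defined (by linearity and the bound) and bounded linear functional on the subspace $\{\nabla\phi:\phi\in C_c^1(\R^d)\}\subset L^{p'}(\mu_t;\R^d)$ with norm at most $c_t(\mu_t)$. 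Hahn--Banach together with Riesz then provide a vector field $w_t(\cdot,\mu_t)\in L^p(\mu_t;\R^d)$ with $\|w_t(\cdot,\mu_t)\|_{L^p(\mu_t)}\le c_t(\mu_t)$ such that
$$B_t[L_\phi](\mu_t)=\int\nabla\phi\cdot w_t(\cdot,\mu_t)\,d\mu_t\qquad\forall\phi\in C_c^1(\R^d).$$

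Feeding this identity into \eqref{CE classic}, for $\Lambda$-a.e.~$\boldsymbol{\mu}$ the curve solves $\partial_t\mu_t+\operatorname{div}(w_t(\cdot,\mu_t)\mu_t)=0$ in $\mathscr{D}'((0,T)\times\R^d)$, with $\int_0^T\|w_t(\cdot,\mu_t)\|_{L^p(\mu_t)}^p\,dt<+\infty$ and $\mu_0\in\PP_p(\R^d)$ (since $(\mathfrak{e}_0)_\sharp\Lambda=M_0\in\PP_p(\PP_p(\R^d))$). The finite-dimensional superposition principle (Theorem~\ref{finite dim superposition principle}) supplies $\lambda\in\PP(C_T(\R^d))$ with $(e_t)_\sharp\lambda=\mu_t$, concentrated on absolutely continuous curves $\gamma$ satisfying $\dot\gamma(t)=w_t(\gamma(t),\mu_t)$; Fubini yields $\int\int_0^T|\dot\gamma|^p(t)\,dt\,d\lambda(\gamma)=\int_0^T\|w_t(\cdot,\mu_t)\|_{L^p(\mu_t)}^p\,dt<+\infty$, so $\lambda$ is concentrated on $AC^p_T(\R^d)$. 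The Lisini estimate \eqref{ineq energy} of Theorem~\ref{lisini lifting} then produces $\boldsymbol{\mu}\in AC_T^p(\PP_p(\R^d))$ with
$$|\dot{\boldsymbol{\mu}}|_{W_p}(t)\le\|w_t(\cdot,\mu_t)\|_{L^p(\mu_t)}\le c_t(\mu_t)\quad\text{for a.e.~}t,$$
which is exactly \eqref{eq: energy ineq derivations}. Integrating in $\Lambda$ and recalling $M_0\in\PP_p(\PP_p(\R^d))$, one obtains $\Lambda\in\PP_{\bar{\mathcal{A}}_p}(C_T(\PP(\R^d)))$, and a second application of \eqref{ineq energy} at the level $(Y,d)=(\PP_p(\R^d),W_p)$ upgrades $\boldsymbol{M}$ to $AC_T^p(\PP_p(\PP_p(\R^d)))$.

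The only delicate point is the pointwise Hahn--Banach extraction of $w_t(\cdot,\mu_t)$, which is a priori non-canonical: one may canonize it by taking the minimal-norm element in the $L^p(\mu_t;\R^d)$-closure of $\{\nabla\phi:\phi\in C_c^1(\R^d)\}$, but since only its $L^p$-norm enters the metric estimate and the whole argument proceeds curve-by-curve, no joint measurable selection is needed at this stage. A full representation of $B$ as a Borel non-local vector field $b:[0,T]\times\R^d\times\PP(\R^d)\to\R^d$ is deferred to Section~\ref{subsec: tangent}.
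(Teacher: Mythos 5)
Your argument is essentially sound for $p>1$ and takes a genuinely different route from the paper: the paper applies Hahn--Banach once on the \emph{space--time} Lebesgue space $L^{p'}(\mu_t\otimes dt;\R^d)$ and concludes with \cite[Theorem 8.3.1]{ambrosio2005gradient}, whereas you apply Hahn--Banach pointwise in $t$, feed the resulting vector field into the finite-dimensional superposition principle, and close with the Lisini estimate \eqref{ineq energy}. Your route has the pleasant by-product of a pointwise bound $|\dot{\boldsymbol{\mu}}|_{W_p}(t)\le c_t(\mu_t)$ a.e., which is slightly stronger than the integrated estimate the paper produces directly, and using the second (easy) part of Theorem~\ref{lisini lifting} twice to get both $\boldsymbol{\mu}\in AC_T^p(\PP_p(\R^d))$ and $\boldsymbol{M}\in AC_T^p(\PP_p(\PP_p(\R^d)))$ is a tidy closing step that mirrors the paper's use of Lemma~\ref{lemma: well posed for Lambda-a.e. curve} and Proposition~\ref{starting from Lambda}.

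There are, however, two gaps. First, the proposition is stated for $p\ge 1$, but your Hahn--Banach/Riesz extraction requires $p>1$: for $p=1$ the bounded functional lives on a subspace of $L^\infty(\mu_t;\R^d)$, and the extension provided by Hahn--Banach is not representable by an $L^1(\mu_t;\R^d)$ density, so no vector field $w_t\in L^1(\mu_t;\R^d)$ with $\|w_t\|_{L^1(\mu_t)}\le c_t(\mu_t)$ is produced. The paper handles $p=1$ by a separate, purely metric argument: for $\phi\in C_b^1(\R^d)$ with $\|\nabla\phi\|_\infty\le 1$, integrate \eqref{eq: ce against phi_k} in $t$ and take the Kantorovich supremum to get $W_1(\mu_s,\mu_t)\le\int_s^t c_r(\mu_r)\,dr$ directly, with no vector field in sight. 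You should either restrict your argument to $p>1$ and adjoin that case, or argue $p=1$ separately.

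Second, the dismissal ``no joint measurable selection is needed at this stage'' is too quick: what you do need, even working curve-by-curve, is that $(t,x)\mapsto w_t(x,\mu_t)$ be \emph{Borel on $[0,T]\times\R^d$}, because Theorem~\ref{finite dim superposition principle} is stated for a Borel vector field on $[0,T]\times\R^d$ (and the hypothesis $\int_0^T\int|v|\,d\mu_t\,dt<\infty$ implicitly relies on it). Taking the minimal-norm element at each $t$ does single out a canonical representative, and one can prove its measurability in $t$ (via a countable dense family of test functions and strict convexity of the $L^p$-norm), but this deserves a sentence or two rather than being waved away. The paper sidesteps the issue entirely by performing the Hahn--Banach/Riesz step once on $L^{p'}(\mu_t\otimes dt;\R^d)$, which immediately yields a single Borel $v:[0,T]\times\R^d\to\R^d$ with the integrated bound $\|v\|_{L^p(\mu_t\otimes dt)}^p\le\int_0^T c_t^p(\mu_t)\,dt$; that integrated bound is all that \eqref{eq: energy ineq derivations} requires, so the pointwise construction is not strictly necessary for the statement being proved.
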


\begin{proof}
    Thanks to the hypothesis $M_0\in \PP_p(\PP_p(\R^d))$, we already know that 
    \[\int W_p^p(\mu_0,\delta_{\bar{x}}) d\Lambda(\boldsymbol{\mu})<+\infty.\]
    Moreover, thanks to Lemma \ref{lemma: well posed for Lambda-a.e. curve} and Fubini's theorem, it holds that 
    \[\int_0^T\int c_t^p(\mu_t) d\Lambda(\boldsymbol{\mu}) dt = \int_0^T \int_\PP c_t^p(\mu) dM_t(\mu) dt <+\infty,\]
    so it is left to prove that $\Lambda$-a.e. $\boldsymbol{\mu}$ is in $AC_T(\PP_p(\R^d))$ and \eqref{eq: energy ineq derivations} holds.

    Case $p=1$: let $\phi \in C_b^1(\R^d)$ with $\|\nabla \phi\|_\infty \leq 1$. Then for any $s<t$ it holds
    \[\int_{\R^d} \phi d(\mu_t-\mu_s) \leq \int_s^t B_r[L_\phi](\mu_r) dr \leq \int_s^t c_r(\mu_r)dr,\]
    so that passing to the supremum w.r.t. $\phi$ on the left hand side, we have $W_1(\mu_s,\mu_t)\leq \int_s^t c_r(\mu_r)dr$, which implies \eqref{eq: energy ineq derivations}. Then, $\Lambda \in \PP_{\overline{\mathcal{A}}_1}(C_T(\PP(\R^d)))$ by definition and $\boldsymbol{M}\in AC_T^1(\PP_1(\PP_1(\R^d)))$ easily follows.

    Case $p>1$: for $\Lambda$-a.e. $\boldsymbol{\mu}$, we find a vector field $v:[0,T]\times \R^d \to \R^d$ for which $\partial_t\mu_t + \operatorname{div}(v_t\mu_t)=0$ is satisfied. 
    We know that $\Lambda$-a.e. $\boldsymbol{\mu}$ solves the continuity equation $\partial_t\mu_t +\operatorname{div}(B_t\mu_t) =0 $, in the sense that for all $\psi\in C_c^1(0,T)$ and $\phi \in C^1_c(\R^d)$ it holds
    \begin{equation}
        \int_0^T c_t^p(\mu_t) dt <+\infty \quad \text{and} \quad
        \int_0^T \psi'(t) \int_{\R^d} \phi(x) d\mu_t(x) dt = - \int_0^T \psi(t) B_t[L_\phi](\mu_t) dt.
    \end{equation}
    Fix a curve $\boldsymbol{\mu}= (\mu_t)_{t\in [0,T]} \in C_T(\PP(\R^d))$ with such properties and let $\mathcal{S}$ be the collection of real valued functions from $[0,T]\times \R^d$ defined as
    \[\mathcal{S}:= \operatorname{Span}\left( \left\{ (t,x) \mapsto \psi(t)\nabla_W F(x,\mu_t) \ : \ \psi\in C^1([0,T]), \ F\in \operatorname{Cyl}_c^1(\PP(\R^d)) \right\}\right).\]
    Define the functional 
    \begin{equation}
        \begin{aligned}
            \mathcal{B} :\mathcal{S}  \to \R, \quad
            \mathcal{B}(H) :=  \sum_{k=1}^n \int_0^T \psi_k(t)B_t[F_k](\mu_t) dt,
        \end{aligned}
    \end{equation} 
    where the general form for $H$ is $H (t,x) = \sum_{k=1}^n \psi_k(t) \nabla_WF_k(x,\mu_t)$.
    We have:
    \begin{itemize}
        \item $\mathcal{B}$ is linear (due to the linearity of all the $B_t$'s) and well defined, i.e. it does not depend on the representation chosen for $H\in \mathcal{S}$ indeed if \[H = \sum_{i=1}^m \zeta_i(t) \nabla_W G_i(x,\mu_t) = \sum_{k=1}^n \psi_k(t) \nabla_WF_k(x,\mu_t), \]
        this implies that for each $t\in [0,T]$ the Wasserstein gradient of the cylinder function $\mu \mapsto \sum_i \zeta_i(t) G_i(\mu) - \sum_k \psi_k(t)F_k(\mu)$ is null in $\mu_t$. Then by linearity of the integral and of each $B_t$, it holds
        \begin{align*}
             & \left| \sum_{k=1}^n \int_0^T \psi_k(t)B_t[F_k](\mu_t) dt - \sum_{i=1}^m \int_0^T \zeta_i(t)B_t[G_i](\mu_t) dt \right| \\
            & \quad \quad \quad \quad = \left| \int_0^T B_t\left[\sum_{i=1}^m \zeta_i(t) G_i - \sum_{k=1}^n \psi_k(t)F_k \right](\mu_t)dt \right| 
            \\
            & \quad \quad \quad \quad \leq \int_0^T c_t(\mu_t) \left\|\nabla_W \left(\sum_{i=1}^m\zeta_i(t) G_i - \sum_{k=1}^n \psi_k(t)F_k\right)(\cdot,\mu_t) \right\|_{L^{p'}(\mu_t)} dt = 0;
        \end{align*}
        \item for any $H \in \mathcal{S}$, it holds \[|\mathcal{B}(H)| \leq \left( 
        \int_0^T c_t^p(\mu_t) dt \right)^{\frac{1}{p}} \left\| 
        H\right\|_{L^{p'}(\mu_t\otimes dt;\R^d)}. \]
    \end{itemize}
        This implies that, by Hahn-Banach theorem, $\mathcal{B}$ can be extended to a continuous and linear functional on the set $L^{p'}(\mu_t\otimes dt; \R^d)$ and it can be represented by a Borel measurable vector field $v:[0,T]\times \R^d \to \R^d$. In particular, $v$ satisfies 
        \begin{equation}\label{eq: HB for B}\|v\|_{L^p(\mu_t\otimes dt;\R^d)} \leq\left( 
        \int_0^T c_t^p(\mu_t) dt \right)^{\frac{1}{p}} \quad \text{and} \quad \mathcal{B}(H) = \int_0^T \int_{\R^d} v(t,x) \cdot H(t,x) d\mu_t(x) dt,\end{equation}
        for all $H \in \mathcal{S}$. This implies that $\partial_t\mu_t + \operatorname{div}(v_t\mu_t) = 0$, indeed for all $\psi\in C_c^1(0,T)$ and $\phi\in C_c^1(\R^d)$, considering function $H(t,x):= \psi(t) \nabla\phi(x)$ and substituting in \eqref{eq: HB for B}, we have 
        \begin{align*}
            \int_0^T \psi'(t) & \int_{\R^d} \phi(x) d\mu_t(x) dt =  - \int_0^T \psi(t) B_t[L_\phi](\mu_t) dt 
            \\
            = &
            - \mathcal{B}(H) = - \int_0^T \psi(t) \int_{\R^d} v(t,x)\cdot \nabla\phi(x) d\mu_t(x) dt.
        \end{align*}
    Thanks to \cite[Theorem 8.3.1]{ambrosio2005gradient}, we conclude that $\boldsymbol{\mu}\in AC_T(\PP_p(\R^d))$
    \[\int_0^T |\dot\mu|_{W_p}^p(t) dt \leq \int_0^T \int_{\R^d} |v(t,x)|^p d\mu_t dt \leq \int_0^T c_t^p(\mu_t) dt<+\infty.\]
    Then, $\Lambda \in \PP_{\bar{\mathcal{A}}_p}(C_T(\PP(\R^d)))$ and $\boldsymbol{M}\in AC_T^p(\PP_p(\PP_p(\R^d)))$ follow, respectively, from Lemma \ref{lemma: well posed for Lambda-a.e. curve} and Proposition \ref{starting from Lambda}.
\end{proof}

\subsubsection{Non-local vector fields}
Before proceeding, we introduce the notion of \textit{non-local vector field}, and we see how it is connected to the one of derivation. As for derivations, we introduce 
`$L^p$-non-local vector fields'.
We will use the notation $\widetilde{M}$ and $\widetilde{M_t}\otimes dt$ introduced in Remark \ref{rem: tilde{M}} and §\ref{subsub cont curve in prob space}.

\begin{df}[$L^p$-non-local vector fields]\label{vf inducing derivation}
    Let $M\in\PP(\PP(\R^d))$ and $p\geq 1$. We say that $b:\R^d\times \PP(\R^d)\to \R^d$ is an $L^p(\widetilde{M})$-non-local vector field if 
    \begin{equation}
        \int_{\PP}\int_{\R^d} |b(x,\mu)|^p d\mu(x)dM(\mu)  <+\infty .
    \end{equation}
\end{df}

As for derivations, we will often work with a family of non-local vector field, indexed by time $t\in [0,T]$. In particular, given a Borel family of random measures $(M_t)_{t\in[0,T]}\subset \PP(\PP(\R^d))$, an $L^p(\widetilde{M}_t\otimes dt)$-non-local vector field is a Borel measurable function $b:[0,T]\times \R^d \times \PP(\R^d)\to \R^d$ such that 
\begin{equation}\label{time integrability for non-local vf}
    \int_0^T \int_{\PP}\int_{\R^d} |b(t,x,\mu)|^p d\mu(x) dM_t(\mu) dt<+\infty.
\end{equation}

\begin{df}\label{def CERM non-local vf}
    Let $(M_t)_{t\in [0,T]} \subset \PP(\PP(\R^d))$ be a curve of random measures and $b:[0,T]\times \R^d \times \PP(\R^d) \to \R^d$ an $L^p(\widetilde{M}_t\otimes dt)$ non-local vector field. We say that $\partial_t M_t + \operatorname{div}_\PP(b_tM_t) = 0$ holds, if for all $F \in \operatorname{Cyl}_c^1(\PP(\R^d))$ it holds
    \begin{equation}\label{cerm with non-local vf}
        \frac{d}{dt}\int_{\PP(\R^d)} F(\mu) dM_t(\mu) = \int_{\PP} \int_{\R^d} \nabla_W F(x,\mu) \cdot b_t(x,\mu) d\mu(x) dM_t(\mu),
    \end{equation}
    in the sense of distribution in $(0,T)$.
\end{df}

\begin{oss}\label{rem: vf induces deriv}
    Notice that a non-local vector field always induces a derivation. To be more specific, let $M\in \PP(\PP(\R^d))$ and $b:\R^d \times \PP(\R^d)\to\R^d$ be an $L^p(\widetilde{M})$ non-local vector field, then for $M$-a.e. $\mu\in \PP(\R^d)$, the following quantity defines an $L^p(M)$-derivation
    \begin{equation}
        B^{(b)}[F](\mu) := \int_{\R^d} b(x,\mu)\cdot \nabla_W F(x,\mu)d\mu(x)
    \end{equation}
    where a feasible $c$ is given by 
    \[c^{(b)}(\mu) = \|b(\cdot,\mu)\|_{L^p(\mu)}.\]
    The same relation holds between $L^p(\widetilde{M}_t\otimes dt)$-non-local vector field and $L^p(M_t\otimes dt)$-derivations. In particular, in the context of Definition \ref{def CERM non-local vf}, if $p\geq1$ and $M_0\in \PP_p(\PP_p(\R^d))$, then thanks to Proposition \ref{p-superposition}, it always holds that
    \begin{equation}\label{eq: energy ineq CERM nlvf}
        |\dot{M}|_{\mathcal{W}_p}^p(t) \leq \int_{\PP(\R^d)} \int_{\R^d} |b_t(x,\mu)|^pd\mu(x)dM_t(\mu) <+\infty \quad \text{ for a.e. }t\in[0,T].
    \end{equation}
\end{oss}

\begin{lemma}\label{lemma: CERM duality with cyl_b}
     Let $(M_t)_{t\in [0,T]} \subset \PP(\PP(\R^d))$ be a curve of random measures and $b:[0,T]\times \R^d \times \PP(\R^d) \to \R^d$ an $L^p(\widetilde{M}_t\otimes dt)$ non-local vector fields, with $p\geq 1$, satisfying $\partial M_t + \operatorname{div}_\PP(b_tM_t) = 0$. Then, \eqref{cerm with non-local vf} is satisfied also for $F \in \operatorname{Cyl}_b^1(\PP(\R^d))$.
\end{lemma}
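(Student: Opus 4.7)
The natural strategy is a truncation-plus-dominated-convergence argument that reduces an $F \in \operatorname{Cyl}_b^1(\PP(\R^d))$ to an approximating sequence in $\operatorname{Cyl}_c^1(\PP(\R^d))$. Write $F(\mu) = \Psi(L_{\phi_1}(\mu),\ldots,L_{\phi_k}(\mu))$ with $\Psi \in C_b^1(\R^k)$ and $\phi_i \in C_b^1(\R^d)$. Fix a cutoff $\chi \in C_c^1(\R^d)$ with $\chi \equiv 1$ on $B_1$, $0 \le \chi \le 1$, and set $\chi_R(x) := \chi(x/R)$ so that $\|\nabla \chi_R\|_\infty \le R^{-1}\|\nabla\chi\|_\infty$. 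Define $\phi_i^R := \chi_R\, \phi_i \in C_c^1(\R^d)$ and $F_R := \Psi \circ L_{\Phi^R} \in \operatorname{Cyl}_c^1(\PP(\R^d))$. By assumption, for every $\psi \in C_c^1(0,T)$,
\begin{equation}\label{eq:plan-CERM-R}
-\int_0^T \psi'(t) \int_{\PP} F_R(\mu)\,dM_t(\mu)\,dt = \int_0^T \psi(t) \int_{\PP}\int_{\R^d} \nabla_W F_R(x,\mu)\cdot b_t(x,\mu)\,d\mu(x)\,dM_t(\mu)\,dt.
\end{equation}
The task is then to pass to the limit $R \to \infty$ in both sides.

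For the left-hand side, $|F_R| \le \|\Psi\|_\infty$ uniformly, and for every $\mu$ one has $L_{\phi_i^R}(\mu) \to L_{\phi_i}(\mu)$ by dominated convergence (since $|\chi_R \phi_i| \le \|\phi_i\|_\infty$ and $\chi_R \to 1$ pointwise). Continuity of $\Psi$ yields $F_R(\mu) \to F(\mu)$ pointwise, so by dominated convergence (using $|\psi'|$ times the $\|\Psi\|_\infty$-bound as dominator on the finite measure space $dM_t\otimes dt$), the left-hand side converges to the analogous expression with $F$ in place of $F_R$.

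For the right-hand side, the gradient decomposes as
\begin{equation*}
\nabla_W F_R(x,\mu) = \sum_{j=1}^k \partial_j\Psi(L_{\Phi^R}(\mu)) \bigl[\chi_R(x)\nabla \phi_j(x) + \phi_j(x)\nabla\chi_R(x)\bigr],
\end{equation*}
which is uniformly bounded by a constant $C = C(\Psi,\Phi,\chi)$ independent of $R \ge 1$ (thanks to the $R^{-1}$ factor in $\nabla\chi_R$), and converges pointwise to $\nabla_W F(x,\mu) = \sum_j \partial_j\Psi(L_\Phi(\mu))\nabla\phi_j(x)$. The main integrability check is the domination $|\nabla_W F_R(x,\mu)\cdot b_t(x,\mu)| \le C |b_t(x,\mu)|$: since $(\PP \times \R^d \times [0,T], \widetilde M_t \otimes dt)$ is a finite measure space and $b \in L^p(\widetilde M_t\otimes dt;\R^d)$ with $p \ge 1$, H\"older's inequality gives $b \in L^1(\widetilde M_t\otimes dt;\R^d)$. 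Dominated convergence then yields convergence of the right-hand side of \eqref{eq:plan-CERM-R}.

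\textbf{Main obstacle.} The only delicate point is ensuring the gradient stays uniformly bounded despite the spurious term $\phi_j\nabla\chi_R$ arising from the Leibniz rule; this is precisely what forces the scaling $\chi_R(\cdot) = \chi(\cdot/R)$ rather than a fixed cutoff. Everything else reduces to two applications of dominated convergence on the finite measure space underlying the continuity equation, so no further measure-theoretic subtlety is expected.
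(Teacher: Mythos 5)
Your proof is correct and follows essentially the same route as the paper: multiply each $\phi_i$ by a cutoff $\rho(\cdot/R)$, use the $R^{-1}$ decay of $\nabla\rho_R$ to keep $\nabla_W F_R$ uniformly bounded, note $b\in L^1(\widetilde M_t\otimes\,\d t)$ by H\"older since $p\ge 1$, and pass to the limit on both sides of the weak formulation by dominated convergence. (The paper's displayed expression ``$\phi_{i,R}(x)=\phi_i(\rho_R(x))$'' is evidently a typo for the product $\phi_i(x)\rho_R(x)$, which is exactly your $\chi_R\phi_i$; the composition would not even be defined for $d>1$ nor compactly supported.)
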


\begin{proof}
    Let $\Psi\in C_b^1(\R^k)$ and $\phi_i \in C_b^1(\R^d)$ for $i\leq k$. Consider a cut-off function $\rho\in C^1_c(\R^d)$ such that $0\leq\rho\leq 1$, $\rho(x) = 1$ for all $|x|\leq1$ and $\rho=0$ for all $|x|\geq 2$. Then, for all $R>1$ define $\rho_R(x) := \rho(x/R)$, $\phi_{i,R}(x) = \phi_i(\rho_R(x))$ and $F_R = \Psi(L_{\phi_{1,R}},\dots,L_{\phi_{k,R}}) \in \operatorname{Cyl}_c^1(\PP(\R^d))$. We know that for all $\xi\in C_c^1(0,T)$ it holds
    \[\int_0^T \xi'(t)\int_{\PP(\R^d)} F_R(\mu) dM_t(\mu) dt = \int_0^T \xi(t)\int_{\PP} \int_{\R^d} b_t(x,\mu)\cdot \nabla_W F_R(x,\mu)d\mu(x)dM_t(\mu) dt,\]
    so we want to pass to the limit on both sides as $R\to+\infty$.
    Regarding the LHS:
    \begin{align*}
        & \left|  \int_0^T \xi'(t)\int_{\PP(\R^d)} F_R(\mu) dM_t(\mu) dt - \int_0^T \xi'(t)\int_{\PP(\R^d)} F(\mu) dM_t(\mu) dt \right|
        \\
        & \ \leq 
        \|\xi'\|_\infty \|\nabla\Psi\|_\infty \sum_{i=1}^k \int_0^T \int_{\PP} \int_{\R^d} |\phi_{i,R}(x) - \phi_i(x)| d\mu(x) dM_t(\mu) dt \to 0,
    \end{align*}
    thanks to dominated convergence theorem. Regarding the RHS:

    \begin{align*}
        \left|\int_0^T \xi(t)\int_{\PP} \int_{\R^d} b_t(x,\mu)\cdot \big(\nabla_W F_R(x,\mu)-\nabla_W F(x,\mu)\big)d\mu(x) dM_t(\mu) dt \right|\to 0
    \end{align*}
    again by dominated convergence theorem, since $\nabla_W F_R(x,\mu) \to \nabla_WF(x,\mu)$ pointwise in $(x,\mu)$, and the domination is given by $2\|\xi\|_\infty\sum_{i=1}^k\|\partial_i \Psi\|_\infty\|\nabla\phi_i\|_\infty \|b_t(\cdot,\mu)\|_{L^p(\mu)}$. 
\end{proof}

\begin{es}\label{example: N particles CERM}
    The curve $(M_t)_{t\in [0,T]}$ introduced in \eqref{eq: random measures associated to N-particle} solves the continuity equation $\partial M_t+\operatorname{div}_\PP(b_tM_t)=0$, with $b$ as in \eqref{non-local vf intro finite part system}. Indeed, using the notation $\underline{x} = (x_1,\dots,x_N)\in (\R^{d})^N$ for all $F = \Psi
    \circ L_{\Phi} \in \operatorname{Cyl}_c^1(\PP(\R^d))$ we have
    \begin{align*}
        \frac{d}{dt}\int_{\PP(\R^d)} F(\mu) dM_t(\mu)
        = &
        \frac{d}{dt} \int_{\R^{dN}} \Psi \left( \frac{1}{N} \sum_{i=1}^N \phi_1(x_i) , \dots, \frac{1}{N} \sum_{i=1}^N \phi_k(x_i)\right) dm_t(\underline{x})
        \\
        = &
        \frac{1}{N}\sum_{j=1}^k \int_{\R^{dN}} \sum_{i=1}^N \partial_j \Psi\left( L_{\Phi}(\iota(\underline{x})) \right) \nabla \phi_j(x_i)\cdot b_t(x_i,\underline{x}) dm_t(\underline{x})
        \\
        = &
        \sum_{j=1}^k \int_{\R^{dN}} \int_{\R^d} \partial_j \Psi(L_\Phi(\iota(\underline{x}))) \nabla\phi_j(x) \cdot b_t(x,\iota(\underline{x})) d[\iota(\underline{x})](x) dm_t(\underline{x})
        \\
        = & 
        \int_{\PP(\R^d)} \int_{\R^d}  \nabla_W F(x,\mu) \cdot b_t(x,\mu) d\mu(x) dM_t(\mu).
    \end{align*}
\end{es}

Consider now an absolutely continuous curve of random measures $\boldsymbol{M} \in AC_T^p(\PP_p(\PP_p(\R^d)))$, our goal is to build a non-local vector field such that the curve solves the continuity equation associated to it. Before proceeding, we need to define some useful objects in the following:
\begin{itemize}
    \item given a curve $\boldsymbol{M}\in C_T(\PP(\PP(\R^d)))$, define $\Xi^{\boldsymbol{M}} \in \mathcal{M}_+([0,T]\times \R^d \times \PP(\R^d))$ such that for all $F:[0,T] \times \R^d \times \PP(\R^d) \to [0,1]$ Borel measurable, it holds
    \begin{equation}\label{def of Xi^M}
        \int F(t,x,\mu) d\Xi^{\boldsymbol{M}}(t,x,\mu) = \int_0^T \int_{\PP} \int_{\R^d} F(t,x,\mu) d\mu(x) dM_t(\mu) dt.
    \end{equation}
    It coincides with the measure already indicated as $\widetilde{M_t}\otimes dt$, we just use this in some contexts for the sake of notation;
    \item given a measure $\mathfrak{L}\in \PP(\PP(C_T(\R^d))) $, define the measure $\Xi^{\mathfrak{L}} \in \mathcal{M}_+([0,T]\times C_T(\R^d) \times \PP(C_T(\R^d)))$ such that for all $H:[0,T]\times C_T(\R^d) \times \PP(C_T(\R^d)) \to [0,1]$ Borel measurable, it holds
    \begin{equation}\label{def of Xi^L}
        \int H(t,\gamma,\lambda) d\Xi^{\mathfrak{L}}(t,\gamma,\lambda) = \int_0^T \int \int H(t,\gamma,\lambda) d\lambda(\gamma) d\mathfrak{\L}(\lambda) dt.
    \end{equation}
\end{itemize}

\begin{prop}[From AC to CE]\label{correspondence AC to CERM}
    Let $\boldsymbol{M}=(M_t)_{t\in [0,T]} \in AC_T^p(\PP_p(\PP_p(\R^d)))$ for some $p>1$. Then, there exists an $L^p(\widetilde{M}_t\otimes dt)$ non-local vector field $b:[0,T],\times \R^d \times \PP(\R^d) \to \R^d$ such that
    \begin{equation}\label{eq: energy equality non-local vf}
        \int_\PP\int_{\R^d} |b(t,x,\mu)|^pd\mu(x) dM_t(\mu) = |\dot{M}|^p_{\mathcal{W}_p}(t) \quad \text{ for a.e. }t\in [0,T],
    \end{equation}
    and satisfying the continuity equation $\partial M_t + \operatorname{div}_\PP(b_tM_t) = 0$, in the sense of \eqref{cerm with non-local vf}. 
\end{prop}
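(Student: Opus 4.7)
My plan is to construct $b$ via a Hahn--Banach and Riesz representation argument dual to the one in the proof of Proposition~\ref{p-superposition}. The analytic heart is a chain-rule estimate: for every $F\in\operatorname{Cyl}_c^1(\PP(\R^d))$, the map $J_F(t):=\int F\,dM_t$ is absolutely continuous and
\[
|\dot J_F(t)|\;\leq\;|\dot{\boldsymbol M}|_{\mathcal{W}_p}(t)\,\|\nabla_WF(\cdot,\cdot)\|_{L^{p'}(\widetilde M_t;\R^d)}\quad\text{for a.e. }t\in (0,T).
\]
This follows from the metric chain rule applied to $\tilde F:M\mapsto\int F\,dM$ once one bounds its local slope on $(\PP_p(\PP_p(\R^d)),\mathcal{W}_p)$ by $\|\nabla_WF\|_{L^{p'}(\widetilde M;\R^d)}$; this slope bound comes from estimating the increments $|F(\nu)-F(\mu)|$ along an optimal $\mathcal{W}_p$-coupling between $M$ and a nearby $N$ via the cylinder slope of $F$ on $(\PP_p(\R^d),W_p)$, namely $|\nabla F|(\mu)=\|\nabla_WF(\cdot,\mu)\|_{L^{p'}(\mu)}$, followed by Hölder's inequality. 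Applied to an instantaneous $t$-dependent combination $G_t:=\sum_k\psi_k(t)F_k$ (by differentiating $s\mapsto\int G_t\,dM_s$ at $s=t$), the same argument yields
\[
\Bigl|\sum_k\psi_k(t)\dot J_{F_k}(t)\Bigr|\;\leq\;|\dot{\boldsymbol M}|_{\mathcal{W}_p}(t)\,\Bigl\|\sum_k\psi_k(t)\nabla_WF_k(\cdot,\cdot)\Bigr\|_{L^{p'}(\widetilde M_t;\R^d)}.
\]

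On the subspace
\[
V:=\operatorname{Span}\bigl\{(t,x,\mu)\mapsto\psi(t)\nabla_WF(x,\mu):\psi\in C_c^1((0,T)),\,F\in\operatorname{Cyl}_c^1(\PP(\R^d))\bigr\}\subset L^{p'}(\widetilde M_t\otimes dt;\R^d),
\]
I then define the linear functional $\mathcal{B}\bigl(\sum_k\psi_k\nabla_WF_k\bigr):=\sum_k\int_0^T\psi_k(t)\dot J_{F_k}(t)\,dt=-\sum_k\int_0^T\psi_k'(t)J_{F_k}(t)\,dt$. Well-definedness on $V$ is immediate from the instantaneous estimate applied to any representation of the zero element, and boundedness by $\bigl(\int_0^T|\dot{\boldsymbol M}|^p\,dt\bigr)^{1/p}\cdot\|\cdot\|_{L^{p'}(\widetilde M_t\otimes dt;\R^d)}$ follows by Hölder's inequality. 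Hahn--Banach extension together with Riesz representation in $L^p(\widetilde M_t\otimes dt;\R^d)$ then produces a Borel vector field $b$ with
\[
\int_0^T\int_\PP\int_{\R^d}|b(t,x,\mu)|^p\,d\mu(x)\,dM_t(\mu)\,dt\;\leq\;\int_0^T|\dot{\boldsymbol M}|_{\mathcal{W}_p}^p(t)\,dt,
\]
and unwinding the definition of $\mathcal B$ one sees that $b$ satisfies the continuity equation in the sense of~\eqref{cerm with non-local vf}.

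Finally, the reverse pointwise inequality $|\dot{\boldsymbol M}|_{\mathcal{W}_p}^p(t)\leq\int_\PP\int_{\R^d}|b(t,x,\mu)|^p\,d\mu(x)\,dM_t(\mu)$ for a.e.\ $t$ is precisely Remark~\ref{rem: vf induces deriv} combined with Proposition~\ref{p-superposition} applied to the derivation induced by $b$. Together with the integrated Hahn--Banach bound, this forces the pointwise equality~\eqref{eq: energy equality non-local vf}, since two nonnegative measurable functions on $[0,T]$ with one pointwise dominating the other a.e.\ and with equal integrals must coincide a.e. The main obstacle in executing this plan is the instantaneous chain-rule estimate for $t$-dependent cylinder combinations $G_t$: the same element of $V$ admits many representations $\sum_k\psi_k\nabla_WF_k$, and it is precisely this instantaneous estimate (not just the single-cylinder version) that makes the definition of $\mathcal B$ independent of the chosen representation. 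Once this is in place, the remainder is a routine combination of metric chain rule, $L^p$--$L^{p'}$ duality, and the already proven direction of Proposition~\ref{p-superposition}.
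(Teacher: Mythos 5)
Your proposal is correct and takes a genuinely different route from the paper. The paper constructs $b$ \emph{explicitly} as a conditional expectation: starting from the nested lifting $\mathfrak{L}=G_\sharp\Lambda_{\boldsymbol M}\in\PP(\PP(C_T(\R^d)))$ produced in Section \ref{metric structure}, the measures $\Xi^{\mathfrak{L}}$ and $\Xi^{\boldsymbol M}$ are related by the push-forward map $\mathcal{E}(t,\gamma,\lambda)=(t,\gamma(t),(e_t)_\sharp\lambda)$, and $b$ is obtained as the density of $\mathcal{E}_\sharp(D\,\Xi^{\mathfrak{L}})$ with respect to $\Xi^{\boldsymbol M}$ via a Jensen-inequality disintegration lemma; the energy equality \eqref{eq: energy equality non-local vf} is then read off from the minimality of the lifting. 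You instead avoid the nested lifting entirely and proceed by abstract duality: you first establish the metric slope bound $\operatorname{lip}\tilde F(M)\le\|\nabla_W F\|_{L^{p'}(\widetilde M;\R^d)}$ for $\tilde F(M)=\int F\,dM$ by estimating increments along an optimal random coupling and integrating $\nabla_W F$ along the linear interpolants $\mu_s^\pi=((1-s)p^1+sp^2)_\sharp\pi$, then combine the metric chain rule with Hölder to get $|\dot J_F(t)|\le|\dot{\boldsymbol M}|(t)\,\|\nabla_W F\|_{L^{p'}(\widetilde M_t)}$ a.e., set up the functional $\mathcal B$ on the span of $\psi(t)\nabla_W F$, and invoke Hahn--Banach plus Riesz representation in $L^p(\widetilde M_t\otimes dt)$. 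Both yield $\|b\|_{L^p}^p\le\int_0^T|\dot{\boldsymbol M}|^p$ and both close the gap to equality via the already-proved direction from Remark \ref{rem: vf induces deriv}/\eqref{eq: energy ineq CERM nlvf}. The paper's approach buys an explicit probabilistic formula for $b$ (averaging trajectory velocities over the fibers of $\mathcal{E}$), at the cost of requiring the full nested-lifting machinery of Theorem \ref{metric superposition}. Your approach buys independence from Section \ref{metric structure} and is closer in spirit to the classical argument of \cite[Theorem 8.3.1]{ambrosio2005gradient} for $\PP_p(\R^d)$, at the cost of producing $b$ only through a non-constructive duality step and requiring the slope bound as a separate analytic lemma.

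One point that your sketch leaves slightly loose and that is worth making precise: the estimate $|\dot J_F(t)|\le|\dot{\boldsymbol M}|(t)\,\|\nabla_W F\|_{L^{p'}(\widetilde M_t)}$ holds for a.e.\ $t$ with exceptional set depending on $F$, whereas the well-definedness and boundedness of $\mathcal B$ require simultaneous validity across all the $F_k$ appearing in a given representation. This is resolved by observing that only finitely many cylinder functions appear in any single representation, so one may intersect the finitely many negligible sets; alternatively one fixes a countable family dense for the slope estimate and extends by continuity in the coefficients. Once that is spelled out, the argument is complete and does not introduce circularity: Proposition \ref{p-superposition} and the inequality \eqref{eq: energy ineq CERM nlvf} which you invoke for the reverse bound are established independently of the present proposition.
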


\begin{proof}
    Using the results of Section \ref{metric structure}, let $\Lambda_{\boldsymbol{M}} \in \operatorname{Lift}(\boldsymbol{M})$ and $\mathfrak{L} := G_\sharp (\Lambda_{\boldsymbol{M}}) \in \PP_{\bar{\mathfrak{A}}_p}(\PP(C_T(\R^d)))$, and consider the measures $\Xi^{\boldsymbol{M}}$ and $\Xi^{\mathfrak{L}}$ defined above. Thanks to propositions \ref{3.8} and \ref{3.10}, we have 
    \[\int_0^T \int \int |\dot\gamma|^p(t) d\lambda(\gamma) d\mathfrak{L}(\lambda) dt = \int_0^T|\dot{M}|_{\mathcal{W}_p}^p(t) dt <+\infty,\]
    which implies that the map $(t,\gamma,\lambda) \mapsto D(t,\gamma):=\dot\gamma(t)$ is in $L^p(\Xi^{\mathfrak{L}};\R^d)$ (see Lemma \ref{meas of D(t,gamma)}). Consider the map
    \begin{equation}\label{eq: def of mathcal E}
        \begin{aligned}
            \mathcal{E}: [0,T]\times C_T(\R^d) \times \PP(C_T(\R^d)) & \to [0,T]\times \R^d \times \PP(\R^d)
            \\
            (t,\gamma,\lambda) & \mapsto (t,\gamma(t), (e_t)_\sharp  \lambda),
        \end{aligned}
    \end{equation}
    and notice that $\Xi^{\boldsymbol{M}} = \mathcal{E}_\sharp  \Xi^{\mathfrak{L}}$. Then, thanks to \cite[Lemma 17.3]{ambrosio2021lectures} (see also  Remark \ref{remark density push-forward}), there exists a function $b:[0,T]\times \R^d \times \PP(\R^d) \to \R^d$ such that 
    \begin{equation}\label{eq: def of b thanks to lemma}
    \mathcal{E}_\sharp (D \ \Xi^{\mathfrak{L}}) = b \ \Xi^{\boldsymbol{M}}.
    \end{equation}
    We show that $b$ is the non-local vector field we are looking for: indeed 
    \begin{equation}\label{energy inequality vf}
    \begin{aligned}
        \int_0^T \int_\PP \int_{\R^d} |b(t,x,\mu)|^p d\mu(x) dM_t(\mu) dt 
        = & 
        \int  |b(t,x,\mu)|^pd\Xi^{\boldsymbol{M}}(t,x,\mu)dt 
        \leq 
        \int |\dot\gamma(t)|^p d\Xi^{\mathfrak{L}}(t,\gamma,\lambda) <+\infty,
    \end{aligned}
    \end{equation}
    again thanks to \cite[Lemma 17.3]{ambrosio2021lectures}. Moreover, for any $\psi\in C^1_c(0,T)$ and $F = \Psi\circ L_\Phi \in \operatorname{Cyl}_c^1(\PP(\R^d))$, it holds
    \begin{align*}
        \int_0^T \psi'(t) & \int_\PP F(\mu) dM_t(\mu) dt 
        =  
        \int \psi'(t) F(\mu) d\Xi^{\boldsymbol{M}}(t,x,\mu) 
        = 
        \int \psi'(t) F((e_t)_\sharp \lambda) d\Xi^{\mathfrak{L}}(t,\gamma,\lambda)  
        \\
        = &
        \int \int \int_0^T \psi'(t) \Psi\left(\int \phi_1(\gamma(t)) d\lambda(\gamma), \dots , \int \phi_k(\gamma(t)) d\lambda(\gamma)\right) dt d\lambda(\gamma) d\mathfrak{L}(\lambda) 
        \\
        = &
        - \int \int \int_0^T \psi(t) \sum_{i=1}^k \partial_i\Psi(L_\Phi((e_t)_\sharp \lambda)) \nabla\phi_i(\gamma(t)) \cdot \dot\gamma(t) \ dtd\lambda(\gamma) d\mathfrak{L}(\lambda)
        \\
        = & 
        - \int \psi(t) \nabla_W F \big(\gamma(t),(e_t)_\sharp \lambda\big) \cdot d\big(D \  \Xi^{\mathfrak{L}}\big)(t,\gamma,\lambda)
        \\
        = & 
        - \int \psi(t) \nabla_W F(x,\mu) \cdot d\big(b \ \Xi^{\boldsymbol{M}} \big)(t,x,\mu)
        \\
        = &
        - \int_0^T \psi(t) \int_\PP \int_{\R^d} \nabla_W F (x,\mu) \cdot b(t,x,\mu) d\mu(x) dM_t(\mu) dt,
    \end{align*}
    where in the second last equality we exploited the definition of $\mathcal{E}$ and the characterization of $b$ given by \eqref{eq: def of b thanks to lemma}. Putting together \eqref{eq: energy ineq CERM nlvf}, \eqref{energy inequality vf} and Proposition \eqref{well def from L to M}, then \eqref{eq: energy equality non-local vf} follows.
\end{proof}

\begin{oss}\label{rem: nlvf of minimal energy}
Notice that, thanks to \eqref{eq: energy equality non-local vf}, the vector field we built is minimal in an $L^p$-sense, and because of the strict convexity of $|\cdot |^p$ for $p>1$, such vector field is unique, in the sense that any other $L^p(M_t\otimes dt)$-non-local vector field $\tilde{b}$ satisfying $\partial_tM_t + \operatorname{div}_\PP(\tilde{b}_tM_t)=0$ and \eqref{eq: energy equality non-local vf} coincides with $b$ for $\Xi^{\boldsymbol{M}}$-a.e. $(t,x,\mu)\in[0,T]\times \R^d\times \PP(\R^d)$.
\end{oss}

A consequence of the characterization of absolutely continuous curves with curves that solve a continuity equation, is a Benamou-Brenier-type formula for random measures. 

\begin{teorema}[Benamou-Brenier formula]\label{thm: BB2}
    Let $p>1$. For all $M_0,M_1\in \PP_p(\PP_p(\R^d))$ it holds
    \begin{equation}\label{eq: BB}
    \begin{aligned}
        \mathcal{W}_p^p(M_0,M_1) = \operatorname{min}\bigg\{ \int_0^1 \int_{\PP}\int_{\R^d} |b_t(x,\mu)|^p d\mu(x)dM_t(\mu)dt \, : \, \partial_t M_t + \operatorname{div}_\PP(b_tM_t) = 0 \bigg\}.
    \end{aligned}
    \end{equation}
\end{teorema}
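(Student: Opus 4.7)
The plan is to prove the two inequalities separately, with the reverse inequality ($\le$) realized by a minimizer obtained from a geodesic together with the representation result of Proposition~\ref{correspondence AC to CERM}.

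For the lower bound ($\ge$), take any admissible pair $(\boldsymbol M, b)$ where $\boldsymbol M$ joins $M_0$ to $M_1$ and $b$ is an $L^p(\widetilde M_t\otimes \d t)$ non-local vector field satisfying $\partial_t M_t+\operatorname{div}_{\PP}(b_t M_t)=0$. By Remark~\ref{rem: vf induces deriv}, since $M_0\in \PP_p(\PP_p(\R^d))$ and $b$ induces an $L^p(M_t\otimes \d t)$-derivation with $c_t^{(b)}(\mu)=\|b_t(\cdot,\mu)\|_{L^p(\mu)}$, Proposition~\ref{p-superposition} (applied via the energy inequality \eqref{eq: energy ineq CERM nlvf}) gives $\boldsymbol M\in AC_1^p(\PP_p(\PP_p(\R^d)))$ and
\begin{equation*}
|\dot{\boldsymbol M}|_{\mathcal W_p}^p(t)\le \int_{\PP}\int_{\R^d}|b_t(x,\mu)|^p\,\d\mu(x)\,\d M_t(\mu)\quad\text{for a.e.\ }t\in(0,1).
\end{equation*}
Combining the general metric inequality $\mathcal{W}_p^p(M_0,M_1)\le \bigl(\int_0^1 |\dot{\boldsymbol M}|_{\mathcal W_p}(t)\,\d t\bigr)^p \le \int_0^1 |\dot{\boldsymbol M}|_{\mathcal W_p}^p(t)\,\d t$ (Jensen on $[0,1]$) with the above yields
\begin{equation*}
\mathcal{W}_p^p(M_0,M_1)\le \int_0^1\int_{\PP}\int_{\R^d}|b_t(x,\mu)|^p\,\d\mu(x)\,\d M_t(\mu)\,\d t,
\end{equation*}
so the infimum on the right of \eqref{eq: BB} is at least $\mathcal{W}_p^p(M_0,M_1)$.

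For the upper bound ($\le$) and attainment, I invoke the geodesic structure. Since $\R^d$ is geodesic, $(\PP_p(\R^d),W_p)$ is geodesic for $p>1$ and, by iteration, so is $(\PP_p(\PP_p(\R^d)),\mathcal{W}_p)$ (as recalled at the start of \S\ref{subsec: geodesics}). Pick therefore a constant speed geodesic $\boldsymbol M=(M_t)_{t\in[0,1]}$ connecting $M_0$ to $M_1$; it belongs to $AC_1^p(\PP_p(\PP_p(\R^d)))$ and satisfies $|\dot{\boldsymbol M}|_{\mathcal W_p}(t)=\mathcal{W}_p(M_0,M_1)$ for a.e.\ $t$, so $\int_0^1 |\dot{\boldsymbol M}|_{\mathcal W_p}^p(t)\,\d t=\mathcal{W}_p^p(M_0,M_1)$. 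Apply Proposition~\ref{correspondence AC to CERM} to this $\boldsymbol M$ to produce an $L^p(\widetilde M_t\otimes \d t)$ non-local vector field $b$ with $\partial_t M_t+\operatorname{div}_\PP(b_tM_t)=0$ and the \emph{equality}
\begin{equation*}
\int_{\PP}\int_{\R^d}|b(t,x,\mu)|^p\,\d\mu(x)\,\d M_t(\mu)=|\dot{\boldsymbol M}|_{\mathcal W_p}^p(t)\quad\text{for a.e.\ }t.
\end{equation*}
Integrating in time produces an admissible competitor whose value equals $\mathcal{W}_p^p(M_0,M_1)$, proving both the opposite inequality and the fact that the infimum is attained, so it is a minimum.

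The two steps are essentially packaging: all the heavy lifting—the a.e.\ control of the metric derivative by the $L^p$-norm of the vector field (Proposition~\ref{p-superposition}) and, conversely, the existence of a vector field saturating this control along an AC curve (Proposition~\ref{correspondence AC to CERM})—has already been done. I do not expect any genuine obstacle; the only point to double-check is that the geodesic endpoints indeed match $M_0,M_1$ (automatic) and that the $p>1$ assumption is used only through Proposition~\ref{correspondence AC to CERM}, which is already stated in that range.
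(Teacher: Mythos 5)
Your proof is correct and follows essentially the same route as the paper: the lower bound via the energy inequality \eqref{eq: energy ineq CERM nlvf} plus the metric Jensen bound, and the upper bound/attainment by picking a constant-speed geodesic and applying Proposition~\ref{correspondence AC to CERM} to produce the saturating non-local vector field. The only cosmetic difference is that the paper cites Lemma~\ref{lemma: geodesics through opt couplings} for the existence of geodesics, whereas you appeal directly to the geodesic-space structure of $\PP_p(\PP_p(\R^d))$; both are valid.
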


\begin{proof}
    Thanks to \eqref{eq: energy ineq CERM nlvf}, all the competitors for the right-hand side satisfy the inequality 
    \[\mathcal{W}_p^p(M_0,M_1) \leq \int_0^1 \int_\PP\int_{\R^d} |b_t(x,\mu)|^p d\mu(x)dM_t(\mu)dt.\]
    On the other hand, Lemma \ref{lemma: geodesics through opt couplings} gives the existence of a constant speed geodesic $(M_t)_{t\in[0,1]}\in C([0,1],\PP_p(\PP_p(\R^d)))$. Then, Proposition \ref{correspondence AC to CERM} gives the existence of a non-local vector field $b:[0,1]\times \R^d \times \PP(\R^d) \to \R^d$ satisfying $\partial_tM_t + \operatorname{div}_\PP(b_tM_t) = 0$ and \eqref{eq: energy equality non-local vf}, from which it follows that the curve $(M_t)$ and the non-local vector field $b$ are optimal for \eqref{eq: BB}.
\end{proof}

\subsection{The tangent and cotangent bundle to $\PP_p(\PP_p(\R^d))$}\label{subsec: tangent}
In this subsection, we define the tangent and cotangent bundle as closure in a suitable Lebesgue space of the Wasserstein gradient of cylinder functions. Then, following the same argument of \cite[§8.4]{ambrosio2005gradient} we characterize the non-local vector fields of minimal energy (see Remark \ref{rem: nlvf of minimal energy}) as elements of the tangent bundle.

Before proceeding, let us recall the duality pairing map between Lebesgue spaces: given any measurable space $(X,\mathcal{F})$ endowed with a finite positive measure $\sigma$, then for any $p\in (1,+\infty)$ the duality pairing is defined as
\begin{equation}
    j_p: L^p(\sigma;\R^d) \to L^{p'}(\sigma;\R^d), \quad j_p(V)(x):= \begin{cases}
        |V(x)|^{p-2}V(x) \quad & \text{ if } V(x)\neq 0
        \\
        0 & \text{ otherwise }
    \end{cases}
\end{equation}


\begin{df}
    Let $p\in(1,+\infty)$ and $M\in \PP_p(\PP_p(\R^d))$ and recall the definition of $\widetilde{M}\in \PP(\R^d\times \PP(\R^d))$ from Remark \ref{rem: tilde{M}}. Then we define, respectively, the cotangent and the tangent space of $\PP_p(\PP_p(\R^d))$ at $M$ as
    \begin{equation}\label{eq: cotangent}
        \operatorname{CoTan}_M \PP_p(\PP_p(\R^d)) := \operatorname{Clos}_{L^{p'}(\widetilde{M};\R^d)} \left\{ \nabla_W F \ : \ F\in\operatorname{Cyl}_c(\PP(\R^d)) \right\} \subseteq L^{p'}(\widetilde{M};\R^d);
    \end{equation}
    \begin{equation}\label{eq: tangent}
        \operatorname{Tan}_M \PP_p(\PP_p(\R^d)) := \operatorname{Clos}_{L^{p}(\widetilde{M};\R^d)} \left\{ j_{p'}\left(\nabla_W F\right) \ : \ F\in\operatorname{Cyl}_c(\PP(\R^d)) \right\} \subseteq L^{p}(\widetilde{M};\R^d).
    \end{equation}
\end{df}
Notice that the tangent and the cotangent space are in duality by the maps $j_p$ and $j_{p'}$, i.e. $\operatorname{Tan}_M \PP_p(\PP_p(\R^d)) = j_{p'}(\operatorname{CoTan}_M \PP_p(\PP_p(\R^d)))$ and $\operatorname{CoTan}_M \PP_p(\PP_p(\R^d)) = j_p(\operatorname{Tan}_M \PP_p(\PP_p(\R^d)))$.

\begin{oss}
    The tangent space could be defined only considering infinitely-smooth cylinder functions, that is 
    \[\operatorname{Tan}_M \PP_p(\PP_p(\R^d)) = \operatorname{Clos}_{L^{p}(\widetilde{M};\R^d)} \left\{ j_{p'}\left(\nabla_W (\Psi\circ L_\Phi)\right)  :   k\in \mathbb{N}, \, \Psi\in C_c^\infty(\R^k), \, \Phi \in C_c^\infty(\R^d;\R^k) \right\},\]
    since any $C^1_c$ function can be uniformly approximated by functions in $C_c^\infty$. 
\end{oss}

\begin{lemma}\label{lemma: minimality in the tangent}
    Let $p>1$, $M\in \PP_p(\PP_p(\R^d))$ and $b\in L^p(\widetilde{M};\R^d)$. Then $b\in \operatorname{Tan}_M \PP_p(\PP_p(\R^d))$ if and only if $\|b+ b'\|_{L^p(\widetilde{M};\R^d)} \geq \|b\|_{L^p (\widetilde{M};\R^d)}$ for all $b'\in L^p(\widetilde{M};\R^d)$ such that $\langle b',\omega \rangle = 0$ for all $\omega \in \operatorname{CoTan}_M \PP_p(\PP_p(\R^d))$.    
    \\
    In particular, for every $b \in L^p(\widetilde{M};\R^d)$ there exists a unique element $\Pi(b) \in \operatorname{Tan}_M \PP_p(\PP_p(\R^d))$ in the set of vector fields $b' \in L^p(\widetilde{M};\R^d)$ satisfying $\langle b,\omega \rangle = \langle b',\omega \rangle$ for all $\omega \in \operatorname{CoTan}_M \PP_p(\PP_p(\R^d))$ and $\Pi(b)$ is the element of minimal norm in this class.
\end{lemma}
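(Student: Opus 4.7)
The plan is to recognize this lemma as a standard duality/convex-analysis statement in Lebesgue spaces. Set $V:=\operatorname{CoTan}_M\PP_p(\PP_p(\R^d))$, which is a closed linear subspace of $L^{p'}(\widetilde{M};\R^d)$ by construction, and write $V^\perp:=\{b'\in L^p(\widetilde{M};\R^d):\langle b',\omega\rangle=0\ \forall\omega\in V\}$ for its annihilator in $L^p$. Since the duality map $j_{p'}:L^{p'}(\widetilde{M};\R^d)\to L^p(\widetilde{M};\R^d)$ is a continuous bijection with continuous inverse $j_p$, applying $j_{p'}$ to the closure in \eqref{eq: cotangent} gives $\operatorname{Tan}_M\PP_p(\PP_p(\R^d))=j_{p'}(V)$. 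I will also use the elementary identity $\|j_{p'}(\omega)\|_{L^p}^p=\|\omega\|_{L^{p'}}^{p'}=\langle j_{p'}(\omega),\omega\rangle$, which follows from $(p'-1)p=p'$.

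For the forward direction of the minimality characterization, take $b=j_{p'}(\omega)\in\operatorname{Tan}_M$ with $\omega\in V$ and any $b'\in V^\perp$. Since $\langle b',\omega\rangle=0$, combining H\"older's inequality with the identity above yields
\[\|b\|_{L^p}^p=\langle b,\omega\rangle=\langle b+b',\omega\rangle\le\|b+b'\|_{L^p}\|\omega\|_{L^{p'}}=\|b+b'\|_{L^p}\|b\|_{L^p}^{p-1},\]
hence $\|b\|_{L^p}\le\|b+b'\|_{L^p}$. For the converse, I would exploit the fact that for $p>1$ the function $\varepsilon\mapsto\|b+\varepsilon u\|_{L^p}^p$ is G\^ateaux differentiable at $\varepsilon=0$ with derivative $p\langle j_p(b),u\rangle$. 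The minimality hypothesis, applied to both $u$ and $-u$ for arbitrary $u\in V^\perp$, then forces $\langle j_p(b),u\rangle=0$ for every $u\in V^\perp$, so that $j_p(b)\in(V^\perp)^\perp$. The closedness of $V$ in the reflexive space $L^{p'}(\widetilde{M};\R^d)$ and the Hahn--Banach theorem give $(V^\perp)^\perp=V$, whence $j_p(b)\in V$ and $b=j_{p'}(j_p(b))\in j_{p'}(V)=\operatorname{Tan}_M$.

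For the second assertion, the set $b+V^\perp$ is a closed affine subspace of $L^p(\widetilde{M};\R^d)$. Since $L^p$ is uniformly convex for $p\in(1,+\infty)$, it contains a unique element $\Pi(b)$ of minimal norm. For every $v\in V^\perp$ one has $\Pi(b)+v\in b+V^\perp$, so $\|\Pi(b)\|_{L^p}\le\|\Pi(b)+v\|_{L^p}$, and by the characterization just proved $\Pi(b)\in\operatorname{Tan}_M$. Conversely, any element of $\operatorname{Tan}_M\cap(b+V^\perp)$ would, by the same characterization, be a minimizer of the norm on $b+V^\perp$; uniqueness of such a minimizer (again by uniform convexity) forces it to coincide with $\Pi(b)$.

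The only step that is not purely bookkeeping is the annihilator identity $(V^\perp)^\perp=V$, but this reduces to the closedness of $V$ in $L^{p'}$, which is built into the definition \eqref{eq: cotangent}. The remaining technical ingredient, namely the G\^ateaux differentiability of $\|\cdot\|_{L^p}^p$ with derivative given by the duality map, is a classical fact in the geometry of Lebesgue spaces for $p>1$ and will not constitute a real obstacle.
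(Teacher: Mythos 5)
Your proof is correct and follows essentially the same route as the paper's: the paper invokes convexity of $\|\cdot\|_{L^p}^p$ together with the subdifferential $pj_p(b)$ and the Hahn--Banach theorem, while you phrase the forward direction as a direct Hölder computation and the converse via Gâteaux differentiability of $\|\cdot\|_{L^p}^p$ together with the bipolar identity $(V^\perp)^\perp=V$; these are the same ingredients in slightly different packaging, and your handling of the second assertion via uniform (rather than strict) convexity is likewise equivalent.
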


\noindent Notice that the condition that $\langle b',\omega \rangle = 0$ for all $\omega \in \operatorname{CoTan}_M \PP_p(\PP_p(\R^d))$ is equivalent to ask that $\langle b',\nabla_W F \rangle = 0$ for all $F \in \operatorname{Cyl}_c(\PP(\R^d))$, which, consistently with our notation, can be written in the compact form $\operatorname{div}_{\PP}(b'M) = 0$.

\begin{proof}
    As in \cite[Lemma 8.4.2]{ambrosio2005gradient}, by convexity of the $L^p$-norm to the power $p$ and the fact that $pj_{p}(b)$ belongs to its subdifferential at the function $b$, we have that $\|b+b'\|_{L^p}^p \geq \|b\|_{L^p}$ for all $b'$ satisfying $\operatorname{div}_{\PP}(b'M)=0$ if and only if $\langle j_p(b),b' \rangle = 0$ for all $b'$ as before, and by the Hahn-Banach theorem this happens if and only if $j_p(b) \in \operatorname{CoTan}_M \PP_p(\PP_p(\R^d))$. This is equivalent to say $b = j_{p'}(j_p(b)) \in \operatorname{Tan}_M \PP_p(\PP_p(\R^d))$. The last part follows from the fact that the class of vector fields $b'$ satisfying $\langle b,\omega \rangle = \langle b' ,\omega \rangle$ for all $\omega \in \operatorname{CoTan}_M \PP_p(\PP_p(\R^d))$ is closed and convex, so that by strict convexity of the $L^p$-norm there exists a unique element of minimum norm in it and by the previous characterization it belongs to $\operatorname{Tan}_M \PP_p(\PP_p(\R^d))$.
\end{proof}

\begin{prop}\label{prop:tangent}
    Let $\boldsymbol{M}=(M_t)_{t\in[0,T]} \in C_T(\PP(\PP(\R^d)))$ such that $M_0 \in \PP_p(\PP_p(\R^d))$ and $b:[0,T]\times \R^d \times \PP(\R^d)\to \R^d$ be an $L^p(\widetilde{M}_t\otimes dt)$-non-local vector field. Assume that the continuity equation $\partial_t M_t +\operatorname{div}_{\PP}(b_tM_t) = 0$ holds, in the sense of Definition \ref{def CERM non-local vf}. Then $M_t\in \PP_p(\PP_p(\R^d))$ for all $t\in [0,T]$ and the following are equivalent:
    \begin{enumerate}
        \item[(1)] $b_t \in \operatorname{Tan}_{M_t}\PP_p(\PP_p(\R^d))$ for a.e. $t\in [0,T]$;
        \item[(2)] $\int_{\PP(\R^d)}\int_{\R^d} |b_t(x,\mu)|^p d\mu(x)dM_t(\mu) \leq |\dot{\boldsymbol{M}}|_{\mathcal{W}_p}^p(t)$ for a.e. $t\in [0,T]$;
        \item[(3)] $\int_0^T\int_{\PP(\R^d)}\int_{\R^d} |b_t(x,\mu)|^p d\mu(x)dM_t(\mu)dt = \int_0^T |\dot{\boldsymbol{M}}|_{\mathcal{W}_p}^p(t)dt$.
    \end{enumerate}
\end{prop}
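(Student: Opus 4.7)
The strategy is: (i) establish that the curve $\boldsymbol{M}$ lies in $AC_T^p(\PP_p(\PP_p(\R^d)))$ so that the Wasserstein metric derivative $|\dot{\boldsymbol{M}}|_{\mathcal{W}_p}$ is meaningful; (ii) obtain the equivalence $(2)\Leftrightarrow(3)$ from the general energy inequality \eqref{eq: energy ineq CERM nlvf}; (iii) reduce $(1)\Leftrightarrow(2)$ to the minimality characterization of Lemma~\ref{lemma: minimality in the tangent} by comparing $b$ with the minimal-energy vector field provided by Proposition~\ref{correspondence AC to CERM}.

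First I would note that the $L^p(\widetilde{M}_t\otimes\,\d t)$-integrability of $b$ combined with \eqref{eq: energy ineq CERM nlvf} yields $\int_0^T |\dot{\boldsymbol{M}}|_{\mathcal{W}_p}^p(t)\,\d t<+\infty$; together with $M_0\in\PP_p(\PP_p(\R^d))$, this promotes $\boldsymbol{M}$ to an element of $AC_T^p(\PP_p(\PP_p(\R^d)))$, in particular giving $M_t\in\PP_p(\PP_p(\R^d))$ for every $t\in[0,T]$. The equivalence $(2)\Leftrightarrow(3)$ is then immediate: the pointwise bound $|\dot{\boldsymbol{M}}|_{\mathcal{W}_p}^p(t)\le\int_\PP\int_{\R^d}|b_t|^p\,\d\mu\,\d M_t$ from \eqref{eq: energy ineq CERM nlvf} integrated over $[0,T]$ shows that either the pointwise reverse inequality (2) or the integrated equality (3) forces both to hold for a.e.~$t$.

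For $(1)\Leftrightarrow(2)$, apply Proposition~\ref{correspondence AC to CERM} to get a non-local vector field $\tilde{b}\in L^p(\widetilde{M}_t\otimes\,\d t)$ which solves the same continuity equation and satisfies the minimality identity
\[
\int_\PP\int_{\R^d}|\tilde{b}_t(x,\mu)|^p\,\d\mu(x)\,\d M_t(\mu)=|\dot{\boldsymbol{M}}|_{\mathcal{W}_p}^p(t)\quad\text{for a.e.\ }t\in[0,T].
\]
Subtracting the two weak formulations of the continuity equation, one obtains that for every fixed $F\in\operatorname{Cyl}_c^1(\PP(\R^d))$ the identity $\int_\PP\int_{\R^d}(b_t-\tilde{b}_t)\cdot\nabla_W F\,\d\mu\,\d M_t=0$ holds for a.e.~$t$. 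Choosing a countable family of cylinder functions dense (e.g.\ in the sense used in the proof of Theorem~\ref{superposition principle}) allows one to extract a common $t$-null set outside of which $\operatorname{div}_\PP((b_t-\tilde{b}_t)M_t)=0$ in the weak sense against \emph{all} $F\in\operatorname{Cyl}_c^1(\PP(\R^d))$; this countability step is the main technical point of the argument.

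Finally, I apply Lemma~\ref{lemma: minimality in the tangent} pointwise in $t$. For $(1)\Rightarrow(2)$: if $b_t\in\operatorname{Tan}_{M_t}\PP_p(\PP_p(\R^d))$, then by the minimality characterization and since $(\tilde{b}_t-b_t)$ is divergence-free,
\[
\|b_t\|_{L^p(\widetilde{M}_t;\R^d)}\le\|b_t+(\tilde{b}_t-b_t)\|_{L^p(\widetilde{M}_t;\R^d)}=\|\tilde{b}_t\|_{L^p(\widetilde{M}_t;\R^d)}=|\dot{\boldsymbol{M}}|_{\mathcal{W}_p}(t),
\]
which is $(2)$. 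For $(2)\Rightarrow(1)$: combining $(2)$ with \eqref{eq: energy ineq CERM nlvf} yields $\|b_t\|_{L^p(\widetilde{M}_t;\R^d)}^p=|\dot{\boldsymbol{M}}|_{\mathcal{W}_p}^p(t)$ a.e.; but for every $b'\in L^p(\widetilde{M}_t;\R^d)$ with $\operatorname{div}_\PP(b'M_t)=0$, the field $b_t+b'$ solves the same continuity equation, so \eqref{eq: energy ineq CERM nlvf} gives $\|b_t+b'\|_{L^p(\widetilde{M}_t;\R^d)}^p\ge|\dot{\boldsymbol{M}}|_{\mathcal{W}_p}^p(t)=\|b_t\|_{L^p(\widetilde{M}_t;\R^d)}^p$, which is precisely the condition of Lemma~\ref{lemma: minimality in the tangent} characterizing membership in $\operatorname{Tan}_{M_t}\PP_p(\PP_p(\R^d))$.
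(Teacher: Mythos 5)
Your argument for the preliminary promotion of $\boldsymbol{M}$ to $AC_T^p$, the equivalence $(2)\Leftrightarrow(3)$, and the implication $(1)\Rightarrow(2)$ are all sound and essentially parallel the paper's reasoning (you use the minimality \emph{inequality} from Lemma~\ref{lemma: minimality in the tangent} directly, while the paper concludes $b_t=\tilde b_t$ from the \emph{uniqueness} of the minimum-norm representative — both valid). The countability device for extracting a common $t$-null set when comparing $b$ and $\tilde b$ against all cylinder test functions is the right technical point and matches the spirit of the paper.

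However, your $(2)\Rightarrow(1)$ step has a genuine gap. You assert that for a fixed $t$ and any $b'\in L^p(\widetilde M_t;\R^d)$ with $\operatorname{div}_\PP(b'M_t)=0$, the field $b_t+b'$ "solves the same continuity equation," and then invoke \eqref{eq: energy ineq CERM nlvf} to deduce $\|b_t+b'\|_{L^p(\widetilde M_t)}^p\ge|\dot{\boldsymbol M}|_{\mathcal W_p}^p(t)$. This does not follow: the inequality \eqref{eq: energy ineq CERM nlvf} is an a.e.-in-$t$ statement for a time-dependent field defined on all of $[0,T]$ satisfying the continuity equation. A perturbation $b'$ living at a single instant $t$ modifies $b$ only on an $\mathcal L^1$-null set, so the resulting field coincides with $b$ as an element of $L^p(\widetilde M_t\otimes\,\d t;\R^d)$; the continuity equation and \eqref{eq: energy ineq CERM nlvf} are unchanged and give no new information at that particular $t$. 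To run a variational argument pointwise in time you would need a measurable selection $s\mapsto b'_s$ of divergence-free perturbations with enough regularity to feed into \eqref{eq: energy ineq CERM nlvf}, and then upgrade the resulting a.e.\ inequality to the desired conclusion for the arbitrary single-time perturbation $b'$ — which is exactly the subtlety to be addressed. The paper handles this by introducing the auxiliary time-integrated space $\mathcal V\subset L^p(\Xi^{\boldsymbol M};\R^d)$ (the closed span of $j_p(\xi(t)\nabla_W F)$ over $\xi\in C_c^1(0,T)$, $F\in\operatorname{Cyl}_c(\PP(\R^d))$), shows that condition $(3)$ together with Remark~\ref{rem: nlvf of minimal energy} forces $b\in\mathcal V$, and then uses an approximating sequence $j_p(\xi_n\nabla_WF_n)\to b$ in $L^p(\Xi^{\boldsymbol M})$: passing to a subsequence gives, for a.e.\ fixed $t$, convergence $j_p(\nabla_W(\xi_n(t)F_n))\to b_t$ in $L^p(\widetilde M_t)$, which exhibits $b_t$ as a limit of tangent elements and places it in $\operatorname{Tan}_{M_t}\PP_p(\PP_p(\R^d))$. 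You should replace the pointwise perturbation argument with this space-time variational step and the subsequent a.e.-$t$ extraction.
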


\begin{proof}
The equivalence between (2) and (3) follows from \eqref{eq: energy ineq CERM nlvf}. Regarding (1)$\implies$(2), consider $\tilde{b}$ be the $L^p(\widetilde{M}_t\otimes dt)$-non-local vector field given by Proposition \ref{correspondence AC to CERM}. The goal is to show that $b_t=\tilde{b}_t$ for a.e. $t\in[0,T]$, as functions of $L^p(\widetilde{M}_t)$. By \eqref{eq: energy equality non-local vf} and \eqref{eq: energy ineq CERM nlvf}, it holds that 
\begin{equation}\label{eq: ineq proposition}
\int_{\PP}\int_{\R^d} |\tilde{b}_t(x,\mu)|^p d\mu(x) dM_t(\mu) \leq \int_{\PP}\int_{\R^d}|b_t(x,\mu)|^pd\mu(x)dM_t(\mu) \quad \text{ for a.e. }t\in[0,T].
\end{equation}
Moreover, the curve $(M_t)_{t\in[0,T]}$, by assumption and by construction of $\tilde{b}$, satisfies the continuity equations $\partial_tM_t + \operatorname{div}_{\PP}(b_tM_t) = 0$ and $\partial_tM_t + \operatorname{div}_{\PP}(\tilde{b}_t M_t) = 0$, which implies that for all $\xi\in C_c^1(0,T)$ and $F\in \operatorname{Cyl}_c(\PP(\R^d))$ it holds
\[\int_0^T \xi(t)\int_{\R^d\times\PP(\R^d)} b_t\cdot \nabla_WF d\widetilde{M}_t \ dt = \int_0^T \xi(t)\int_{\R^d\times\PP(\R^d)} \tilde{b}_t\cdot \nabla_WF d\widetilde{M}_t \ dt. \]
Localizing this equality in time, it holds 
\[\int_{\R^d\times\PP(\R^d)} b_t\cdot \nabla_WF d\widetilde{M}_t  = \int_{\R^d\times\PP(\R^d)} \tilde{b}_t\cdot \nabla_WF d\widetilde{M}_t  \quad \text{ for a.e. }t\in[0,T],\]
and together with \eqref{eq: ineq proposition}, since $b_t \in \operatorname{Tan}_{M_t}\PP_p(\PP_p(\R^d))$ for a.e. $t\in [0,T]$, we can apply Lemma \ref{lemma: minimality in the tangent} to conclude that $b_t(x,\mu) = \tilde{b}_t(x,\mu)$ for $\widetilde{M}_t$-a.e. $(x,\mu)$ and for a.e. $t\in[0,T]$.
\\
Regarding (2)$\implies$(1), let us introduce the auxiliar space
\[\mathcal{V}:= \operatorname{Clos}_{L^p(\Xi^{\boldsymbol{M}};\R^d)} \operatorname{Span}\left\{ j_p\left( \xi(t)\nabla_W F(x,\mu) \right) \ : \ \xi \in C_c^1(0,T), \ F \in \operatorname{Cyl}_c(\PP(\R^d)) \right\}.\]
Following the argument of Lemma \ref{lemma: minimality in the tangent}, it is not hard to prove that a non-local vector field $v:[0,T]\times \R^d \times \PP(\R^d) \to \R^d$ belongs to $\mathcal{V}$ if and only if $\|v+v'\|_{L^p} \geq \|v\|_{L^p}$ for all $v'\in L^{p}(\Xi^{\boldsymbol{M}};\R^d)$ satisfying 
\[\int_0^T \xi(t)\int_{\PP(\R^d)} \int_{\R^d} \nabla_W F(x,\mu)\cdot v'(t,x,\mu) d\mu(x) dM_t(\mu) dt = 0.\]
Thus, if we assume condition (2), because of its equivalence to (3) and the minimality given by Remark \ref{rem: nlvf of minimal energy}, it holds $b\in \mathcal
V$. Thus we conclude proving that $v\in\mathcal{V} \implies v_t \in \operatorname{Tan}_{M_t}\PP_p(\PP_p(\R^d))$ for a.e. $t\in[0,T]$. This easily follows by a pointwise argument: fix two sequences of functions $\xi_n\in C_c^1(0,T)$ and $F_n\in\operatorname{Cyl}_c(\PP(\R^d))$ such that $j_p(\xi_n \nabla_W F_n) \to v$ in $L^p(\Xi^{\boldsymbol{M}};\R^d)$. Up to consider a subsequence, it holds that 
\[j_p(\xi_n \nabla_WF_n)(t,\cdot,\cdot) = |\xi_n(t)|^{p-2}\xi_n(t) |\nabla_W F_n(\cdot,\cdot)|^{p-2} \nabla_WF_n(\cdot,\cdot) \to v(t,\cdot,\cdot) \quad\text{ in }L^p(\widetilde{M}_t),\]
for a.e. $t\in[0,T]$. In particular, fixing a time $t\in [0,T]$ for which the one above holds, the sequence of cylinder functions $F_{t,n} \in \operatorname{Cyl}_c(\PP(\R^d))$ defined by $F_{t,n}(x,\mu):= \xi_n(t) F_n(x,\mu)$ is such that $j_p(\nabla_W F_{t,n}) \to v_t$ in $L^p(\widetilde{M}_t)$. By definition of tangent space, this proves that if $v\in \mathcal{V}$, then $v_t \in \operatorname{Tan}_{M_t} \PP_p(\PP_p(\R^d))$ for a.e. $t\in[0,T]$. 
\end{proof}

\subsection{Derivations and vector fields}

In this subsection, we show that any family of derivations $(B_t)_{t\in[0,T]}$ is induced by a family of non-local vector fields, as in Remark \ref{vf inducing derivation}, whenever the continuity equation $\partial_tM_t + \operatorname{div}_\PP(B_tM_t) = 0$ is satisfied.
\begin{teorema}\label{derivation induced by a vf theorem}
    Let $(M_t)_{t\in [0,T]}\in C_T(\PP(\PP(\R^d)))$ and $(B_t)_{t\in [0,T]}$ an $L^p(M_t\otimes dt)$-derivation, such that 
    $\partial M_t + \operatorname{div}_{\PP} (B_tM_t) = 0$.
    Then there exists an $L^p(\widetilde{M}_t\otimes dt)$-non-local vector field $b:[0,T]\times \R^d \times \PP(\R^d)$ such that for $M_t\otimes dt$-a.e. $(\mu,t)$ it holds
    \begin{equation}\label{vf inducing derivation thesis theorem}
        B_t[F](\mu) = \int_{\R^d} b(t,x,\mu)\cdot \nabla_W F(x,\mu) d\mu(x), \quad \forall F\in \operatorname{Cyl}_c^1(\PP(\R^d)).
    \end{equation}
\end{teorema}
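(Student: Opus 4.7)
The plan is to construct $b$ by a Hahn--Banach / Riesz representation argument carried out simultaneously in the three variables $(t,x,\mu)$, and then to localize the resulting identity from an integrated form to a pointwise one. I focus on the main case $p>1$, where Riesz representation on $L^{p'}(\widetilde{M}_t\otimes dt;\R^d)$ is available; the case $p=1$ requires a supplementary absolute continuity argument (one replaces Hahn--Banach on $L^\infty$ by Riesz on $C_c$, and uses the continuity equation to upgrade the resulting $\R^d$-valued measure to a density in $L^1$).

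\medskip

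The first key move is to introduce the subspace $V\subset L^{p'}(\widetilde{M}_t\otimes dt;\R^d)$ spanned by the functions $(t,x,\mu)\mapsto \xi(t)G(\mu)\nabla_W F(x,\mu)$ with $\xi\in C_c(0,T)$ and $G,F\in \operatorname{Cyl}_c^1(\PP(\R^d))$, and to set
\[
\mathcal{L}\left(\sum_k \xi_k G_k\nabla_W F_k\right):=\sum_k \int_0^T\xi_k(t)\int_\PP G_k(\mu)\, B_t[F_k](\mu)\,dM_t(\mu)\,dt.
\]
The delicate step is well-definedness and boundedness by $\|c\|_{L^p(M_t\otimes dt)}\|\cdot\|_{L^{p'}(\widetilde{M}_t\otimes dt;\R^d)}$. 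The trick is that, for each fixed $(t,\mu)$ outside the common negligible set of Remark \ref{rem: negligible sets derivation}, the \emph{diagonal} combination $H_{t,\mu}(\mu'):=\sum_k \xi_k(t)G_k(\mu)F_k(\mu')$ is itself a single cylinder function of $\mu'$ (the $\xi_k(t)G_k(\mu)$ being mere real scalars), so linearity of $B_t$ and the derivation bound give
\[
\left|\sum_k \xi_k(t)G_k(\mu)B_t[F_k](\mu)\right|=\bigl|B_t[H_{t,\mu}](\mu)\bigr|\le c_t(\mu)\left\|\sum_k \xi_k(t)G_k(\mu)\nabla_W F_k(\cdot,\mu)\right\|_{L^{p'}(\mu)}
\]
pointwise in $(t,\mu)$. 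Integrating against $M_t\otimes dt$ and applying Hölder yields both well-definedness (the right-hand side vanishes when the element is null in $L^{p'}(\widetilde{M}_t\otimes dt;\R^d)$) and the desired bound.

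\medskip

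By Hahn--Banach and Riesz representation (using $p>1$), $\mathcal{L}$ extends and is represented by some $b\in L^p(\widetilde{M}_t\otimes dt;\R^d)$, so that for every admissible $\xi,G,F$
\[
\int_0^T\xi(t)\int_\PP G(\mu)B_t[F](\mu)dM_t(\mu)dt\;=\;\int_0^T\xi(t)\int_\PP G(\mu)\int_{\R^d}b(t,x,\mu)\cdot\nabla_W F(x,\mu)d\mu(x)dM_t(\mu)dt.
\]
Localization is then done in three steps: varying $\xi$ over a countable dense family in $C_c(0,T)$ peels off the time integral for a.e.\ $t$; varying $G$ over a countable dense family in $\operatorname{Cyl}_c^1(\PP(\R^d))$ (which is dense in $L^{p'}(M_t)$, since the cylinder functions form a point-separating subalgebra of $C_b(\PP(\R^d))$ containing constants and generating the Borel $\sigma$-algebra) removes the $\mu$-integration for $M_t$-a.e.\ $\mu$; and varying $F$ over a countable dense family of cylinder functions, then extending to all $F\in\operatorname{Cyl}_c^1(\PP(\R^d))$ via the uniform derivation bound on both sides, yields the pointwise identity \eqref{vf inducing derivation thesis theorem} for $M_t\otimes dt$-a.e.\ $(t,\mu)$.

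\medskip

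The main obstacle is the well-definedness step: a tempting first attempt is to use only $\operatorname{Span}\{\xi\nabla_W F\}$ as test space, which yields merely the $\mu$-integrated identity $\int_\PP B_t[F]dM_t = \int_\PP\int b\cdot\nabla_W F d\mu \,dM_t$ and so loses the local-in-$\mu$ information needed for \eqref{vf inducing derivation thesis theorem}. Enlarging the test space by the factor $G(\mu)$ and handling well-definedness through the ``scalar coefficient'' trick for the diagonal cylinder function $H_{t,\mu}$---which crucially uses that the derivation bound holds uniformly over the whole class $\operatorname{Cyl}_c^1(\PP(\R^d))$ outside a common negligible set---is what makes the pointwise localization step go through.
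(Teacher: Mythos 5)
Your proof is correct for $p>1$ and takes a genuinely different route from the paper's. The paper's proof goes through the nested superposition machinery: it first lifts $\boldsymbol{M}$ to $\Lambda\in\PP(C_T(\PP(\R^d)))$ via Proposition~\ref{p-superposition}, then to $\mathfrak{L}\in\PP(\PP(C_T(\R^d)))$ via the measurable selection of Theorem~\ref{metric theorem from Lambda to mathfrak{L}}, constructs an intermediate field $\tilde b(t,x,\lambda)$ on $[0,T]\times\R^d\times\PP(C_T(\R^d))$ by a push-forward density lemma (Step~2), and finally obtains $b(t,x,\mu)$ by averaging $\tilde b$ against the disintegration of $\mathfrak{L}$ conditional on $(e_t)_\sharp\lambda=\mu$ (Steps~3--5); the localization in $\mu$ is achieved through the explicit product form of that disintegration. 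Your approach instead stays entirely at the level of the derivation and the measure $\widetilde M_t\otimes dt$: the crucial enlargement of the test space by the factor $G(\mu)$, together with the ``diagonal cylinder function'' trick that feeds the linear combination $H_{t,\mu}=\sum_k\xi_k(t)G_k(\mu)F_k$ into the pointwise derivation bound of Remark~\ref{rem: negligible sets derivation}, correctly produces the $L^{p'}$--$L^p$ duality estimate, and then Hahn--Banach/Riesz plus a functional-monotone-class localization over countable families of $(\xi,G,F)$ yields \eqref{vf inducing derivation thesis theorem}. What the paper's route buys is an essentially constructive formula $b(t,x,\mu)=\int\tilde b(t,x,\lambda)\,d\mathfrak{L}_{t,\mu}(\lambda)$ arising as a conditional expectation, consistent with the rest of the geometric framework of Sections~3--5; what your route buys is a shorter, self-contained argument that avoids that machinery entirely. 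One caveat that applies to both: the paper's Step~1 ultimately relies on Lisini's lifting (Theorem~\ref{lisini lifting}), stated for $p\in(1,+\infty)$, so despite the theorem's unqualified $p\geq1$ the paper's proof is also effectively restricted to $p>1$ --- your honest flag that $p=1$ needs a separate absolute-continuity argument (which you do not develop) is therefore not a disadvantage relative to the paper, though the sketch you give for that case would need substantial fleshing out to be credible.
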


The proof of this result is very similar to the one of Proposition \ref{correspondence AC to CERM}. Indeed, putting together Proposition \ref{p-superposition} and Proposition \ref{correspondence AC to CERM}, we have a vector field $v:[0,T]\times \R^d \times \PP(\R^d)\to \R^d$ such that for any $F\in \operatorname{Cyl}_c^1(\PP(\R^d))$ and for a.a. $t\in (0,T)$, it holds 
    \[\int_{\PP}\int_{\R^d} v(t,x,\mu)\cdot \nabla_W F(x,\mu) d\mu(x) dM_t(\mu) = \int_{\PP} B_t[F](\mu) dM_t(\mu).\]
    \\
    The non-trivial part is to localize this equality with respect to the variable $\mu$, to prove \eqref{vf inducing derivation thesis theorem}. In the proof, we are going to see how this localization can be done in various steps, mainly in steps 2 and 3 below.

\begin{proof}[Proof of Theorem \ref{derivation induced by a vf theorem}]
    \textbf{Step 1}: \textit{superposition and nested metric lifting.} Using first Proposition \ref{p-superposition} and then Theorem \ref{metric theorem from Lambda to mathfrak{L}}, we obtain a probability measure $\mathfrak{L}\in\PP_{\mathfrak{A}_p}(\PP(C_T(\R^d)))$ that satisfies: 
    \begin{itemize}
        \item[(i)] $(E_t)_\sharp \mathfrak{L} = M_t$;
        \item[(ii)]$\mathfrak{L}$-a.e. $\lambda\in \PP(C_T(\R^d))$ is such that $\mu_t:= (e_t)_\sharp  \lambda$ solves the continuity equation \eqref{CE classic}, since $E_\sharp  \mathfrak{L}  = \Lambda$; \label{condition ii L} 
        \item[(iii)]
        $\int_0^T \int \int |\dot\gamma|^p(t)  d\lambda(\gamma) d\mathfrak{L}(\lambda) dt<+\infty.$
    \end{itemize}
    Consider its associated measure $\Xi^{\mathfrak{L}} \in \mathcal{M}_+([0,T]\times C_T(\R^d) \times \PP(C_T(\R^d)))$ as in \eqref{def of Xi^L}.
    \\
    \textbf{Step 2}: \textit{localization step}. Consider the function $\mathcal{F}: (t,\gamma,\lambda) \mapsto (t,\gamma(t),\lambda) \in [0,T]\times \R^d \times \PP(C_T(\R^d))$, and notice that $\mathcal{F}_\sharp \Xi^{\mathfrak{L}} = \Xi^{\mathfrak{L},\boldsymbol{M}}$, where $\Xi^{\mathfrak{L},\boldsymbol{M}}\in \mathcal{M}_+\big( [0,T]\times \R^d \times \PP(C_T(\R^d)) \big)$ is defined such that for each $G:[0,T]\times \R^d \times \PP(C_T(\R^d)) \to [0,1]$ Borel measurable it holds
    \[\int G(t,x,\lambda) d\Xi^{\mathfrak{L},\boldsymbol{M}}(t,x,\lambda) = \int_0^T \int \int G(t,\gamma(t),\lambda) d\lambda(\gamma) d\mathfrak{\L}(\lambda) dt.\]
    Since the function $\dot\gamma \in L^p(\Xi^{\mathfrak{L}})$, then thanks to \cite[Lemma 17.3]{ambrosio2021lectures}, it holds that there exists a function $\tilde{b} \in L^p(\Xi^{\mathfrak{L},\boldsymbol{M}};\R^d)$ such that 
    \[\mathcal{F}_\sharp (D \ \Xi^{\mathfrak{L}})(dt,dx,d\lambda) = \tilde{b}(t,x,\lambda) \ \Xi^{\mathfrak{L},\boldsymbol{M}}(dt,dx,d\lambda),\]
    where $D(t,\gamma,\lambda)= \dot\gamma(t)$ as before.
    
    \textit{Claim}: for $\mathfrak{L}$-a.e. $\lambda$, $\mu_t:=(e_t)_\sharp \lambda$ solves $\partial_t\mu_t + \operatorname{div}_x(\tilde{b}(t,x,\lambda) \mu_t) = 0$.
    \\
    Indeed, for any $F:\PP(C_T(\R^d))\to [0,1]$ Borel measurable, $\psi \in C_c^1(0,T)$ and $\phi\in C_c^1(\R^d)$ it holds
    \begin{align*}
        \int F(\lambda) & \left[\int_0^T\psi'(t) \int_{\R^d}\phi(x) d\mu_t(x) dt\right]d\mathfrak{L}(\lambda) = 
        \\
        & = 
        \int F(\lambda)  \left[\int_0^T\psi'(t) \int_{C_T(\R^d)}\phi(\gamma(t)) d\lambda(\gamma) dt\right]d\mathfrak{L}(\lambda) = 
        \\
        & = 
        - \int F(\lambda) \left[\int_0^T\psi(t) \int_{C_T(\R^d)}\nabla\phi(\gamma(t))\cdot \dot\gamma(t) d\lambda(\gamma) dt\right]d\mathfrak{L}(\lambda) 
        \\
        & = 
        - \int F(\lambda) \psi(t) \nabla\phi(\gamma(t))\cdot 
        d\left(D  \Xi^{\mathfrak{L}} \right)(t,\gamma,\lambda)
        \\
        & = 
        - \int F(\lambda) \psi(t) \nabla\phi(x)\cdot \tilde{b}(t,x,\lambda)
        d\Xi^{\mathfrak{L},\boldsymbol{M}} (t,x,\lambda)
        \\
        & = 
        \int F(\lambda) \left[-
        \int_0^T \psi(t) \int_{\R^d} \nabla \phi(x) \cdot \tilde{b}(t,x,\lambda) d\mu_t(x) dt \right] d\mathfrak{L}(\lambda).
    \end{align*}

\textbf{Step 3}: \textit{definition of the non-local vector field}. Define the continuous map $\mathcal{G}:[0,T]\times \R^d \times \PP(C_T(\R^d)) \to [0,T]\times \R^d \times \PP(\R^d)$ as
    \[\mathcal{G}(t,x,\lambda) = (t,x,(e_t)_\sharp \lambda),\]
    and notice that $\mathcal{G}_\sharp \Xi^{\mathfrak{L},M} := \Xi^{\boldsymbol{M}}$.
    At this point, consider the disintegration of $\Xi^{\mathfrak{L},\boldsymbol{M}}$ w.r.t. $\mathcal{G}$, i.e. the Borel map $(t,x,\mu) \mapsto \Xi^{\mathfrak{L},\boldsymbol{M}}_{t,x,\mu}$, that is well-defined $\Xi^{\boldsymbol{M}}$-almost everywhere. Then define the non-local vector field as
    \[b(t,x,\mu) = \int \tilde{b}(s,z,\lambda) d\Xi^{\mathfrak{L},\boldsymbol{M}}_{t,x,\mu}(s,z,\lambda),\]
    which is measurable thanks to the measurability of $\tilde{b}$ and of the disintegration $(t,x,\mu) \mapsto \Xi^{\mathfrak{L},\boldsymbol{M}}_{t,x,\mu}$ (see Lemma \ref{measurability int g d mu}). By construction, it's easy to verify that $b\in L^p(\Xi^{\boldsymbol{M}};\R^d)$.

    \textbf{Step 4}: \textit{representation for the disintegration} $\Xi^{\mathfrak{L},\boldsymbol{M}}_{t,x,\mu}$. For any $t\in [0,T]$, disintegrate the measure $\mathfrak{L}$ with respect to the map $E_t= (e_t)_\sharp $, to obtain that there exists a family of probability measures $\{\mathfrak{L}_{t,\mu}\}_{\mu\in \PP(\R^d)}\subset \PP(\PP(C_T(\R^d)))$ such that 
    \[\mathfrak{L} = \int_{\PP(\R^d)} \mathfrak{L}_{t,\mu} dM_t(\mu).\]
    Then $\Xi^{\mathfrak{L},\boldsymbol{M}}_{t,x,\mu} = \delta_t\otimes \delta_x \otimes \mathfrak{L}_{t,\mu}$ for $\Xi^{\boldsymbol{M}}$-a.e. $(t,x,\mu)$. Indeed for any $G:[0,T]\times \R^d \times \PP(C_T(\R^d))\to [0,+\infty]$ Borel measurable map, we have
    \begin{align*}
        \int G d\Xi^{\mathfrak{L},\boldsymbol{M}} 
        = &
        \int_0^T \int \left( \int G(t,x,\lambda) d\big((e_t)_\sharp \lambda\big)(x) \right) d\mathfrak{L}(\lambda) dt
        \\
        = &
        \int_0^T \left[ \int_{\PP(\R^d)} \left( 
        \int_{\R^d} \int G(t,x,\lambda) d\mathfrak{L}_{t,\mu}(\lambda) d\mu(x) \right) dM_t(\mu)\right] dt
        \\
        = &
        \int \left[ \int G(t,x,\lambda) d\mathfrak{L}_{t,\mu}(\lambda) \right] d\Xi^{\boldsymbol{M}}(t,x,\mu) 
        \\
        = &
        \int \left[\int \int \int G(s,z,\lambda) \ d\delta_t(s) d\delta_x(z) d\mathfrak{L}_{t,\mu}(\lambda)\right] d\Xi^{\boldsymbol{M}}(t,x,\mu),
    \end{align*}
    so we conclude by mean of the uniqueness of the disintegration. In particular, for $\Xi^{\boldsymbol{M}}$-a.e. $(t,x,\mu)$, it holds
    \begin{equation}\label{repr vf with new disintegration}
        b(t,x,\mu) = \int \tilde{b}(t,x,\lambda) d\mathfrak{L}_{t,\mu}(\lambda).
    \end{equation}

    \textbf{Step 5}: \textit{conclusion}. We are left to prove that the non-local vector field $b$ satisfies \eqref{vf inducing derivation thesis theorem}. From Step 2 and the properties of $\mathfrak{L}$, we know that for $\mathfrak{L}$-a.e. $\lambda$, for any $\psi\in C_c^1(0,T)$ and $\phi\in C_c^1(\R^d)$, it holds
    \begin{align*}
        \int_0^T \psi(t) B_t[L_\phi]\big((e_t)_\sharp \lambda\big) dt & = - \int_0^T \psi'(t)\int_{\R^d}\phi(x)d\big((e_t)_\sharp \lambda\big)(x) dt 
        \\
        & = \int_0^T \psi(t) \int_{\R^d} \tilde{b}(t,x,\lambda) \cdot \nabla \phi(x) d\big((e_t)_\sharp \lambda\big)(x) dt,
    \end{align*}
    which implies, thanks to Lemma \ref{chain rule}, that for all $F\in \operatorname{Cyl}_c^1(\PP(\R^d))$ it holds
    \[B_t[F]\big((e_t)_\sharp \lambda\big) = \int_{\R^d} \tilde{b}(t,x,\lambda)\cdot \nabla_WF(x,(e_t)_\sharp \lambda) d\big((e_t)_\sharp \lambda\big)(x),\]
    for a.e. $t\in [0,T]$ and $\mathfrak{L}$-a.e. $\lambda$. At this point, for a.e. $t\in [0,T]$ and $M_t$-a.e. $\mu\in \PP(\R^d)$, we can integrate both sides w.r.t. $\mathfrak{L}_{t,\mu}$: since $(e_t)_\sharp \lambda = \mu$ for $\mathfrak{L}_{t,\mu}$-a.e. $\lambda$, the left hand side is constant, while on the right hand side, we can switch the order of integration to obtain, thanks to \eqref{repr vf with new disintegration}, that
    \begin{align*}
        B_t[F](\mu) = \int_{\R^d} \left(\int \tilde{b}(t,x,\lambda) d\mathfrak{L}_{t,\mu}(\lambda)\right)\cdot \nabla_W F(x,\mu) d\mu(x)
        = 
        \int_{\R^d} b(t,x,\mu) \cdot \nabla_W F(x,\mu) d\mu(x),
    \end{align*}
    for a.e. $M_t\otimes dt$-a.e. $(\mu,t)$ and for all $F\in \operatorname{Cyl}_c^1(\PP(\R^d))$.
    \end{proof}

\section{Nested superposition principle}\label{superposition}
The main goal of this section is to prove Theorem \ref{main theorem}. The strategy is similar to the one used in Section \ref{metric structure} to prove Theorem \ref{metric superposition}. In particular, given a Borel measurable non-local vector field $b:[0,T]\times \R^d \times \PP(\R^d) \to \R^d$, the main objects of our study are:
\begin{enumerate}[label=(\roman*)]
    \item a curve of random measures $\boldsymbol{M} = (M_t)_{t\in [0,T]}\in C_T(\PP(\PP(\R^d)))$ for which $b$ is an $L^1(\widetilde{M}_t\otimes dt)$-non-local vector field and such that, according to Definition \ref{def CERM non-local vf}, it holds 
    \begin{equation}
        \partial_t M_t+\operatorname{div}_\PP(b_tM_t) = 0;
    \end{equation}
    \item\label{item ii} a probability measure $\Lambda \in \PP(C_T(\PP(\R^d)))$ satisfying $\int \int_0^T  \int |b(t,x,\mu_t)| d\mu_t(x) dt d\Lambda(\boldsymbol{\mu}) <+\infty$ and concentrated over curves of measures $\boldsymbol{\mu}=(\mu_t)_{t\in [0,T]}\in C_T(\PP(\R^d))$ that solves
    \begin{equation}
        \partial_t \mu_t + \operatorname{div}(b_t(\cdot,\mu_t)\mu_t) = 0;
    \end{equation}
    \item\label{item iii} a probability measure $\mathfrak{L}\in \PP(\PP(C_T(\R^d)))$ satisfying $\int \int a_1(\gamma) d\lambda(\gamma)d\mathfrak{L}(\lambda)<+\infty$ (see Definition \ref{ac energies}) and concentrated over $\lambda \in \PP(C_T(\R^d))$ that, in turn, are concentrated over $\gamma\in AC_T(\R^d)$ that are solutions of 
    \begin{equation}\label{particle systems section 5}
        \dot\gamma(t) = b(t,\gamma_t,(e_t)_\sharp \lambda).
    \end{equation}
\end{enumerate}

We introduce two sets associated with a generic non-local vector field, that will play a fundamental role in the proof of Theorem \ref{main theorem}.

\begin{df}\label{def: CE and SCE}
    Let $b:[0,T]\times \R^d \times \PP(\R^d) \to \R^d$ be a Borel map. Then, the set $\operatorname{CE}(b)\subset C_T(\PP(\R^d))$ of solutions to the continuity equation driven by the non-local vector field $b$ is defined by
    \begin{equation}\label{CEb}
\begin{aligned}
    \CE(b):= \bigg\{(\mu_t)_{t\in[0,T]} \in C_T(\PP(\R^d)) \ : \ 
    \int \int |b_t(x,\mu_t)|d\mu_t(x) dt <+\infty, & \\
    \partial_t\mu_t + \operatorname{div}(b_t(\cdot,\mu_t)\mu_t)=0& \bigg\}.
\end{aligned}
\end{equation}
The set $\operatorname{SPS}(b)\subset \PP(C_T(\R^d))$ of superposition solutions of the particle systems \eqref{particle systems section 5} is defined by
\begin{equation}\label{SPSb}
\begin{aligned}
    \operatorname{SPS}(b):= \{\lambda \in & \PP(C_T(\R^d)) \ : \ \int \int |b_t(\gamma(t),(e_t)_\sharp \lambda)| d\lambda(\gamma) dt<+\infty ,
    \\
    &\lambda \big( \operatorname{AC}_T(\R^d) \big) =1, \ \dot{\gamma}(t) = b_t(\gamma_t,(e_t)_\sharp \lambda) \ \mathcal{L}^1_T\otimes\lambda\text{-a.e.} \}.
\end{aligned}
\end{equation}
\end{df}

Note that the properties of $\Lambda\in\PP(C_T(\PP(\R^d)))$ and $\mathfrak{L}\in \PP(\PP(C_T(\R^d)))$ listed in \ref{item ii} and \ref{item iii}, can be summarized by saying that $\Lambda$ is concentrated over $\operatorname{CE}(b)$ and $\mathfrak{L}$ is concentrated over $\operatorname{SPS}(b)$. A crucial point will be the Borel measurability of these sets, for which we refer to Proposition \ref{meas of CE_b} and Proposition \ref{meas of SPSb}. 

Let us start noticing that the hierarchy described in the introduction between general objects $\boldsymbol{M}\in C_T(\PP(\PP(\R^d)))$, $\Lambda \in \PP(C_T(\PP(\R^d)))$ and $\mathfrak{L} \in \PP(\PP(C_T(\R^d)))$, is preserved when we require them to satisfy the conditions listed above. Before proceeding recall that 
\[E:\PP(C_T(\R^d)) \to C_T(\PP(\R^d)), \quad E(\lambda):= ((e_t)_\sharp \lambda)_{t\in [0,T]};\]
\[\mathfrak{e}_t:C_T(\PP(\R^d)) \to \PP(\R^d), \ \  \mathfrak{e}_t(\boldsymbol{\mu}) := \mu_t;\quad E_t : \PP(C_T(\R^d)) \to \PP(\R^d), \ \  E_t(\lambda) = (e_t)_\sharp \lambda.\]

\begin{prop}\label{L to Lambda section 5}
    If $\lambda \in \operatorname{SPS}(b)$, then $E(\lambda) \in \operatorname{CE}(b)$. In particular, if
    $\mathfrak{L}\in \PP(\PP(C_T(\R^d)))$ is concentrated over $\operatorname{SPS}(b)$, then $\Lambda := E_\sharp \mathfrak{L} \in \PP(C_T(\PP(\R^d)))$ is concentrated over $\operatorname{CE}(b)$.
\end{prop}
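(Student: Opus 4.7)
The plan is to verify, for a fixed $\lambda \in \operatorname{SPS}(b)$, the two defining conditions of $\operatorname{CE}(b)$ for the curve $\boldsymbol{\mu} := E(\lambda) = ((e_t)_\sharp\lambda)_{t\in[0,T]}$, and then deduce the measure-theoretic statement by a direct push-forward argument. For the integrability requirement, I would simply use the change of variables $\mu_t = (e_t)_\sharp\lambda$ to rewrite
\begin{equation*}
\int_0^T\int_{\R^d}|b_t(x,\mu_t)|\,d\mu_t(x)\,dt = \int_0^T\int_{C_T(\R^d)}|b_t(\gamma(t),(e_t)_\sharp\lambda)|\,d\lambda(\gamma)\,dt,
\end{equation*}
which is finite by the very first condition in the definition \eqref{SPSb} of $\operatorname{SPS}(b)$ (and Fubini--Tonelli, applicable since the integrand is nonnegative).

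Next, I would check that $\boldsymbol{\mu}$ solves the non-local continuity equation in the sense of distributions. Picking test functions $\phi\in C_c^1(\R^d)$ and $\psi\in C_c^1(0,T)$, a change of variable gives
\begin{equation*}
\int_0^T \psi'(t)\int_{\R^d}\phi(x)\,d\mu_t(x)\,dt = \int_{C_T(\R^d)}\int_0^T \psi'(t)\phi(\gamma(t))\,dt\,d\lambda(\gamma),
\end{equation*}
with Fubini justified by the finite integrability of $|\phi|\|\psi'\|_\infty$ against $\lambda\otimes \mathcal{L}^1_T$. For $\lambda$-a.e.~$\gamma\in AC_T(\R^d)$ with $\dot\gamma(t) = b_t(\gamma(t),\mu_t)$ a.e.~$t$, a one-dimensional integration by parts yields
\begin{equation*}
\int_0^T \psi'(t)\phi(\gamma(t))\,dt = -\int_0^T \psi(t)\,\nabla\phi(\gamma(t))\cdot b_t(\gamma(t),\mu_t)\,dt.
\end{equation*}
Integrating this identity in $d\lambda(\gamma)$ (Fubini applicable thanks to the integrability already established) and changing variables back through $\mu_t = (e_t)_\sharp\lambda$, I obtain
\begin{equation*}
\int_0^T \psi'(t)\int_{\R^d}\phi(x)\,d\mu_t(x)\,dt = -\int_0^T \psi(t)\int_{\R^d}\nabla\phi(x)\cdot b_t(x,\mu_t)\,d\mu_t(x)\,dt,
\end{equation*}
which is precisely $\partial_t\mu_t + \operatorname{div}(b_t(\cdot,\mu_t)\mu_t) = 0$ in $\mathscr{D}'((0,T)\times\R^d)$. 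Hence $E(\lambda)\in \operatorname{CE}(b)$.

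For the measure-theoretic conclusion, the inclusion $E(\operatorname{SPS}(b))\subseteq \operatorname{CE}(b)$ just proved, combined with the continuity of $E$ (Lemma \ref{well posedeness of E}) and the Borel measurability of $\operatorname{CE}(b)$ and $\operatorname{SPS}(b)$ (Propositions \ref{meas of CE_b} and \ref{meas of SPSb}), gives the Borel inclusion $\operatorname{SPS}(b) \subseteq E^{-1}(\operatorname{CE}(b))$. Therefore
\begin{equation*}
\Lambda(\operatorname{CE}(b)) = \mathfrak{L}(E^{-1}(\operatorname{CE}(b))) \geq \mathfrak{L}(\operatorname{SPS}(b)) = 1,
\end{equation*}
as desired. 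The argument is essentially routine once Fubini is set up; the only subtle point is that for each fixed $\gamma$ the measure $\mu_t = (e_t)_\sharp\lambda$ is determined by $\lambda$ and does not vary with $\gamma$, so the usual ODE-to-continuity-equation correspondence passes through curve-by-curve exactly as in the classical case, with no further obstacle.
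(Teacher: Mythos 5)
Correct, and this is essentially the same approach as the paper: change variables from $\mu_t$ to $\lambda$ via $\mu_t=(e_t)_\sharp\lambda$, integrate by parts along $\lambda$-a.e.\ absolutely continuous curve using $\dot\gamma(t)=b_t(\gamma(t),(e_t)_\sharp\lambda)$, and change back. Your treatment of the integrability condition and of the push-forward/measurability argument for the ``in particular'' part is more explicit than the paper's (which leaves these implicit), but the core computation is identical.
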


\begin{proof}
    Let $\lambda \in \operatorname{SPS}(b)$ and $\mu_t:= (e_t)_\sharp \lambda$. Then for any $\xi \in C_c^1(0,T)$ and $\phi \in C_c^1(\R^d)$, it holds 
    \begin{align*}
        \int_0^T & \xi'(t)  \int_{\R^d} \phi(x) d\mu_t(x) dt 
        =
        \int_0^T \xi'(t) \int \phi(\gamma(t)) d\lambda(\gamma) dt
        = 
        - \int \int_0^T\xi(t) \nabla \phi(\gamma_t) \cdot \dot\gamma(t) dt d\lambda(\gamma)
        \\
        = & 
        - \int_0^T \xi(t) \int \nabla\phi(\gamma_t) \cdot b(t,\gamma_t,(e_t)_\sharp \lambda) d\lambda(\gamma) dt
        = 
        - \int_0^T \xi(t) \int_{\R^d} \nabla\phi(x) \cdot b(t,x,\mu_t) d\mu_t(x) dt.
    \end{align*}
\end{proof}

\begin{prop}\label{Lambda to M_t section 5}
    Let $\Lambda\in \PP(C_T(\PP(\R^d)))$ be concentrated on $\operatorname{CE}(b)$ and such that \[\int \int_0^T \int |b_t(x,\mu_t)|d\mu_t(x) dt d\Lambda(\boldsymbol{\mu})<+\infty.\] Then the curve of random measures defined as 
    $M_t:= (\mathfrak{e}_t)_\sharp \Lambda,$
    solves the continuity equation $\partial_tM_t + \operatorname{div}_\PP(b_t M_t) = 0$, in the sense of Definition \ref{def CERM non-local vf}.
\end{prop}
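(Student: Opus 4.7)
\textbf{Proof plan for Proposition \ref{Lambda to M_t section 5}.}
The plan is to reduce the statement to a pointwise (in $\boldsymbol{\mu}$) differentiation of $t \mapsto F(\mu_t)$ along $\Lambda$-a.e. curve in $\operatorname{CE}(b)$, followed by two applications of Fubini's theorem.

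First, I would verify that $b$ is a genuine $L^1(\widetilde{M}_t\otimes \d t)$--non-local vector field in the sense of \eqref{time integrability for non-local vf}. Using $M_t=(\mathfrak{e}_t)_\sharp\Lambda$ and Fubini, one has
\[
\int_0^T\!\!\int_{\PP}\!\!\int_{\R^d}|b_t(x,\mu)|\,\d\mu(x)\,\d M_t(\mu)\,\d t
=\int\!\!\int_0^T\!\!\int_{\R^d}|b_t(x,\mu_t)|\,\d\mu_t(x)\,\d t\,\d\Lambda(\boldsymbol{\mu})<+\infty
\]
by the hypothesis. Measurability of the integrand in $(t,\boldsymbol{\mu},x)$ follows from the fact that the evaluation $\mathfrak e_t$ is continuous and $b$ is Borel.

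Next, fix $F=\Psi\circ L_\Phi\in \operatorname{Cyl}_c^1(\PP(\R^d))$ and $\xi\in C_c^1(0,T)$. For $\Lambda$-a.e.~$\boldsymbol{\mu}\in\CE(b)$, testing the continuity equation $\partial_t\mu_t+\operatorname{div}(b_t(\cdot,\mu_t)\mu_t)=0$ against each $\phi_i\in C_c^1(\R^d)$ shows that $t\mapsto L_{\phi_i}(\mu_t)$ is absolutely continuous with derivative $\int \nabla\phi_i\cdot b_t(\cdot,\mu_t)\,\d\mu_t$ a.e.; by the chain rule (using $\Psi\in C_b^1$), $t\mapsto F(\mu_t)$ is absolutely continuous and
\[
\frac{\d}{\d t}F(\mu_t)=\int_{\R^d}\nabla_W F(x,\mu_t)\cdot b_t(x,\mu_t)\,\d\mu_t(x)\quad\text{for a.e. }t.
\]
Integrating by parts against $\xi$,
\[
\int_0^T \xi'(t)F(\mu_t)\,\d t=-\int_0^T\xi(t)\!\int_{\R^d}\!\nabla_W F(x,\mu_t)\cdot b_t(x,\mu_t)\,\d\mu_t(x)\,\d t,
\]
for $\Lambda$-a.e.~$\boldsymbol{\mu}$.

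Finally, I would integrate the above identity with respect to $\Lambda$ and apply Fubini on both sides—justified on the left by $\|F\|_\infty<+\infty$ and on the right by the $L^1$ integrability of $b$ with respect to $\widetilde{M}_t\otimes \d t$ established in the first step. Using $(\mathfrak e_t)_\sharp\Lambda=M_t$, this yields
\[
\int_0^T\!\xi'(t)\!\!\int_{\PP}\!\!F(\mu)\,\d M_t(\mu)\,\d t=-\!\int_0^T\!\xi(t)\!\!\int_{\PP}\!\!\int_{\R^d}\!\!\nabla_W F(x,\mu)\cdot b_t(x,\mu)\,\d\mu(x)\,\d M_t(\mu)\,\d t,
\]
which is precisely Definition \ref{def CERM non-local vf}. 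There is no serious obstacle; the only mild points are the Fubini justification (covered by the hypothesis) and ensuring that the $\Lambda$-negligible exceptional set in the pointwise chain rule does not depend on the particular $\xi$ and $F$—this is immediate since it suffices to exclude $\boldsymbol{\mu}\notin\CE(b)$, which is a single $\Lambda$-null set independent of the test functions.
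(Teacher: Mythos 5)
Your proof is correct and follows essentially the same route as the paper's: fix a cylinder test function, use Fubini to pass from $M_t$ to $\Lambda$, differentiate $t\mapsto F(\mu_t)$ along $\Lambda$-a.e.\ curve in $\CE(b)$ via the chain rule, and then Fubini back. The only cosmetic difference is that you make the integration-by-parts and Fubini justifications explicit, whereas the paper compresses them into a single chain of equalities.
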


\begin{proof}
    Let $\xi\in C_c^1(0,T)$ and $F = \Psi (L_\Phi) \in \operatorname{Cyl}_c^1(\PP(\R^d))$, then 
    \begin{align*}
        \int_0^T \xi'(t) & \int_{\PP(\R^d)} F(\mu) dM_t(\mu) dt 
        = 
        \int_0^T \xi'(t) \int F(\mu_t) d\Lambda(\boldsymbol{\mu}) dt 
        \\
        = &
        \int \int_0^T \xi'(t) \psi\left( \int \phi_1 d\mu_t, \dots , \int \phi_k d\mu_t \right) dt d\Lambda(\boldsymbol{\mu})
        \\
        = &
        - \int \int_0^T \xi(t) \sum_{i=1}^k \partial_i \psi(L_\Phi(\mu_t)) \int_{\R^d} \nabla \phi_i(x)\cdot b_t(x,\mu_t) d\mu_t(x) dt d\Lambda(\boldsymbol{\mu})
        \\
        = & 
        - \int_0^T \xi(t) \int \int_{\R^d} \nabla_W F(x,\mu_t)\cdot b_t(x,\mu_t) d\mu_t(x) d\Lambda(\boldsymbol{\mu}) dt 
        \\
        = & 
        - \int_0^T \xi(t) \int_{\PP(\R^d)} \int_{\R^d} \nabla_W F(x,\mu)\cdot b_t(x,\mu) d\mu(x) dM_t(\mu) dt.
    \end{align*}
\end{proof}

\begin{co}
    Let $\mathfrak{L}\in \PP(\PP(AC_T(\R^d)))$ be concentrated on $\operatorname{SPS}(b)$ and such that 
    \[\int \int_0^T \int |b_t(\gamma_t,(e_t)_\sharp \lambda)| d\lambda(\gamma) dt d\mathfrak{L}<+\infty.\] Then the curve of random measures defined as 
    $
        M_t:= (E_t)_\sharp  \mathfrak{L}, 
    $
    solves $\partial_tM_t + \operatorname{div}_\PP(b_t M_t) = 0$.
\end{co}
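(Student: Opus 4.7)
The plan is to simply chain together the two preceding propositions via the intermediate object $\Lambda := E_\sharp \mathfrak{L} \in \PP(C_T(\PP(\R^d)))$, so the proof is essentially a bookkeeping exercise. First, I would show that $\Lambda$ fulfils the hypotheses of Proposition \ref{Lambda to M_t section 5}. By Proposition \ref{L to Lambda section 5}, since $\mathfrak{L}$ is concentrated on $\operatorname{SPS}(b)$, the push-forward $\Lambda = E_\sharp \mathfrak{L}$ is concentrated on $\operatorname{CE}(b)$.

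Next, I would check the integrability condition. Using the push-forward formula together with the identity $\mathfrak{e}_t \circ E = E_t$ (so that for $\boldsymbol{\mu} = E(\lambda)$ one has $\mu_t = (e_t)_\sharp \lambda$), and applying Fubini,
\begin{align*}
\int \int_0^T \int_{\R^d} |b_t(x,\mu_t)|\, d\mu_t(x)\, dt\, d\Lambda(\boldsymbol{\mu})
&= \int \int_0^T \int_{\R^d} |b_t(x,(e_t)_\sharp\lambda)|\, d((e_t)_\sharp\lambda)(x)\, dt\, d\mathfrak{L}(\lambda) \\
&= \int \int_0^T \int_{C_T(\R^d)} |b_t(\gamma_t,(e_t)_\sharp\lambda)|\, d\lambda(\gamma)\, dt\, d\mathfrak{L}(\lambda),
\end{align*}
which is finite by assumption.

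Finally, using the compatibility $(\mathfrak{e}_t)_\sharp \Lambda = (\mathfrak{e}_t)_\sharp E_\sharp \mathfrak{L} = (\mathfrak{e}_t \circ E)_\sharp \mathfrak{L} = (E_t)_\sharp \mathfrak{L} = M_t$ for every $t\in[0,T]$, I would invoke Proposition \ref{Lambda to M_t section 5} applied to $\Lambda$ to conclude that $(M_t)_{t\in[0,T]}$ solves $\partial_t M_t + \operatorname{div}_\PP(b_t M_t) = 0$ in the sense of Definition \ref{def CERM non-local vf}. There is no substantive obstacle here: the statement is a direct corollary, and the only thing to verify carefully is the transfer of the integrability hypothesis from $\mathfrak{L}$ to $\Lambda$ via the evaluation map, which is immediate from the definition of push-forward.
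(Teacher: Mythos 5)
Your proof follows exactly the paper's route: set $\Lambda := E_\sharp \mathfrak{L}$, apply Proposition~\ref{L to Lambda section 5} to get concentration on $\operatorname{CE}(b)$, transfer the integrability via $E_t = \mathfrak{e}_t \circ E$, and conclude by Proposition~\ref{Lambda to M_t section 5}. The paper's proof is just a one-line remark invoking these two propositions and the identity $E_t = \mathfrak{e}_t \circ E$; you have simply spelled out the bookkeeping, which is correct.
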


\begin{proof}
    Simply notice that $E_t = \mathfrak{e}_t \circ E$, and we conclude thanks to Propositions \ref{L to Lambda section 5} and \ref{Lambda to M_t section 5}.
\end{proof}

\subsection{Superposition: from \texorpdfstring{$\boldsymbol{M}$}{} to \texorpdfstring{$\Lambda$}{}}

Here, similarly to Section \ref{metric structure}, we would like to somehow invert the result of Proposition \ref{Lambda to M_t section 5}, i.e. to define a measure $\Lambda \in \PP(C_T(\PP(\R^d)))$ concentrated over $\operatorname{CE}(b)$ starting from a curve $\boldsymbol{M} \in C_T(\PP(\PP(\R^d)))$ that solves the continuity equation $\partial_tM_t + \operatorname{div}_\PP(b_tM_t)=0$. This is a simple consequence of the superposition theorem \ref{superposition principle} for derivations.

\begin{teorema}\label{superposition principle for non-local vf}
    Let $\boldsymbol{M}=(M_t)_{t\in [0,T]} \in C_T(\PP(\PP(\R^d)))$ and $b:[0,T]\times \R^d \times \PP(\R^d) \to \R^d$ be an $L^1(\widetilde{M}_t \otimes dt)$-non-local vector fields, according to Definition \ref{vf inducing derivation}. 
    \\
    Assume that the continuity equation $\partial_tM_t + \operatorname{div}_\PP(b_tM_t) = 0$ is satisfied. 
    Then there exists a measure $\Lambda \in \PP(C_T(\PP(\R^d)))$ that is concentrated over $\operatorname{CE}(b)$ and $(\mathfrak{e}_t)_\sharp \Lambda = M_t$ for all $t\in [0,T]$.
\end{teorema}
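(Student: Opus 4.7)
The statement is essentially a translation of Theorem \ref{superposition principle} into the language of non-local vector fields, so the plan is to reduce the claim to a direct application of it.

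First, I would associate to the non-local vector field $b$ its induced family of derivations, as in Remark \ref{rem: vf induces deriv}:
\[
B_t^{(b)}[F](\mu) := \int_{\R^d} b(t,x,\mu)\cdot \nabla_W F(x,\mu)\, d\mu(x),\qquad F\in \operatorname{Cyl}_c^1(\PP(\R^d)),
\]
with bounding function $c_t(\mu) := \|b(t,\cdot,\mu)\|_{L^1(\mu)}$. The assumption that $b$ is an $L^1(\widetilde{M}_t\otimes dt)$-non-local vector field translates, via Fubini, to $c\in L^1(M_t\otimes dt)$, so $(B_t^{(b)})_{t\in[0,T]}$ is a family of $L^1(M_t\otimes dt)$-derivations in the sense of \eqref{time integrability family of derivations} with $p=1$. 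Moreover, the hypothesis $\partial_t M_t + \operatorname{div}_\PP(b_tM_t)=0$ in the sense of Definition \ref{def CERM non-local vf} is, by construction of $B_t^{(b)}$, precisely the continuity equation in the sense of Definition \ref{def CE} driven by the derivation family $(B_t^{(b)})$.

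Next, I would invoke Theorem \ref{superposition principle} to produce a measure $\Lambda\in \PP(C_T(\PP(\R^d)))$ with $(\mathfrak{e}_t)_\sharp \Lambda = M_t$ for every $t\in [0,T]$, and such that for $\Lambda$-a.e.\ $\boldsymbol{\mu}=(\mu_t)_{t\in[0,T]}$ one has $\int_0^T c_t(\mu_t)\, dt<+\infty$ together with
\[
\partial_t\mu_t + \operatorname{div}(B_t^{(b)}\mu_t)=0
\]
in the sense of distributions in duality with $C_c^1(\R^d)$. Testing the derivation on cylinder functions of the form $F=L_\phi$ with $\phi\in C_c^1(\R^d)$, the latter reads
\[
\int_0^T \psi'(t)\int_{\R^d}\phi(x)\, d\mu_t(x)\, dt = -\int_0^T \psi(t)\int_{\R^d} \nabla\phi(x)\cdot b(t,x,\mu_t)\, d\mu_t(x)\, dt
\]
for every $\psi\in C_c^1(0,T)$, which is exactly the non-local continuity equation in the definition of $\operatorname{CE}(b)$. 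Together with the integrability $\int\!\int |b(t,x,\mu_t)|\, d\mu_t(x)\, dt = \int_0^T c_t(\mu_t)\, dt <+\infty$ from Theorem \ref{superposition principle}, this shows that $\Lambda$-a.e.\ $\boldsymbol{\mu}$ belongs to $\operatorname{CE}(b)$, hence $\Lambda$ is concentrated on $\operatorname{CE}(b)$ (which is a Borel subset of $C_T(\PP(\R^d))$; the Borel measurability of $\operatorname{CE}(b)$ will be addressed separately).

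There is no real obstacle here: all the analytic work has already been carried out in Section \ref{subsec: first superposition result}, and the only point is the algebraic identification between $B_t^{(b)}[L_\phi]$ and the classical divergence term $\operatorname{div}(b_t(\cdot,\mu_t)\mu_t)$. The more delicate measurability question—needed later to derive claim (2) of Theorem \ref{main theorem} via a measurable selection—concerns the Borel character of $\operatorname{CE}(b)$ and $\operatorname{SPS}(b)$, and is postponed to subsequent subsections.
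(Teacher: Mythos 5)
Your proof is correct and follows essentially the same route as the paper: both pass from the non-local vector field $b$ to the induced derivation family $B_t^{(b)}$ with bounding function $c_t(\mu)=\|b(t,\cdot,\mu)\|_{L^1(\mu)}$, then apply Theorem \ref{superposition principle} and identify the resulting distributional continuity equation along $\Lambda$-a.e.\ curve with the defining condition of $\operatorname{CE}(b)$. Your write-up is in fact slightly more explicit than the paper's about checking the $L^1(M_t\otimes dt)$-derivation conditions and about the (separately established) Borel measurability of $\operatorname{CE}(b)$.
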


\begin{proof}
    Consider the family of $L^1(M_t\otimes dt)$-derivations induced by the family of non-local vector fields $b$, i.e. 
\begin{equation}
    B_t[F](\mu) := \int_{\R^d} b(t,x,\mu) \cdot \nabla_W F(x,\mu) d\mu(x) \ \quad \forall F\in \operatorname{Cyl}_c^1(\PP(\R^d)),
\end{equation}
that is well-defined $M_t$-a.e. and for almost all $t\in (0,T)$, since 
\begin{equation}
    \int_0^T \int_{\PP(\R^d)}\int_{\R^d}|b(t,x,\mu)| d\mu(x) dM_t(\mu) dt<+\infty.
\end{equation}
It is an $L^1(M_t\otimes dt)$ family of derivations, so we can apply Theorem \ref{superposition principle} to obtain $\Lambda \in \PP(C_T(\PP(\R^d)))$ such that $(\mathfrak{e}_t)_\sharp \Lambda = M_t$ for all $t\in [0,T]$ and $\Lambda$-a.e. $\boldsymbol{\mu}= (\mu_t)_{t\in [0,T]}$ satisfies
\[\frac{d}{dt}\int_{\R^d}\phi(x) d\mu_t(x) = B_t[L_\phi](\mu_t) = \int_{\R^d} b(t,x,\mu_t)\cdot \nabla\phi(x) d\mu_t(x), \]
for all $\phi\in C_c^1(\R^d)$, in the sense of distributions in $(0,T)$.
\end{proof}

\subsection{Nested superposition: from \texorpdfstring{$\Lambda$}{} to \texorpdfstring{$\mathfrak{L}$}{}}\label{subsec: from mathfrak L to Lambda} 
Here, we want to invert the result of Proposition \ref{L to Lambda section 5}, i.e. to build a measure $\mathfrak{L}\in \PP(\PP(C_T(\R^d)))$ concentrated over $\operatorname{SPS}(b)$ to a given measure $\Lambda \in \PP(C_T(\PP(\R^d)))$ concentrated over $\operatorname{CE}(b)$.
Let us introduce some notations: we will denote 
\begin{equation}\label{eq: def of Y and Z}
    Y:= [0,T]\times \R^d \times \PP(\R^d), \quad \quad Z:= [0,T]\times C_T(\R^d) \times \PP(C_T(\R^d)).
\end{equation}

Then, we define the maps
\begin{equation}\label{eq: def of kappa and mathfrak K}
    \begin{aligned}
        &\kappa: C_T(\PP(\R^d)) \to \mathcal{M}_+(Y), \quad  \kappa(\boldsymbol{\mu}) = dt\otimes \big( \mu_t \otimes\delta_{\mu_t} \big),
        \\
        & \mathfrak{K}:\PP(C_T(\R^d)) \to \mathcal{M}_+(Z), \quad \mathfrak{K}(\lambda):= \mathcal{L}^1_T\otimes \lambda\otimes\delta_{\lambda}.
    \end{aligned}
\end{equation}
For a fixed Borel non-local vector field $b:[0,T]\times \R^d \to \PP(\R^d) \to \R^d$, we define the subset $\hat{\CE}(b)\subset \mathcal{M}_+(Y)\times \mathcal{M}(Y;\R^d)$ as
\begin{equation}\label{eq: def of hat ce(b)}
    \begin{aligned}
    & \hat{\operatorname{CE}}(b):= \big\{(\hat{\mu},\hat{\nu})\in \mathcal{M}_+(Y)\times\mathcal{M}(Y;\R^d) \ : \ \hat{\mu} \in \operatorname{Im}(\kappa), \ b\in L^1(\hat{\mu}), \\
    & \hspace{6.68cm} \hat{\nu} = b\hat{\mu},\ \partial_t\hat{\mu}+ \operatorname{div}(\hat{\nu})= 0 \big\},
    \end{aligned}
\end{equation}
where the equation $\partial_t \hat{\mu} + \operatorname{div}(\hat{\nu}) = 0$ must be understood as 
\[\forall \xi \in C_c((0,T)\times\R^d) \ \int \partial_t\xi(t,x) d\hat{\mu}(t,x,\mu) + \int \nabla_x \xi(t,x)\cdot d\hat{\nu}(t,x,\mu) = 0,\]
and, lastly, define the set $\hat{\operatorname{SPS}}(b)\subset\mathcal{M}_+(Z)$ as
\begin{equation}\label{eq: def of hat sps(b)}
\begin{aligned}
    & \hat{\operatorname{SPS}}(b):= \bigg\{\hat{\lambda} \in \mathcal{M}_+(Z) \ : \ \hat{\lambda} \in \operatorname{Im}(\mathfrak{K}), \  \hat{\lambda}\big(([0,T]\times AC_T(\R^d) \times \PP(\R^d))^c\big)=0, \\
        & \hspace{4.15cm} \hat{D}\in L^1(\hat{\lambda}), \ \int |\hat{D}-b\circ \hat{E}| d\hat{\lambda}=0\bigg\},
    \end{aligned}
\end{equation}
where $\hat{D}(t,\gamma,\lambda) = D(t,\gamma)$, according to Lemma \ref{meas of D(t,gamma)}, for all $(t,\gamma,\lambda) \in Z$, and 
\begin{equation}\label{eq: def of hat E}
    \hat{E}: Z \to [0,T]\times \R^d \times \PP(\R^d), \quad \hat{E}(t,\gamma,\lambda) = (t,\gamma(t),(e_t)_\sharp \lambda).
\end{equation}
Notice that $\hat{E}$ coincide with the map $\mathcal{E}$ already defined in \eqref{eq: def of mathcal E}, we just call it $\hat{E}$ here for the sake of notation.

The proof of the nested superposition principle relies on the measurability of the sets $\CE(b)$, $\operatorname{SPS}(b)$, $\hat{\CE}(b)$ and $\hat{\operatorname{SPS}}(b)$, for which the following lemma will be fundamental.

\begin{lemma}\label{cont of kappa and mathfrak k}
    The maps $\kappa$ and $\mathfrak{K}$ are continuous and injective. In particular, $\operatorname{Im}(\kappa)\subset \mathcal{M}_+(Y)$ and $\operatorname{Im}(\mathfrak{K})\subset \mathcal{M}_+(Z)$ are Borel subsets.
\end{lemma}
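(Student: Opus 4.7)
The plan is to reduce both statements to the fundamental fact that the map $\tau_W\colon \PP(W)\to \PP(W\times \PP(W))$ defined by $\tau_W(\nu):=\nu\otimes\delta_\nu$ is continuous and injective for any Polish space $W$. Once this is established, the lemma follows by tensoring with fixed factors and applying the Lusin--Souslin theorem recalled in Appendix \ref{appendix souslin}.

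First I would prove the continuity of $\tau_W$. For any $f\in C_b(W\times \PP(W))$ and any narrowly convergent sequence $\nu_n\to\nu$ in $\PP(W)$, the identity $\int f\,d\tau_W(\nu)=\int f(w,\nu)\,d\nu(w)$ reduces everything to showing
\begin{equation*}
\int f(w,\nu_n)\,d\nu_n(w)\;\longrightarrow\;\int f(w,\nu)\,d\nu(w).
\end{equation*}
The cleanest route is Skorokhod's representation: since $W$ is Polish, one can realize $\nu_n,\nu$ as laws of $W$-valued random variables $X_n,X$ on a common probability space with $X_n\to X$ a.s.; continuity and boundedness of $f$, together with $\nu_n\to\nu$ in $\PP(W)$, then give $f(X_n,\nu_n)\to f(X,\nu)$ a.s. and the convergence follows by dominated convergence. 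Injectivity of $\tau_W$ is immediate: the projection of $\tau_W(\nu)$ onto the second factor equals $\delta_\nu$.

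Continuity of $\kappa$ and $\mathfrak{K}$ then follows. For $\mathfrak{K}$, writing $\mathfrak{K}(\lambda)=\mathcal{L}^1_T\otimes \tau_{C_T(\R^d)}(\lambda)$, continuity is inherited from $\tau_{C_T(\R^d)}$ because tensoring with the fixed finite measure $\mathcal{L}^1_T$ is a continuous operation $\PP(C_T(\R^d)\times \PP(C_T(\R^d)))\to \mathcal{M}_+(Z)$. For $\kappa$, I would argue that if $\boldsymbol{\mu}^n\to\boldsymbol{\mu}$ in $C_T(\PP(\R^d))$ then $\mu_t^n\to\mu_t$ in $\PP(\R^d)$ for every $t\in[0,T]$; applying $\tau_{\R^d}$ and using its continuity yields pointwise narrow convergence of the fibers $\mu_t^n\otimes \delta_{\mu_t^n}$ in $\PP(\R^d\times \PP(\R^d))$, and for each $f\in C_b(Y)$ the functions $t\mapsto \int f(t,\cdot,\cdot)\,d(\mu_t^n\otimes\delta_{\mu_t^n})$ are uniformly bounded by $\|f\|_\infty$, so dominated convergence in time gives $\int f\,d\kappa(\boldsymbol{\mu}^n)\to\int f\,d\kappa(\boldsymbol{\mu})$. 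Injectivity of $\kappa$ and $\mathfrak{K}$ follows by projecting onto the delta factor: for $\kappa(\boldsymbol{\mu})$, disintegration in time recovers $\mu_t\otimes\delta_{\mu_t}$ for a.e.~$t$, hence $\mu_t$ by projection onto $\PP(\R^d)$, and continuity of $t\mapsto \mu_t$ identifies the whole curve; for $\mathfrak{K}$, the projection onto $\PP(C_T(\R^d))$ is $T\delta_\lambda$, determining $\lambda$.

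Finally, for the Borel character of the images, I note that $C_T(\PP(\R^d))$, $\PP(C_T(\R^d))$ and $\mathcal{M}_+(Y)$, $\mathcal{M}_+(Z)$ are all Polish (see §\ref{subsub cont curve in prob space}, §\ref{subsub top over P(C_T)} and Remark \ref{measures are Polish/Lusin}). Having shown continuity and injectivity, I invoke the Lusin--Souslin theorem from Appendix \ref{appendix souslin}, which asserts that an injective Borel (in particular continuous) map between Polish spaces sends Borel sets to Borel sets; applied to the whole domain this yields that $\operatorname{Im}(\kappa)$ and $\operatorname{Im}(\mathfrak{K})$ are Borel in the corresponding target spaces. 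The main (mild) technical point is the continuity of $\tau_W$; everything else is bookkeeping.
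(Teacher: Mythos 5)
Your proof is correct and follows the same essential route as the paper, which however dismisses the continuity with the single word "clearly" and disposes of injectivity and the Borel image in one sentence each. Your contribution is to supply the content behind "clearly": the factorization of both $\kappa$ and $\mathfrak{K}$ through the building block $\tau_W(\nu)=\nu\otimes\delta_\nu$ is a clean way to organize the argument, and the Skorokhod-representation proof of continuity of $\tau_W$ is a standard but perfectly legitimate device for handling the joint convergence $\int f(w,\nu_n)\,d\nu_n(w)\to\int f(w,\nu)\,d\nu(w)$ when both the integrand and the measure move (an alternative, if one wished to avoid Skorokhod, is to use tightness and uniform continuity of $f$ on products of a compact with a narrowly-compact set of measures, or a Stone--Weierstrass reduction to test functions of the product form $g(w)h(\nu)$). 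Your injectivity argument via disintegration over the time variable is the same one the paper names, and the deduction that the images are Borel from continuity, injectivity, and Lemma \ref{lusin iff borel} is exactly what the paper invokes — your phrasing as "Lusin--Souslin" corresponds to Corollary \ref{cont and inj image of Borel sets}. One minor point worth being explicit about: for the domain of $\kappa$, convergence in $C_T(\PP(\R^d))$ is with respect to the sup metric $D_{\hat W_1}$, which is stronger than pointwise narrow convergence of the marginals $\mu^n_t\to\mu_t$; you only use the pointwise consequence, which is fine, and the dominated-convergence-in-time step closes the argument correctly.
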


\begin{proof}
    The two functions are clearly continuous, and the injectivity follows by uniqueness of the disintegration with respect to the projection on the time variable. The last part of the statement follows from Lemma \ref{lusin iff borel} and the definition of Lusin set.
\end{proof}

\begin{prop}\label{meas of CE_b}
    Let $b:[0,T]\times \R^d \times \PP(\R^d) \to \R^d$ be Borel. 
    It holds 
    \begin{equation}\label{eq: CE(b) = kappa^-1 hat CE(b)}
        \operatorname{CE}(b) = \kappa^{-1}(\pi^1(\hat{\CE}(b))),
    \end{equation} 
    where $\pi^1(\hat{\mu},\hat{\nu}) = \hat{\mu}$ is the projection on the first coordinate.
    Moreover, the set $\hat{\CE}(b) \subset \mathcal{M}_+(Y)\times \mathcal{M}(Y;\R^d)$ is Borel and, in particular, $\operatorname{CE}(b)\subset C_T(\PP(\R^d))$ is Borel. 
\end{prop}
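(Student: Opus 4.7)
The plan is to prove the set identity $\CE(b)=\kappa^{-1}(\pi^1(\hat\CE(b)))$ by direct translation of the defining conditions, then to establish Borel measurability of $\hat\CE(b)$ by rewriting it as a countable intersection of Borel conditions, and finally to show that the projection $\pi^1$ preserves Borel measurability here thanks to the uniqueness of the $\hat\nu$-fiber. Once $\pi^1(\hat\CE(b))$ is Borel, the measurability of $\CE(b)$ follows from the continuity of $\kappa$ recorded in Lemma \ref{cont of kappa and mathfrak k}.

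\textbf{The identity.} For $\boldsymbol{\mu}=(\mu_t)_{t\in[0,T]}\in C_T(\PP(\R^d))$, the measure $\kappa(\boldsymbol{\mu})=dt\otimes(\mu_t\otimes\delta_{\mu_t})$ satisfies, for every non-negative Borel $g:Y\to[0,+\infty]$,
\[\int g(t,x,\mu)\,d\kappa(\boldsymbol{\mu})(t,x,\mu)=\int_0^T\int_{\R^d} g(t,x,\mu_t)\,d\mu_t(x)\,dt.\]
Applying this with $g=|b|$ converts $b\in L^1(\kappa(\boldsymbol{\mu}))$ into the integrability condition appearing in \eqref{CEb}; testing the distributional equation $\partial_t\kappa(\boldsymbol{\mu})+\operatorname{div}(b\,\kappa(\boldsymbol{\mu}))=0$ against $\xi\in C_c^1((0,T)\times\R^d)$, viewed as functions on $Y$ independent of the $\mu$-variable, yields exactly $\partial_t\mu_t+\operatorname{div}(b_t(\cdot,\mu_t)\mu_t)=0$. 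Both inclusions in the identity follow at once.

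\textbf{Measurability of $\hat\CE(b)$.} Fix countable dense families $\{\xi_n\}\subset C_c^1((0,T)\times\R^d)$ and $\{\phi_m\}\subset C_b(Y;\R^d)$. Then
\begin{align*}
\hat\CE(b)=\Big\{\hat\mu\in\operatorname{Im}(\kappa)\Big\}&\cap\Big\{\textstyle\int|b|\,d\hat\mu<+\infty\Big\}\cap\bigcap_m\Big\{\textstyle\int\phi_m\cdot d\hat\nu=\int\phi_m\cdot b\,d\hat\mu\Big\}\\
&\cap\bigcap_n\Big\{\textstyle\int\partial_t\xi_n\,d\hat\mu+\int\nabla_x\xi_n\cdot d\hat\nu=0\Big\}.
\end{align*}
The first factor is Borel by Lemma \ref{cont of kappa and mathfrak k}; the second and third by Lemma \ref{measurability int g d mu} applied to the Borel integrands $|b|$ and $\phi_m\cdot b$; and the fourth by the continuity of the pairing $\hat\nu\mapsto\int\phi\cdot d\hat\nu$ for $\phi\in C_b(Y;\R^d)$. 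That the countable families of test functions separate, respectively, vector Borel measures on $Y$ of finite total variation (using Remark \ref{measures are Polish/Lusin}) and distributional derivatives on $(0,T)\times\R^d$, guarantees that this intersection coincides with $\hat\CE(b)$.

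\textbf{Projection and conclusion.} The delicate point is Borel measurability of $\pi^1(\hat\CE(b))$: projections of Borel sets in Polish spaces are generally only Souslin. The saving feature here is that, on $\{b\in L^1(\hat\mu)\}$, the vector measure $\hat\nu$ is \emph{uniquely determined} by $\hat\mu$ through $\hat\nu=b\hat\mu$. Therefore
\[\pi^1(\hat\CE(b))=\Big\{\hat\mu\in\operatorname{Im}(\kappa):\ b\in L^1(\hat\mu),\ \partial_t\hat\mu+\operatorname{div}(b\hat\mu)=0\Big\},\]
and the right-hand side is a countable intersection of Borel subsets of $\mathcal{M}_+(Y)$ by the same reasoning as above (with $\hat\nu$ replaced by $b\hat\mu$ in the linear test conditions, each of which is Borel in $\hat\mu$ by Lemma \ref{measurability int g d mu}). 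Finally, $\kappa$ is continuous, so $\CE(b)=\kappa^{-1}(\pi^1(\hat\CE(b)))$ is Borel in $C_T(\PP(\R^d))$. The main obstacle, locating a formulation of the fiber condition $\hat\nu=b\hat\mu$ that survives projection, is resolved precisely by exploiting the functional dependence on $\hat\mu$ afforded by the $L^1$-integrability.
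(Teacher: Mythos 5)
Your proof is correct and reaches the same conclusion, but the way you handle the projection step diverges from the paper's argument in a way worth noting. The paper observes that the condition $\hat\nu = b\hat\mu$ makes $\pi^1$ restricted to the set $B := \{(\hat\mu,\hat\nu):\hat\mu\in\operatorname{Im}(\kappa),\ b\in L^1(\hat\mu),\ \hat\nu=b\hat\mu\}$ continuous \emph{and injective}, and then invokes the Lusin--Souslin machinery (Lemma \ref{lusin iff borel} and Corollary \ref{cont and inj image of Borel sets}) to conclude that $\pi^1$ maps Borel subsets of $B$ to Borel subsets of $\mathcal{M}_+(Y)$. You instead bypass the abstract theorem entirely: since $\hat\nu$ is forced to equal $b\hat\mu$, you substitute directly and rewrite $\pi^1(\hat\CE(b))$ as an explicit set of conditions on $\hat\mu$ alone, each of which is Borel by Lemma \ref{measurability int g d mu} (splitting $\nabla_x\xi\cdot b$ into positive and negative parts, both finite on $\{b\in L^1(\hat\mu)\}$). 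This is a genuinely more elementary route—it trades the descriptive-set-theoretic theorem for a hands-on computation. What the paper's approach buys, in return, is modularity: the same Lusin--Souslin argument is reused wholesale in the companion Proposition \ref{meas of SPSb} and in Section \ref{subsec: universal}, whereas your substitution trick is specific to the $\hat\CE(b)$ structure. One small suggestion: when you invoke Remark \ref{measures are Polish/Lusin} for the existence of a countable separating family in $\mathcal{M}(Y;\R^d)$, it is cleaner to cite the paper's Lemma \ref{measurability nu=f mu} directly, since that lemma already packages the countable-separation argument into the Borelness of the condition $\hat\nu=b\hat\mu$.
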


\begin{proof}
    Thanks to Lemma \ref{measurability int g d mu}, Lemma \ref{measurability nu=f mu} and Lemma \ref{cont of kappa and mathfrak k}, the sets 
    \[B_1:= \{\hat{\mu}\in \mathcal{M}_+(Y) \ : \ \hat{\mu}\in \operatorname{Im}(\kappa), \ b \in L^1(\hat{\mu})\}\]\[ B := \{(\hat{\mu},\hat{\nu})\in \mathcal{M}_+(Y)\times\mathcal{M}(Y;\R^d) \ : \ \hat{\mu}\in \operatorname{Im}(\kappa), \ b\in L^1(\hat{\mu}), \ \hat{\nu} = b\hat{\mu}\}\]
    are Borel, respectively, in $\mathcal{M}_+(Y)$ and $\mathcal{M}_+(Y)\times \mathcal{M}(Y;\R^d)$. Moreover, notice that $B_1 = \pi^1(B)$. Then, the set  
    \[\hat{\operatorname{CE}}(b):= \{(\hat{\mu},\hat{\nu})\in \mathcal{M}_+(Y)\times\mathcal{M}(Y;\R^d) \ : \ b\in L^1(\hat{\mu}), \ \hat{\nu} = b\hat{\mu},\ \partial_t\hat{\mu}+ \operatorname{div}(\hat{\nu})=0\}\]
    is Borel, since it is a relatively closed subset of $B$.
    Notice that the projection on the first coordinate restricted to $B$, i.e. $\pi^1|_{B} : B \to \mathcal{M}_+(Y)$, is continuous and injective, then it satisfies $\pi^1(B')\in \mathcal{B}\big(\mathcal{M}_+(Y)\big)$ for all $B'\subset B$ Borel set (see Lemma \ref{lusin iff borel} and Corollary \ref{cont and inj image of Borel sets}).
    Thus, we conclude proving \eqref{eq: CE(b) = kappa^-1 hat CE(b)}: if $\boldsymbol{\mu} \in \operatorname{CE}(b)$, then $(\kappa(\boldsymbol{\mu}), b \kappa(\boldsymbol{\mu})) \in \hat{\operatorname{CE}}(b)$; on the other hand, if $\boldsymbol{\mu}\in \kappa^{-1}(\pi^1(\hat{\operatorname{CE}}(b)))$, then $(\hat{\mu},\hat{\nu}) = \big(\kappa(\boldsymbol{\mu}),b \kappa(\boldsymbol{\mu})\big)\in \hat{\operatorname{CE}}(b)$, which implies that 
    \[\int_0^T \int |b_t(x,\mu_t)|d\mu_t(x) dt = \int_Y |b_t(x,\mu)|d\kappa(\boldsymbol{\mu})(t,x,\mu) <+\infty,\]
    and
    \begin{align*}
        0 = & \int_Y \frac{\partial}{\partial t}\xi(t,x) d\hat{\mu}(t,x,\mu) + \int_Y \nabla_x\xi(t,x)\cdot d\hat{\nu}(t,x,\mu)
        \\
        = & 
        \int_Y \frac{\partial}{\partial t}\xi(t,x) d\hat{\mu}(t,x,\mu) + \int_Y \nabla_x\xi(t,x)\cdot b(t,x,\mu)d\hat{\mu}(t,x,\mu)
        \\
        = & 
        \int_0^T \int_{\R^d} \frac{\partial}{\partial t} \xi(t,x) d\mu_t(x) dt + \nabla_x\xi(t,x)\cdot b(t,x,\mu_t)d\mu_t(x) dt,
    \end{align*}
    which means that $\boldsymbol{\mu} = (\mu_t)_{t\in [0,T]}$ satisfies the continuity equation, and so $\boldsymbol{\mu}\in \operatorname{CE}(b)$.
\end{proof}

\begin{prop}\label{meas of SPSb}
    Let $b:[0,T]\times \R^d \times \PP(\R^d) \to \R^d$ be Borel. It holds
    \begin{equation}\label{eq: sps(b) = k^-1 hat sps(b)}
        \operatorname{SPS}(b) = \mathfrak{K}^{-1}(\hat{\operatorname{SPS}}(b)).
    \end{equation}
    Moreover, the set $\hat{\operatorname{SPS}}(b)\subset \mathcal{M}_+(Z)$ is Borel and, in particular, $\operatorname{SPS}(b) \subset \PP(C_T(\R^d))$ is Borel.
\end{prop}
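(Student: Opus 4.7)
The strategy mirrors the proof of Proposition \ref{meas of CE_b}: first establish the set-theoretic identity \eqref{eq: sps(b) = k^-1 hat sps(b)} by a direct two-sided verification, then prove that $\hat{\operatorname{SPS}}(b)$ is Borel in $\mathcal{M}_+(Z)$, and conclude from the continuity and injectivity of $\mathfrak{K}$ (Lemma \ref{cont of kappa and mathfrak k}). The role of $\hat{\operatorname{SPS}}(b)$ is to decouple the circular occurrence of $\lambda$ in the definition of $\operatorname{SPS}(b)$ (it appears both as the measure on $C_T(\R^d)$ and, via $(e_t)_\sharp\lambda$, inside the vector field), encoding both roles through the auxiliary object $\hat\lambda=\mathcal{L}^1_T\otimes\lambda\otimes\delta_\lambda$.

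For the identity $\operatorname{SPS}(b) = \mathfrak{K}^{-1}(\hat{\operatorname{SPS}}(b))$, given $\lambda\in\operatorname{SPS}(b)$ I would set $\hat\lambda:=\mathfrak{K}(\lambda)$; by construction $\hat\lambda\in\operatorname{Im}(\mathfrak{K})$, and $\lambda(AC_T(\R^d))=1$ immediately gives $\hat\lambda(([0,T]\times AC_T(\R^d)\times\PP(C_T(\R^d)))^c)=0$. Fubini yields $\int|\hat D|\,d\hat\lambda = \int_0^T\int|\dot\gamma(t)|\,d\lambda(\gamma)\,dt$, which is finite because the ODE condition replaces $|\dot\gamma(t)|$ by $|b_t(\gamma_t,(e_t)_\sharp\lambda)|$ and the latter is $L^1$ by definition of $\operatorname{SPS}(b)$; the same ODE forces $\int|\hat D-b\circ\hat E|\,d\hat\lambda=0$. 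Conversely, if $\mathfrak{K}(\lambda)\in\hat{\operatorname{SPS}}(b)$, the product structure of $\hat\lambda$ allows me to read each of the four clauses backwards via Fubini: the concentration clause gives $\lambda(AC_T(\R^d))=1$, the $L^1$ clause together with $\int|\hat D-b\circ\hat E|\,d\hat\lambda=0$ and the triangle inequality gives the integrability of $|b_t(\gamma_t,(e_t)_\sharp\lambda)|$ against $\lambda\otimes dt$, and the vanishing clause gives $\dot\gamma(t)=b_t(\gamma_t,(e_t)_\sharp\lambda)$ for $\mathcal{L}^1_T\otimes\lambda$-a.e.\ $(t,\gamma)$.

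For the Borel measurability of $\hat{\operatorname{SPS}}(b)$, the clause $\hat\lambda\in\operatorname{Im}(\mathfrak{K})$ is Borel by Lemma \ref{cont of kappa and mathfrak k} (which gives that $\operatorname{Im}(\mathfrak{K})$ is Lusin, hence Borel, in $\mathcal{M}_+(Z)$). The concentration clause is Borel because $AC_T(\R^d)$ is a Borel subset of $C_T(\R^d)$ (see §\ref{subsub ac curve on metric spaces}) and $\hat\lambda\mapsto\hat\lambda(A)$ is Borel for every fixed Borel $A\subset Z$ (Lemma \ref{measurability int g d mu}). The functions $\hat D$ (Borel by Lemma \ref{meas of D(t,gamma)}) and $b\circ\hat E$ (Borel by composition, since $\hat E$ is continuous and $b$ is Borel) are $[-\infty,+\infty]^d$-valued Borel on $Z$, so $|\hat D|$ and $|\hat D-b\circ\hat E|$ are $[0,+\infty]$-valued Borel; by monotone convergence on the truncations $|\hat D|\wedge n$ and $|\hat D-b\circ\hat E|\wedge n$, the maps
\[
\hat\lambda\mapsto\int|\hat D|\,d\hat\lambda,\qquad \hat\lambda\mapsto\int|\hat D-b\circ\hat E|\,d\hat\lambda
\]
are suprema of Borel functions (again by Lemma \ref{measurability int g d mu}), hence Borel into $[0,+\infty]$. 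Thus the last two clauses cut out Borel subsets as well.

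Combining these items, $\hat{\operatorname{SPS}}(b)$ is Borel in $\mathcal{M}_+(Z)$, and continuity of $\mathfrak{K}$ then yields that $\operatorname{SPS}(b)=\mathfrak{K}^{-1}(\hat{\operatorname{SPS}}(b))$ is Borel in $\PP(C_T(\R^d))$. The only genuine subtlety is the handling of the function $\hat D$, which is only finite on $[0,T]\times AC_T(\R^d)\times\PP(C_T(\R^d))$; the truncation argument above absorbs this, and the concentration clause ensures that the restriction of $\hat D$ where it is infinite contributes nothing to the integrals once the other clauses are enforced.
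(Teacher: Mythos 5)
Your proof is correct and follows the same strategy as the paper: verify the set identity $\operatorname{SPS}(b)=\mathfrak{K}^{-1}(\hat{\operatorname{SPS}}(b))$ by a direct two-sided check using the product structure of $\mathfrak{K}(\lambda)$, establish Borel-ness of $\hat{\operatorname{SPS}}(b)$ clause by clause via Lemmas \ref{cont of kappa and mathfrak k}, \ref{meas of AC^p}, \ref{meas of D(t,gamma)} and \ref{measurability int g d mu}, and conclude via continuity of $\mathfrak{K}$. You spell out the Borel-measurability details (truncation of $\hat D$, continuity of $\hat E$) a bit more explicitly than the paper does, and you silently correct the paper's typo $\PP(\R^d)$ to $\PP(C_T(\R^d))$ in the concentration clause, but the argument is the same.
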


\begin{proof}
    The set $\hat{\operatorname{SPS}}(b)$ is Borel thanks to Lemma \ref{cont of kappa and mathfrak k}, Lemma \ref{meas of AC^p}, Lemma \ref{meas of D(t,gamma)} and Lemma \ref{measurability int g d mu}, and we conclude by proving \eqref{eq: sps(b) = k^-1 hat sps(b)}. If $\lambda\in \operatorname{SPS}(b)$, then calling $\hat{\lambda}= \mathfrak{K}(\lambda)$ we have:
    \begin{itemize}
        \item since $\lambda$ is concentrated over $AC_T(\R^d)$, it holds $\hat{\lambda}\big(([0,T]\times AC_T(\R^d)\times \PP(\R^d))^c\big)=0$;
        \item $\int |D-b\circ \hat{E}| d\hat{\lambda}=0$ is equivalent to $\dot{\gamma}(t) = b(t,\gamma(t),(e_t)_\sharp \lambda)$ $\mathcal{L}^1_T\otimes \lambda$-a.e.;
        \item thanks to the previous point, $\hat{D}\in L^1(\hat{\lambda})$ is equivalent to $\int \int |b(t,\gamma(t),(e_t)_\sharp \lambda)|  d\lambda dt <+\infty$.
    \end{itemize}
    So, $\mathfrak{K}(\lambda)\in \hat{\operatorname{SPS}}(b)$.
    On the other hand, if $\lambda \in \mathfrak{K}^{-1}(\hat{\operatorname{SPS}}(b))$, then $\hat{\lambda}:= \mathfrak{K}(\lambda) \in \hat{\operatorname{SPS}}(b)$, which means that 
    \begin{itemize}
        \item $\lambda\big((AC_T(\R^d))^c\big) = 0$;
        \item $\dot\gamma(t) = b(t,\gamma(t),(e_t)_\sharp \eta)$ for $\hat{\lambda}$-a.e. $(t,\gamma,\eta)$. Since $\hat{\lambda} = \mathcal{L}^1_T\otimes\lambda\otimes\delta_{\lambda}$, we have that $\dot\gamma(t) = b(t,\gamma(t),(e_t)_\sharp \lambda)$ $\mathcal{L}^1_T\otimes\lambda$-a.e.;
        \item thanks to the previous point, 
        \[\int\int |b(t,\gamma(t),(e_t)_\sharp \lambda)| d\lambda dt = \int | \hat{D}| d\hat{\lambda}<+\infty.\]
    \end{itemize}
    Thus, $\lambda\in \operatorname{SPS}(b)$ and the proof is concluded.
\end{proof}

Now, everything is set to prove the following theorem.

\begin{teorema}\label{from Lambda to L theorem/corollary}
    Let $b:[0,T]\times \R^d \times \PP(\R^d) \to \R^d$ be Borel measurable. Then, there exists a Souslin-Borel measurable map $G_b:\operatorname{CE}(b) \to \PP(C_T(\R^d))$ satisfying $\operatorname{Im} G_b \subset \operatorname{SPS}(b)$ and $E \circ G_b(\boldsymbol{\mu}) = \boldsymbol{\mu}$ for all $\boldsymbol{\mu}\in \operatorname{CE}(b)$. 
    \\
    In particular, if $\Lambda\in \PP(C_T(\PP(\R^d)))$ is concentrated over $\CE(b)$, then the measure $\mathfrak{L}:= (G_b)_\sharp \Lambda \in \PP(\PP(C_T(\R^d)))$ is well defined and concentrated over $\operatorname{SPS}(b)$, satisfying $E_\sharp  \mathfrak{L} = \Lambda$. 
\end{teorema}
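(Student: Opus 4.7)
The plan is to apply the measurable selection theorem (Theorem~\ref{measurable selection}) to the restriction of the evaluation map $E:\PP(C_T(\R^d))\to C_T(\PP(\R^d))$, suitably restricted so as to become a surjection onto $\CE(b)$. The outputs of the preceding two propositions are exactly the structural ingredients that make this work: namely, $\operatorname{SPS}(b)\subset \PP(C_T(\R^d))$ is Borel (Proposition~\ref{meas of SPSb}), $\CE(b)\subset C_T(\PP(\R^d))$ is Borel (Proposition~\ref{meas of CE_b}), and $E$ is continuous (hence Borel) on $\PP(C_T(\R^d))$ by Lemma~\ref{well posedeness of E}. Moreover, Proposition~\ref{L to Lambda section 5} already shows that $E(\operatorname{SPS}(b))\subseteq \CE(b)$, so it only remains to argue surjectivity.

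The key step is surjectivity of $E:\operatorname{SPS}(b)\to \CE(b)$, which is precisely where the classical finite-dimensional superposition principle enters. Given $\boldsymbol{\mu}=(\mu_t)_{t\in[0,T]}\in \CE(b)$, define the Borel time-dependent vector field
\[
v(t,x):=b(t,x,\mu_t)\quad\text{on }[0,T]\times \R^d.
\]
By definition of $\CE(b)$ one has $\int_0^T\int_{\R^d}|v(t,x)|\,\d\mu_t(x)\,\d t<+\infty$ and $\partial_t\mu_t+\operatorname{div}(v_t\mu_t)=0$. Applying Theorem~\ref{finite dim superposition principle} furnishes $\lambda\in \PP(C_T(\R^d))$ with $(e_t)_\sharp\lambda=\mu_t$ for all $t\in[0,T]$, concentrated on $AC_T(\R^d)$, and whose typical curve solves $\dot\gamma(t)=v(t,\gamma(t))=b(t,\gamma(t),(e_t)_\sharp\lambda)$. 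By the $L^1$-bound above and the definition \eqref{SPSb}, this $\lambda$ belongs to $\operatorname{SPS}(b)$ and satisfies $E(\lambda)=\boldsymbol{\mu}$. Thus $E$ is a Borel surjection from the Borel subset $\operatorname{SPS}(b)$ onto the Borel subset $\CE(b)$ of Polish spaces, and Theorem~\ref{measurable selection} produces a Souslin-Borel measurable right inverse $G_b:\CE(b)\to \operatorname{SPS}(b)$ with $E\circ G_b=\operatorname{id}_{\CE(b)}$.

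For the second assertion, let $\Lambda\in \PP(C_T(\PP(\R^d)))$ be concentrated on $\CE(b)$. Since $G_b$ is Souslin-Borel measurable, Proposition~\ref{univ meas of souslin} and Corollary~\ref{push-forward by a souslin-borel map} guarantee that the push-forward $\mathfrak{L}:=(G_b)_\sharp\Lambda$ is a well-defined Borel probability measure on $\PP(C_T(\R^d))$; because $\operatorname{Im} G_b\subseteq \operatorname{SPS}(b)$, it is concentrated on $\operatorname{SPS}(b)$. Finally, the identity $E\circ G_b=\operatorname{id}$ on $\CE(b)$ together with the fact that $\Lambda$ is concentrated on $\CE(b)$ yield
\[
E_\sharp\mathfrak{L}=E_\sharp(G_b)_\sharp\Lambda=(E\circ G_b)_\sharp\Lambda=\Lambda,
\]
which closes the argument.

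The genuinely hard part is not this selection argument itself but the measurability of the sets $\CE(b)$ and $\operatorname{SPS}(b)$, which was already handled in Propositions~\ref{meas of CE_b} and~\ref{meas of SPSb} through the ancillary sets $\hat{\CE}(b)$ and $\hat{\operatorname{SPS}}(b)$ and the injective continuous lifts $\kappa,\mathfrak{K}$; once those are available, the present statement is essentially a direct application of the measurable selection theorem, using the finite-dimensional superposition principle as the only analytic input to ensure surjectivity.
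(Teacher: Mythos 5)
Your proof is correct and follows exactly the paper's argument: restrict $E$ to the Borel set $\operatorname{SPS}(b)$, establish surjectivity onto $\CE(b)$ via the finite-dimensional superposition principle applied to $v(t,x):=b(t,x,\mu_t)$, and then invoke the measurable selection theorem together with Propositions~\ref{meas of CE_b} and~\ref{meas of SPSb} and Corollary~\ref{push-forward by a souslin-borel map}. If anything, you spell out the surjectivity verification (the $L^1$ bound transferring from $\mu_t$ to $\lambda$, the a.c. support, and the ODE condition) in slightly more detail than the paper does.
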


\begin{proof}
    Notice that, thanks to Proposition \ref{L to Lambda section 5}, the map 
\[E|_{\operatorname{SPS}(b)} : \operatorname{SPS}(b) \to \operatorname{CE}(b)\]
is well defined. Moreover, thanks to the finite dimensional superposition principle, Theorem \ref{finite dim superposition principle}, it is surjective. Indeed for all $\boldsymbol{\mu}\in \operatorname{CE}(b)$ there exists a lifting $\lambda\in \PP(C_T(\R^d))$ such that $(e_t)_\sharp \lambda = \mu_t$, $\lambda$ is supported over a.c. curves that solves $\dot\gamma(t) = b(t,\gamma_t,\mu_t)$ and 
\[\int \int |b_t(\gamma(t),(e_t)_\sharp \lambda)| d\lambda(\gamma) dt = \int \int |b_t(x,\mu_t)|d\mu_t(x) dt <+\infty.\]
In other words, $\lambda \in \operatorname{SPS}(b)$. Then, thanks to Proposition \ref{meas of CE_b} and Proposition \ref{meas of SPSb}, we can apply Theorem \ref{measurable selection} to obtain a Souslin-Borel measurable map $G_b: \operatorname{CE}(b) \to \operatorname{SPS}(b)$, thanks to which we can define $\mathfrak{L}:= (G_b)_\sharp  \Lambda$, because of Corollary \ref{push-forward by a souslin-borel map}. Moreover, being $G_b$ a right-inverse of $E$, we also have the equality $\Lambda = E_\sharp  \mathfrak{L}$, so it preserves the hierarchy shown in Proposition \ref{L to Lambda section 5}.
\end{proof}

\subsection{Universality in the measurable selection}\label{subsec: universal}

In this subsection, we show a possible universal decomposition for a measurable selection map $G_b$, highlighting the dependence on the non-local vector field $b$ in it. 
\\
We will use the same notations of the previous subsection introduced in \eqref{eq: def of Y and Z}-\eqref{eq: def of hat E}, but here it will be fundamental to introduce also the following sets:
\begin{equation}
    \begin{aligned}
        \hat{\operatorname{CE}}:= \big\{(\hat{\mu},\hat{\nu}) \in \mathcal{M}_+(Y)\times \mathcal{M}(Y;\R^d) \ : \ \hat{\mu} \in \operatorname{Im}(\kappa), \quad \partial_t\hat{\mu} + \operatorname{div}\hat{\nu} = 0, \quad \hat{\nu} \ll \hat{\mu} & \big\},
        \\
        \hat{\operatorname{SPS}} :=\Big\{ \hat{\lambda}\in \mathcal{M}_+(Z) \ : \ \hat{\lambda} \in \operatorname{Im}(\mathfrak{K}), \quad 
        \hat{\lambda} \big(([0,T] \times AC_T(\R^d) \times \PP(C_T(\R^d)))^c \big) =0,& 
        \\
        \hspace{3.8cm} \hat{D}\in L^1(\hat{\lambda}), \quad 
        \exists v: Y \to \R^d \ \text{s.t.} \ \hat{D} = v\circ \hat{E} \ \hat{\lambda}\text{-a.e.} & 
        \Big\}.
    \end{aligned}
\end{equation}

These sets are strictly related to the sets $\hat{\operatorname{CE}}(b)$ and $\hat{\operatorname{SPS}}(b)$ (see \eqref{eq: def of hat ce(b)} and \eqref{eq: def of hat sps(b)}), the difference is that they allow a general Borel field $b$ instead of an a priori fixed one. 
\\
Indeed, for any element $(\hat{\mu},\hat{\nu}) \in \hat{\operatorname{CE}}$ there exists a Borel non-local vector field $v:[0,T]\times \R^d\times \PP(\R^d) \to \R^d$ such that $\hat\nu = v \hat\mu$. Moreover, since $\hat\mu\in \operatorname{Im}(\kappa)$, there exists a curve $\boldsymbol{\mu} \in C_T(\PP(\R^d))$ such that $\hat{\mu} = \kappa(\boldsymbol{\mu})$, and by the first condition, it also holds that $\partial_t \mu_t + \operatorname{div}(v_t(\cdot,\mu_t)\mu_t) = 0$. We remark that the presence of $\hat{\nu}$ is also telling which vector field (defined $\hat{\mu}$-a.e.) drives the continuity equation. In particular, $\hat{\operatorname{CE}}$ is formally the union of all $\hat{\operatorname{CE}}(b)$ as $b$ varies.
\\
Regarding the other set, any $\hat{\lambda}\in \hat{\operatorname{SPS}}$ can be represented as $\hat{\lambda} = \mathcal{L}^1\otimes \lambda \otimes \delta_\lambda$ for some $\lambda \in \PP(C_T(\R^d))$, and by the other conditions it holds that $\lambda$ is concentrated over absolutely continuous curves that solve the ordinary differential equation $\dot\gamma(t) = v_t(\gamma(t),(e_t)_\sharp \lambda)$. As before, $\hat{\operatorname{SPS}}$ is formally the a union of all $\hat{\operatorname{SPS}}(b)$ as $b$ varies.

 At this point, for all $b:[0,T]\times \R^d \times \PP(\R^d) \to \R^d$ Borel measurable we have the following commutative diagram:
 \begin{equation}\label{eq: comm diagram}
\begin{tikzcd}
  \operatorname{SPS}(b) \arrow[r, "\mathfrak{K}"] \arrow[d, "E"'] &  \hat{\operatorname{SPS}} \arrow[d, "\hat{\mathcal{E}}"]\\
  \CE(b) \arrow[r, "\kappa_b"] & \hat{\CE}
\end{tikzcd}
\end{equation}
where 
\[\hat{\mathcal{E}}(\hat{\lambda}) := \big(\hat{E}_\sharp  \hat{\lambda}, \hat{E}_\sharp (\hat{D}\hat{\lambda})\big), \quad \kappa_b(\hat{\mu}):= \big(\kappa(\boldsymbol{\mu}), b\kappa(\boldsymbol{\mu})\big).\]

In the following, we show that we can obtain a right inverse $\hat{\mathcal{G}}:\hat{\CE} \to \hat{\operatorname{SPS}}$ of the map $\hat{\mathcal{E}}$, performing again a measurable selection. Then, we infer that $\hat{\mathcal{G}}\circ \kappa_b(\operatorname{CE}(b)) \subseteq \mathfrak{K}(\operatorname{SPS}(b))$ for any measurable $b$, giving the diagram
\begin{equation}\label{eq: resulting diagram}
\begin{tikzcd}
  \operatorname{SPS}(b)  &  \hat{\operatorname{SPS}} \arrow[l, "\hspace{0.15cm}\mathfrak{K}^{-1}"']\\
  \CE(b) \arrow[r, "\kappa_b"] & \hat{\CE} \arrow[u, "\hat{\mathcal{G}}"']
\end{tikzcd}
\end{equation}
Then, composition $\mathfrak{K}^{-1}\circ \hat{\mathcal{G}}\circ\kappa_b$ will be a right inverse of $E|_{\operatorname{SPS}_b}$.
This strategy is an alternative proof for Theorem \ref{from Lambda to L theorem/corollary}, providing a more refined right inverse of the map $E|_{\operatorname{SPS}_b}$. Moreover, the measurable selection map $\hat{\mathcal{G}}$ is universal, in the sense that it does not depend on $b$.

In the following, we make rigorous this strategy step by step, starting with a more general result that will be useful but could be of independent interest.

\begin{lemma}\label{f = v circ e}
    Let $R,S$ be Polish spaces, $e:S\to R$ Borel measurable and $f:S\to \R^d$ a Borel function. 
    Let $\lambda\in \mathcal{M}_+(S) $ be such that $f\in L^1(\lambda)$. Assume that 
    \[\mathcal{H}\left( e_\sharp (f\lambda) | e_\sharp \lambda \right) = \mathcal{H}(f\lambda|\lambda),\]
    where 
    \[\mathcal{H}(\nu|\mu) := \int \sqrt{1+\left|\frac{d\nu}{d\mu}\right|^2} d\mu + |\nu^\perp|, \quad \nu = \frac{d\nu}{d\mu} \mu + \nu^{\perp} \text{ with }\nu^\perp \perp \mu.\]
    Then, calling $v:R \to \R^d$ a version for the density of $e_\sharp (f\lambda)$ w.r.t. $e_\sharp  \lambda$, we have that 
    \[f = v\circ e \ \lambda\text{-a.e.}\]
\end{lemma}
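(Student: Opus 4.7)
The strategy is to reduce the statement to a fiberwise Jensen inequality for the strictly convex function $\phi(x) := \sqrt{1+|x|^2}$ on $\R^d$. Since $f \in L^1(\lambda)$, the vector measure $f\lambda$ is absolutely continuous with respect to $\lambda$, and consequently $e_\sharp(f\lambda)$ is absolutely continuous with respect to $\mu := e_\sharp \lambda$ (if $\mu(A) = 0$, then $\lambda(e^{-1}(A)) = 0$, so $|e_\sharp(f\lambda)|(A) \leq \int_{e^{-1}(A)} |f|\, d\lambda = 0$). Hence the singular parts in the two expressions for $\mathcal{H}$ vanish and the hypothesis reduces to
\[
\int_S \phi(f)\, d\lambda \;=\; \int_R \phi(v)\, d\mu.
\]

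Next I would disintegrate $\lambda = \int_R \lambda_r\, d\mu(r)$ with each $\lambda_r$ a probability measure concentrated on the fiber $e^{-1}(\{r\})$; this is available because $R, S$ are Polish and $e$ is Borel. A direct computation with the disintegration identifies the density explicitly:
\[
v(r) \;=\; \int_S f\, d\lambda_r \qquad \text{for $\mu$-a.e. } r \in R,
\]
where Fubini together with $f \in L^1(\lambda)$ ensures $f \in L^1(\lambda_r)$ for $\mu$-a.e.~$r$. Substituting into the previous display, the hypothesis becomes
\[
\int_R \Bigl( \int_S \phi(f)\, d\lambda_r \Bigr) d\mu(r) \;=\; \int_R \phi\Bigl(\int_S f\, d\lambda_r\Bigr) d\mu(r).
\]

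For every $r$, Jensen's inequality applied to the probability measure $\lambda_r$ yields $\phi\bigl(\int f\, d\lambda_r\bigr) \leq \int \phi(f)\, d\lambda_r$, so the integrated equality above forces pointwise equality for $\mu$-a.e.~$r$. Because $\phi$ is \emph{strictly} convex on $\R^d$ (its Hessian $[(1+|x|^2)I - xx^T]/(1+|x|^2)^{3/2}$ is positive definite everywhere), equality in Jensen occurs only when $f$ is $\lambda_r$-a.s.~constant, with the constant necessarily equal to $v(r)$. Thus $f(s) = v(e(s))$ for $\lambda$-a.e. $s$. The only delicate point is the verification that both singular parts in $\mathcal{H}$ vanish, which reduces to the absolute continuity check above; once this bookkeeping is in place, the conclusion is immediate from the rigidity of strict Jensen.
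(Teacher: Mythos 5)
Your argument is essentially the same as the paper's: disintegrate $\lambda$ over $e$, identify $v(r)$ as the fiberwise average $\int f\,d\lambda_r$, and invoke the equality case in Jensen's inequality for the strictly convex integrand $\sqrt{1+|\cdot|^2}$. The only difference is cosmetic — you spell out the absolute-continuity bookkeeping (that both singular parts vanish) and the Hessian computation verifying strict convexity, which the paper takes for granted.
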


\begin{proof}
    First of all, notice that 
    \[\mathcal{H}\left( e_\sharp (f\lambda) | e_\sharp \lambda \right) = \int_R \sqrt{1+|v|^2} d(e_\sharp \lambda), \quad \mathcal{H}\left( f\lambda | \lambda \right) = \int_S \sqrt{1+|f|^2} d\lambda.\]

    Consider the disintegration of $\lambda$ with respect to the map $e$, i.e. the family of probability measures $\{\lambda_x\}_{x\in R}\subset \mathcal{P}(S)$ such that 
    \[\lambda = \int_R \lambda_x d(e_\sharp \lambda)(x), \quad \lambda_x(e^{-1}(\{x\}) = 1.\]
    Define the map 
    \[\tilde{v}(x) := \int_{S} f(y) d\lambda_x(y),\]
    which is well-define $e_\sharp \lambda$-a.e. because $f\in L^1(\lambda)$ and Borel measurable thanks to Corollary \ref{equiv of sigma alg cor}

    \textit{Claim}: $v=\tilde{v}$ $e_\sharp \lambda$-a.e. 
    \\
    To prove the claim, it suffices to prove that $\tilde{v}$ is a density for $e_\sharp (f\lambda)$ w.r.t. $e_\sharp \lambda$. For any $g:R\to \R^d$ it holds
    \begin{align*}
        \int_R g(x) \cdot de_{\sharp }(f\lambda)(x) = & \int_S g(e(y)) \cdot f(y) d\lambda(y) 
        = 
        \int_R \left( \int_S g(e(y))\cdot f(y) d\lambda_x(y) \right) d(e_\sharp \lambda)(x)
        \\
        = & 
        \int_R g(x) \cdot \left( 
        \int_S f(y) d\lambda_x(y) \right)d(e_\sharp \lambda) 
        = 
        \int_R g(x) \tilde{v}(x) d(e_\sharp \lambda)(x).
    \end{align*}
    
    Now, for simplicity, we call $H(z) = \sqrt{1+|z|^2}$. Then it always holds
    \begin{align*}
        \mathcal{H}\left( e_\sharp (f\lambda) | e_\sharp \lambda \right) & =  \int_R H(\tilde{v}(x)) d(e_\sharp \lambda)(x) = \int_X H\left( \int_S f(y) d\lambda_x(y) \right) d(e_\sharp \lambda)(x)
        \\
        \leq & \int_R \int_S H(f(y)) d\lambda_x(y) d(e_\sharp \lambda)(x) = \int_S H(f(y)) d\lambda(y) = \mathcal{H}\left(f\lambda |\lambda \right),
    \end{align*}
    where we used Jensen inequality. By hypothesis, we have equality, and by strict convexity of $G$ the equality can hold if and only if for $(e_\sharp \lambda)$-a.e. $x\in R$, $f$ is constant $\lambda_x$-a.e. In particular, $f$ is constant $\lambda_{e(y)}$-a.e., for $\lambda$-a.e. $y\in S$, and looking in the definition of $\tilde{v}$, we conclude that $\tilde{v}(e(y)) = f(y)$ $\lambda$-a.e.
\end{proof}

\begin{oss}\label{remark density push-forward}
    It is known that the same conclusion of the previous lemma is obtained if 
    \begin{equation}\label{eq density}
        \int_R |v|^p d(e_\sharp \lambda) = \int_S |f|^p d\lambda,
    \end{equation}
    for $p\in(1,+\infty)$. Unfortunately, this argument fails when $p=1$, as the following counterexample shows.
    Indeed, under the same assumptions of the lemma, but assuming only (\ref{eq density}) with $p=1$, it is still true that $e_\sharp \eta\ll e_\sharp \lambda$. Anyway, it is not said that $|f|=|v\circ e|$: take $S=\R^2$, $R=\R$, $\lambda = \frac{1}{2}\mathcal{H}^1_{[0,1]\times \{-1\}} + \frac{1}{2}\mathcal{H}^1_{[0,1]\times \{1\}}$, $e(x,y) = x$, $f(x,y) = 1/2$ is $x<0$ and $f(x,y) = 3/2$ if $x\geq 0$. Then 
    \[\eta := f\lambda = \frac{1}{4}\mathcal{H}^1_{[0,1]\times \{-1\}} + \frac{3}{4}\mathcal{H}^1_{[0,1]\times \{1\}}, \quad e_\sharp \lambda = e_\sharp \eta = \mathcal{L}^1_{[0,1]},\]
    which means that $v \equiv 1$. Moreover, it's easy to verify that
    \[\int |f|d\lambda = \frac{1}{2}\int_0^1 \frac{1}{2}dx + \frac{1}{2}\int_0^1\frac{3}{2} dx = 1 = |e_\sharp \eta|(\R),\]
    but it does not hold that $|f| = |v\circ e|$ for $\lambda$-a.e. $y\in S$.
    So, when $p=1$, we can only conclude that 
    \[f = \frac{|f|}{|v\circ e|} v\circ e \quad \lambda\text{-a.e.}\]
    In view of this, our result is a version of the result for $p>1$, when we only have $f\in L^1$. Of course, the function $H(z) = \sqrt{1+z^2}$ can be substituted with any strictly convex function $\tilde{H}$ satisfying $\operatorname{lim}_{z\to+\infty}\tilde{H}(z)/x <+\infty$.
\end{oss}

Thanks to this lemma, we can then rewrite the set $\hat{\operatorname{SPS}}$ as 
\begin{equation}
\begin{aligned}
    & \hat{\operatorname{SPS}} :=\Big\{ \hat{\lambda}\in \mathcal{M}_+(Z) \ : \ \hat{\lambda} \in \operatorname{Im}(\mathfrak{K}),\quad
        \hat{\lambda} \big(([0,T] \times AC_T(\R^d) \times \PP(C_T(\R^d)))^c \big) =0, 
        \\
        &
        \hspace{3.8cm}
        \mathcal{H}\left( \hat{E}_\sharp (\hat{D}\hat{\lambda}) | \hat{E}_\sharp  \hat{\lambda} \right) = \int\sqrt{1+|\hat{D}|^2}d\hat{\lambda} , \quad  \hat{D}\in L^1(\hat{\lambda})
        \Big\}.
\end{aligned}
\end{equation}
Indeed, thanks to Lemma \ref{f = v circ e}, the constraint that there exists $v:Y \to \R^d$ such that $\hat{D} = v\circ \hat{E}$ $\hat{\lambda}$-a.e. can be rewritten by 
\begin{equation}\label{eq: meas cond using H}
\mathcal{H}\left( \hat{E}_\sharp (\hat{D}\hat{\lambda}) | \hat{E}_\sharp  \hat{\lambda} \right) = \int\sqrt{1+|\hat{D}|^2}d\hat{\lambda}.
\end{equation}

We are now ready to prove the following.

\begin{teorema}\label{lemma: properties of hat mathcal E}
    The sets $\hat{\operatorname{CE}} \subset \mathcal{M}_+(Y)\times \mathcal{M}(Y;\R^d)$ and $\hat{\operatorname{SPS}}\subset \mathcal{M}_+(Z)$, and the map $\hat{\mathcal{E}}$ are Borel measurable. Moreover, 
    \begin{equation}
        \hat{\mathcal{E}} |_{\hat{\operatorname{SPS}}}: \hat{\operatorname{SPS}} \to \hat{\operatorname{CE}}
    \end{equation}
    is surjective. In particular, there exists a Souslin-Borel measurable map $\hat{\mathcal{G}}:\hat{\operatorname{CE}} \to \hat{\operatorname{SPS}}$ such that $\hat{\mathcal{E}} \circ\hat{\mathcal{G}} = \operatorname{id}_{\hat{\operatorname{CE}}}$. 
    Moreover, for all $b:[0,T]\times \R^d \times \PP(\R^d) \to \R^d$ Borel, it holds $\hat{\mathcal{E}}(\hat{\operatorname{SPS}}(b) ) = \hat{\operatorname{CE}}(b) $ and $\hat{\mathcal{G}}(\hat{\CE}(b)) \subset \hat{\operatorname{SPS}}(b)$.
\end{teorema}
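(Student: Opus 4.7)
I proceed in four stages: (i) Borel measurability of the sets $\hat{\operatorname{CE}}$, $\hat{\operatorname{SPS}}$ and of the map $\hat{\mathcal{E}}$; (ii) surjectivity of $\hat{\mathcal{E}}|_{\hat{\operatorname{SPS}}}$; (iii) the measurable selection; (iv) compatibility with a fixed $b$.

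For (i), the building blocks are already available. The sets $\operatorname{Im}(\kappa)$ and $\operatorname{Im}(\mathfrak{K})$ are Borel by Lemma \ref{cont of kappa and mathfrak k}; the continuity equation $\partial_t\hat{\mu}+\operatorname{div}\hat{\nu}=0$ is cut out by countably many continuous zero-conditions obtained by testing against a countable dense family of $C_c^\infty((0,T)\times\R^d)$ functions; the conditions of $\hat{\lambda}$ being concentrated on $AC_T(\R^d)$-curves and of $\hat{D}\in L^1(\hat{\lambda})$ are Borel thanks to Lemmas \ref{meas of AC^p}, \ref{meas of D(t,gamma)} and \ref{measurability int g d mu}. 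The two truly existential conditions --- $\hat{\nu}\ll\hat{\mu}$ in $\hat{\operatorname{CE}}$, and the existence of some Borel $v:Y\to\R^d$ with $\hat{D}=v\circ\hat{E}$ $\hat{\lambda}$-a.e.~in $\hat{\operatorname{SPS}}$ --- are both reformulated, using the strictly convex entropy $\mathcal{H}$ of Lemma \ref{f = v circ e}, as the scalar equalities
\begin{equation*}
\mathcal{H}(\hat{\nu}|\hat{\mu})=\int\sqrt{1+|d\hat{\nu}/d\hat{\mu}|^2}\,d\hat{\mu},\qquad
\mathcal{H}\bigl(\hat{E}_\sharp(\hat{D}\hat{\lambda})\,\big|\,\hat{E}_\sharp\hat{\lambda}\bigr)=\int\sqrt{1+|\hat{D}|^2}\,d\hat{\lambda}.
\end{equation*}
Standard Radon--Nikodym measurability together with Proposition \ref{meas of push forward} shows these equalities define Borel subsets, and since $\hat{\mathcal{E}}=(\hat{E}_\sharp,\hat{E}_\sharp\circ(\hat{D}\,\cdot\,))$ is built from two Borel push-forwards, it is itself Borel.

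For (ii), fix $(\hat{\mu},\hat{\nu})\in\hat{\operatorname{CE}}$. By injectivity of $\kappa$ (Lemma \ref{cont of kappa and mathfrak k}) there is a unique $\boldsymbol{\mu}\in C_T(\PP(\R^d))$ with $\kappa(\boldsymbol{\mu})=\hat{\mu}$, and a Borel selection of the Radon--Nikodym derivative yields a Borel $v:Y\to\R^d$ with $\hat{\nu}=v\hat{\mu}$. Then $(\boldsymbol{\mu},v)$ satisfies the classical non-local continuity equation $\partial_t\mu_t+\operatorname{div}(v(t,\cdot,\mu_t)\mu_t)=0$, so Theorem \ref{finite dim superposition principle} applied to the time-dependent vector field $(t,x)\mapsto v(t,x,\mu_t)$ produces $\lambda\in\operatorname{SPS}(v)$. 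Setting $\hat{\lambda}:=\mathfrak{K}(\lambda)$, a direct unwinding of definitions gives $\hat{E}_\sharp\hat{\lambda}=\hat{\mu}$ and $\hat{E}_\sharp(\hat{D}\hat{\lambda})=v\hat{\mu}=\hat{\nu}$, so $\hat{\mathcal{E}}(\hat{\lambda})=(\hat{\mu},\hat{\nu})$, and moreover $\hat{\lambda}\in\hat{\operatorname{SPS}}$ with witness $v$.

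Stage (iii) is then an immediate application of the measurable selection Theorem \ref{measurable selection} to the Borel surjection $\hat{\mathcal{E}}|_{\hat{\operatorname{SPS}}}\to\hat{\operatorname{CE}}$. For (iv), take $(\hat{\mu},\hat{\nu})\in\hat{\operatorname{CE}}(b)$ and any $\hat{\lambda}\in\hat{\operatorname{SPS}}$ with $\hat{\mathcal{E}}(\hat{\lambda})=(\hat{\mu},\hat{\nu})$; then $\hat{E}_\sharp(\hat{D}\hat{\lambda})=\hat{\nu}=b\hat{\mu}=b\,\hat{E}_\sharp\hat{\lambda}$, identifying $b$ as a version of the Radon--Nikodym density. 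The membership $\hat{\lambda}\in\hat{\operatorname{SPS}}$ supplies precisely the $\mathcal{H}$-equality needed to invoke Lemma \ref{f = v circ e} with $(R,S,e,f)=(Y,Z,\hat{E},\hat{D})$, whose conclusion forces $\hat{D}=b\circ\hat{E}$ $\hat{\lambda}$-a.e., i.e.~$\hat{\lambda}\in\hat{\operatorname{SPS}}(b)$. This simultaneously yields $\hat{\mathcal{E}}(\hat{\operatorname{SPS}}(b))=\hat{\operatorname{CE}}(b)$ and $\hat{\mathcal{G}}(\hat{\operatorname{CE}}(b))\subseteq\hat{\operatorname{SPS}}(b)$. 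The main obstacle is the Borel character of the existential conditions in stage (i), which is resolved cleanly by the reformulation through $\mathcal{H}$ provided by Lemma \ref{f = v circ e}.
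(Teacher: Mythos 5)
Your outline tracks the paper's proof closely (measurability via the entropy trick and Lemma~\ref{f = v circ e}, surjectivity via the finite-dimensional superposition principle, measurable selection, and $b$-compatibility again via Lemma~\ref{f = v circ e}), but there are two points that need fixing.

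The main gap is in stage~(iv). To prove the stated equality $\hat{\mathcal{E}}(\hat{\operatorname{SPS}}(b))=\hat{\operatorname{CE}}(b)$ you need both inclusions. Your argument establishes $\hat{\mathcal{E}}|_{\hat{\operatorname{SPS}}}^{-1}(\hat{\operatorname{CE}}(b))\subseteq\hat{\operatorname{SPS}}(b)$, which, combined with surjectivity, gives $\hat{\operatorname{CE}}(b)\subseteq\hat{\mathcal{E}}(\hat{\operatorname{SPS}}(b))$ and the desired $\hat{\mathcal{G}}(\hat{\operatorname{CE}}(b))\subseteq\hat{\operatorname{SPS}}(b)$. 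But the reverse inclusion $\hat{\mathcal{E}}(\hat{\operatorname{SPS}}(b))\subseteq\hat{\operatorname{CE}}(b)$ does not ``simultaneously'' follow; it is a separate direct computation. Given $\hat\lambda=\mathcal{L}^1_T\otimes\lambda\otimes\delta_\lambda\in\hat{\operatorname{SPS}}(b)$, one must verify that $\hat{\mathcal{E}}(\hat\lambda)=(\hat\mu,\hat\nu)$ satisfies $\hat\nu=b\hat\mu$ and $\partial_t\hat\mu+\operatorname{div}\hat\nu=0$ by unwinding the definitions (this is essentially the hat-level analogue of Proposition~\ref{L to Lambda section 5}). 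Relatedly, your stage~(ii) only proves surjectivity; you never check that $\hat{\mathcal{E}}$ maps $\hat{\operatorname{SPS}}$ \emph{into} $\hat{\operatorname{CE}}$, which is logically prior to asserting that $\hat{\mathcal{E}}|_{\hat{\operatorname{SPS}}}\colon\hat{\operatorname{SPS}}\to\hat{\operatorname{CE}}$ is well defined and surjective. Both missing verifications are short computations, but they are needed.

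A smaller remark on stage~(i): you fold $\hat\nu\ll\hat\mu$ into an $\mathcal{H}$-equality $\mathcal{H}(\hat{\nu}|\hat{\mu})=\int\sqrt{1+|d\hat{\nu}/d\hat{\mu}|^2}\,d\hat{\mu}$, but the right-hand side already involves the Lebesgue decomposition of $\hat\nu$ with respect to $\hat\mu$, and its Borel dependence on the pair $(\hat\mu,\hat\nu)$ is precisely what has to be argued; invoking ``standard Radon--Nikodym measurability together with Proposition~\ref{meas of push forward}'' does not cover it, as that proposition is about push-forwards. The paper's route via Corollary~\ref{abs cont meas R^d} is both more direct and already established; for $\hat{\operatorname{SPS}}$ the $\mathcal{H}$-formula is the right tool (there the right-hand side $\int\sqrt{1+|\hat D|^2}\,d\hat\lambda$ is manifestly Borel by Lemma~\ref{measurability int g d mu}, and the left-hand side is jointly l.s.c.), but it is simpler not to force the same device onto $\hat{\operatorname{CE}}$.
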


\begin{proof}
    \textbf{Step 1}: the set $\hat{\operatorname{CE}}$ is measurable, since $\partial_t\hat{\mu} + \operatorname{div}\hat{\nu} = 0$ is a closed condition, $\hat{\nu}\ll\hat{\mu}$ is a Borel condition (see Corollary \ref{abs cont meas R^d}) and $\operatorname{Im}(\kappa)$ is Borel measurable (see Lemma \ref{cont of kappa and mathfrak k}).
    \\
    The set $\hat{\operatorname{SPS}}$ is Borel since: the set $\operatorname{Im}(\mathfrak{K})$ is Borel thanks to Lemma \ref{cont of kappa and mathfrak k}; from Lemma \ref{measurability int g d mu}, we have that the evaluation on Borel sets is Borel and $\hat{D}\in L^1(\hat{\lambda})$ is a Borel condition; the map $\mathcal{M}_+(Z) \ni \hat{\lambda}\mapsto \hat{D}\hat{\lambda} \in \mathcal{M}(Z;\R^d)$ is Borel measurable (see Corollary \ref{measurability vector measure f mu}), as well as the push-forward operation through $\hat{E}$ (see Proposition \ref{meas of push forward}). Then the equality \eqref{eq: meas cond using H} is a Borel condition, again by Lemma \ref{measurability int g d mu} and the fact that $\mathcal{H}$ is jointly l.s.c. (see \cite[Chapter 2]{ambrosio2000functions}). 

    \textbf{Step 2}: as before, $\hat{\mathcal{E}}$ is the composition of Borel measurable, thanks to Corollary \ref{measurability vector measure f mu} and Proposition \ref{meas of push forward}, thus it is Borel measurable. \\
    Moreover, it maps $\hat{\operatorname{SPS}}$ in $\hat{\CE}$: let $\hat{\lambda}\in \hat{\operatorname{SPS}}$ and $(\hat{\mu},\hat{\nu}):= \hat{\mathcal{E}}(\hat{\lambda})$. Since $\hat{\lambda} \in \operatorname{Im}(\mathfrak{K})$, there exists $\lambda \in \PP(C_T(\R^d))$ such that $\hat{\lambda} = \mathfrak{L}^1_T\otimes \lambda \otimes \delta_\lambda$, which implies that $\hat{\mathcal{E}}(\hat{\lambda}) = \mathcal{L}^1_T\otimes (\mu_t\otimes\delta_{\mu_t})$, where $\mu_t := (e_t)_\sharp \lambda$ for all $t\in[0,T]$. Moreover, the equality \eqref{eq: meas cond using H} implies (see Lemma \ref{f = v circ e}) the existence of a vector field $b:[0,T]\times \R^d \times \PP(\R^d) \to \R^d$ such that $\hat{D} = b \circ \hat{E}$ for $\hat{\lambda}$-a.e. $(t,\gamma,\eta)$. By definition of $\hat{\mathcal{E}}$ and again by Lemma \ref{f = v circ e}, $\hat{\nu} = b\hat{\mu}$. Regarding the continuity equation, for all $\xi\in C_c^1((0,T)\times \R^d)$, it holds
    \begin{align*}
        \int \frac{\partial}{\partial t} \xi(t,x) d\hat{\mu} + \int \nabla_x \xi(t,x) d\hat{\nu} & = 
        \int \frac{\partial}{\partial t} \xi(t,\gamma(t)) d\hat{\lambda} + \int \nabla_x \xi(t,\gamma(t)) \cdot D(t,\gamma)d\hat{\lambda}
        \\
        & = 
        \int \int_0^T \frac{\partial}{\partial t}\xi(t,\gamma(t)) dt d\lambda(\gamma) = 0.
    \end{align*}
    Then, $\hat{\mathcal{E}} $ maps $\hat{\operatorname{SPS}}$ in $\hat{\operatorname{CE}}$. 

    \textbf{Step 3}: given $b:[0,T]\times \R^d \times \PP(\R^d) \to \R^d$ and $\hat{\lambda} \in \hat{\operatorname{SPS}}(b)$ of the form $\hat{\lambda} = \mathcal{L}^1_T\otimes \lambda \otimes \delta_\lambda$, for some $\lambda \in \PP(C_T(\R^d))$, using the same notation of the previous step we have 
    \begin{align*}
        0 = \int \frac{\partial}{\partial t} & \xi(t,x) d\hat{\mu} + \int \nabla_x \xi(t,x) d\hat{\nu} = 
        \int \frac{\partial}{\partial t} \xi(t,\gamma(t)) d\hat{\lambda} + \int \nabla_x \xi(t,\gamma(t)) \cdot D(t,\gamma)d\hat{\lambda}
        \\
        & = 
        \int \frac{\partial}{\partial t} \xi(t,\gamma(t)) d\hat{\lambda} + \int \nabla_x \xi(t,\gamma(t)) \cdot b(t,\gamma(t),(e_t)_\sharp \eta)d\hat{\lambda}
        \\
        & = 
        \int_0^T\int \frac{\partial}{\partial t} \xi(t,\gamma(t)) d\lambda dt + \int_0^T\int \nabla_x \xi(t,\gamma(t)) \cdot b(t,\gamma(t),(e_t)_\sharp \lambda)d\lambda dt
        \\
        & = 
        \int_0^T\int \frac{\partial}{\partial t} \xi(t,x) d\mu_t(x) dt + \int_0^T\int \nabla_x \xi(t,x) \cdot b(t,x,\mu_t)d\mu_tdt,
    \end{align*}
    i.e. $\hat{\mathcal{E}}(\hat{\operatorname{SPS}}(b)) \subseteq \hat{\operatorname{CE}}(b)$. The equality follows from the surjectivity, which we prove in the next step. 

    \textbf{Step 4}: we are left to prove the surjectivity of the map $\hat{\mathcal{E}}$. For any $(\hat{\mu},\hat{\nu}) \in \hat{\operatorname{CE}}$, we have that $\hat{\mu}$ is of the form $\mathcal{L}^1_T\otimes (\mu_t\otimes \delta_{\mu_t})$ for some $(\mu_t)_{t\in[0,T]}\in C_T(\PP(\R^d))$, and $\mu_t$ solves the continuity equation $\partial_t\mu_t + \operatorname{div}(b_t(\cdot,\mu_t)\mu_t) = 0$, where $b$ is a density for $\hat{\nu}$ w.r.t. $\hat{\mu}$. Then we can use the finite dimensional superposition principle (Theorem \ref{finite dim superposition principle}) to obtain $\lambda\in \PP(C_T(\R^d))$ that is in $\operatorname{SPS}(b)$. It is not hard to verify that $\hat{\lambda}:= \mathfrak{K}(\lambda) \in \hat{\operatorname{SPS}}$ and $\hat{\mathcal{E}}(\hat{\lambda}) = (\hat{\mu},\hat{\nu})$.

    \textbf{Step 5}: we can finally apply Theorem \ref{measurable selection} to obtain the existence of $\hat{\mathcal{G}}:\hat{\operatorname{CE}} \to \hat{\operatorname{SPS}}$ satisfying the requirements. Then, we are only left to prove that $\hat{\mathcal{G}}(\hat{\CE}(b)) \subset \hat{\operatorname{SPS}}(b)$, for which it is sufficient to show that $\hat{\mathcal{E}} |_{\hat{\operatorname{SPS}}}^{-1} (\hat{\operatorname{CE}}(b)) = \hat{\operatorname{SPS}}(b))$. Let $(\hat{\mu},\hat{\nu}) \in \hat{\operatorname{CE}}(b)$ with $\hat{\mu} = \mathcal{L}^1_T\otimes(\mu_t \otimes \delta_{\mu_t})$ for some $\boldsymbol{\mu}= (\mu_t)_{t\in[0,T]} \in C_T(\PP(\R^d))$ and consider $\hat{\lambda} \in \hat{\operatorname{SPS}}$ such that $\hat{\mathcal{E}}(\hat{\lambda}) = (\hat{\mu},\hat{\nu})$. Since $\hat{\operatorname{SPS}}\subset \operatorname{Im}(\mathfrak{K})$, there exists $\lambda \in \PP(C_T(\R^d))$ such that $\hat{\lambda} = \mathcal{L}^1_T\otimes \lambda \otimes \delta_\lambda$. Moreover, since 
    \[\mathcal{H}\left( \hat{E}_\sharp (\hat{D}\hat{\lambda}) | \hat{E}_\sharp  \hat{\lambda} \right) = \mathcal{H}\left( \hat{D}\hat{\lambda}|  \hat{\lambda} \right),\]
    we have that there exists a Borel measurable function $v:[0,T]\times \R^d \times \PP(\R^d) \to \R^d$ such that $\hat{D} = v\circ \hat{E}$ for $\hat{\lambda}$-a.e. $(t,\gamma,\eta)$. In particular, $\hat{\lambda} \in \hat{\operatorname{SPS}}(v)$, and thanks to Lemma \ref{lemma: properties of hat mathcal E} we have that $\hat{\mathcal{E}}(\hat{\lambda}) = (\hat{\mu},\hat{\nu}) \in \hat{\operatorname{CE}}(v)$. By assumption $(\hat{\mu},\hat{\nu}) \in \hat{\operatorname{CE}}(b)$, thus it holds $b = v$ for $\hat{\mu}$-a.e. $(t,x,\mu)$. Since $\hat{\mu} = \hat{E}_\sharp \hat{\lambda}$, we have that $b\circ \hat{E} = v\circ \hat{E}=D$ $\hat{\lambda}$-almost everywhere. In particular $\hat{\lambda} \in \hat{\operatorname{SPS}}(b)$.
\end{proof}

We are now ready to conclude the argument we presented above, stating the main result.
Let $b:[0,T]\times \R^d \times \PP(\R^d) \to \R^d$ be a Borel measurable map. Define $\mathcal{M}_{+,b} (Y) := \big\{\hat{\mu} \in \mathcal{M}_+(Y) \ : \ b \in L^1(\hat{\mu};\R^d) \big\}$ and the map 
\begin{equation}
\begin{aligned}
    V_b: \mathcal{M}_{+,b} (Y) & \to \mathcal{M}_{+}(Y) \times \mathcal{M}(Y;\R^d)
    \\
    \hat{\mu} & \mapsto (\hat{\mu}, b \hat{\mu}),
\end{aligned}
\end{equation}
which is a Borel map thanks to Corollary \ref{measurability vector measure f mu}. Then, the map $\kappa_b$ introduced in \eqref{eq: comm diagram}, is given by $\kappa_b = V_b \circ \kappa$, resulting Borel measurable. Recall that the (left) inverse of $\mathfrak{K}$ is given by $\mathfrak{K}^{-1} = \frac{1}{T}\pi^2_\sharp $.

\begin{co}
    The function $\mathfrak{K}^{-1} \circ \hat{\mathcal{G}} \circ \kappa_b$ maps $\operatorname{CE}(b)$ to $\operatorname{SPS}(b)$ and is a right inverse of the map  $E|_{\operatorname{CE}(b)}$, i.e. $E\circ \mathfrak{K}^{-1} \circ \hat{\mathcal{G}} \circ \kappa_b = \operatorname{id}_{\operatorname{CE}(b)}$. In particular, for any $\Lambda \in \PP(C_T(\PP(\R^d)))$ concentrated over $\operatorname{CE}(b)$, the random measure $\mathfrak{L}:= (\mathfrak{K}^{-1} \circ \hat{\mathcal{G}} \circ \kappa_b)_\sharp \Lambda$ is concentrated over $\operatorname{SPS}(b)$ and satisfies the properties of Theorem \ref{main theorem}.
\end{co}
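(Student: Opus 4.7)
The plan is to exploit the commutative diagram \eqref{eq: comm diagram} together with Theorem \ref{lemma: properties of hat mathcal E}, Proposition \ref{meas of SPSb} and Lemma \ref{cont of kappa and mathfrak k} to reduce everything to a diagram chase, with only one genuinely computational step. First I would verify well-definedness and measurability of the composition $\mathfrak{K}^{-1}\circ\hat{\mathcal{G}}\circ\kappa_b$: the map $\kappa_b = V_b \circ \kappa$ is Borel (since $\kappa$ is continuous and $V_b$ is Borel by Corollary \ref{measurability vector measure f mu}), $\hat{\mathcal{G}}$ is Souslin-Borel by Theorem \ref{lemma: properties of hat mathcal E}, and $\mathfrak{K}^{-1}$ agrees on $\operatorname{Im}(\mathfrak{K})$ with $\tfrac{1}{T}\pi^2_\sharp$, which is continuous; composing a Borel with a Souslin-Borel map yields Souslin-Borel.

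For the inclusion $\mathfrak{K}^{-1}\circ\hat{\mathcal{G}}\circ\kappa_b(\operatorname{CE}(b))\subseteq\operatorname{SPS}(b)$ I would chain three facts: $\kappa_b(\operatorname{CE}(b))\subseteq\hat{\operatorname{CE}}(b)$ is immediate from the definitions \eqref{CEb} and \eqref{eq: def of hat ce(b)}; $\hat{\mathcal{G}}(\hat{\operatorname{CE}}(b))\subseteq\hat{\operatorname{SPS}}(b)$ is the last sentence of Theorem \ref{lemma: properties of hat mathcal E}; and $\mathfrak{K}^{-1}(\hat{\operatorname{SPS}}(b))=\operatorname{SPS}(b)$ is the identity \eqref{eq: sps(b) = k^-1 hat sps(b)} from Proposition \ref{meas of SPSb}. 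The one piece that must actually be computed is the commutation identity
\begin{equation}\label{eq:keyid}
    \hat{E}_\sharp\,\mathfrak{K}(\lambda) \;=\; \kappa(E(\lambda)) \qquad \text{for every } \lambda\in\PP(C_T(\R^d)),
\end{equation}
which follows by unfolding $\mathfrak{K}(\lambda)=\mathcal{L}^1_T\otimes\lambda\otimes\delta_\lambda$ and $\hat{E}(t,\gamma,\eta)=(t,\gamma(t),(e_t)_\sharp\eta)$ and applying Fubini: both measures integrate a test function $f$ on $Y$ to $\int_0^T\!\!\int f(t,x,(e_t)_\sharp\lambda)\,d((e_t)_\sharp\lambda)(x)\,dt$.

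Using \eqref{eq:keyid}, the right-inverse property becomes a one-line diagram chase. For $\boldsymbol{\mu}\in\operatorname{CE}(b)$, setting $\lambda:=\mathfrak{K}^{-1}\circ\hat{\mathcal{G}}\circ\kappa_b(\boldsymbol{\mu})$ gives $\mathfrak{K}(\lambda)=\hat{\mathcal{G}}\circ\kappa_b(\boldsymbol{\mu})$, so by $\hat{\mathcal{E}}\circ\hat{\mathcal{G}}=\operatorname{id}_{\hat{\operatorname{CE}}}$ and \eqref{eq:keyid},
\begin{equation*}
    \kappa(E(\lambda))\;=\;\hat{E}_\sharp\,\mathfrak{K}(\lambda)\;=\;\pi^1\bigl(\hat{\mathcal{E}}(\hat{\mathcal{G}}(\kappa_b(\boldsymbol{\mu})))\bigr)\;=\;\pi^1(\kappa_b(\boldsymbol{\mu}))\;=\;\kappa(\boldsymbol{\mu}),
\end{equation*}
and the injectivity of $\kappa$ from Lemma \ref{cont of kappa and mathfrak k} forces $E(\lambda)=\boldsymbol{\mu}$.

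For the ``in particular'' statement, the push-forward $\mathfrak{L}:=(\mathfrak{K}^{-1}\circ\hat{\mathcal{G}}\circ\kappa_b)_\sharp\Lambda$ is a well-defined Borel probability on $\PP(C_T(\R^d))$ by Corollary \ref{push-forward by a souslin-borel map}, since $\Lambda$ is concentrated on $\operatorname{CE}(b)$; the image being in $\operatorname{SPS}(b)$ gives concentration of $\mathfrak{L}$ on $\operatorname{SPS}(b)$, which is property \eqref{properties of L R^d} of Theorem \ref{main theorem}; the right-inverse identity pushes forward to $E_\sharp\mathfrak{L}=\Lambda$, that is property \eqref{third property}; and then $(E_t)_\sharp\mathfrak{L}=(\mathfrak{e}_t)_\sharp(E_\sharp\mathfrak{L})=(\mathfrak{e}_t)_\sharp\Lambda=M_t$ closes the loop, provided one first takes $\Lambda$ to be a superposition lifting of $\boldsymbol{M}$ as produced by Theorem \ref{superposition principle for non-local vf}. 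The only real obstacle is the identity \eqref{eq:keyid}: everything else is bookkeeping made possible by the measurability of $\hat{\operatorname{CE}}$, $\hat{\operatorname{SPS}}$, and the measurable selection $\hat{\mathcal{G}}$ already established in Theorem \ref{lemma: properties of hat mathcal E}.
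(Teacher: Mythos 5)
Your argument is correct and follows the same route as the paper: the paper verifies the three defining conditions of $\operatorname{SPS}(b)$ directly and then checks the right-inverse property by a test-function computation, while you repackage the first step via $\operatorname{SPS}(b)=\mathfrak{K}^{-1}(\hat{\operatorname{SPS}}(b))$ from Proposition \ref{meas of SPSb} and the second as the commutation identity $\hat{E}_\sharp\mathfrak{K}(\lambda)=\kappa(E(\lambda))$ together with injectivity of $\kappa$. These are the same calculations expressed slightly more abstractly, and your caveat at the end (that the ``in particular'' statement implicitly assumes $\Lambda$ is the lifting of $\boldsymbol{M}$ from Theorem \ref{superposition principle for non-local vf}) is an accurate reading of the corollary.
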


\begin{proof}
For simplicity, let us call $G_b:= \mathfrak{K}^{-1} \circ \hat{\mathcal{G}} \circ \kappa_b$.
    First of all, notice that $\kappa_b = V_b \circ k$ is well defined and maps $\boldsymbol{\mu} \in\operatorname{CE}(b)$ in $(\hat{\mu},\hat{\nu}) \in \hat{\operatorname{CE}}(b) \cap \operatorname{Im}(\kappa)$. Then, $\hat{\mathcal{G}}$ selects a measure $\hat{\lambda} \in \hat{\operatorname{SPS}}$ such that $\hat{\mathcal{E}}(\hat{\lambda}) = (\hat{\mu},\hat{\nu})$. 
    
    Since $\hat{\mathcal{G}}(\operatorname{CE}(b)) \subset \hat{\operatorname{SPS}}(b)$, then $\hat{\lambda} :=\hat{\mathcal{G}}(\kappa_b(\boldsymbol{\mu})) \in \hat{\operatorname{SPS}}(b)$. We need to show that $\lambda := \mathfrak{K}^{-1}(\hat{\lambda}) \in \operatorname{SPS}(b)$. All the needed checks are pretty straightforward: 
    \begin{itemize}
        \item since $\lambda = \pi^2_\sharp \hat{\lambda}$, we have that $\lambda(AC_T(\R^d)) = 0$;
        \item for $\hat{\lambda}$-a.e. $(t,\gamma,\eta)$ it holds $D(t,\gamma) = b(t,\gamma(t),(e_t)_\sharp \eta)$, which immediately implies that $D(t,\gamma) = b(t,\gamma(t),(e_t)_\sharp \lambda)$ for $\mathcal{L}^1_T\otimes \lambda$-a.e. $(t,\gamma)$;
        \item $\int \int |b(t,\gamma_t,(e_t)_\sharp \lambda)| d\lambda(\gamma) dt = \int |D| d\hat{\lambda}<+\infty$.
    \end{itemize}
    Thus, we conclude that $G_b$ maps $\operatorname{CE}(b)$ in $\operatorname{SPS}(b)$. 
    We are left to show that $G_b$ is a right inverse of $E$. Let $\lambda = G_b(\boldsymbol{\mu})$, then for all $f:[0,T]\times\R^d \to [0,+\infty]$ Borel, it holds
    \begin{align*}
        \int f(t,\gamma(t)) d\lambda(\gamma) = & \int f(t,\gamma(t)) d\hat{\lambda}(t,\gamma,\eta)
        = 
        \int f(t,x) d\hat{\mu}(t,x,\mu) 
        = 
        \int f(t,x) d\mu_t(x) dt,
    \end{align*}
    where $\hat{\lambda} = \mathfrak{K}(\lambda)$ and $\hat{\mu} = \hat{E}_\sharp \hat{\lambda} = \mathcal{L}^1_T\otimes (\mu_t\otimes \delta_{\mu_t})$. In particular, $(e_t)_\sharp \lambda = \mu_t$ for all $t\in[0,T]$.
\end{proof}

\section{Existence and uniqueness: the Lipschitz case}\label{uniqueness section} In this section, we are going to prove a uniqueness result for the solution of the continuity equation \(\partial_tM_t + \operatorname{div}_\PP(b_tM_t)=0,\) under a Lipschitz assumption of the vector field $b$ with respect to the variables $(x,\mu) \in\R^d \times \PP(\R^d)$. 

Here we state first a useful lemma, and then the main uniqueness theorem of this section, whose proof is divided into several parts and postponed to the next subsection.

\begin{lemma}\label{initial lemma section 6}
    Let $p\geq 1$. Let $\boldsymbol{M} = (M_t)_{t\in [0,T]} \in C_T(\PP(\PP(\R^d)))$ and $b:[0,T]\times \R^d \times \PP(\R^d)\to \R^d$ an $L^p(\widetilde{M}_t\otimes dt)$-non-local vector field (see Definition \ref{vf inducing derivation}) satisfying $\partial_t M_t + \operatorname{div}_\PP(b_tM_t) = 0$. The following properties hold:
    \begin{itemize}
        \item[(i)] if $M_0$ is concentrated over $\PP_p(\R^d)$, then the same holds for $M_t$ for all $t\in [0,T]$;
        \item[(ii)] if $M_0\in \PP_p(\PP_p(\R^d))$, then $M_t\in \PP_p(\PP_p(\R^d))$ for all $t\in [0,T]$.
    \end{itemize}
\end{lemma}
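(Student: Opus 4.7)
The strategy is to lift $\boldsymbol M$ to a nested superposition $\mathfrak L$ via the results of Sections \ref{subsec: first superposition result} and \ref{subsec: from mathfrak L to Lambda}, and then read off the required moment bounds from the ODE satisfied by the microscopic trajectories.

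First, since the ambient measures are finite, the hypothesis that $b$ is an $L^p(\widetilde M_t\otimes dt)$ non-local vector field implies it is also $L^1$, so Theorem \ref{superposition principle for non-local vf} yields $\Lambda\in\PP(C_T(\PP(\R^d)))$ concentrated over $\operatorname{CE}(b)$ with $(\mathfrak e_t)_\sharp\Lambda=M_t$. Applying Theorem \ref{from Lambda to L theorem/corollary}, we obtain $\mathfrak L\in\PP(\PP(C_T(\R^d)))$ concentrated on $\operatorname{SPS}(b)$ and satisfying $(E_t)_\sharp\mathfrak L=M_t$ for every $t\in[0,T]$. By definition of $\operatorname{SPS}(b)$, for $\mathfrak L$-a.e.\ $\lambda$, $\lambda$-a.e.\ curve $\gamma$ is absolutely continuous and satisfies the integral identity $\gamma(t)=\gamma(0)+\int_0^t b(s,\gamma(s),(e_s)_\sharp\lambda)\,ds$. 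Taking modulus and raising to the $p$-th power (using Jensen in time for $p>1$, or the triangle inequality for $p=1$), we get the pointwise bound
\begin{equation}\label{eq:plan-bound}
|\gamma(t)|^p\le 2^{p-1}|\gamma(0)|^p+2^{p-1}T^{p-1}\int_0^T|b(s,\gamma(s),(e_s)_\sharp\lambda)|^p\,ds.
\end{equation}

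For (i), fix $x_0\in\R^d$ (which by translation we may take to be $0$). Since $M_0$ is concentrated on $\PP_p(\R^d)$ and $(E_0)_\sharp\mathfrak L=M_0$, for $\mathfrak L$-a.e.\ $\lambda$ the measure $\mu_0:=(e_0)_\sharp\lambda$ has finite $p$-moment, i.e.\ $\int|\gamma(0)|^p\,d\lambda(\gamma)<+\infty$. Moreover, by Fubini and the identity
\begin{equation*}
\int\!\!\int_0^T\!\!\int|b(s,\gamma(s),(e_s)_\sharp\lambda)|^p\,d\lambda(\gamma)\,ds\,d\mathfrak L(\lambda)=\int_0^T\!\!\int_\PP\!\!\int|b(s,x,\mu)|^p\,d\mu(x)\,dM_s(\mu)\,ds<+\infty,
\end{equation*}
for $\mathfrak L$-a.e.\ $\lambda$ the inner double integral is finite. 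Integrating \eqref{eq:plan-bound} in $\gamma$ against such a $\lambda$, we obtain $\int|\gamma(t)|^p\,d\lambda(\gamma)<+\infty$ for every $t\in[0,T]$; that is, $(e_t)_\sharp\lambda\in\PP_p(\R^d)$. Since $M_t=(E_t)_\sharp\mathfrak L$, we conclude that $M_t$ is concentrated on $\PP_p(\R^d)$.

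For (ii), integrate \eqref{eq:plan-bound} against $\lambda$ and then against $\mathfrak L$ and use Fubini:
\begin{align*}
\int W_p^p(\mu,\delta_0)\,dM_t(\mu) &=\int\!\!\int|\gamma(t)|^p\,d\lambda(\gamma)\,d\mathfrak L(\lambda)\\
&\le 2^{p-1}\!\!\int\!\!\int|x|^p\,d\mu(x)\,dM_0(\mu)+2^{p-1}T^{p-1}\!\!\int_0^T\!\!\int_\PP\!\!\int|b|^p\,d\mu\,dM_s\,ds.
\end{align*}
The first term is finite because $M_0\in\PP_p(\PP_p(\R^d))$, and the second because $b$ is an $L^p(\widetilde M_t\otimes dt)$ non-local vector field. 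Hence $M_t\in\PP_p(\PP_p(\R^d))$ for every $t\in[0,T]$.

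No step looks hard; the only care needed is in the $p>1$ case, where the application of Jensen to pass from the $L^1$-type estimate on $\dot\gamma$ to an $L^p$-type estimate on $\gamma(t)-\gamma(0)$ is what forces the factor $T^{p-1}$ to appear in \eqref{eq:plan-bound}.
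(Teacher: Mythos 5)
Your proof is correct and follows essentially the same strategy as the paper's: lift to a superposition of curves, then read off the moment bound from the microscopic ODE $\gamma(t)=\gamma(0)+\int_0^t b\,ds$.

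The small difference is organizational. The paper stops at $\Lambda$ (via Theorem \ref{superposition principle}) and, for each fixed $\Lambda$-typical curve $\boldsymbol\mu$, invokes the finite-dimensional superposition principle (Theorem \ref{finite dim superposition principle}) to produce a $\lambda\in\PP(C_T(\R^d))$ for that particular curve; no measurability of the assignment $\boldsymbol\mu\mapsto\lambda$ is needed, since $\lambda$ is used only to prove a pointwise fact about $\boldsymbol\mu$. You instead go all the way to the global lift $\mathfrak L$ via Theorem \ref{from Lambda to L theorem/corollary}, which invokes the measurable-selection argument. Both are valid; the paper's version is slightly lighter because Lemma \ref{initial lemma section 6} (for part (i)) does not actually require the single measurable map $G_b$. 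For part (ii) you integrate the pointwise estimate $|\gamma(t)|^p\lesssim|\gamma(0)|^p+T^{p-1}\int_0^T|b|^p\,ds$ against $\lambda$ and $\mathfrak L$, whereas the paper observes that (ii) follows directly from Proposition \ref{p-superposition}; your explicit computation is clean and makes the constants visible. One small wrinkle in the paper that your route sidesteps: the paper writes $W_p^p(\mu_t,\mu_0)\le\int_0^t\int|\dot\gamma|^p\,d\lambda\,ds$ without the Jensen factor $t^{p-1}$; you track this factor explicitly in \eqref{eq:plan-bound}. No gaps in your argument.
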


\begin{proof}
    (i) Consider $\Lambda \in \PP(C_T(\PP(\R^d)))$ given by Theorem \ref{superposition principle}. Thanks to \eqref{time integrability for non-local vf} and the properties of $\Lambda$, we have that for $\Lambda$-a.e. $\boldsymbol{\mu} = (\mu_t)_{t\in [0,T]}\in C_T(\PP(\R^d))$ it holds
    \[\int_0^T \int_{\R^d} |b(t,x,\mu_t)|^p d\mu_t(x) dt<+\infty,\]
    and $\partial_t\mu_t + \operatorname{div} (b_t(\cdot,\mu_t)\mu_t )= 0$. Moreover, since $(\mathfrak{e}_0)_\sharp \Lambda = M_0$, we have that $\mu_0\in \PP_p(\R^d)$ for $M_0$-a.e. $\mu_0$. Then, for any $(\mu_t)_{t\in [0,T]}$ with these properties, consider $\lambda \in \PP(C_T(\R^d))$ given by Theorem \ref{finite dim superposition principle}, so that 
    \begin{align*}
        W_p^p(\mu_t,\mu_0) \leq & \int_0^t \int |\dot\gamma|^p(s)d\lambda(\gamma)ds 
        =  
        \int_0^t \int |b(s,\gamma_s,(e_s)_\sharp \lambda)|^p(s) d\lambda(\gamma)ds 
        \\
        = &
        \int_0^t \int_{\R^d} |b(s,x,\mu_s)|^p d\mu_s(x) ds <+\infty
    \end{align*}
    and given a transport plan $\pi_{0,t}$ realizing $W_p^p(\mu_0,\mu_t)$, we have    \begin{equation}\label{p-moment estimate}
        \int_{\R^d} |x|^p d\mu_t(x) \leq 2^p\int_{\R^d\times \R^d} |x-y|^p d\pi_{0,t}(x,y) + \int_{\R^d}|y^p| d\mu_0(y)<+\infty,
    \end{equation}
    which gives that $M_t$ is concentrated over $\PP_p(\R^d)$. Property (ii) follows from Proposition \ref{p-superposition}.
\end{proof}

\begin{teorema}\label{main uniqueness theorem}
    Let $p\geq 1$. Let $\boldsymbol{M} = (M_t)_{t\in [0,T]} \in C_T(\PP(\PP(\R^d)))$ and $b:[0,T]\times \R^d \times \PP(\R^d)\to \R^d$ an $L^p(M_t\otimes dt)$-non-local vector field satisfying:
    \begin{enumerate}
        \item\label{cond 2 weak uniqueness} for any $\mu_0,\mu_1\in \PP_p(\R^d)$ there exists a $W_p$-optimal plan $\pi$ between $\mu_0$ and $\mu_1$ such that \begin{equation}\label{eq: Lip condition}
            \int_{\R^d\times \R^d}|b(t,x_0,\mu_0) - b(t,x_1,\mu_1)|^p d\pi(x_0,x_1) \leq L(t)  W_p^p(\mu_0,\mu_1),
        \end{equation}
        with $L \in L^1(0,T)$;
        \item $\partial_tM_t + \operatorname{div}_\PP(b_t M_t) = 0$ and $M_0=\overline{M}\in \PP(\PP(\R^d))$ is concentrated over $\PP_p(\R^d)$.
    \end{enumerate}
    Then:
    \begin{enumerate}
        \item[(i)] there exists a unique $\Lambda \in \PP(C_T(\PP(\R^d)))$ concentrated on $\operatorname{CE}(b)$ and such that $(\mathfrak{e}_0)_\sharp \Lambda = \overline{M}$;
        \item[(ii)] there exists a unique $\mathfrak{L} \in \PP(\PP(C_T(\R^d)))$ concentrated on $\operatorname{SPS}(b)$ and such that $(E_0)_\sharp \mathfrak{L} = \overline{M}$.
    \end{enumerate}
    In particular, $\boldsymbol{M} \in C_T(\PP(\PP(\R^d)))$ is the unique solution of $\partial_t M_t + \operatorname{div}_\PP(b_tM_t) = 0$ satisfying $M_0 = \overline{M}$.
\end{teorema}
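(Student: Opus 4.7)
The plan is to reduce the three uniqueness claims to a single statement: for each $\mu_0\in\PP_p(\R^d)$, the set $\operatorname{SPS}(b)$ contains at most one element $\lambda$ with $(e_0)_\sharp\lambda=\mu_0$. Once this is available, I will apply Theorem \ref{superposition principle for non-local vf} to any $\boldsymbol{M}$ solving the continuity equation to obtain $\Lambda\in\PP(C_T(\PP(\R^d)))$ concentrated on $\CE(b)$ with the correct marginals, lift $\Lambda$ to $\mathfrak{L}$ via Theorem \ref{from Lambda to L theorem/corollary}, and disintegrate $\mathfrak{L}=\int\mathfrak{L}^{\mu_0}\,\d\overline{M}(\mu_0)$ with respect to $(E_0)_\sharp$. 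Each slice $\mathfrak{L}^{\mu_0}$ is concentrated on $\{\lambda\in\operatorname{SPS}(b):(e_0)_\sharp\lambda=\mu_0\}$, a singleton, so $\mathfrak{L}^{\mu_0}$ is a Dirac and $\mathfrak{L}$ is determined by $\overline{M}$. The uniqueness of $\Lambda=E_\sharp\mathfrak{L}$ and of $M_t=(E_t)_\sharp\mathfrak{L}$ follow at once, while the same disintegration argument applied to $\Lambda$ via the initial-marginal map $\boldsymbol{\mu}\mapsto\mu_0$ yields (i).

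The one-point uniqueness of $\operatorname{SPS}(b)$ splits into two substeps. For the first, given $(\mu_t^1),(\mu_t^2)\in\CE(b)$ sharing the initial datum $\mu_0\in\PP_p(\R^d)$, I would select at a.e.~$t$ an optimal plan $\pi_t$ realizing \eqref{eq: Lip condition} and invoke the standard derivative formula for the Wasserstein distance along continuity equations (cf.\ \cite[Ch.~8]{ambrosio2005gradient}),
\begin{equation*}
\frac{d}{dt}W_p^p(\mu_t^1,\mu_t^2)\le p\int|x_1-x_2|^{p-2}(x_1-x_2)\cdot\bigl(b(t,x_1,\mu_t^1)-b(t,x_2,\mu_t^2)\bigr)\,\d\pi_t(x_1,x_2).
\end{equation*}
H\"older's inequality and \eqref{eq: Lip condition} bound the right-hand side by $p\,L(t)^{1/p}W_p^p(\mu_t^1,\mu_t^2)$, and Gronwall's lemma forces $\mu_t^1\equiv\mu_t^2$. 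The case $p=1$ is handled analogously by Kantorovich duality, testing the difference against $1$-Lipschitz functions.

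For the second substep, with $(\mu_t)$ now determined by $\mu_0$, every $\lambda\in\operatorname{SPS}(b)$ with $(e_0)_\sharp\lambda=\mu_0$ is concentrated on solutions of the measure-frozen ODE $\dot\gamma(t)=b(t,\gamma(t),\mu_t)$, with $\gamma(t)\in\operatorname{supp}(\mu_t)$ for $\mathcal{L}^1\otimes\lambda$-a.e.~$(t,\gamma)$. Assuming $b(t,\cdot,\mu_t)$ is $L(t)^{1/p}$-Lipschitz on $\operatorname{supp}(\mu_t)$, a Gronwall estimate on $|\gamma(t)-\gamma'(t)|$ for two such solutions with common initial datum yields a unique flow map $\Phi:\operatorname{supp}(\mu_0)\to C_T(\R^d)$, so $\lambda=\Phi_\sharp\mu_0$ depends only on $\mu_0$.

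The main obstacle, which the previous substep takes for granted, is the deduction of this Lipschitz property of $b(t,\cdot,\mu)$ on $\operatorname{supp}(\mu)$ from the weaker \eqref{eq: Lip condition}. Testing \eqref{eq: Lip condition} with $\mu_0=\mu_1=\mu$ is vacuous, since for $p>1$ the only $W_p$-optimal plan between $\mu$ and itself is the diagonal; the idea is therefore to approximate $\mu$ by finitely-supported $\mu_n\to\mu$ in $W_p$, on which transpositions of equal-mass atoms produce genuinely non-trivial optimal plans between distinct but close approximations, and to extract a Lipschitz representative of $b(t,\cdot,\mu)$ on $\operatorname{supp}(\mu)$ by passage to the limit, exploiting the $L^p$-continuity of $b$ along optimal couplings that \eqref{eq: Lip condition} itself encodes. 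Since modifying $b$ on a $\widetilde{M}_t\otimes\,\d t$-negligible set affects neither the continuity equation for $\boldsymbol{M}$ nor the set $\operatorname{SPS}(b)$, one may then work throughout with this Lipschitz representative.
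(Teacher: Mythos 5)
Your overall strategy -- reduce to uniqueness of the curve in $\operatorname{CE}(b)$ for fixed $\mu_0$ (via the derivative of $W_p^p$ and Gronwall), then freeze the marginals $(\mu_t)$ and argue uniqueness of the characteristics via a spatial Lipschitz estimate on $\operatorname{supp}\mu_t$, finally disintegrating $\mathfrak{L}$ and $\Lambda$ over the initial datum -- coincides with the paper's. You also correctly identify the crux: extracting the $x$-Lipschitz property of $b(t,\cdot,\mu)$ on $\operatorname{supp}\mu$ from hypothesis \eqref{eq: Lip condition}, which is formulated only along \emph{optimal} plans and is vacuous when $\mu_0=\mu_1$.

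The gap is in your proposed mechanism for that last step, and it is not cosmetic. You suggest approximating $\mu$ by finitely-supported measures and exploiting ``transpositions of equal-mass atoms'' as ``non-trivial optimal plans between distinct but close approximations.'' But a plan that swaps two atoms $x_i\leftrightarrow x_j$ with $|x_i-x_j|$ bounded away from zero has transport cost comparable to $|x_i-x_j|^p$, whereas between any two measures that are $W_p$-close the optimal cost is small; so the transposition plan is \emph{not} $W_p$-optimal, and \eqref{eq: Lip condition} cannot be invoked for it. In fact, for any fixed $\mu$, testing only optimal plans between measures near $\mu$ never lets you compare $b(t,x_i,\mu)$ to $b(t,x_j,\mu)$ at two widely separated atoms, because optimal matchings only ever move mass a small distance. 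The paper closes this gap with an intermediate lemma (Lemma \ref{lipschitz along optimal plans implies lipschitz on any plan}) showing that \eqref{eq: Lip condition} self-improves from optimal plans to \emph{arbitrary} plans: one first segments the interpolating curve $t\mapsto(\mathrm{x}^t)_\sharp\pi$ between two finitely supported measures into finitely many pieces on each of which the linear interpolation \emph{is} a geodesic and the induced plan \emph{is} the unique optimal one (Lemma \ref{interpolating curve lemma}), sums the resulting estimates by the triangle inequality in $L^p(\pi)$, and then passes to general measures by gluing. Only after this upgrade can the swap plan be used, and the spatial $L(t)^{1/p}$-Lipschitz property on $\operatorname{supp}\mu$ then follows by invoking \cite[Theorem 4.8]{cavagnari2020extension} (Lemma \ref{W_p lipschitz implies spatial lipschitz} in the paper). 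Without this intermediate extension to arbitrary plans, the deduction of spatial Lipschitz does not go through.

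Two minor points you handle more carefully than the paper: the derivative bound for $W_p^p$ actually yields $pL(t)^{1/p}W_p^p$ rather than $pL(t)W_p^p$ (harmless for Gronwall since $L^{1/p}\in L^1(0,T)$ on a bounded interval), and you flag the $p=1$ case for separate treatment via Kantorovich duality, which the paper leaves implicit. Also note that the paper's closing Gronwall step implicitly assumes $\gamma_0(0)=\gamma_1(0)$; your argument makes the same implicit assumption and should state it.
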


    It is worth commenting on the existence for solutions of the continuity equation for random measures: it can be recovered under Lipschitz assumptions in the spirit of our theorem (see \cite{cavagnari2022lagrangian, bonnet2021differential}) or even under just Carathéodory assumptions (see \cite{bonnet2024caratheodory}). Indeed, if given a non-local vector field $b:[0,T]\times \R^d \times \PP(\R^d) \to \R^d$, we can prove existence for $\partial_t \mu_t + \operatorname{div}(b_t(\cdot,\mu_t)\mu_t) = 0$ for any starting measure $\mu_0\in \PP(\R^d)$, then using a measurable selection argument and the disintegration theorem, for any $M_0\in \PP(\PP(\R^d))$ we can prove existence for $\Lambda \in \PP(C_T(\PP(\R^d)))$ concentrated over $\CE(b)$ and such that $(\mathfrak{e}_0)_\sharp \Lambda = M_0$. At this point, the superposition principle gives existence for: $(M_t)_{t\in [0,T]} \in C_T(\PP(\PP(\R^d)))$ solving $\partial_t M_t + \operatorname{div}_\PP(b_tM_t) = 0$ starting from $M_0$; and $\mathfrak{L}\in \PP(\PP(C_T(\R^d)))$ concentrated over $\operatorname{SPS}(b)$ and such that $(E_0)_\sharp \mathfrak{L} = M_0$.

\subsection{Proof of Theorem \ref{main uniqueness theorem}}
Even if point $(i)$ follows from point $(ii)$, we prove it first using a more classical argument. Consider $\Lambda\in \PP(C_T(\PP(\R^d)))$ given by Theorem \ref{main theorem}. We will prove uniqueness for the trajectories 
\begin{equation}\label{CE uniqueness section}
\partial_t\mu_t + \operatorname{div}(b_t(\cdot,\mu_t)\mu_t) = 0
\end{equation}
for any fixed starting point $\overline{\mu}\in \PP_p(\R^d)$. Indeed, given $\boldsymbol{\mu}^0, \boldsymbol{\mu}^1\in C_T(\PP(\R^d))$  two solutions of \eqref{CE uniqueness section}, thanks to \cite[Theorem 8.4.7 and Remark 8.4.8]{ambrosio2005gradient}, we can differentiate in time the quantity $W_p^p(\mu_t^0,\mu_t^1)$, to obtain that, given a $W_p$-optimal plan $\pi_t^{0,1} $ between $\mu_t^0$ and $\mu_t^1$
    \begin{align*}
        \frac{d}{dt} & W_p^p(
        \mu_t^0,\mu_t^1
        )  = p\int_{\R^d\times\R^d} |x_0-x_1|^{p-2}(x_0-x_1)\cdot \big(b(t,x_0,\mu_t^0) - b(t,x_1,\mu_t^1)\big) d\pi_t^{0,1}(x_0,x_1)
        \\
        & \leq
        pW_p^{p-1}(\mu_t^0,\mu_t^1)\left(\int |b(t,x_0,\mu_t^0) - b(t,x_1,\mu_t^1)|^p d\pi_t^{0,1}(x_0,x_1) \right)^{\frac{1}{p}}
        \leq
        pL(t)W_p^p(\mu_t^0,\mu_t^1).
    \end{align*}
    Using Gr\"onwall lemma, we conclude that for any $\overline{\mu}\in \PP_p(\R^d)$ there exists a unique $\boldsymbol{\mu}_{\overline{\mu}}\in C_T(\PP(\R^d))$ solution of \eqref{CE uniqueness section}. Thus, we have the following representation 
    \begin{equation}
        \Lambda = \int_{\PP_p(\R^d)} \delta_{\boldsymbol{\mu}_{\overline{\mu}}} d\overline{M}(\overline{\mu}),
    \end{equation}
    implying the uniqueness result of the theorem. 
    \\
    Using this result, we can already prove the uniqueness of $\partial_tM_t+\operatorname{div}_\PP(b_tM_t)=0$ given a starting point $\overline{M}\in \PP(\PP(\R^d))$ concentrated over $\PP_p(\R^d)$. Indeed, if we had two different solutions $\boldsymbol{M}^0$ and $\boldsymbol{M}^1$, Theorem \ref{main theorem} would give us two different $\Lambda^0,\Lambda^1 \in \PP(C_T(\PP(\R^d)))$ satisfying property $(i)$ of Theorem \ref{main uniqueness theorem}, which is a contradiction. 

    We are left with the proof of $(ii)$, that, similarly to the proof of $(i)$, passes again through the uniqueness of superposition solutions $\lambda\in\operatorname{SPS}(b)$ with a fixed starting point $(e_0)_\sharp \lambda = \overline{\mu}\in \PP_p(\R^d)$. In particular, we will see how the Lipschitz assumption in Theorem \ref{main uniqueness theorem} implies a Lipschitz assumption in the variable space $x\in \R^d$, giving uniqueness of trajectories at the particle level. Before proceeding, we need some preliminary results, that are the extension to the case $p\geq 1$ of (a part of) \cite[Lemma 6.1, Theorem 6.2 and Theorem 7.6]{cavagnari2023lagrangian}.

    \begin{lemma}
        Let $\mu_0,\mu_1\in \PP_p(\R^d)$ and $\pi \in \Gamma(\mu_0,\mu_1)$. Assume $\mu_0$ has finite support $S = \{\Bar{x}_1,\dots,\Bar{x}_N\}$ with $\delta:= \min\{|\Bar{x}_i - \Bar{x}_j| \ : \ i\neq j\}$ and 
        \[\sup\big\{ |y-x| \ : (x,y) \in \operatorname{supp}\pi \big\} \leq \frac{\delta}{2}.\]
        Then $\pi$ is $W_p$-optimal, i.e. $W_p^p(\mu_0,\mu_1) = \int |y-x|^pd\pi(x,y)$.
    \end{lemma}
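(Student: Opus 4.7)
The plan is to reduce the problem to a pointwise comparison at the level of the second marginal. The key geometric fact is that the balls $\overline{B(\bar x_i,\delta/2)}$ are pairwise essentially disjoint: for any $i\neq j$, the assumption $|\bar x_i-\bar x_j|\ge\delta$ combined with the triangle inequality gives that if $y$ satisfies $|y-\bar x_i|\le\delta/2$ then $|y-\bar x_j|\ge\delta/2\ge|y-\bar x_i|$. In particular, for points $y\in B(\bar x_i,\delta/2)$, the closest point of $S$ to $y$ is precisely $\bar x_i$.

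First I would disintegrate $\pi$ with respect to its first marginal. Since $\mu_0=\sum_i m_i\delta_{\bar x_i}$ with $m_i:=\mu_0(\{\bar x_i\})$, one writes $\pi=\sum_i \delta_{\bar x_i}\otimes \nu_i$ where $\nu_i\in\mathcal M_+(\R^d)$ has total mass $m_i$ and $\mu_1=\sum_i\nu_i$. The hypothesis on $\operatorname{supp}\pi$ forces $\operatorname{supp}\nu_i\subseteq\overline{B(\bar x_i,\delta/2)}$, so by the observation above, for $\nu_i$-a.e.\ $y$ one has $|y-\bar x_i|^p=\min_{j}|y-\bar x_j|^p$. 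This lets me compute
\begin{equation*}
    \int|y-x|^p\,d\pi(x,y)=\sum_i\int|y-\bar x_i|^p\,d\nu_i(y)=\int\min_{j}|y-\bar x_j|^p\,d\mu_1(y).
\end{equation*}

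Next, for any competitor $\pi'\in\Gamma(\mu_0,\mu_1)$, the same disintegration yields $\pi'=\sum_i\delta_{\bar x_i}\otimes\nu'_i$ with $\sum_i\nu'_i=\mu_1$, but now without any control on $\operatorname{supp}\nu'_i$. Trivially bounding each integrand from below by the minimum over $j$ gives
\begin{equation*}
    \int|y-x|^p\,d\pi'(x,y)=\sum_i\int|y-\bar x_i|^p\,d\nu'_i(y)\ge\sum_i\int\min_j|y-\bar x_j|^p\,d\nu'_i(y)=\int\min_j|y-\bar x_j|^p\,d\mu_1(y).
\end{equation*}
Combining the two displays shows that $\pi$ achieves the infimum, and therefore $W_p^p(\mu_0,\mu_1)=\int|y-x|^p\,d\pi(x,y)$.

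I do not expect any serious obstacle: the proof is purely geometric, and the only step requiring mild care is the disintegration argument, which is standard since the first marginal is a finite combination of Dirac masses. The critical role of the bound $\delta/2$ is to ensure that the nearest-neighbor assignment produced by $\pi$ is simultaneously the pointwise cheapest one, which is exactly what makes the competitor comparison work without invoking cyclical monotonicity or duality.
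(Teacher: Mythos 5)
Your proof is correct, but it takes a genuinely different route from the paper's. The paper verifies that $\operatorname{supp}\pi$ is $c$-cyclically monotone (with $c(x,y)=|x-y|^p$): for any finite cycle $\{(x_i,y_i)\}_{i=1}^n\subset\operatorname{supp}\pi$, the separation $|x_{i-1}-x_i|\ge\delta$ between distinct source points and the bound $|x_i-y_i|\le\delta/2$ force $|x_{i-1}-y_i|\ge\delta/2\ge|x_i-y_i|$ whenever $x_{i-1}\neq x_i$, whence $\sum_i|x_{i-1}-y_i|^p\ge\sum_i|x_i-y_i|^p$, and optimality follows from the cyclical-monotonicity criterion. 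You instead bypass that criterion entirely: you exhibit the explicit lower bound $\int\min_j|y-\bar x_j|^p\,d\mu_1(y)$ valid for \emph{every} $\pi'\in\Gamma(\mu_0,\mu_1)$ (since the first coordinate is always some $\bar x_j$), and show that the separation hypothesis forces $\pi$ to attain it pointwise, because $|y-\bar x_i|\le\delta/2$ implies $\bar x_i$ is the nearest point of $S$ to $y$. Your argument is more elementary (it uses only the definition of $W_p$ as an infimum, not the sufficiency of cyclical monotonicity for optimality) and arguably cleaner, since the paper's per-term estimate requires a small case split on whether $x_{i-1}=x_i$ that the displayed chain glosses over. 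The trade-off is that the cyclical-monotonicity route is the one that generalizes when $\mu_0$ is not discrete, whereas your lower-bound argument leans essentially on the first marginal being a finite sum of Diracs; for the lemma as stated, though, both are complete, and yours gets there with less machinery.
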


    \begin{proof}
        It is sufficient to prove that the support of $\pi $ satisfies the $c$-cyclical monotonicity property, with $c(x,y) := |x-y|^p$. Consider $\{(x_i,y_i)\}_{i=1}^n \subset \operatorname{supp} 
        \pi$, with $x_0 := x_n$. Then
        \begin{align*}
            \sum_{i=1}^n |x_{i-1} - y_i|^p - |x_i-y_i|^p
            \geq \sum_{i=1}^n \left[\big||x_{i-1}-x_i| - |x_i-y_i|\big|^p - \left(\frac{\delta}{2}\right)^p\right]\geq 0,
        \end{align*}
        because $(x_i,y_i)\in \operatorname{supp}\pi$, thus $|x_i-y_i| \leq \delta/2$, and 
        \[|x_{i-1}- y_i| = |x_{i-1} -x_ i + x_i - y_i| \geq |x_{i-1} - x_i| - |x_i - y_i| \geq \delta - \delta/2 = \delta/2. \]
    \end{proof}

    \begin{lemma}\label{interpolating curve lemma}
        Let $\mu_0,\mu_1\in \PP_p(\R^d) $ be two measures with finite support, $\pi\in \Gamma(\mu_0,\mu_1)$ and $\mu_t := (\mathrm{x}^t)_\sharp \pi$, where $\mathrm{x}^t(x_0,x_1) := (1-t)x_0 + t x_1$. Then the following properties hold:
        \begin{itemize}
            \item[(i)] for every $s\in [0,1]$ there exists $\delta > 0$ such that for every $t\in [0,1]$ with $|t-s|\leq \delta$ $\pi^{st}:= (\mathrm{x}^s,\mathrm{x}^t)_\sharp \pi $ is a $W_p$ optimal plan between $\mu_s$ and $\mu_t$. Moreover 
            \begin{equation}
                W_p^p(\mu_s,\mu_t) = |t-s|^p \int |x_0 - x_1|^pd\pi(x_0,x_1);
            \end{equation}
            \item[(ii)] there exist $0=t_0 < \dots < t_K =1$ such that for every $k=1,\dots,K$, $\mu|_{[t_{k-1},t_k]}$ is a constant speed geodesic w.r.t. $W_p$ and 
            \begin{equation}
                W_p^p(\mu_{s},\mu_{r}) = |r - s|^p \int |x_0-x_1|^p d\pi(x_0,x_1) \quad \forall s,r\in[t_{k-1},t_k];
            \end{equation}
            \item[(iii)] the length of the curve $t\mapsto \mu_t$, w.r.t. $W_p$, is $\big(\int |x_0-x_1|^p d\pi(x_0,x_1)\big)^{1/p}$.  \end{itemize}
    \end{lemma}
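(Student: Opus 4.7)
The plan is to prove (i) directly from the previous lemma, construct in (ii) a finite partition of $[0,1]$ on which the identity matching between adjacent partition points is $W_p$-optimal, upgrade this local optimality to the full constant-speed-geodesic statement by a triangle-inequality argument, and finally deduce (iii) by summation.

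For (i), fix $s \in [0,1]$ and set $R := \max\{|x_0 - x_1| : (x_0,x_1) \in \supp\pi\}$. Since $\supp\pi$ is finite, so is $\supp\mu_s$; let $\delta_s > 0$ denote the minimum distance between distinct atoms of $\mu_s$ (with the convention $\delta_s := +\infty$ if $|\supp\mu_s|\leq 1$), and pick $\delta := \delta_s/(2R)$. For $|t-s|\leq \delta$, each pair in $\supp\pi^{st}$ is of the form $(\mathrm{x}^s(x_0, x_1), \mathrm{x}^t(x_0, x_1))$ and satisfies
\[
|\mathrm{x}^s(x_0, x_1) - \mathrm{x}^t(x_0, x_1)| = |t - s|\,|x_0 - x_1| \leq \delta R = \delta_s/2,
\]
so the previous lemma, applied with first marginal $\mu_s$ of separation $\delta_s$, yields $W_p$-optimality of $\pi^{st}$; computing its cost directly gives $W_p^p(\mu_s, \mu_t) = |t-s|^p \int|x_0 - x_1|^p\, d\pi$.

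For (ii), I construct the partition inductively: put $t_0 := 0$ and $t_{k+1} := \sup\{t \in [t_k, 1] : \pi^{t_k, t} \text{ is } W_p\text{-optimal}\}$. Part (i) ensures $t_{k+1} > t_k$, and continuity of $W_p$ and of $r \mapsto \int|\mathrm{x}^{t_k} - \mathrm{x}^r|^p\, d\pi$ make the supremum attained, so $\pi^{t_k, t_{k+1}}$ is itself optimal with cost $(t_{k+1} - t_k)^p C$, where $C := \int|x_0 - x_1|^p\, d\pi$. The procedure terminates in finitely many steps because (a) collisions among pairs of $\supp\pi$, i.e.~values $t$ with $\mathrm{x}^t(x_0,x_1) = \mathrm{x}^t(y_0, y_1)$ for distinct pairs, satisfy a linear equation in $t$ and are therefore finitely many, and (b) between collisions the costs of the finitely many alternative couplings between $\supp\mu_{t_k}$ and $\supp\mu_r$, viewed as piecewise-analytic functions of $r$, differ from the identity cost only on a discrete set unless they coincide identically (in which case no refinement is needed). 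Once $\pi^{t_{k-1}, t_k}$ is known optimal, for every $s \leq r$ in $[t_{k-1}, t_k]$ the triangle inequality applied to $\mu_{t_{k-1}}, \mu_s, \mu_r, \mu_{t_k}$, together with the feasibility of $\pi^{t_{k-1}, s}$ and $\pi^{r, t_k}$ as transport plans, produces
\[
(t_k - t_{k-1})C^{1/p} = W_p(\mu_{t_{k-1}},\mu_{t_k}) \leq (s - t_{k-1}) C^{1/p} + W_p(\mu_s, \mu_r) + (t_k - r)C^{1/p},
\]
whence $W_p(\mu_s, \mu_r) \geq (r - s) C^{1/p}$; the opposite bound coming from the plan $\pi^{sr}$ yields the equality $W_p^p(\mu_s, \mu_r) = (r-s)^p C$, i.e.\ the constant-speed-geodesic property on $[t_{k-1}, t_k]$.

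Part (iii) then follows from (ii) by summation: $\operatorname{length}(\mu) = \sum_{k=1}^K W_p(\mu_{t_{k-1}}, \mu_{t_k}) = C^{1/p}\sum_k (t_k - t_{k-1}) = C^{1/p}$. The principal technical obstacle is the termination argument in (ii): since $\delta_s$ can vanish as $s$ approaches a collision time, part (i) alone does not preclude an accumulation of the $t_k$, and one must exploit the finite-support and piecewise-analytic structure of the transport costs to ensure the partition is finite.
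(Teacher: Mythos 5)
Your overall strategy is sound; the paper itself gives no direct proof (it defers to the cited reference), and the structure you use---the cyclical monotonicity lemma for (i), a greedy partition for (ii), and summation for (iii)---is almost certainly the same as in that reference. Parts (i) and (iii) are correct, the closedness argument showing the supremum defining $t_{k+1}$ is attained is correct, and the triangle-inequality upgrade from optimality of $\pi^{t_{k-1},t_k}$ to the constant-speed geodesic property on $[t_{k-1},t_k]$ is correct.

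The gap is in the termination argument for (ii), as you suspect. Point (a) (finitely many collision times) is true but does not prevent the $t_k$ from accumulating at a collision time from one side, since $\delta_s\to 0$ as $s$ approaches such a time. Point (b) is vaguely stated: ``the finitely many alternative couplings between $\supp\mu_{t_k}$ and $\supp\mu_r$'' is not a fixed finite set as $r$ varies. One can make this precise by lifting to couplings $\rho$ on $\supp\pi\times\supp\pi$ with both marginals $\pi$; this polytope is fixed, has finitely many vertices, and the cost of each vertex is piecewise-analytic in $r$. That does show $\{r : \pi^{t_k,r}\text{ optimal}\}$ is a finite union of intervals \emph{for each fixed} $t_k$. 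But the number of intervals is not uniformly bounded in $t_k$, and the greedy restarts the analysis at each new $t_{k+1}$, so this still does not rule out accumulation.

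The clean way to close the gap is to apply the previous lemma at the would-be accumulation point rather than at $t_k$: suppose $t_k\uparrow t^\ast\leq 1$ with $t_k<t^\ast$ for all $k$. Since $\mu_{t^\ast}$ has finite support (being a push-forward of $\pi$), its separation $\delta_{t^\ast}$ is strictly positive. Apply the previous lemma with $\mu_0=\mu_{t^\ast}$ and $\pi=(\mathrm{x}^{t^\ast},\mathrm{x}^{s})_\sharp\pi$, whose displacements are $|t^\ast-s|\,|x_0-x_1|\leq |t^\ast - s|R$: for $|s-t^\ast|\leq \delta_{t^\ast}/(2R)$ the plan is optimal, hence so is $\pi^{s,t^\ast}$. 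Once $t_k>t^\ast-\delta_{t^\ast}/(2R)$, the time $t^\ast$ belongs to the set whose supremum defines $t_{k+1}$, so $t_{k+1}\geq t^\ast$, contradicting $t_{k+1}<t^\ast$. Hence the greedy construction reaches $1$ in finitely many steps, and no appeal to analyticity is needed.
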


    \begin{proof}
        It is the very same of \cite[Theorem 6.2]{cavagnari2023lagrangian}.
    \end{proof}

    \begin{lemma}\label{lipschitz along optimal plans implies lipschitz on any plan}
        Let $b:\R^d \times \PP_p(\R^d) \to \R^d$ be such that for all $\mu_0,\mu_1\in \PP_p(\R^d)$ and some $W_p$-optimal plan $\pi\in\Gamma(\mu_0,\mu_1)$ it holds
        \begin{equation}\label{lipschitz condition lemma}
            \int |b(x_0,\mu_0) - b(x_1,\mu_1)|^p d\pi(x_0,x_1) \leq L\int | x_1 - x_0 |^p d\pi(x_0,x_1),
        \end{equation}
        for some $L\in(0,+\infty)$. Then \eqref{lipschitz condition lemma} holds for any transport plan $\pi\in \Gamma(\mu_0,\mu_1)$.
    \end{lemma}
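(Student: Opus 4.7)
The strategy rests on two ingredients: a pointwise rigidity statement for $b(\cdot,\mu)$ on $\supp(\mu)$, and an interpolation/chaining argument based on Lemma~\ref{interpolating curve lemma}.

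\emph{Step 1 (pointwise rigidity in $x$).} First establish that, for every $\mu\in\PP_p(\R^d)$, the restriction of $b(\cdot,\mu)$ to $\supp(\mu)$ is $L^{1/p}$-Lipschitz. Fix $\bar x_1,\bar x_2\in\supp(\mu)$ and approximate $\mu$ by an atomic measure $\mu^N$ that contains $\bar x_1$ and $\bar x_2$ as isolated atoms. For such atomic $\mu^N$, construct perturbations $\nu_0=\mu^N$ and $\nu_1$ obtained by ``swapping'' a mass $\alpha>0$ between a small neighborhood of $\bar x_1$ and one of $\bar x_2$. The preceding lemma (on small-displacement plans being $W_p$-optimal) ensures that, once the perturbation scale is much smaller than $|\bar x_1-\bar x_2|$, the natural swap plan is $W_p$-optimal between $\nu_0$ and $\nu_1$. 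Apply the hypothesis \eqref{lipschitz condition lemma} along this plan, divide by $\alpha$, and let $\alpha\to 0$ to isolate the contribution of the swapped atoms and obtain
\[
|b(\bar x_1,\mu^N)-b(\bar x_2,\mu^N)|^p\le L\,|\bar x_1-\bar x_2|^p.
\]
A density/stability argument (using the hypothesis again as an $L^p$-stability estimate) then transfers the pointwise inequality to general $\mu\in\PP_p(\R^d)$.

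\emph{Step 2 (chaining along the interpolation).} Let $\pi\in\Gamma(\mu_0,\mu_1)$ be arbitrary and set $\mu_t:=(\mathrm x^t)_\sharp\pi$, where $\mathrm x^t(x_0,x_1)=(1-t)x_0+t\,x_1$. By Lemma~\ref{interpolating curve lemma}(ii) (after reducing to finitely supported $\mu_0,\mu_1$ by density), there is a partition $0=t_0<t_1<\dots<t_K=1$ such that each plan $\pi_k:=(\mathrm x^{t_{k-1}},\mathrm x^{t_k})_\sharp\pi$ is $W_p$-optimal between $\mu_{t_{k-1}}$ and $\mu_{t_k}$, with cost $(t_k-t_{k-1})^p\int|x_0-x_1|^p\,d\pi$. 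Applying the hypothesis on each subinterval and using Step~1 to reconcile the possibly distinct optimal plan furnished by the assumption with our specific $\pi_k$, one deduces
\[
\bigl\|b(\mathrm x^{t_{k-1}},\mu_{t_{k-1}})-b(\mathrm x^{t_k},\mu_{t_k})\bigr\|_{L^p(\pi)}\le L^{1/p}(t_k-t_{k-1})\|x_0-x_1\|_{L^p(\pi)}.
\]
Writing $f_k(x_0,x_1):=b(\mathrm x^{t_k}(x_0,x_1),\mu_{t_k})$, note that $f_0(x_0,x_1)=b(x_0,\mu_0)$ and $f_K(x_0,x_1)=b(x_1,\mu_1)$. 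Summing by the triangle inequality in $L^p(\pi)$ and using $\sum_k(t_k-t_{k-1})=1$ gives
\[
\bigl\|b(x_0,\mu_0)-b(x_1,\mu_1)\bigr\|_{L^p(\pi)}\le L^{1/p}\,\|x_0-x_1\|_{L^p(\pi)},
\]
which, raised to the $p$-th power, is exactly \eqref{lipschitz condition lemma} for $\pi$. A final density argument removes the finite-support assumption on $\mu_0,\mu_1$.

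\emph{Main obstacle.} The heart of the argument is Step~1: extracting a genuinely pointwise Lipschitz estimate in the space variable from a hypothesis that only averages $|b(\cdot,\mu_0)-b(\cdot,\mu_1)|$ along \emph{some} optimal plan. This rigidity is precisely the phenomenon announced in the introduction, and it is enabled by the small-displacement optimality criterion of the preceding lemma, which allows one to engineer swap plans that are guaranteed to be $W_p$-optimal. A secondary difficulty in Step~2 is that the hypothesis delivers the averaged inequality only along \emph{one} optimal plan between $\mu_{t_{k-1}}$ and $\mu_{t_k}$, which need not be the interpolated plan $\pi_k$; Step~1 is invoked precisely to bridge this gap so that the chain argument closes with the same constant $L$.
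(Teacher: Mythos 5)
There is a genuine gap in your Step 2. You correctly observe that the hypothesis \eqref{lipschitz condition lemma} is asserted only along \emph{some} $W_p$-optimal plan $\sigma_k$ between $\mu_{t_{k-1}}$ and $\mu_{t_k}$, whereas the chaining argument needs it for the specific interpolated plan $\pi_k:=(\mathrm x^{t_{k-1}},\mathrm x^{t_k})_\sharp\pi$. But the proposed ``reconciliation'' via the pointwise Lipschitz estimate of Step~1 does not preserve the constant $L$. Gluing $\pi_k$ and $\sigma_k$ through a common marginal and applying the triangle inequality in $L^p$ gives an estimate of the form
\begin{equation*}
\bigl\|b(x,\mu_{t_{k-1}})-b(y,\mu_{t_k})\bigr\|_{L^p(\pi_k)}
\;\le\; L^{1/p}\,\|x-z\|_{L^p(\tau)} + L^{1/p}\,W_p(\mu_{t_{k-1}},\mu_{t_k}),
\end{equation*}
where $\tau$ is the gluing and $\|x-z\|_{L^p(\tau)}$ is the discrepancy between the two plans. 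Even though both plans have the same (optimal) cost $W_p$, their joint coupling need not be concentrated near the diagonal: in general one only has $\|x-z\|_{L^p(\tau)}\le 2W_p$, which turns the chain into a factor-$3$ estimate, not the claimed one. Two optimal plans between the same marginals can differ arbitrarily in any gluing, and spatial Lipschitzianity of $b(\cdot,\mu)$ cannot repair that.

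The paper closes this gap differently and more directly: after invoking Lemma~\ref{interpolating curve lemma} to ensure $\pi_k$ is optimal, it notes that $\pi_k$ is in fact the \emph{unique} optimal plan between the intermediate marginals of a Wasserstein geodesic (citing \cite[Lemma 7.2.1 and Theorem 7.2.2]{ambrosio2005gradient}). Uniqueness forces the ``some optimal plan'' of the hypothesis to coincide with $\pi_k$, so the chain closes with the exact constant $L$ and no reconciliation step is required. Your Step~1 (pointwise spatial Lipschitzianity of $b(\cdot,\mu)$ on $\supp\mu$) is a true statement---it appears as Lemma~\ref{W_p lipschitz implies spatial lipschitz}, proved by a cited external argument---but it is established \emph{after} this lemma and is not used in its proof; your sketch of the ``swap'' perturbation is also too vague to carry the claimed pointwise rigidity on its own, since the hypothesis only controls averaged quantities between two \emph{distinct} measures, not along the identity plan of a single $\mu$.
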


    \begin{proof}
        The proof is divided in two steps: first we prove the result for measures that are supported on finite sets, and then we use an approximation procedure to extend the result to all measures.

        \textbf{Step 1}: assume that $\mu_0,\mu_1$ have finite support and consider a generic transport plan $\pi\in \Gamma(\mu_0,\mu_1)$. Let $0=t_0<\dots<t_K = 1$ be as in Lemma \ref{interpolating curve lemma}, so that $(\mathrm{x}^{t_{k-1}},\mathrm{x}^{t_k})_\sharp \pi$ is a $W_p$-optimal plan between $(\mathrm{x}^{t_{k-1}})_\sharp \pi$ and $(\mathrm{x}^{t_k})_\sharp \pi$. It is also the unique optimal plan, see \cite[Lemma 7.2.1 and Theorem 7.2.2]{ambrosio2005gradient}. Then 
        \begin{align*}
            \bigg( \int |b(x_0, \mu_0) - & b(x_1,\mu_1)|^p d\pi(x_0,x_1)\bigg)^{1/p}
            \\
            & \leq 
            \sum_{k=1}^K \bigg(\int |b(\mathrm{x}^{t_{k-1}},  (\mathrm{x}^{t_{k-1}})_\sharp \pi) - b(\mathrm{x}^{t_{k}},(\mathrm{x}^{t_{k}})_\sharp \pi)|^p d\pi(x_0,x_1)\bigg)^{1/p}
            \\
            & \leq  \sum_{k=1}^K L W_p\big((\mathrm{x}^{t_{k-1}})_\sharp \pi, \mathrm{x}^{t_{k}}_\sharp \pi\big) 
            =  
            \sum_{k=1}^K L(t_k-t_{k-1}) \int|x_1-x_0|^p d\pi(x_0,x_1) 
            \\
            & = L \int|x_1-x_0|^p d\pi(x_0,x_1) .
        \end{align*}

        \textbf{Step 2}: let $\mu_0^n \in \PP(\R^d)$ (resp. $\mu_1^n\in \PP(\R^d)$) have finite support and be such that $W_p(\mu_0^n,\mu_0) \to 0$ (resp. $W_p(\mu_1^n,\mu_1) \to 0$). Let $\pi_0^n \in \Gamma(\mu_0^n,\mu_0)$ and $\pi_1^n \in \Gamma(\mu_1,\mu_1^n)$ be $W_p$-optimal plans for which \eqref{lipschitz condition lemma} is satisfied. Exploiting \cite[Proposition 8.6]{ambrosio2021lectures}, let $\sigma_n \in \PP\big((\R^d)^4\big)$ be such that
        \[p^{12}_\sharp \sigma_n = \pi_0^n, \ p^{23}_\sharp \sigma_n = \pi, \ p^{34}_\sharp \sigma_n = \pi^n_1,\]
        where $p^{ij}$ is the projection on both $i$-th and $j$-th coordinates. In particular, $p^{14}_\sharp \sigma_n \in \Gamma(\mu_0^n,\mu_1^n)$ and converges to $\pi$ w.r.t. $W_p$. Indeed, rearranging the coordinates of $\sigma^n$, we have a transport plan between $\pi$ and $(p_1,p_4)_\sharp \sigma^n$, which is $(p_2,p_3,p_1,p_4)_\sharp \sigma^n \in \Gamma(\pi,(p_1,p_4)_\sharp \sigma^n)$, and \[W_p^p(\pi,(p_1,p_4)_\sharp \sigma^n)\leq \int_{(\R^{d})^4}|y_2-y_1|^p + |y_3-y_4|^p d\sigma^n(y_1,y_2,y_3,y_4) = W_p^p(\mu_0^n,\mu_0) + W_p^p(\mu_1^n,\mu_1) \to 0.\]  
        Then, using the notation $(y_1,y_2,y_3,y_4)\in (\R^d)^4$, we have 
        \begin{align*}
            \bigg(\int & |b(x_0,\mu_0) - b(x_1,\mu_1)|^p d\pi(x_0,x_1)\bigg)^{1/p} = \| b(y_2,\mu_0) - b(y_3,\mu_1) \|_{L^p(\sigma_n;\R^d)} 
            \\
            \leq & 
            \| b(y_2,\mu_0) - b(y_1,\mu_0^n) \|_{L^p} + \| b(y_1,\mu_0^n) - b(y_4,\mu_1^n) \|_{L^p} + \| b(y_4,\mu_1^n) - b(y_3,\mu_1) \|_{L^p} 
            \\
            \leq &
            L^{1/p} \left[ W_p(\mu_0^n,\mu_0) + W_p(\mu_1^n,\mu_1) + \left(\int |x_1-x_0|^p dp^{14}_\sharp \sigma_n(x_0,x_1)\right)^{1/p} \right],
        \end{align*}
        where in the last inequality we used the fact that \eqref{lipschitz condition lemma} holds for $p^{12}_\sharp \sigma_n = \pi_0^n$ and $p^{34}_\sharp \sigma_n = \pi^n_1$, and $p^{14}_\sharp \sigma_n$ is any transport plan between $\mu_0^n$ and $\mu_1^n$, that are finitely supported so that \eqref{lipschitz condition lemma} holds as well, thanks to Step 1. Then, we conclude passing to the limit as $n\to +\infty$.
    \end{proof}

    The next result shows how the Lipschitz property along all the possible transport plans implies a Lipschitz property in space. For a proof, we refer to \cite[Theorem 4.8, (1)]{cavagnari2020extension}.

    \begin{lemma}\label{W_p lipschitz implies spatial lipschitz}
        Let $b:\R^d\times \PP(\R^d)\to\R^d$ be satisfying the hypothesis of Lemma \ref{lipschitz along optimal plans implies lipschitz on any plan}. Then, for all $\mu\in \PP_p(\R^d)$, the map $b(\cdot,\mu) :\operatorname{supp}\mu \to \R^d$ is $L$-Lipschitz.
    \end{lemma}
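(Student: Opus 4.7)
The plan is a three-stage argument: first upgrade the Lipschitz hypothesis to hold along \emph{every} coupling; then test against perturbed self-couplings of $\mu$ with itself that concentrate near prescribed pairs $(x_0,x_1)\in(\operatorname{supp}\mu)^2$; finally exploit Lebesgue differentiation to convert the integrated estimate into a pointwise one. As a preliminary step, Lemma \ref{lipschitz along optimal plans implies lipschitz on any plan} promotes \eqref{lipschitz condition lemma} to all $\pi\in\Gamma(\mu_0,\mu_1)$; applying this with $\mu_0=\mu$, $\mu_1=\delta_0$ (where the unique coupling is $\mu\otimes\delta_0$) yields $\int|b(x,\mu)-b(0,\delta_0)|^p\,d\mu(x)\le L\,W_p^p(\mu,\delta_0)<+\infty$, so $b(\cdot,\mu)\in L^p(\mu;\R^d)$. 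This integrability is what makes the Lebesgue-point machinery below applicable.

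Given distinct $x_0,x_1\in\operatorname{supp}\mu$, for $\varepsilon>0$ small set $\nu_i^\varepsilon:=\mu|_{B_\varepsilon(x_i)}/\mu(B_\varepsilon(x_i))$ and $m_\varepsilon:=\min\{\mu(B_\varepsilon(x_0)),\mu(B_\varepsilon(x_1))\}>0$, and define
\[
\pi_\varepsilon := (\operatorname{id},\operatorname{id})_\sharp\mu \;-\; m_\varepsilon\,(\operatorname{id},\operatorname{id})_\sharp\!\bigl[\tfrac12(\nu_0^\varepsilon+\nu_1^\varepsilon)\bigr] \;+\; \tfrac{m_\varepsilon}{2}\bigl(\nu_0^\varepsilon\otimes\nu_1^\varepsilon+\nu_1^\varepsilon\otimes\nu_0^\varepsilon\bigr).
\]
Direct verification, using $m_\varepsilon\le\mu(B_\varepsilon(x_i))$, shows $\pi_\varepsilon\ge 0$ and both marginals equal $\mu$. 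Both the integrands $|y_0-y_1|^p$ and $|b(y_0,\mu)-b(y_1,\mu)|^p$ vanish on the diagonal, so the diagonal contributions cancel on both sides of the extended Lipschitz inequality applied to $\pi_\varepsilon$; together with the symmetry of $(y_0,y_1)\leftrightarrow(y_1,y_0)$, this reduces to
\[
\int|b(y_0,\mu)-b(y_1,\mu)|^p\,d(\nu_0^\varepsilon\otimes\nu_1^\varepsilon) \;\le\; L\int|y_0-y_1|^p\,d(\nu_0^\varepsilon\otimes\nu_1^\varepsilon).
\]

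The main obstacle is taking $\varepsilon\to 0$ on the left, since $b(\cdot,\mu)$ has no a-priori continuity. The remedy is the Lebesgue differentiation theorem for Radon measures on $\R^d$ applied to $|b(\cdot,\mu)|^p\in L^1(\mu)$: there exists $\Sigma\subseteq\operatorname{supp}\mu$ with $\mu(\operatorname{supp}\mu\setminus\Sigma)=0$ such that for every $x\in\Sigma$,
\[
\lim_{\varepsilon\to 0}\|b(\cdot,\mu)-b(x,\mu)\|_{L^p(\nu_x^\varepsilon)}=0,\qquad \nu_x^\varepsilon:=\mu|_{B_\varepsilon(x)}/\mu(B_\varepsilon(x)).
\]
For $x_0,x_1\in\Sigma$, the Minkowski inequality in $L^p(\nu_0^\varepsilon\otimes\nu_1^\varepsilon)$, noting that the constant function $b(x_0,\mu)-b(x_1,\mu)$ has norm $|b(x_0,\mu)-b(x_1,\mu)|$, yields
\[
|b(x_0,\mu)-b(x_1,\mu)| \le \|b(x_0,\mu)-b(y_0,\mu)\|_{L^p(\nu_0^\varepsilon)} + \|b(y_0,\mu)-b(y_1,\mu)\|_{L^p(\nu_0^\varepsilon\otimes\nu_1^\varepsilon)} + \|b(y_1,\mu)-b(x_1,\mu)\|_{L^p(\nu_1^\varepsilon)}.
\]
The first and third terms vanish as $\varepsilon\to 0$ by the Lebesgue-point property, while the middle term is bounded above by $L^{1/p}\bigl(\int|y_0-y_1|^p\,d(\nu_0^\varepsilon\otimes\nu_1^\varepsilon)\bigr)^{1/p}\to L^{1/p}|x_0-x_1|$ since the supports of $\nu_i^\varepsilon$ are contained in $B_\varepsilon(x_i)$. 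This gives the sharp bound $|b(x_0,\mu)-b(x_1,\mu)|\le L^{1/p}|x_0-x_1|$ for all $x_0,x_1\in\Sigma$.

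Finally, $\Sigma$ is dense in $\operatorname{supp}\mu$: any $x\in\operatorname{supp}\mu$ satisfies $\mu(B_r(x))>0$ for every $r>0$, hence $\mu(B_r(x)\cap\Sigma)=\mu(B_r(x))>0$ and $B_r(x)\cap\Sigma\neq\emptyset$. The Lipschitz estimate extends by uniform continuity to the whole of $\operatorname{supp}\mu$ (equivalently, after replacing $b(\cdot,\mu)$ by its unique continuous extension from the full-measure set $\Sigma$), completing the proof.
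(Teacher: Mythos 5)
The paper does not prove this lemma — it cites it from an external reference (Cavagnari--Savar\'e--Sodini, Theorem 4.8(1)) — so your self-contained argument is genuinely useful and not a restatement of the paper's route. The construction of the perturbed self-coupling $\pi_\varepsilon$ is correct: for $\varepsilon$ smaller than $|x_0-x_1|/2$ the two balls are disjoint, the measure $\mu-\tfrac{m_\varepsilon}{2}(\nu_0^\varepsilon+\nu_1^\varepsilon)$ is nonnegative, both marginals are $\mu$, and after Lemma~\ref{lipschitz along optimal plans implies lipschitz on any plan} the diagonal contributions drop from both sides by symmetry, giving the displayed inequality on $\nu_0^\varepsilon\otimes\nu_1^\varepsilon$. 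The Besicovitch/Lebesgue differentiation step for a general Radon measure $\mu$ on $\R^d$, applied to $b(\cdot,\mu)\in L^p(\mu;\R^d)$ (integrability correctly established via the coupling with $\delta_0$), yields the Lebesgue point set $\Sigma$, and the Minkowski three-term split is sound.

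There is, however, one point you should state more forcefully rather than tuck into a parenthesis. Your argument only produces the inequality $|b(x_0,\mu)-b(x_1,\mu)|\le L^{1/p}|x_0-x_1|$ for $x_0,x_1\in\Sigma$; the extension to all of $\operatorname{supp}\mu$ necessarily replaces $b(\cdot,\mu)$ by its unique continuous extension from $\Sigma$. This is not a stylistic choice: the hypothesis of Lemma~\ref{lipschitz along optimal plans implies lipschitz on any plan} is an integral condition in which the first (resp.\ second) marginal of every admissible $\pi$ is an honest probability measure, so modifying $b(\cdot,\mu)$ at a $\mu$-null point of $\operatorname{supp}\mu$ changes neither the hypothesis nor the value of $b$ at any $\mu'\neq\mu$, yet destroys pointwise Lipschitzness there. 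Thus the statement cannot be literally true for the pointwise function $b$ when $\mu$ is non-atomic; one proves, as you do, that $b(\cdot,\mu)$ admits a (unique) Lipschitz $\mu$-representative on $\operatorname{supp}\mu$, and this representative is what the downstream Gr\"onwall argument in Section~\ref{uniqueness section} actually needs. Finally, note that the sharp constant coming out of the $L^p$ normalization of the hypothesis is $L^{1/p}$, not $L$; this discrepancy is a benign imprecision of the paper (it propagates into the constant in the Gr\"onwall estimate but affects neither its validity nor the uniqueness conclusion, since $L^{1/p}\in L^1(0,T)$ whenever $L\in L^1(0,T)$ and $p\ge 1$).
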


    Now, we can proceed to prove point (ii) in Theorem \ref{main uniqueness theorem}. Notice that all the previous lemmas apply to any non-local vector field $(x,\mu)\mapsto b(t,x,\mu)$, for any $t$ such that $L(t)<+\infty$ (in particular for a.e. $t\in (0,T)$). So, for a.e. $t\in(0,T)$ and for every $\mu\in \PP_p(\R^d)$, the map $b(t,\cdot,\mu) : \operatorname{supp}\mu \to \R^d$ is $L(t)$-Lipschitz.
    \\
    Now, let $\lambda \in \operatorname{SPS}_b$ and consider $\gamma_0,\gamma_1\in \operatorname{supp}\lambda$ that satisfy $\dot\gamma_i(t) = b(t,\gamma_i(t),(e_t)_\sharp \lambda)$, $i=0,1$. Thanks to Lemma \ref{initial lemma section 6}, for $i=0,1$, we know that $(e_t)_\sharp \lambda \in \PP_p(\R^d)$ for any $t\in (0,T)$ and $\gamma_i(t) \in \operatorname{supp}(e_t)_\sharp \lambda$, because $\gamma_i\in \operatorname{supp}\lambda$. Then, for a.e. $t\in (0,T)$
    \begin{align*}
        \frac{d}{dt} |\gamma_0(t) - \gamma_1(t)|^2 = & 2\big(\gamma_0(t) - \gamma_1(t)\big)\cdot \big(b(t,\gamma_0(t),(e_t)_\sharp \lambda) - b(t,\gamma_1(t),(e_t)_\sharp \lambda )\big)
        \\
        \leq &
        2L(t)|\gamma_1(t) - \gamma_2(t)|^2.
    \end{align*}
    Thus, using Gronwall lemma and the continuity of the curves $\gamma_0$ and $\gamma_1$, we have $\gamma_0 = \gamma_1$, which implies that, defining $\overline{\mu}:= (e_0)_\sharp \lambda$, it holds
    \begin{equation}\label{repr formula for lambda}
        \lambda = \lambda_{\overline{\mu}} := \int \delta_{\gamma_{\overline{x}}} d\overline{\mu}(\overline{x}),
    \end{equation}
    where $\gamma_{\overline{x}}\in C_T(\R^d)$ is the unique curve in the support of $\lambda$ solving $\dot{\gamma}(t) = b(t,\gamma(t),(e_t)_\sharp \lambda)$ starting from $\gamma(0) = \overline{x} \in \R^d$. 
    Then, considering $\mathfrak{L}\in \PP(\PP(C_T(\R^d)))$ given by Theorem \ref{main theorem}, $\mathfrak{L}$-a.e. $\lambda \in \PP(C_T(\R^d))$ must be of the form \eqref{repr formula for lambda}, so that 
    \begin{equation}
        \mathfrak{L} = \int_{\PP(\R^d)} \delta_{\lambda_{\overline{\mu}}} d\overline{M}(\overline{\mu}),
    \end{equation}
    giving the uniqueness of $\mathfrak{L}$ satisfying $(E_0)_\sharp \mathfrak{L} = \overline{M}$.

    \begin{oss}
        The same proof works, in the case $p=2$, only assuming monotonicity of the vector field $b$, i.e. that for any $\mu_0,\mu_1 \in \PP_2(\R^d)$ there exists a $W_2$-optimal plan $\pi$ between $\mu_0$ and $\mu_1$ such that 
        \begin{equation}
            \int (x_0-x_1)\cdot \big( b(t,x_0,\mu_0) - b(t,x_1,\mu_1) \big) d\pi(x_0,x_1) \leq L(t) W_2^2(\mu_0,\mu_1),
        \end{equation}
        with $L\in L^1(0,T)$. Indeed, thanks to the aforementioned \cite{cavagnari2023lagrangian,cavagnari2020extension}, the same proofs work under this hypothesis.
    \end{oss}

\appendix

\section{Lusin sets, Souslin sets and measurable selection theorem}\label{appendix souslin}
Here we collect some results from \cite{schwariz1973radon} and \cite{Bogachev07} about Lusin and Souslin sets, that are persistently used throughout the paper. In particular, we recall (without proof) the universal measurability of Souslin subsets and a measurable selection theorem.

\begin{df}[Lusin sets]\label{def: lusin}
    A subset $L\subset X$ of a Hausdorff topological space $(X,\tau)$, is said to be a Lusin set if there exists a Polish space $(Y,\tau')$ and a continuous and bijective map $i:Y\to L$. The space $X$ is said to be a Lusin space if it is a Lusin set.
\end{df}

Lusin spaces shares many interesting properties. Here we list the ones that are useful for our presentation and we suggest \cite[Chapter 2]{schwariz1973radon} for further reading.

\begin{lemma}\label{lusin iff borel}
    Let $(X,\tau)$ be a Lusin space. Then $L\subset X$ is a Lusin set if and only if it is a Borel set. In particular, the same holds when $(X,\tau)$ is Polish.
\end{lemma}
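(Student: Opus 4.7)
The plan is to prove the two directions separately, reducing both to classical facts about Borel subsets of Polish spaces, and then invoking the Lusin--Souslin theorem for the harder direction.

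For the easy direction (Borel $\Rightarrow$ Lusin), I would start with a continuous bijection $i\colon Y\to X$ from a Polish space $Y$ witnessing that $(X,\tau)$ is Lusin. Given a Borel set $L\subset X$, the set $i^{-1}(L)\subset Y$ is Borel in a Polish space. The classical Lusin--Kuratowski theorem then gives a Polish topology $\tau'$ on $i^{-1}(L)$, finer than the subspace topology, whose Borel $\sigma$-algebra coincides with the Borel subsets of $i^{-1}(L)$ inherited from $Y$. In particular, the identity map $(i^{-1}(L),\tau')\to i^{-1}(L)\subset Y$ is a continuous bijection, and composing with $i$ yields a continuous bijection from a Polish space onto $L$. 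Hence $L$ is Lusin.

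For the non-trivial direction (Lusin $\Rightarrow$ Borel), I would fix a continuous bijection $\phi\colon Z\to X$ witnessing that $X$ is Lusin, with $Z$ Polish, and a continuous bijection $i\colon Y\to L$ from a Polish $Y$. The key preliminary fact I would use is that a continuous bijection between Lusin spaces is automatically a Borel isomorphism: this follows from the Lusin--Souslin theorem applied to $\phi$, which guarantees that $\phi$ sends Borel sets of $Z$ to Borel sets of $X$, hence $\phi^{-1}$ is Borel measurable. With this in hand, the composition $\psi := \phi^{-1}\circ i \colon Y\to Z$ is a Borel-measurable injection between Polish spaces. By the Lusin--Souslin theorem again, the image $\psi(Y)=\phi^{-1}(L)$ is a Borel subset of the Polish space $Z$. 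Applying the Borel isomorphism $\phi$ once more, $L=\phi(\phi^{-1}(L))$ is Borel in $X$.

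The main obstacle is the backward implication, where one really needs the Lusin--Souslin theorem (injective Borel images of Polish spaces in Polish spaces are Borel). The trick in packaging the argument cleanly is to use the Lusin structure of the ambient space $X$ in two essentially independent ways: once to pass to a Polish model via $\phi$, and once to convert the continuous injection $i$ into a Borel injection between Polish spaces, so that the theorem applies directly. The Polish case stated as a corollary is then immediate, since every Polish space is Lusin (take $i=\mathrm{id}$ in Definition \ref{def: lusin}).
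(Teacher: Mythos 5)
The paper does not actually give a proof of this lemma --- it is cited to \cite[Chapter 2]{schwariz1973radon}, so there is no in-paper argument to compare against. Your proof is essentially correct and its overall architecture (reduce both directions to the Polish setting via the witnessing continuous bijection, then use the Lusin--Souslin theorem for the hard direction) is the standard one. Two remarks, one minor and one a genuine gap to flag.

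The small gap is in your justification that $\phi\colon Z\to X$ is a Borel isomorphism. You invoke the Lusin--Souslin theorem to conclude that $\phi$ maps Borel sets of $Z$ to Borel sets of $X$, but the classical Lusin--Souslin theorem is stated for Borel injections between \emph{Polish} spaces, and $X$ here is only Lusin (it need not be metrizable, let alone Polish). The statement you need is the version for Souslin spaces, and it is worth making the argument explicit: given Borel $B\subset Z$, both $\phi(B)$ and $\phi(Z\setminus B)=X\setminus\phi(B)$ are continuous images of Borel (hence Souslin) subsets of a Polish space, so they are Souslin subsets of the Souslin space $X$; a Souslin set with Souslin complement is Borel (this is \cite[Corollary 6.6.10]{Bogachev07}, which the paper quotes in a remark just after the definition of Souslin sets). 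With that, $\phi(B)$ is Borel, so $\phi$ is a Borel isomorphism and the rest of your argument goes through exactly as written. Once $\phi$ is known to be a Borel isomorphism, your forward direction could actually be shortened as well: transport $L$ to $\phi^{-1}(L)\subset Z$, apply Lusin--Kuratowski there, and push the Polish re-topologization forward through $\phi$; your route via $i^{-1}(L)$ is equally valid, just a parallel path.

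One notational point: in the backward direction you introduce both $\phi\colon Z\to X$ and $i\colon Y\to L$, and then form $\psi=\phi^{-1}\circ i$. Strictly speaking $i$ takes values in $L\subset X$, so $\psi$ is well-defined; but you should say explicitly that $i$, viewed as a map into $X$, is still continuous (inclusion $L\hookrightarrow X$ is continuous), so that $\psi$ is the composition of a continuous map with a Borel map and hence Borel, as required to apply Lusin--Souslin on the Polish side.
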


\noindent A direct consequence of the definition of Lusin set and the previous lemma is the next corollary.

\begin{co}\label{cont and inj image of Borel sets}
    Let $(X,\tau)$ and $(Y,\tau')$ be Lusin spaces and $f:Y \to X$ continuous and injective. Then, $f(B)\subset X$ is Borel for any $B\subset Y$ Borel. 
\end{co}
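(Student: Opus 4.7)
The plan is to invoke Lemma \ref{lusin iff borel} twice, once to turn the Borel hypothesis on $B$ into a Lusin-set statement and once to turn a Lusin-set conclusion about $f(B)$ back into a Borel statement. This packages everything into two applications of a fact that is already stated for us.

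First I would start from $B \subset Y$ Borel. Since $Y$ is Lusin, Lemma \ref{lusin iff borel} tells us that $B$ is a Lusin subset of $Y$. By Definition \ref{def: lusin}, this means there exist a Polish space $(P, \tau'')$ and a continuous bijection $i : P \to B$. This is the only structural input we need from $B$.

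Next I would form the composition $f \circ i : P \to f(B)$. As a composition of continuous maps between Hausdorff spaces, it is continuous. As the composition of the bijection $i : P \to B$ with the restriction $f|_B : B \to f(B)$ — which is bijective because $f$ is injective on $Y$ and hence on $B$ — the map $f \circ i$ is itself a continuous bijection from the Polish space $P$ onto $f(B)$. By Definition \ref{def: lusin}, this exhibits $f(B)$ as a Lusin subset of the Lusin space $X$. Applying Lemma \ref{lusin iff borel} once more, $f(B)$ is Borel in $X$, which is the claim.

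The argument has no real obstacle: the only subtlety is remembering that $f$ need not be injective on all of $Y$ in some intended generalisations, but here injectivity is a hypothesis, so $f|_B$ is automatically bijective onto its image and the composition $f \circ i$ stays bijective. No topological properties of $f$ beyond continuity and injectivity are used.
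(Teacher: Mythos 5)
Your argument is correct and is exactly the intended one: the paper states the corollary is "a direct consequence of the definition of Lusin set and the previous lemma," which is precisely your two applications of Lemma \ref{lusin iff borel} bridged by composing the Polish parametrisation of $B$ with the continuous injection $f$. Nothing to add.
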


\begin{lemma}\label{union of lusin}
    Let $(X,\tau)$ be a Hausdorff topological space and $L_n\subset X$ a sequence of Lusin sets of $X$. Then $\bigcup L_n $ is a Lusin set.
\end{lemma}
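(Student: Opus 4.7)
The plan is to reduce the statement to making the given Lusin sets \emph{disjoint} inside $X$, and then glue the resulting Polish witnesses by a topological disjoint sum. For each $n$, let $(Y_n,\tau_n)$ be Polish with a continuous bijection $f_n: Y_n\to L_n$. Set $L:=\bigcup_n L_n$; we want to exhibit a Polish space $Z$ and a continuous bijection $Z\to L$.

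The natural candidate $\bigsqcup_n Y_n\to L$ obtained by gluing the $f_n$'s is continuous and surjective, but not injective whenever the $L_n$'s overlap. To fix this, I would pass to the sets $L_n':=L_n\setminus\bigcup_{k<n}L_k$, which are pairwise disjoint with union $L$. The key step is therefore to show that $E_n:=f_n^{-1}\bigl(\bigcup_{k<n}L_k\bigr)=\bigcup_{k<n}f_n^{-1}(L_k)$ is a \emph{Borel} subset of $Y_n$; granted this, $Y_n'':=Y_n\setminus E_n$ is Borel in the Polish space $Y_n$, so Kuratowski's theorem supplies a finer Polish topology $\tau_n''$ on $Y_n''$, and the restriction $f_n|_{Y_n''}:(Y_n'',\tau_n'')\to L_n'$ is a continuous bijection.

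The main obstacle is establishing that each $f_n^{-1}(L_k)$ is Borel in $Y_n$, since the ambient $X$ is only Hausdorff and $L_k$ need not be Borel in $X$. Here is where the Hausdorff hypothesis is used through a ``fibered product'' trick: form
\[
P_{k,n}:=\bigl\{(y,z)\in Y_k\times Y_n \, : \, f_k(y)=f_n(z)\bigr\},
\]
which is the preimage under the continuous map $(f_k,f_n):Y_k\times Y_n\to X\times X$ of the diagonal $\Delta_X$. Since $X$ is Hausdorff, $\Delta_X$ is closed, so $P_{k,n}$ is closed in the Polish space $Y_k\times Y_n$ and hence Polish. The projection $\pi_n:P_{k,n}\to Y_n$ is continuous and, thanks to the injectivity of $f_k$, also injective, with image exactly $f_n^{-1}(L_k)$. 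By the Lusin--Suslin theorem (continuous injective images of Polish spaces inside Polish spaces are Borel), $f_n^{-1}(L_k)$ is Borel in $Y_n$, which is what was needed.

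Once the disjointification is in place, I would simply set $Z:=\bigsqcup_n (Y_n'',\tau_n'')$ with the disjoint-sum topology, which is Polish (countable disjoint sum of Polish spaces), and define $g:Z\to L$ by gluing the maps $f_n|_{Y_n''}$. Each summand maps continuously into $X$ (hence into $L$ with the subspace topology), so $g$ is continuous; the surjectivity follows from $L=\bigsqcup_n L_n'$; and the injectivity follows from the disjointness of the $L_n'$'s together with the injectivity of each $f_n$. Hence $L$ is Lusin, as desired.
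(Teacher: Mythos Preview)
Your proof is correct. The paper does not actually provide a proof of this lemma: it is stated in Appendix~A as a background fact imported from \cite{schwariz1973radon} and \cite{Bogachev07}, so there is nothing to compare against directly.

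That said, your argument is essentially the standard one found in those references. The disjointification $L_n' = L_n \setminus \bigcup_{k<n} L_k$ is the natural move, and the fiber-product trick --- pulling back the diagonal of the Hausdorff space $X$ to get a closed (hence Polish) subset $P_{k,n} \subset Y_k \times Y_n$, then using injectivity of $f_k$ to make the projection $\pi_n$ injective --- is exactly how one shows $f_n^{-1}(L_k)$ is Borel without any Borel structure on $X$ itself. The appeal to Lusin--Suslin here is the same content as the paper's Lemma~\ref{lusin iff borel}/Corollary~\ref{cont and inj image of Borel sets}. One minor remark: instead of invoking Kuratowski's change-of-topology theorem for the Borel set $Y_n''$, you could equivalently cite Lemma~\ref{lusin iff borel} directly (Borel subsets of Polish spaces are Lusin), which gives a Polish space $Z_n$ with a continuous bijection onto $Y_n''$ and composes with $f_n$; this keeps everything within the paper's stated toolkit, but the two formulations are interchangeable.
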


\begin{df}[Souslin sets]
    A subset $S\subset X$ of a Hausdorff topological space $(X,\tau)$ is said to be a \textit{Souslin set} if there exist a Polish space $Y$ and a continuous map $f:Y\to X$ such that $f(Y) = S$. The space $X$ is said to be a \textit{Souslin space} if it is a Souslin set.
    \\
    Let $\Tilde{\mathcal{S}}(X)$ be the class of all Souslin subsets of $X$ and $\mathcal{S}(X)$ be the $\sigma$-algebra generated by $\Tilde{\mathcal{S}}(X)$.
\end{df}

An important tool to better understand the structure of the Souslin subsets of a given space, is the so-called A-operation (or Souslin operation). In the following theorem, we see how they are connected.

\begin{teorema}
    Let $(X,\tau)$ be a Hausdorff topological space. Every Souslin subset $S\in \tilde{\mathcal{S}}(X)$ can be obtained from closed sets by means of the A-operation, i.e. for any $S\in \tilde{\mathcal{S}}(X)$ there exists a class of closed sets $\{C_{n_1,\dots,n_k}\}$, where $(n_1,\dots,n_k)$ is any possible finite sequence, such that 
    \begin{equation}
        S = \bigcup_{(n_i)\in \N^{\infty}}\bigcap_{k=1}^{+\infty} C_{n_1,\dots,n_k}.
    \end{equation}
    Moreover, the set $\Tilde{\mathcal{S}}(X)$ is closed under the A-operation. 
\end{teorema}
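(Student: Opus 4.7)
The statement splits into two independent claims, and I plan to treat them in turn.

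For the first claim, I would exploit the very definition of Souslin: $S = f(Y)$ for some Polish $Y$ and continuous $f\colon Y\to X$. Fix a complete metric $d_Y$ on $Y$ and build a Souslin scheme of nonempty subsets $\{Y_s\}_{s\in \N^{<\N}}$ satisfying $Y_\emptyset = Y$, $Y_s=\bigcup_{n\in\N} Y_{s\frown n}$, and $\operatorname{diam}(Y_s)\le 2^{-|s|}$; such a scheme is produced by the standard recursive covering of $Y$ by countably many closed balls of shrinking radii. Set
\[
C_s \;:=\; \overline{f(Y_s)}^{\,X}, \qquad s\in \N^{<\N},
\]
which is closed in $X$. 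I then claim that $S=\bigcup_{\sigma\in \N^{\N}}\bigcap_{k\ge 1} C_{\sigma|k}$. The inclusion $\subseteq$ is immediate: given $y\in Y$, one inductively picks $\sigma$ so that $y\in Y_{\sigma|k}$ for every $k$, and then $f(y)\in f(Y_{\sigma|k})\subseteq C_{\sigma|k}$.

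For the reverse inclusion, the plan is as follows. Given $x\in \bigcap_k C_{\sigma|k}$, completeness of $Y$ together with the vanishing diameter condition collapses $\bigcap_k Y_{\sigma|k}$ to a single point $y_\sigma$. Continuity of $f$ at $y_\sigma$ implies that for every open $V\ni f(y_\sigma)$ one has $Y_{\sigma|k}\subseteq f^{-1}(V)$ for $k$ large, so $C_{\sigma|k}\subseteq \overline{V}$ and hence $x\in \overline{V}$ for \emph{every} open $V\ni f(y_\sigma)$. This forces $x=f(y_\sigma)\in S$. This last passage is the main technical delicacy: the Hausdorff separation must be leveraged (in practice one works in a regular/metrizable setting, as in all applications of the paper) to conclude that being in the closure of every neighbourhood of $f(y_\sigma)$ pins down the point.

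For the closure of $\tilde{\mathcal S}(X)$ under the A-operation, let $\{S_s\}_{s\in \N^{<\N}}$ be Souslin subsets and write $S_s = f_s(Y_s)$ with $Y_s$ Polish and $f_s$ continuous. Consider the Polish product
\[
P \;:=\; \N^{\N}\times \prod_{s\in \N^{<\N}} Y_s,
\]
and the subset
\[
Z \;:=\;\Bigl\{(\sigma,(y_s)_s)\in P \;:\; f_{\sigma|k}(y_{\sigma|k}) = f_{\sigma|1}(y_{\sigma|1}) \text{ for every } k\ge 1\Bigr\}.
\]
The projection $\pi(\sigma,(y_s)):= f_{\sigma|1}(y_{\sigma|1})$ is continuous, and a direct verification (exhibiting explicit elements of $Z$) gives $\pi(Z) = \bigcup_\sigma \bigcap_k S_{\sigma|k}$, which is the A-operation applied to $\{S_s\}$. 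The key step is to show $Z$ is closed in $P$, so that $Z$ is Polish and its image is Souslin. Here the $\sigma$-dependence of the defining relation is the main subtlety: if $(\sigma^{(n)},(y_s^{(n)}))\to (\sigma,(y_s))$ in $P$, convergence in the Baire space $\N^{\N}$ gives, for each fixed $k$, eventual agreement $\sigma^{(n)}|k = \sigma|k$, so the constraint at level $k$ eventually becomes the fixed continuous equation $f_{\sigma|k}(y_{\sigma|k}^{(n)}) = f_{\sigma|1}(y_{\sigma|1}^{(n)})$; continuity of the $f_s$ and Hausdorffness of $X$ then transfer the equality to the limit. Combining these two parts yields the theorem.
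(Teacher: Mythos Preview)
The paper does not supply its own proof of this theorem; the appendix explicitly recalls it from \cite{schwariz1973radon} and \cite{Bogachev07} without argument. I therefore evaluate your proposal on its own merits.

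Your treatment of the first claim is correct, and your hesitation about the Hausdorff hypothesis is unwarranted: if $x\neq f(y_\sigma)$, Hausdorffness of $X$ yields disjoint open $U\ni x$ and $V\ni f(y_\sigma)$; then $U$ is an open neighbourhood of $x$ disjoint from $V$, whence $x\notin\overline V$, contradicting what you derived. No regularity or metrizability of $X$ is needed.

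For the second claim there is a genuine gap. Your space $P=\N^{\N}\times\prod_{s}Y_s$ is empty as soon as a single $Y_{s_0}$ is empty (equivalently $S_{s_0}=\emptyset$), and then $\pi(Z)=\emptyset$; yet $\bigcup_\sigma\bigcap_k S_{\sigma|k}$ may well be nonempty, since only branches passing through $s_0$ are killed. A clean repair is to replace the full product by
\[
P':=\Bigl\{(\sigma,(z_k)_{k\ge 1}):\sigma\in\N^{\N},\ z_k\in Y_{\sigma|k}\Bigr\}\ \subset\ \N^{\N}\times\Bigl(\textstyle\bigsqcup_{s}Y_s\Bigr)^{\!\N},
\]
which is closed in the displayed Polish product (the countable disjoint union of Polish spaces is Polish, and the label constraint is closed because $\N^{<\N}$ is discrete). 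Writing $F:\bigsqcup_s Y_s\to X$ with $F|_{Y_s}=f_s$, the set $Z':=\{(\sigma,(z_k))\in P':F(z_k)=F(z_1)\ \forall k\}$ is closed (preimage of the diagonal of the Hausdorff space $X$), the map $\pi'(\sigma,(z_k)):=F(z_1)$ is continuous, and $\pi'(Z')=\bigcup_\sigma\bigcap_k S_{\sigma|k}$ with no emptiness obstruction. With this modification your argument goes through.
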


\begin{oss}
    Within this theorem, one can think that $\Tilde{\mathcal{S}}(X) = \mathcal{S}(X)$, but it is known to be false. A consequence of the theorem is that $\Tilde{\mathcal{S}}(X)$ is closed under countable union and intersection. The problems are given by the complement operation: indeed, \cite[Corollary 6.6.10]{Bogachev07}, if both $S$ and $S^c$ are Souslin, then $S$ is a Borel subset, and by \cite[Theorem 6.7.10]{Bogachev07} we know that there exists a Souslin set that is not Borel.
\end{oss}

Now, we show that the Souslin-measurable sets are \textit{universally measurable}.

\begin{prop}\label{univ meas of souslin}
    Let $(X,\tau)$ be a Hausdorff space and $\mu$ a Borel, positive and finite measure over $X$. Then, the $\sigma$-algebra $\mathcal{B}_\mu$ of all the $\mu$-measurable subsets is closed under the $A$-operation.
    \\
    In particular, $\mathcal{S}(X)\subset \mathcal{B}_\mu$ for all Borel, positive and finite measure $\mu$, and we say that the Souslin $\sigma$-algebra is universally measurable.
\end{prop}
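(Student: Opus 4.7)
The plan is to establish the first assertion --- closure of $\mathcal{B}_\mu$ under the $A$-operation --- from which the second follows immediately. Indeed, once this is known, the structural theorem recalled just above expresses every Souslin set as the outcome of the $A$-operation applied to a scheme of closed sets (which are Borel and thus $\mu$-measurable), hence $\tilde{\mathcal{S}}(X)\subseteq \mathcal{B}_\mu$; since $\mathcal{B}_\mu$ is a $\sigma$-algebra, it contains the $\sigma$-algebra $\mathcal{S}(X)$ generated by $\tilde{\mathcal{S}}(X)$.

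Let $\{A_s\}$ be a family of $\mu$-measurable sets indexed by finite sequences $s$ of positive integers, with resulting set
$$A \;=\; \bigcup_{(n_i)\in \mathbb{N}^{\infty}} \bigcap_{k\geq 1} A_{(n_1,\ldots,n_k)}.$$
First I would perform the classical monotone reduction: replace each $A_s$ by $\widetilde A_s := \bigcap_{t\preceq s} A_t$, which is still in $\mathcal{B}_\mu$, satisfies $\widetilde A_{(s,n)}\subseteq \widetilde A_s$, and yields the same $A$ under the $A$-operation. For each finite $s$, introduce the ``tail''
$$A_s^* \;:=\; \bigcup_{\sigma\succ s} \bigcap_{k} \widetilde A_{\sigma|_k},$$
so that $A=A_{\emptyset}^*$, $A_s^*\subseteq \widetilde A_s$, and the crucial monotone identity $A_s^* = \bigcup_n A_{(s,n)}^*$ holds.

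The heart of the argument is then a Sierpi\'nski--Lusin type approximation along the tree of finite sequences. Fixing $\epsilon>0$, I would inductively build $\mu$-measurable sets $C_s$ with $A_s^*\subseteq C_s\subseteq \widetilde A_s$ together with integers $N_s$ such that
$$\mu\!\left(C_s\setminus \bigcup_{n\leq N_s} C_{(s,n)}\right) \;\leq\; \epsilon\cdot 2^{-|s|-1},$$
while preserving the monotonicity $C_{(s,n)}\subseteq C_s$ (by intersecting each newly constructed $C_{(s,n)}$ with $C_s$). This is possible because the finiteness of $\mu$ combined with the increasing union $A_s^*=\bigcup_n A_{(s,n)}^*$ allows the selection of measurable hulls whose $\mu$-measure approaches $\mu^*(A_s^*)$ arbitrarily closely. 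Telescoping the estimate across the tree yields a $\mu$-measurable set $D$ sandwiched between $A$ and $C_{\emptyset}$ with $\mu(D\setminus A)\leq 2\epsilon$; letting $\epsilon\downarrow 0$ shows that $A$ differs from a measurable set by a $\mu$-null set, hence $A\in\mathcal{B}_\mu$ by completeness of $\mathcal{B}_\mu$.

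The main obstacle is the combinatorial bookkeeping in this inductive construction: one must select the truncation levels $N_s$ so that the summable error terms $\epsilon\cdot 2^{-|s|-1}$ combine across all branches of $\mathbb{N}^{<\mathbb{N}}$ to control $\mu(D\setminus A)$ globally, all while maintaining the hereditary constraint $C_{(s,n)}\subseteq C_s$ under the refinements used to enforce monotonicity. A cleaner alternative that bypasses this tree induction is Choquet's capacitability theorem: the outer measure $\mu^*$ is a Choquet capacity and analytic (hence Souslin) sets are universally capacitable, yielding the conclusion in one stroke. I would present the direct tree argument as the main proof and point to classical treatments such as \cite{Bogachev07} for the Choquet-capacity route.
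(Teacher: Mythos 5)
The paper does not in fact prove this proposition: Appendix A explicitly collects it ``without proof'' from \cite{schwariz1973radon} and \cite{Bogachev07}, so there is no internal argument to compare against, and what you have written is a reconstruction of the classical Nikodym--Saks theorem. Your overall strategy (monotone reduction, tails $A_s^*$, Sierpi\'nski--Lusin tree approximation) is the right one, but the sketch has two concrete gaps. First, the error budget $\epsilon\cdot 2^{-|s|-1}$ per node does not sum: when you telescope, the number of allowed sequences $|T_k|$ at level $k$ is a product of the previously chosen truncations $N_{\cdot}$ and can grow arbitrarily, so the level-$k$ loss in $\mu(C_\emptyset\setminus D)$ is only bounded by $|T_k|\cdot\epsilon 2^{-k-1}$, which need not be summable. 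The standard fix is either to divide the budget at level $k$ further by $|T_k|$ after $T_k$ has been fixed, or---cleaner---to choose one uniform cutoff $m_k$ for all $s$ of length $k$ using continuity from below of $\mu^*$ on increasing unions, so that the level-$k$ loss is $\leq\epsilon 2^{-k-1}$ directly.

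Second, and more importantly, the sandwich is the wrong way around and the key compactness step is omitted. The measurable set $D=\bigcap_k D_k$ with $D_k=\bigcup_{s\in T_k}C_s$ is an \emph{inner} approximation, $D\subseteq A$, not $D\supseteq A$ (notice that with $D\subseteq A$ your asserted bound $\mu(D\setminus A)\leq 2\epsilon$ is vacuous). The reason $D\subseteq A$ is the heart of the matter and is not visible in your sketch: for $x\in D$, the set $\{s\in T: x\in C_s\}$ is a downward-closed, finitely branching subtree of $T$ (finite branching because every $N_s<\infty$, downward closure from the hereditary constraint $C_{(s,n)}\subseteq C_s$) with nodes at every level, so by K\"onig's lemma it carries an infinite branch $\sigma$, whence $x\in\bigcap_k\widetilde A_{\sigma|_k}\subseteq A$. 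Pairing $D\subseteq A$ with a measurable hull $\hat A\supseteq A$ and the estimate $\mu(\hat A\setminus D)\leq\mu(C_\emptyset\setminus D)\leq\epsilon$ then gives $\mu$-measurability of $A$ by completeness. Your alternative route via Choquet's capacitability theorem is valid and arguably cleaner, and is essentially the treatment given in the cited references.
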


In particular, we can use any Souslin-Borel measurable map to define a push-forward of a Borel measure and obtain a Borel measure, as the next Corollary shows.

\begin{co}\label{push-forward by a souslin-borel map}
    Let $X,Y$ be two Souslin topological spaces. Let $\mu\in \mathcal{M}_+(X)$ be  Borel measure on $X$ and $f:X\to Y$ be a Souslin-Borel measurable map, i.e. $f^{-1}(B) \in \mathcal{S}(X)$ for any $B\in \mathcal{B}(Y)$. Then $\nu:= f_\sharp \mu \in \mathcal{M}_+(Y)$ is a well-defined measure over $Y$. 
\end{co}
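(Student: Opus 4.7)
My plan is to define $\nu(B):=\mu(f^{-1}(B))$ for every $B\in\mathcal{B}(Y)$ and check that the right-hand side makes sense and yields a countably additive set function. The subtle point is that $f^{-1}(B)$ is only guaranteed to be Souslin, not Borel, so $\mu(f^{-1}(B))$ is not a priori defined. The key ingredient is therefore Proposition \ref{univ meas of souslin}: the Souslin $\sigma$-algebra $\mathcal{S}(X)$ is contained in the $\mu$-completion $\mathcal{B}_\mu$ of the Borel $\sigma$-algebra, so $\mu$ extends canonically to $\mathcal{S}(X)$, and in particular $\mu(f^{-1}(B))\in[0,+\infty)$ is unambiguously defined for every $B\in\mathcal{B}(Y)$.

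Having made sense of $\nu(B)$, the next step is to verify that $\nu$ is a finite positive Borel measure on $Y$. Non-negativity and finiteness are immediate from $\nu(Y)=\mu(f^{-1}(Y))=\mu(X)<+\infty$. For countable additivity, given a disjoint sequence $\{B_n\}_{n\in\mathbb{N}}\subset\mathcal{B}(Y)$, the preimages $\{f^{-1}(B_n)\}$ form a disjoint family in $\mathcal{S}(X)\subset\mathcal{B}_\mu$, and $f^{-1}\bigl(\bigcup_n B_n\bigr)=\bigcup_n f^{-1}(B_n)$. Since the extension of $\mu$ to $\mathcal{B}_\mu$ is still $\sigma$-additive on disjoint sequences in $\mathcal{B}_\mu$, one has
\[
\nu\Bigl(\bigcup_n B_n\Bigr)=\mu\Bigl(\bigcup_n f^{-1}(B_n)\Bigr)=\sum_n \mu(f^{-1}(B_n))=\sum_n \nu(B_n),
\]
which gives countable additivity.

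The main (and really only) conceptual obstacle is the measurability issue in the very first step: a Souslin-Borel measurable map is not a Borel map in general, so the usual push-forward construction via Borel sets does not apply verbatim. Once Proposition \ref{univ meas of souslin} is invoked to place $f^{-1}(\mathcal{B}(Y))$ inside the universally measurable $\sigma$-algebra $\mathcal{B}_\mu$, the remaining verification is routine. No further property of the Souslin structure of $X$ or $Y$ is needed; in particular, $Y$ being Souslin plays no role in the argument beyond guaranteeing that $\mathcal{B}(Y)$ is a reasonable $\sigma$-algebra to start from.
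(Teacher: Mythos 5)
Your proof is correct and is exactly the argument the paper leaves implicit: the corollary is stated without proof as an immediate consequence of Proposition \ref{univ meas of souslin}, and you have supplied the routine verification — universal measurability of $\mathcal{S}(X)$ makes $\mu(f^{-1}(B))$ well-defined through the $\mu$-completion $\mathcal{B}_\mu$, and $\sigma$-additivity of the completed measure on $\mathcal{B}_\mu$ transfers to $\nu$ since preimages preserve disjoint countable unions. Your closing observation that the Souslin hypothesis on $Y$ is not actually used in this push-forward argument is also accurate; it is present for consistency with the surrounding framework (in particular the measurable selection theorem, where both spaces are required to be Souslin) rather than being needed here.
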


Finally, we state a version of the \textit{measurable selection theorem}, see \cite[Theorem 6.9.1]{Bogachev07}.

\begin{teorema}\label{measurable selection}
    Let $X$ and $Y$ be two Souslin topological spaces. Let $F:X\to Y$ be a surjective Borel map. Then, there exists a $(\mathcal{S}(Y),\mathcal{B}(X))$-measurable map $G:Y \to X$ that is a right-inverse of $F$, i.e. $F(G(y))=y$ for any $y\in Y$. In addition, the image of $G$ belongs to $\mathcal{S}(X)$.
\end{teorema}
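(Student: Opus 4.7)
The plan is to reduce the problem to a uniformization question on a product of the Baire space and $Y$, then carry out a lexicographic greedy selection compatible with a Souslin scheme. Since $X$ is Souslin, fix a continuous surjection $\phi:\mathbb{N}^{\mathbb{N}}\to X$ and set $\widetilde{F}:=F\circ\phi:\mathbb{N}^{\mathbb{N}}\to Y$, which is Borel and surjective. If I can produce a $(\mathcal{S}(Y),\mathcal{B}(\mathbb{N}^{\mathbb{N}}))$-measurable map $\sigma:Y\to\mathbb{N}^{\mathbb{N}}$ with $\widetilde{F}(\sigma(y))=y$ for every $y$, then $G:=\phi\circ\sigma$ will be the desired selection, since $\phi$ is continuous (hence Borel) and compositions of a Borel with a Souslin-measurable map are Souslin-measurable.

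To construct $\sigma$, I would use the Souslin scheme representation of the Borel set
\[
A\;:=\;\{(s,y)\in\mathbb{N}^{\mathbb{N}}\times Y:\widetilde{F}(s)=y\},
\]
which is Borel (hence Souslin) in $\mathbb{N}^{\mathbb{N}}\times Y$ and projects onto $Y$. Represent $A$ by an A-operation, $A=\bigcup_{\mathbf n\in\mathbb{N}^{\mathbb{N}}}\bigcap_{k\ge 1}C_{n_1,\dots,n_k}$, with closed sets $C_{n_1,\dots,n_k}\subset\mathbb{N}^{\mathbb{N}}\times Y$ whose first coordinate is restricted to the cylinder $[n_1,\dots,n_k]\subset\mathbb{N}^{\mathbb{N}}$. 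For fixed $y$ the fiber $A_y:=\widetilde{F}^{-1}(y)$ is nonempty, and I define $\sigma(y)$ greedily: having chosen $\sigma(y)_1,\dots,\sigma(y)_{k-1}$, let $\sigma(y)_k$ be the smallest $n\in\mathbb{N}$ such that the set
\[
\pi_Y\!\left(\bigcap_{j\le k}C_{\sigma(y)_1,\dots,\sigma(y)_{k-1},n,\dots}\right)
\]
still contains $y$, where the intersection is unfolded along \emph{all} admissible continuations. A standard completeness argument in the Baire space ensures the resulting sequence $\sigma(y)$ is an element of $A_y$, so $\widetilde{F}(\sigma(y))=y$.

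Measurability of $\sigma$ reduces to showing that for every finite tuple $(n_1,\dots,n_k)$ the level set $\{y:\sigma(y)_j=n_j\text{ for }j\le k\}$ lies in $\mathcal{S}(Y)$. By construction, this level set is a Boolean combination of projections onto $Y$ of sets obtained from the $C_{\cdot}$ via countable unions and intersections; each such projection is a Souslin set in $Y$, and $\mathcal{S}(Y)$ is stable under the A-operation (and in particular under countable unions, intersections and complements of its Souslin generators), so the level set is in $\mathcal{S}(Y)$. For the final claim, $G(Y)=\phi(\sigma(Y))$, where $\sigma(Y)$ is Souslin in $\mathbb{N}^{\mathbb{N}}$ as it is produced by a Souslin scheme, and continuous images of Souslin sets are Souslin, so $G(Y)\in\mathcal{S}(X)$. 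The main obstacle is that $\widetilde{F}$ is only Borel, so the fibers $A_y$ are not closed and a naive ``smallest element in the closure'' selection need not land inside $A_y$; the remedy is precisely to run the greedy procedure against the Souslin scheme of $A$ rather than against the raw fibers, which is where Proposition \ref{univ meas of souslin} and stability of $\mathcal{S}(Y)$ under the A-operation become essential.
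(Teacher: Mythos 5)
The paper does not prove this statement; it is quoted from \cite[Theorem~6.9.1]{Bogachev07} without proof, so there is no internal argument to compare against. Your attempt heads in the right general direction --- the Jankov--von Neumann uniformization theorem is indeed what underlies the cited result --- but the central object of your construction does not exist. You ask for a Souslin scheme $A=\bigcup_{\mathbf n\in\mathbb N^{\mathbb N}}\bigcap_{k\ge1}C_{n_1,\dots,n_k}$ in which each closed set $C_{n_1,\dots,n_k}\subset\mathbb N^{\mathbb N}\times Y$ is contained in $[n_1,\dots,n_k]\times Y$. But then, for each fixed $k$, the cylinders $[\mathbf n]$ with $|\mathbf n|=k$ form a clopen partition of $\mathbb N^{\mathbb N}$, so $D_k:=\bigcup_{|\mathbf n|=k}C_{\mathbf n}$ is closed; and since any point of $\bigcap_k C_{n_1,\dots,n_k}$ must have first coordinate equal to $\mathbf n$, the scheme index coincides with the first coordinate and your representation collapses to $A=\bigcap_k D_k$, a $G_\delta$ set. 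The graph of a Borel map $\widetilde F$ is $G_\delta$ only in very special cases (essentially Baire class at most one); for a generic Borel surjection it is not. So the scheme against which you run the greedy selection is unavailable, and the ``standard completeness argument'' you invoke to show $\sigma(y)\in A_y$ has nothing closed to bite on.

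The fix is the one your closing remark almost reaches: one introduces an \emph{extra} Baire factor. Since $A$ is Souslin it is the projection of a closed set $C\subset\mathbb N^{\mathbb N}\times Y\times\mathbb N^{\mathbb N}$, and one uniformizes by selecting, in each nonempty \emph{closed} fiber $C_y\subset\mathbb N^{\mathbb N}\times\mathbb N^{\mathbb N}$, the lexicographically least (interleaved) branch; closedness of $C_y$ --- not of $A_y$, and not of any in-plane level-$k$ section --- is exactly what makes the completeness argument valid, and the $\mathcal S(Y)$-measurability computations are then carried out against projections of $C$, which are Souslin. Two smaller remarks: $\mathcal S(Y)$ is by construction a $\sigma$-algebra, so what your measurability step actually needs is closure under countable Boolean operations on Souslin sets, not ``stability of $\mathcal S(Y)$ under the A-operation'' (that is a property of $\tilde{\mathcal S}(Y)$, and the paper's own remark recalls that $\tilde{\mathcal S}(Y)\subsetneq\mathcal S(Y)$). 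And the final claim that $\sigma(Y)$ is Souslin because it is ``produced by a Souslin scheme'' is an assertion, not an argument --- one must show that the graph of $\sigma$ is Souslin and then project.
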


\section{The extended metric-topological structure of \texorpdfstring{$\R^\infty$}{}}\label{appendix R^infty}
In this section, we describe some properties of the space $\R^\infty$, i.e. the space of real sequences $\mathrm{x} = (x_n)_{n\in \N}$, endowed with two different structures:
\begin{itemize}
    \item the metric \begin{equation}\label{eq: distance D_infty}
    D_\infty(\mathrm{x}, \mathrm{y}):= \sup_{n\in \N} |x_n - y_n| \wedge 1,
    \end{equation}
    inducing the uniform convergence;
    \item the topology $\tau_w$ induced by the element-wise convergence, which is the topology given by the metric
    \begin{equation}
        d_\infty(\mathrm{x},\mathrm{y}) := \sum_{n\in \N} \frac{|x_n-y_n|\wedge 1}{2^n},
    \end{equation}
    that is complete, and thus the topological space $(\R^\infty,\tau_w)$ is Polish. We will always consider on $\R^\infty$ the Borel $\sigma$-algebra generated by $\tau_w$.
\end{itemize}

This structure is related to $(\PP(\R^d),\hat{W}_1)$. Let $\mathcal{A}=\{\phi_1,\phi_2,\dots \}\subset C_c^1(\R^d)$ be satisfying:
    \begin{itemize}
        \item[(i)] $\phi_k$ $1$-Lipschitz w.r.t. $|\cdot | \wedge 1$, in particular $\|\nabla\phi_k\|_\infty\leq 1$ for all $k\in \N$;
        \item[(ii)] $\operatorname{Span}(\mathcal{A})$ dense in $C_0^1(\R^d)$;
        \item[(iii)]\label{third property A}$\hat{W}_1(\mu,\nu) = \sup_k\int_{\R^d}\phi_k d(\mu-\nu)$.
    \end{itemize}
    The existence of such family is justified from the fact that the countable class of functions
    \[\mathcal{A}' := \left\{\phi_{k,n}(x) = \big(|x-x_k|\wedge 1 \big)* \rho_{\frac{1}{n}}, \quad k,n\in \N \right\}\]
    satisfies the third condition, where $\{x_k\}\subset \R^d$ is a countable and dense subset of $\R^d$, $\rho(x)$ is a mollifier and $\rho_{\varepsilon}(x) = \frac{1}{\varepsilon^d}\rho(\frac{x}{\varepsilon})$. On the other hand, the unit ball of $C_0^1(\R^d)$, endowed with the $C^1$-norm, is separable. Now, define 
    \begin{equation}\label{appendix: iota Rd}
    \iota:\PP(\R^d) \to \R^\infty, \quad \iota(\mu) = \big(L_{\phi_1}(\mu), L_{\phi_2}(\mu),\dots \big).
    \end{equation}  

    \begin{lemma}\label{lemma: im of iota is closed}
        The map $\iota$ is an isometry between $(\PP(\R^d),\hat{W}_1)$ and $(\iota(\PP(\R^d)),D_\infty)$. In particular, $\iota(\PP(\R^d))$ is bounded and closed w.r.t. $D_\infty$, while it is Borel but not closed with respect to $\tau_w$.
    \end{lemma}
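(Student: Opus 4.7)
The first step is to establish the isometry $\hat W_1(\mu,\nu) = D_\infty(\iota(\mu),\iota(\nu))$. By property (i), each $\phi_k$ is $1$-Lipschitz with respect to $|\cdot|\wedge 1$, so it has oscillation at most $1$; Kantorovich--Rubinstein duality then yields $|L_{\phi_k}(\mu) - L_{\phi_k}(\nu)| \leq \hat W_1(\mu,\nu) \leq \operatorname{diam}_{|\cdot|\wedge 1}(\R^d) = 1$. Hence the truncation with $1$ in $D_\infty$ is inactive and $D_\infty(\iota(\mu),\iota(\nu)) = \sup_k|L_{\phi_k}(\mu) - L_{\phi_k}(\nu)|$, which equals $\hat W_1(\mu,\nu)$ by property (iii) of the family $\mathcal A$. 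The boundedness of $\iota(\PP(\R^d))$ in $D_\infty$ by $1$ is a direct consequence, and injectivity of $\iota$ follows from the isometry property combined with $\hat W_1$ being a genuine distance.

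For closedness with respect to $D_\infty$ I would invoke completeness: since $\hat W_1$ metrizes the narrow topology on $\PP(\R^d)$ (because $|\cdot|\wedge 1$ is bounded and induces the Euclidean topology), $(\PP(\R^d),\hat W_1)$ is Polish, and so its isometric image is a complete subset of $(\R^\infty,D_\infty)$, hence closed there.

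For Borel measurability in $\tau_w$ I would apply Lemma \ref{lusin iff borel}. The map $\iota$ is continuous from $\PP(\R^d)$ (narrow topology) into $(\R^\infty,\tau_w)$: coordinatewise convergence $L_{\phi_k}(\mu_n)\to L_{\phi_k}(\mu)$ follows from narrow convergence since every $\phi_k\in C_c^1(\R^d)$ is in particular bounded and continuous. Thus $\iota(\PP(\R^d))$ is the continuous injective image of a Polish space into the Polish (hence Lusin) space $(\R^\infty,\tau_w)$; by Definition~\ref{def: lusin} it is a Lusin subset, and by Lemma~\ref{lusin iff borel} it is Borel.

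To see that $\iota(\PP(\R^d))$ is not closed in $\tau_w$, I would exhibit an escape of mass. Fix a sequence $x_n\in\R^d$ with $|x_n|\to\infty$ and set $\mu_n:=\delta_{x_n}$. Since each $\phi_k$ has compact support, $L_{\phi_k}(\mu_n) = \phi_k(x_n) = 0$ for $n$ large, so $\iota(\mu_n)$ converges coordinatewise to the zero sequence $\mathrm{0}\in\R^\infty$. If $\mathrm{0}$ lay in $\iota(\PP(\R^d))$, there would exist $\mu\in\PP(\R^d)$ with $\int\phi_k\,d\mu = 0$ for all $k$; by linearity and density of $\operatorname{Span}(\mathcal A)$ in $C_0^1(\R^d)$, and then in $C_0(\R^d)$ in the sup norm, one gets $\int\psi\,d\mu = 0$ for every $\psi\in C_0(\R^d)$. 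Applied to a compactly supported cut-off equal to $1$ on a large ball, this contradicts $\mu(\R^d) = 1$. The main subtlety will be the Borel step, where one has to invoke the right abstract fact — a continuous injective image of a Polish space inside a Polish ambient space is automatically Borel — rather than attempting a direct description of $\iota(\PP(\R^d))$ as a countable intersection/union of explicit sets.
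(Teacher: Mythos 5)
Your proof is correct and follows essentially the same route as the paper: isometry via the duality formula for $\hat W_1$ (observing the truncation by $1$ is inactive), boundedness and closedness in $D_\infty$, the escaping-mass counterexample $\mu_n=\delta_{x_n}$ with $|x_n|\to\infty$ for non-closedness in $\tau_w$, and Borel-ness via continuity, injectivity, and the Lusin-set lemma. You fill in two details the paper leaves implicit — the completeness argument for closedness in $D_\infty$ and the density argument showing $0\notin\iota(\PP(\R^d))$ — both of which are sound.
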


\begin{proof}
    The fact that $\iota$ is an isometry comes from \eqref{third property A} of $\mathcal{A}$, which tells us
    \[D_{\infty}(\iota(\mu),\iota(\nu)) = \sup_n |L_{\phi_n}(\mu) - L_{\phi_n}(\nu)| \wedge 1 =  \sup_n |L_{\phi_n}(\mu) - L_{\phi_n}(\nu)| = \hat{W}_1(\mu,\nu).\]
    Then, the set $\iota(\PP(\R^d)$ must be bounded and closed w.r.t. $
    D_\infty$. On the other hand, consider the sequence of measures $\mu_n := \delta_{x_n}$, with $x_n \in \R^d$ such that $|x_n|\to+\infty$. Then, $\iota(\mu_n) \to 0$, i.e. $\iota(\mu_n)$ converges element-wise to $0$, but the sequence given by all zeros is not in $\iota(\PP(\R^d))$, thus $\iota(\PP(\R^d))$ is not closed in $\tau_w$. Anyway, $\iota$ is continuous when endowing $\PP(\R^d)$ with the narrow topology and $\R^\infty$ with $\tau_w$. Since it is also injective and $(\R^\infty,\tau_w)$ is Polish, then $\iota(\PP(\R^d))$ is Borel because it is Lusin (see Lemma \ref{lusin iff borel} and Corollary \ref{cont and inj image of Borel sets}).
\end{proof}

We proceed by introducing the cylinder functions for $\R^\infty$, the continuity equation over $\R^\infty$ and the superposition principle. For the proofs, we refer to \cite[Chapter 7]{ambrosio2014well}.

\begin{df}[Cylinder functions of $\R^\infty$]\label{cylinder over R^infty}
    We say that a function $F:\R^\infty\to\R$ is a cylinder function, and we write $F\in \operatorname{Cyl}^1(\R^\infty)$, if there exists $k\in \N$ and $\Psi\in C^1(\R^k)$ bounded and with bounded derivatives, such that 
    \begin{equation}
        F(\mathrm{x}) = \Psi(\pi_k(\mathrm{x})) = \Psi(x_1,\dots,x_k) \quad \forall x\in \R^\infty.
    \end{equation}
    Its gradient is then defined as 
    \begin{equation}
        \nabla F(\mathrm{x}) = \left(\partial_1 \Psi(\pi_k(\mathrm{x})),\dots, \partial_k \Psi(\pi_k(\mathrm{x})), 0 ,0 ,\dots\right).
    \end{equation}
\end{df}

\begin{df}[Continuity equation over $\R^\infty$]\label{CE over R^infty}
    Let $v:[0,T]\times \R^\infty \to \R^\infty$ be a Borel vector field (w.r.t. $\tau_w$) and $(\mathtt{m}_t)_{t\in [0,T]} \subset \PP(\R^\infty)$ a $\text{weakly}^*$ continuous curve of probability measures over $\R^\infty$. Then, we say that the continuity equation $\partial_t\mathtt{m}_t + \operatorname{div}(v_t\mathtt{m}_t) = 0$ holds if 
    \begin{equation}
        \int_0^T \int |v_t^{(k)}| d\mathtt{m}_t dt<+\infty \quad \forall k\in \N,
    \end{equation}
    where the superscript $k$ indicates the $k$-th component of $v$, and 
    \begin{equation}
        \frac{d}{dt}\int F d\mathtt{m}_t = \int \nabla F \cdot v_t d\mathtt{m}_t,\quad  \forall F\in \operatorname{Cyl}^1(\R^\infty)
    \end{equation}
    in the sense of distribution of $(0,T)$.
\end{df}

\begin{df}\label{AC_w over R^infty}
    The set $AC_w([0,T],\R^\infty)\subset C([0,T],(\R^\infty,\tau_w))$ is the set of $\tau$-continuous curves $\gamma:[0,T]\to \R^\infty $ that are element-wise absolutely continuous.
\end{df}

\begin{teorema}[Superposition principle over $\R^\infty$]\label{superposition R^infty}
    Let $v:[0,T]\times \R^\infty \to \R^\infty$ and $(\mathtt{m}_t)_{t\in [0,T]}$ as in the previous definition, satisfying $\partial_t\mathtt{m}_t + \operatorname{div}(v_t\mathtt{m}_t) = 0$. Then there exists a probability measure $\mathtt{L} \in \PP(C_w([0,T],\R^\infty))$ such that $(e_t)_\sharp \mathtt{L} = \mathtt{m}_t$, it is concentrated over $AC_w([0,T],\R^\infty) $ and for $\lambda$-a.e. $\tilde\gamma$, it holds
    \begin{equation}
        \frac{d}{dt}\tilde\gamma^{(k)}(t) = v_t^{(k)}(\tilde\gamma_t) \quad \forall k\in \N.
    \end{equation}
\end{teorema}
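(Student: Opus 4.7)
The plan is to apply the finite-dimensional superposition principle (Theorem \ref{finite dim superposition principle}) to the family of projected continuity equations on $\R^k$, assemble the resulting lifts into a probability measure on the infinite-dimensional path space via a Kolmogorov-type consistency argument, and finally identify the correct coordinate drifts in the limit using martingale convergence of conditional expectations.

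First, for each $k\in\N$, let $\pi_k:\R^\infty\to\R^k$, $\pi_k(\mathrm{x})=(x_1,\dots,x_k)$, set $\mathtt{m}_t^k:=(\pi_k)_\sharp \mathtt{m}_t$, and disintegrate $\mathtt{m}_t=\int \mathtt{m}_t^y\,d\mathtt{m}_t^k(y)$. Define the effective drift
\[
u_t^k(y):=\int_{\R^\infty}\bigl(v_t^{(1)}(\mathrm{x}),\dots,v_t^{(k)}(\mathrm{x})\bigr)\,d\mathtt{m}_t^y(\mathrm{x})\in\R^k,
\]
which belongs to $L^1(\mathtt{m}_t^k\otimes dt;\R^k)$ thanks to the componentwise integrability in Definition \ref{CE over R^infty}. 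Testing $\partial_t\mathtt{m}_t+\operatorname{div}(v_t\mathtt{m}_t)=0$ against cylinder functions of the form $\Psi\circ\pi_k$ with $\Psi\in C^1_c(\R^k)$ and using the tower property, one sees that
\[
\partial_t\mathtt{m}_t^k+\operatorname{div}_{\R^k}\bigl(u_t^k\,\mathtt{m}_t^k\bigr)=0\quad \text{in }\mathscr D'((0,T)\times\R^k).
\]
Theorem \ref{finite dim superposition principle} then yields $\lambda^k\in\PP(C([0,T],\R^k))$ concentrated on solutions $\gamma\in AC([0,T],\R^k)$ of $\dot\gamma(t)=u_t^k(\gamma(t))$ and with $(e_t)_\sharp\lambda^k=\mathtt{m}_t^k$.

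The next step is to construct the family $\{\lambda^k\}$ so as to be consistent, in the sense that $(p_{k+1,k})_\sharp\lambda^{k+1}=\lambda^k$, where $p_{k+1,k}:C([0,T],\R^{k+1})\to C([0,T],\R^k)$ is the componentwise projection. This is done inductively: given $\lambda^k$, one picks any lift $\tilde\lambda^{k+1}$ produced by the finite-dimensional superposition applied to $\mathtt{m}_t^{k+1}$, observes that $(p_{k+1,k})_\sharp\tilde\lambda^{k+1}$ is another lift of $\mathtt{m}_t^k$ solving the same projected ODE (since the first $k$ components of $u_t^{k+1}$ coincide with $u_t^k\circ p_{k+1,k}$ after integration against $\mathtt{m}_t^{\pi_k=\cdot}$), and then matches marginals by disintegrating with respect to $p_{k+1,k}$ and performing a measurable gluing (Theorem \ref{measurable selection}). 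Since $(C([0,T],\R^k),p_{k+1,k})$ is a projective system of Polish spaces, Kolmogorov's extension theorem produces a unique $\mathtt{L}\in\PP(C_w([0,T],\R^\infty))$ with $(p_k)_\sharp\mathtt{L}=\lambda^k$ for every $k$; here $p_k:C_w([0,T],\R^\infty)\to C([0,T],\R^k)$ denotes the coordinate projection. Automatically $(e_t)_\sharp\mathtt{L}=\mathtt{m}_t$, each coordinate curve is absolutely continuous, and $\mathtt{L}(AC_w([0,T],\R^\infty))=1$.

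The main obstacle is the identification of the drift in the limit: by construction, for $\mathtt{L}$-a.e.\ $\tilde\gamma$ and a.e.\ $t$, the $j$-th component satisfies
\[
\dot{\tilde\gamma}^{(j)}(t)=u_t^{(j,k)}\bigl(\pi_k(\tilde\gamma(t))\bigr),\qquad u_t^{(j,k)}(y):=\E_{\mathtt{m}_t}\bigl[v_t^{(j)}\mid \pi_k=y\bigr],
\]
for every $k\geq j$, and we must replace the right-hand side by $v_t^{(j)}(\tilde\gamma(t))$. The $\sigma$-algebras $\sigma(\pi_k)$ are increasing and generate the Borel $\sigma$-algebra of $(\R^\infty,\tau_w)$. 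By the martingale convergence theorem applied to the $L^1(\mathtt{m}_t)$ function $v_t^{(j)}$, the conditional expectations $u_t^{(j,k)}\circ\pi_k$ converge to $v_t^{(j)}$ in $L^1(\mathtt{m}_t)$ and a.e.\ as $k\to\infty$, for a.e.\ $t\in(0,T)$. Using $(e_t)_\sharp\mathtt{L}=\mathtt{m}_t$ and Fubini, this convergence lifts to convergence of the corresponding quantities evaluated along $\mathtt{L}$-typical curves, which upgrades the identity $\dot{\tilde\gamma}^{(j)}(t)=u_t^{(j,k)}(\pi_k(\tilde\gamma(t)))$ to $\dot{\tilde\gamma}^{(j)}(t)=v_t^{(j)}(\tilde\gamma(t))$ for $\mathtt{L}$-a.e.\ $\tilde\gamma$ and a.e.\ $t$, concluding the proof.
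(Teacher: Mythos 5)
The gap is in the consistency step: the claim that $(p_{k+1,k})_\sharp\tilde\lambda^{k+1}$ is ``another lift of $\mathtt{m}_t^k$ solving the same projected ODE'' is false, and the projective system you need therefore does not exist. A curve drawn from $(p_{k+1,k})_\sharp\tilde\lambda^{k+1}$ satisfies $\dot\gamma^{(j)}(t)=u_t^{(j,k+1)}\bigl(\tilde\gamma^{(1)}(t),\dots,\tilde\gamma^{(k+1)}(t)\bigr)$ for $j\le k$, a velocity that still depends on the discarded coordinate $\tilde\gamma^{(k+1)}$. The tower property only gives $\mathbb{E}_{\mathtt{m}_t}\bigl[u_t^{(j,k+1)}\circ\pi_{k+1}\mid\pi_k\bigr]=u_t^{(j,k)}\circ\pi_k$ — an equality of conditional averages, not a pointwise identity — so the projected curves generically do not solve the closed equation $\dot\gamma=u_t^k(\gamma)$, and no measurable gluing can repair this. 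In fact, requiring $(p_{k+1,k})_\sharp\lambda^{k+1}=\lambda^k$ with each $\lambda^k$ concentrated on solutions of its own $k$-dimensional ODE would force (push $\mathcal{L}^1_T\otimes\lambda^{k+1}$ through $(t,\tilde\gamma)\mapsto(t,\tilde\gamma(t))$, which yields $\mathcal{L}^1_T\otimes\mathtt{m}_t^{k+1}$) the a.e.\ identity $u_t^{(j,k)}\circ\pi_{k+1,k}=u_t^{(j,k+1)}$ on $\R^{k+1}$, i.e.\ $\sigma(\pi_k)$-measurability of $\mathbb{E}_{\mathtt{m}_t}[v_t^{(j)}\mid\pi_{k+1}]$; this holds only for degenerate data. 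Kolmogorov's theorem thus never applies, and the (otherwise correct) martingale-convergence argument at the end is applied to a measure that cannot exist.

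The paper delegates this statement to \cite[Chapter~7]{ambrosio2014well}, whose argument replaces projective consistency by compactness: one builds the finite-dimensional lifts $\lambda^k$ exactly as you do, extends each to a law on $C_T(\R^\infty)$, proves coordinate-wise equicontinuity and tightness in $\PP(C_T(\R^\infty,\tau_w))$ (painless under the component-wise $L^1$ hypothesis of Definition~\ref{CE over R^infty}), extracts a narrow limit $\mathtt{L}$, and only then identifies the time marginals and the drift of the limit. Your martingale-convergence observation $u_t^{(j,k)}\circ\pi_k\to v_t^{(j)}$ in $L^1(\mathtt{m}_t)$ is precisely the right tool for that last identification step, but it must be combined with a genuine limiting procedure rather than with a projective system.
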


\section{Measurability in the space of curves}\label{appendix meas for curves}
We collect here some results concerning measurability properties of subsets of $C_T(Y)$, where $Y$ is a Polish space and 
$C_T(Y)$ is endowed with the distance $D_{d}$
where $d$ is a distance on $Y$ generating its topology.

\begin{lemma}\label{meas of AC^p}
    For any $p\in [1,+\infty)$, the space $AC^p_T(Y)$ is a Borel subsets of $C_T(Y)$, both endowed with the sup distance.
\end{lemma}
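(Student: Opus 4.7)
The plan is to exhibit $AC^p_T(Y)$ as the level set of a lower semicontinuous functional built as a countable supremum of continuous functionals on $(C_T(Y),D_d)$. I will split the argument into the two cases $p>1$ and $p=1$, which require genuinely different countable characterizations.

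\smallskip
\noindent\textbf{Case $p>1$.} Let $\mathcal{Q}$ denote the countable family of partitions $\mathcal{P}=(0=t_0<t_1<\dots<t_N=T)$ with rational breakpoints and arbitrary $N\in\mathbb{N}$. For each $\mathcal{P}\in\mathcal{Q}$, the functional
\[
S_p^{\mathcal{P}}(\boldsymbol{y}):=\sum_{i=1}^{N}\frac{d(\boldsymbol{y}(t_i),\boldsymbol{y}(t_{i-1}))^p}{(t_i-t_{i-1})^{p-1}}
\]
is continuous on $(C_T(Y),D_d)$, so
\[
V_p(\boldsymbol{y}):=\sup_{\mathcal{P}\in\mathcal{Q}} S_p^{\mathcal{P}}(\boldsymbol{y})
\]
is lower semicontinuous and thus Borel. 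I would then verify $AC_T^p(Y)=\{V_p<+\infty\}$, which yields the $F_\sigma$ representation $AC_T^p(Y)=\bigcup_{n\in\mathbb{N}}\{V_p\le n\}$. The easy inclusion $AC_T^p\subseteq\{V_p<+\infty\}$ comes from Jensen's inequality applied on each subinterval, which gives $S_p^{\mathcal{P}}(\boldsymbol{y})\le\int_0^T|\dot{\boldsymbol{y}}|^p\,dt$ uniformly in $\mathcal{P}$. The converse is the classical metric-analytic fact that a curve with finite $p$-energy in the sense of $V_p$ admits an $L^p$ upper gradient controlling all finite differences, hence lies in $AC_T^p(Y)$; I would invoke it from \cite[§1.1]{ambrosio2005gradient}.

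\smallskip
\noindent\textbf{Case $p=1$.} Here $V_1(\boldsymbol{y})$ is the total variation and its finiteness does not imply absolute continuity, so I replace it by the concentric oscillation functional
\[
\Phi_n(\boldsymbol{y}):=\sup\Bigg\{\sum_{i=1}^{N}d(\boldsymbol{y}(t_i),\boldsymbol{y}(s_i))\ :\ 0\le s_1<t_1\le s_2<\dots\le s_N<t_N\le T,\ s_i,t_i\in\mathbb{Q},\ \sum_{i=1}^N(t_i-s_i)<1/n\Bigg\}.
\]
Again $\Phi_n$ is a countable supremum of continuous functionals, hence lower semicontinuous and Borel. I would then establish
\[
AC_T(Y)=AC_T^1(Y)=\bigcap_{k\in\mathbb{N}}\bigcup_{n\in\mathbb{N}}\{\boldsymbol{y}\in C_T(Y):\Phi_n(\boldsymbol{y})\le 1/k\},
\]
which is $F_{\sigma\delta}$, hence Borel. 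The inclusion $\subseteq$ is the very definition of absolute continuity restricted to rational endpoints. For $\supseteq$, given $\varepsilon>0$ and any finite collection of disjoint (not necessarily rational) intervals $(s_i,t_i)$ with $\sum(t_i-s_i)<1/(2n)$, I would use the continuity of $\boldsymbol{y}$ on the compact set $[0,T]$ to perturb each endpoint to a nearby rational while keeping the intervals disjoint and their total length below $1/n$, controlling the resulting change in $\sum d(\boldsymbol{y}(s_i),\boldsymbol{y}(t_i))$ by an arbitrarily small amount; this reduces the arbitrary-endpoint oscillation to $\Phi_n$ and yields the standard $\varepsilon$-$\delta$ definition of absolute continuity.

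\smallskip
\noindent\textbf{Main obstacle.} The nontrivial step is the implication $V_p(\boldsymbol{y})<+\infty\Rightarrow \boldsymbol{y}\in AC_T^p(Y)$ for $p>1$, which reconstructs an $L^p$ metric speed from the discrete finite-difference energies. This is a standard fact but requires a genuine argument (constructing a control measure on $[0,T]$ dominated by Lebesgue, or passing to a weak limit of dyadic discrete derivatives in $L^p$); I will cite it rather than reprove it. The $p=1$ case has no such conceptual obstacle but requires the careful density argument sketched above, which hinges crucially on the uniform continuity of $\boldsymbol{y}\in C_T(Y)$.
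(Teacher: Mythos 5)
Your proof is correct and, for $p>1$, essentially coincides with the paper's: the paper cites lower semicontinuity of the $p$-action $a_p$ (from \cite{ambrosio2021lectures}) and writes $AC^p_T(Y)$ as a countable union of its closed sublevel sets, hence $F_\sigma$, while you make the lower semicontinuity transparent by exhibiting $a_p$ as the countable supremum $V_p$ of the continuous discrete energies $S_p^{\mathcal{P}}$ over rational partitions, then flag and cite the nontrivial equivalence $\{V_p<\infty\}=AC^p_T(Y)$. For $p=1$ the paper merely refers to \cite[\S 2.2]{ambrosio2014calculus}, whereas you supply a self-contained $F_{\sigma\delta}$ representation via the oscillation functionals $\Phi_n$, encoding the classical $\varepsilon$-$\delta$ characterization of absolute continuity over rational endpoints. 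This is a genuine gain in self-containment, and it correctly avoids the trap that $a_1$ is \emph{not} lower semicontinuous (uniform limits of curves with bounded $1$-action may be only of bounded variation, so the $p>1$ strategy really does break at $p=1$). The one place your $p=1$ argument deserves an explicit word is the density step: moving the $2N$ endpoints to nearby rationals incurs an error of order $N\,\omega(\eta)$, with $\omega$ the modulus of continuity of $\boldsymbol{y}$ and $\eta$ the perturbation size, which does not vanish uniformly in $N$; the argument nonetheless closes because $N$ is held fixed while $\eta\to 0$, yielding the pointwise bound
\begin{equation*}
\sum_{i=1}^{N} d\big(\boldsymbol{y}(s_i),\boldsymbol{y}(t_i)\big)\ \le\ \Phi_n(\boldsymbol{y})
\end{equation*}
for every fixed finite disjoint family with $\sum_i(t_i-s_i)<1/n$, which is exactly what the inclusion $\supseteq$ requires.
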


\begin{proof}
    By Theorem 10.2, \cite{ambrosio2021lectures}, the functional $a_p$ is lower semicontinuous (their proof can be easily extended to the case $p\in(1,+\infty)$ using again Lemma 10.1, \cite{ambrosio2021lectures}). Then, the sublevel sets are closed, and by the definitions above we can write $AC^p$ as the union of the sublevel sets at level $n \in \N$. Then it is $F_\sigma$, and so it is Borel. 
    Regarding the case $p=1$, we refer to \cite[§2.2]{ambrosio2014calculus}
\end{proof}

\begin{lemma}\label{meas of D(t,gamma)}
    Let $D:[0,T]\times C_T(\R^d) \to \big(\R\cup\{\pm \infty\}\big)^d$ defined as the pointwise and component-wise $\limsup$ of the discrete derivatives, i.e. 
        \[\big(D(t,\gamma)\big)_j = \limsup_{h \to 0}\frac{\gamma_j(t+h)- \gamma_j(t)}{h}, \quad j = 1,\dots,d.\]
    Then, the function $D$ is Borel measurable and for all $\lambda \in \PP(C_T(\R^d))$ concentrated over $AC_T(\PP(\R^d))$, $D(t,\gamma)\in \R^d$ for $\mathcal{L}^1_T\otimes \lambda$-a.e. $(t,\gamma)$.
\end{lemma}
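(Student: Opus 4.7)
The statement has two parts: Borel measurability of $D$, and finiteness of $D$ almost everywhere when $\lambda$ is concentrated on absolutely continuous curves. The strategy for measurability is to express $D$ as a countable pointwise $\limsup$ of jointly continuous functions. For the a.e.\ finiteness, the strategy is to combine classical differentiability of absolutely continuous real-valued curves with Fubini's theorem.

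\textbf{Step 1: measurability.} I would first observe that the evaluation map
$$\mathrm{ev}: [0,T]\times C_T(\R^d) \to \R^d,\qquad (t,\gamma)\mapsto \gamma(t),$$
is jointly continuous when $C_T(\R^d)$ is endowed with the sup distance $D_{|\cdot|}$, since
$|\gamma(t)-\gamma'(t')| \leq |\gamma(t)-\gamma(t')| + D_{|\cdot|}(\gamma,\gamma')$ and $\gamma$ is itself uniformly continuous on $[0,T]$. Consequently, for each $q\in\Q\setminus\{0\}$ and each $j=1,\dots,d$, the map
$$\Phi_{q,j}(t,\gamma):=
\begin{cases}
\displaystyle \frac{\gamma_j(t+q)-\gamma_j(t)}{q} & \text{if }t+q\in[0,T],\\
+\infty & \text{otherwise},
\end{cases}$$
is Borel (continuous on the open Borel subset where $t+q\in[0,T]$, with a Borel convention elsewhere). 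I would then write
$$D_j(t,\gamma) = \inf_{n\in\N}\ \sup\Big\{ \Phi_{q,j}(t,\gamma) \, : \, q\in\Q,\ 0<|q|<\tfrac{1}{n},\ t+q\in[0,T]\Big\},$$
which is a countable sup of a countable inf of Borel functions, hence Borel. The equality between this expression and the original $\limsup$ over real $h$ follows from continuity of $t\mapsto \gamma(t)$: if $h_k\to 0$, one can replace each $h_k$ by a rational $q_k$ close enough to $h_k$ to control the error via uniform continuity of $\gamma$ on $[0,T]$, noting that $|\gamma_j(t+h_k)-\gamma_j(t+q_k)|/|h_k|$ can be made small along a suitable subsequence---this is the only delicate technical point, but it is a routine $\varepsilon$--$\delta$ argument.

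\textbf{Step 2: finiteness a.e.} Once measurability is established, fix $\lambda\in\PP(C_T(\R^d))$ concentrated on $AC_T(\R^d)$. Every $\gamma\in AC_T(\R^d)$ has components $\gamma_j$ that are real-valued absolutely continuous functions on $[0,T]$, hence classically differentiable at $\mathcal L^1_T$-a.e.\ $t$, at which point the $\limsup$ defining $D_j(t,\gamma)$ coincides with the classical derivative $\dot\gamma_j(t)\in\R$. Therefore, for each fixed $\gamma$ in the full $\lambda$-measure set $AC_T(\R^d)$, the set $N_\gamma:=\{t\in[0,T]:D(t,\gamma)\notin\R^d\}$ has Lebesgue measure zero. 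The set
$$N := \{(t,\gamma)\in[0,T]\times C_T(\R^d): D(t,\gamma)\notin\R^d\}$$
is Borel by Step 1, and its sections satisfy $\mathcal L^1_T(N_\gamma)=0$ for $\lambda$-a.e.\ $\gamma$. Fubini's theorem then yields $(\mathcal L^1_T\otimes \lambda)(N)=0$, as claimed.

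\textbf{Main obstacle.} The only substantive technical point is identifying the $\limsup$ along real $h$ with the $\limsup$ along rationals $q$ (Step 1). This is the place where continuity of individual curves $\gamma$ (which at this stage is all that is available, as we are not yet restricting to $AC$) must be used carefully, since the approximation rate of $h_k$ by $q_k$ must beat the denominator $|h_k|\to 0$. The remaining parts are essentially bookkeeping with Borel operations and a standard application of Fubini.
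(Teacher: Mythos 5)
Your proof is correct and follows essentially the same route as the paper's (which is only two sentences: ``Being the $\limsup$ of continuous functions, $D$ is Borel'' followed by the a.e.\ differentiability observation); you merely fill in the countability reduction and the Fubini step that the paper leaves implicit. One small remark on your ``main obstacle'': the passage from real $h$ to rational $q$ is less delicate than you suggest. For each fixed $n$, the map $h \mapsto \big(\gamma_j(t+h)-\gamma_j(t)\big)/h$ is continuous on $\{h : 0<|h|<1/n,\ t+h\in[0,T]\}$, so its supremum over that set equals its supremum over the dense rational subset directly — no sequence-chasing or rate-beating is needed, and in particular $|h_k|\to 0$ plays no role because for each fixed $n$ the competitors $h$ are bounded away from $0$ only by the index set, not by the approximation. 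Also note the value assigned to $\Phi_{q,j}$ when $t+q\notin[0,T]$ should be $-\infty$ rather than $+\infty$ if one wishes to drop the $t$-dependent restriction from the inner sup; with the restriction kept as you wrote it, the convention is immaterial.
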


\begin{proof}
    Being the $\limsup$ of continuous functions, $D$ is Borel. The last property follows from the fact that for $\lambda$-a.e. $\gamma$ and a.e. $t\in (0,T)$, the derivative of $\gamma$ in $t$ exists and is finite. 
\end{proof}

\subsection{Curves in \texorpdfstring{$\R^\infty$}{}}
Here we show a measurability result, linking the two spaces of continuous curves on $\R^\infty$ with respect to the two topologies presented in Appendix \ref{appendix R^infty}, i.e. $C_T(\R^\infty, D_\infty)$ and $C_T(\R^\infty, \tau_w)$. In particular, we will need the following result.

\begin{lemma}\label{lemma: meas of curves in R^infty}
    Let $\iota:\PP(\R^d)\to\R^\infty$ be as in \eqref{appendix: iota Rd}. The set $C_T(\iota(\PP(\R^d)),D_\infty)$ is a Borel subset of $C_T(\R^\infty,\tau_w)$. Moreover, the Borel $\sigma$-algebra generated by its subspace topology induced by $C_T(\R^\infty,D_\infty)$ coincides with the Borel $\sigma$-algebra generated by the subspace topology induced by $C_T(\R^\infty, \tau_w)$.
\end{lemma}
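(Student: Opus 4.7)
The plan is to realize $C_T(\iota(\PP(\R^d)),D_\infty)$ as the subset of $\gamma \in C_T(\R^\infty,\tau_w)$ satisfying two conditions: (a) $\gamma$ is $D_\infty$-continuous, and (b) $\gamma(t) \in \iota(\PP(\R^d))$ for every $t \in [0,T]$. Since $D_\infty$-continuity implies $\tau_w$-continuity, this characterization is well-posed and identifies curves intrinsically valued in $(\iota(\PP(\R^d)),D_\infty)$ with a subset of $C_T(\R^\infty,\tau_w)$.

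For (a), the key observation is that $D_\infty$ is lower semicontinuous on $(\R^\infty \times \R^\infty,\tau_w \times \tau_w)$, being the supremum of the continuous maps $(\mathrm{x},\mathrm{y}) \mapsto |x_n - y_n|\wedge 1$. Hence for each $\gamma \in C_T(\R^\infty,\tau_w)$ the function $(s,t) \mapsto D_\infty(\gamma(s),\gamma(t))$ is l.s.c.\ on $[0,T]^2$, so its sublevel sets $\{(s,t): D_\infty(\gamma(s),\gamma(t)) \leq 1/n\}$ are closed. Combined with the density of $\mathbb{Q} \cap [0,T]$, uniform $D_\infty$-continuity can be tested on pairs of rationals:
\[
\text{(a)}\iff \forall n \in \N\ \exists k \in \N\ \forall s,t \in \mathbb{Q} \cap [0,T],\ |s-t|\leq 1/k \Longrightarrow D_\infty(\gamma(s),\gamma(t)) \leq 1/n,
\]
a countable conjunction of Borel conditions on $C_T(\R^\infty,\tau_w)$ (the evaluations $e_t$ being continuous and $D_\infty$ Borel). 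For (b), I will exploit that $\iota(\PP(\R^d))$ is $D_\infty$-closed by Lemma \ref{lemma: im of iota is closed}: assuming (a), if $\gamma(t) \in \iota(\PP(\R^d))$ for every rational $t$ then $D_\infty$-continuity and closedness force $\gamma(t) \in \iota(\PP(\R^d))$ for every $t \in [0,T]$. Hence, conditional on (a), (b) reduces to the countable requirement that $\gamma(t) \in \iota(\PP(\R^d))$ for all $t \in \mathbb{Q} \cap [0,T]$, which is Borel because $\iota(\PP(\R^d))$ is Borel in $(\R^\infty,\tau_w)$ (again by Lemma \ref{lemma: im of iota is closed}). The conjunction of these countably many Borel conditions proves the first assertion.

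For the coincidence of Borel $\sigma$-algebras, let $X := C_T(\iota(\PP(\R^d)),D_\infty)$. Equipped with its intrinsic sup-$D_\infty$ topology, $X$ is Polish: by Lemma \ref{lemma: im of iota is closed} the metric space $(\iota(\PP(\R^d)),D_\infty)$ is isometric to $(\PP(\R^d),\hat{W}_1)$, which is Polish (since $\hat{W}_1$ metrizes the narrow topology on the Polish space $\PP(\R^d)$), and continuous curves valued in a Polish metric space form a Polish space under the sup-distance. Equipped instead with the $\tau_w$-subspace topology, $X$ is a Borel subset of the Polish space $C_T(\R^\infty,\tau_w)$ (by the first part), and therefore Lusin by Lemma \ref{lusin iff borel}. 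The identity map from the $D_\infty$-version of $X$ to the $\tau_w$-version is continuous (coarser target topology) and bijective, so Corollary \ref{cont and inj image of Borel sets} sends Borel sets to Borel sets, while continuity supplies the reverse inclusion; hence the two Borel $\sigma$-algebras agree.

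The most delicate point is the lower semicontinuity argument reducing the $D_\infty$-continuity quantifier to rationals; mere Borel measurability of $D_\infty$ would only yield a Souslin/analytic condition rather than a Borel one. The remaining steps are a routine application of the Lusin-space framework already exploited elsewhere in the paper.
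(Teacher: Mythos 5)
Your proof is correct, and Part 1 takes a genuinely different route from the paper's. The paper dispatches the Borel-measurability claim in a single abstract step: the identity map from the Polish space $C_T(\iota(\PP(\R^d)),D_\infty)$ (with its intrinsic sup-$D_\infty$ topology) into $C_T(\R^\infty,\tau_w)$ is continuous and injective, so by the very definition of a Lusin set the image is Lusin, hence Borel by Lemma~\ref{lusin iff borel}. You instead give an explicit Borel description by decomposing membership into the two countably quantified conditions (a) and (b), reduced to rational times. The technical observation that makes this viable --- that $D_\infty$ is $\tau_w\times\tau_w$-lower semicontinuous, so that sublevel sets of $(s,t)\mapsto D_\infty(\gamma(s),\gamma(t))$ are closed and the quantification over $[0,T]^2$ collapses to $\mathbb Q\cap[0,T]$ --- is exactly right, and as you note it is essential: mere Borel measurability of $D_\infty$ would only yield a Souslin set. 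Your handling of (b), reducing to rationals by invoking $D_\infty$-closedness of $\iota(\PP(\R^d))$ from Lemma~\ref{lemma: im of iota is closed}, is also sound (one should note that for the direction where rational uniform continuity implies continuity everywhere, one passes from the rational threshold $1/k$ to the strict threshold $1/(k+1)$ for real times, again via l.s.c.\ of $D_\infty$; this is a cosmetic detail). The paper's approach is shorter; yours is more constructive and self-contained, avoiding the Lusin machinery entirely for the first assertion. For Part 2 the two arguments essentially coincide: both observe that $X$ carries two comparable topologies under which it is Lusin; the paper cites Schwartz for the $\sigma$-algebra coincidence, while you derive it directly from Corollary~\ref{cont and inj image of Borel sets} and continuity of the identity, which is arguably preferable given the tools already assembled in the appendix.
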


\begin{proof}
    The space $(\iota(\PP(\R^d)),D_\infty)$ is Polish, since it is isometric to $(\PP(\R^d),\hat{W}_1)$, then the same holds for $C_T(\iota(\PP(\R^d)),D_\infty)$, with its natural compact-open topology, that we call $\pazocal{T}$. Such a topology is clearly stronger than the subspace topology induced by the larger (Polish) space $C_T(\R^\infty, \tau_w)$, that we denote by $\pazocal{T}_w$, so that the map
    \[\operatorname{id}:\big(C_T(\iota(\PP),D_\infty), \pazocal{T}\big) \to \big(C_T(\R^\infty,\tau_w),\pazocal{T}_w\big)\]
    is continuous and injective, i.e. $C_T(\iota(\PP),D_\infty)$, as a topological subspace of $C_T(\R^\infty,\tau_w)$ is Lusin by Definition \ref{def: lusin}, and then Borel by Lemma \ref{lusin iff borel}. 
    To conclude, notice that both the topology over $C_T(\iota(\PP),D_\infty)$ makes it a Lusin space and they are comparable; then the induced Borel $\sigma$-algebras coincide by \cite[Corollary 2, pp. 101]{schwariz1973radon}.
\end{proof}

\subsection{Curve of random measures}

\begin{lemma}\label{lemma: dual cyl R infty}
    For any $\mathtt{m}_0,\mathtt{m}_1\in \PP(\R^\infty)$ it holds
    \begin{equation}\label{dual formula R^infty}
    \begin{aligned}
        W_{1,D_\infty} &(\mathtt{m}_0,\mathtt{m}_1) =  \sup \bigg\{ \int_{\R^\infty} F d(\mathtt{m}_0-\mathtt{m}_1) \ : \ F(x)\in\operatorname{Cyl}_c^1(\R^\infty), \ F \ 1\text{-}\operatorname{Lip} \text{ w.r.t. } D_\infty\bigg\}
        \\
        = & \sup \bigg\{ \int_{\R^\infty}\hspace{-0.2cm} \Psi\circ \pi_k d(\mathtt{m}_0-\mathtt{m}_1)  :  k\in \N, \, \Psi \in C_c^1(\R^k), \, \|\Psi\|_\infty\leq 1/2,\, \bigg\|\sum_{i=1}^k|\partial_i\Psi|\bigg\|_\infty \leq 1\bigg\}
        .
    \end{aligned}
    \end{equation}
\end{lemma}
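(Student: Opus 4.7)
Denote by $S_1$ and $S_2$ the first and second suprema in \eqref{dual formula R^infty}. I would first dispatch the chain $S_2\le S_1\le W_{1,D_\infty}(\mathtt m_0,\mathtt m_1)$. For $S_2\le S_1$, any $\Psi\in C_c^1(\R^k)$ satisfying $\|\Psi\|_\infty\le 1/2$ and $\big\|\sum_i|\partial_i\Psi|\big\|_\infty\le 1$ yields $F=\Psi\circ\pi_k$ which is $1$-Lipschitz with respect to $D_\infty$: by the fundamental theorem of calculus along segments,
\[
|\Psi(u)-\Psi(v)|\le \Big(\sup_{i\le k}|u_i-v_i|\Big)\cdot \Big\|\textstyle\sum_i|\partial_i\Psi|\Big\|_\infty\le \sup_i|u_i-v_i|,
\]
while also $|\Psi(u)-\Psi(v)|\le 2\|\Psi\|_\infty\le 1$, so $|F(x)-F(y)|\le D_\infty(x,y)$. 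The bound $S_1\le W_{1,D_\infty}$ then follows from the Kantorovich--Rubinstein duality for the bounded l.s.c.\ cost $D_\infty$ on the Polish space $(\R^\infty,\tau_w)$, since the competitors in $S_1$ are bounded, $\tau_w$-continuous, and $1$-Lipschitz for $D_\infty$.

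\textbf{Reduction to finite-dimensional Wasserstein problems.} The non-trivial direction $W_{1,D_\infty}\le S_2$ would be obtained by projection to finite coordinates. Define on $\R^n$ the bounded metric $D_\infty^{(n)}(u,v):=\sup_{i\le n}|u_i-v_i|\wedge 1$ and on $\R^\infty\times\R^\infty$ the cost $c^{(n)}(x,y):=D_\infty^{(n)}(\pi_n(x),\pi_n(y))$; note that $c^{(n)}\nearrow D_\infty$ pointwise, and each $c^{(n)}$ is bounded and $\tau_w$-continuous. Since both marginals are Radon, $\Gamma(\mathtt m_0,\mathtt m_1)$ is tight; extracting a $\tau_w$-weak limit $\pi^\ast$ of couplings $\pi_n^\ast\in\Gamma(\mathtt m_0,\mathtt m_1)$ optimal for $c^{(n)}$, for every fixed $n$ and all $n_j\ge n$ we have $\int c^{(n)}\,d\pi_{n_j}^\ast\le W_{1,c^{(n_j)}}$, so passing to the limit and then applying monotone convergence $c^{(n)}\nearrow D_\infty$ along $\pi^\ast$ gives
\[
W_{1,D_\infty}(\mathtt m_0,\mathtt m_1)=\sup_{n}W_{1,c^{(n)}}(\mathtt m_0,\mathtt m_1).
\]
The gluing lemma then identifies each term with $W_{1,D_\infty^{(n)}}\big((\pi_n)_\sharp\mathtt m_0,(\pi_n)_\sharp\mathtt m_1\big)$, reducing everything to bounded Wasserstein problems on the Polish spaces $(\R^n,D_\infty^{(n)})$, where the metric induces the usual topology and thus classical tools apply.

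\textbf{Smooth cylinder approximation.} For each $n$, Kantorovich--Rubinstein on $(\R^n,D_\infty^{(n)})$ allows testing against $\phi\in\operatorname{Lip}_1(\R^n,D_\infty^{(n)})$; since $\operatorname{diam}(\R^n,D_\infty^{(n)})\le 1$, I may also impose $\|\phi\|_\infty\le 1/2$ (subtracting a constant does not alter $\int\phi\,d((\pi_n)_\sharp\mathtt m_0-(\pi_n)_\sharp\mathtt m_1)$). The last step is to approximate such $\phi$ by $\Psi\in C_c^1(\R^n)$ satisfying both bounds in $S_2$: mollify, $\phi_\varepsilon:=\phi\ast\rho_\varepsilon$, preserving $\|\phi_\varepsilon\|_\infty\le 1/2$ and yielding $\sum_i|\partial_i\phi_\varepsilon|\le 1$ a.e.\ (the $C^1$ form of $D_\infty^{(n)}$-Lipschitzianity for small increments, via $\ell^\infty$--$\ell^1$ duality); then cut off with $\chi_R\in C_c^\infty(\R^n)$ equal to $1$ on $B_R$, vanishing outside $B_{2R}$, with $|\nabla\chi_R|_\infty\le C/R$; finally rescale by $(1+C/(2R))^{-1}$ to restore $\sum_i|\partial_i\Psi|\le 1$. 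Dominated convergence as $\varepsilon\to 0$ and $R\to\infty$ yields $\int\Psi\,d((\pi_n)_\sharp\mathtt m_0-(\pi_n)_\sharp\mathtt m_1)\to\int\phi\,d((\pi_n)_\sharp\mathtt m_0-(\pi_n)_\sharp\mathtt m_1)$, so $S_2\ge W_{1,D_\infty^{(n)}}((\pi_n)_\sharp\mathtt m_0,(\pi_n)_\sharp\mathtt m_1)$ for every $n$, and then $S_2\ge W_{1,D_\infty}(\mathtt m_0,\mathtt m_1)$. The main subtlety is the minimax swap in the reduction step, which requires the weak compactness of $\Gamma(\mathtt m_0,\mathtt m_1)$ together with continuity of each $c^{(n)}$; the approximation step is delicate only because the cutoff perturbs $\sum_i|\partial_i\Psi|$, necessitating the small renormalization, but this disappears in the limit $R\to\infty$.
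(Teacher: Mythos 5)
Your proof is correct and follows essentially the same approach as the paper: reduce $W_{1,D_\infty}$ to finite-dimensional Wasserstein distances via projection onto the first $n$ coordinates (you via the truncated costs $c^{(n)}\nearrow D_\infty$ together with a weakly convergent sequence of optimal plans, the paper via push-forward $\pi^n_\sharp\mathtt{m}_i\to\mathtt{m}_i$ and lower semicontinuity of $D_\infty$), then apply Kantorovich--Rubinstein on $(\R^n,D_n)$ and identify $D_n$-$1$-Lipschitz cylinder functions with the class appearing in the second supremum. You additionally spell out the mollification-and-cutoff argument showing that $C_c^1$ competitors suffice in the finite-dimensional duality, a step which the paper invokes without proof.
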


\begin{proof}
We prove the first inequality, while the second one will be a byproduct of our argument.
    The $\geq$ inequality is trivial, so we focus on the other one. Define the projection functions
    \[\pi^n:\R^\infty \to \R^\infty, \quad \pi^n(\mathrm{x}) = (x_1,\dots,x_n,0,0,\dots)\]
    \[ \hspace{-1,3cm}p^n:\R^\infty\to\R^n, \quad p^n(\mathrm{x})= (x_1,\dots,x_n).\]
    Notice that $W_{1,D_\infty}(\pi^n_\sharp \mathtt{m}_0,\pi^n_\sharp \mathtt{m}_1)\leq W_{1,D_\infty}(\mathtt{m}_0,\mathtt{m}_1)$. Moreover, for all $\mathtt{m}\in \PP(\R^\infty)$, we have $\pi^n_\sharp \mathtt{m}\to\mathtt{m}$ weakly in duality with $C_b(\R^\infty,\tau_w)$. Indeed, for any $\mathrm{x}\in \R^\infty$, $\pi^n(\mathrm{x})\overset{\tau}{\to}\mathrm{x}$, so for any $F\in C_b(\R^\infty,\tau_w)$, by the dominated convergence theorem, it holds
    \[\int F(\mathrm{x}) d(\pi^n_\sharp \mathtt{m})(\mathrm{x}) = \int F(\pi^n(\mathrm{x})) d\mathtt{m}(\mathrm{x}) \to \int F(\mathrm{x}) d\mathtt{m}(x).\]
    Moreover, $D_\infty$ is $\tau_w \otimes \tau_w$ l.s.c.: indeed consider $\mathrm{x}^{(n)},\mathrm{y}^{(n)}\in \R^\infty$ respectively, converging component-wise to $\mathrm{x}$ and $\mathrm{y}$, then 
    \begin{align*}
        \liminf_{n\to+\infty} D_\infty(\mathrm{x}^{(n)},\mathrm{y}^{(n)}) 
        = &
        \liminf_{n\to+\infty} \sup_{j\in \N} |x_j^{(n)} - y_j^{(n)}|\wedge 1 
        \\
        \geq &
        \liminf_{n\to+\infty} \sup_{j\leq m} |x_j^{(n)} - y_j^{(n)}|\wedge 1 = \sup_{j\leq m} |x_j - y_j|\wedge 1,
    \end{align*}
    for any $m\in \N$, and for its arbitrariness we conclude. 
    \\
    Consider a sequence of $W_{1,D_\infty}$-optimal plans $\Pi_n \in \Gamma_0(\pi^n_\sharp \mathtt{m}_0,\pi^n_\sharp \mathtt{m}_1)$. By the convergence of the marginals and the fact that $(\R^\infty,\tau_w)$ is a Polish space, we have that the set of measures $\{\Pi_n\}$ is tight, which implies that 
    \[\forall (n_k) \ \exists (n_{k_j}) \ : \ \Pi_{n_{k_j}} \to \Pi\in\Gamma(\mathtt{m}_0,\mathtt{m}_1),\]
    where the convergence is weakly in duality with $C_b(\R^\infty\times\R^\infty, \tau\otimes\tau)$. Then, by $\tau_w\otimes\tau_w$-l.s.c. of $D_\infty$ we have 
    \begin{align*}
        \liminf_{j\to+\infty} W_{1,D_\infty}(\pi^{n_{k_j}}_\sharp \mathtt{m}_0,\pi^{n_{k_j}}_\sharp \mathtt{m}_1) 
        = &
        \liminf_{j\to+\infty} \int D_\infty(\mathrm{x},\mathrm{y}) d\Pi_{n_{k_j}}(\mathrm{x},\mathrm{y}) 
        \\
        \geq & 
        \int D_\infty(\mathrm{x},\mathrm{y}) d\Pi(\mathrm{x},\mathrm{y}) \geq W_{1,D_\infty}(\mathtt{m}_0,\mathtt{m}_1),
    \end{align*}
    and by arbitrariness of $(n_k)$ we conclude that $\liminf_{n\to+\infty} W_{1,D_\infty}(\pi^{n}_\sharp \mathtt{m}_0,\pi^{n}_\sharp \mathtt{m}_1) 
        \geq W_{1,D_\infty}(\mathtt{m}_0,\mathtt{m}_1) 
        $. Together with what was proved before, we have that 
        \begin{equation}
            \lim_{n\to+\infty} W_{1,D_\infty}(\pi^{n}_\sharp \mathtt{m}_0,\pi^{n}_\sharp \mathtt{m}_1) 
        = W_{1,D_\infty}(\mathtt{m}_0,\mathtt{m}_1) .
        \end{equation}
        Now, for any $x,y\in \R^n$ define $D_n(x,y) := \sup_{j\leq n} |x_j-y_j|\wedge 1$ and notice that 
        \begin{align*}
            W_{1,D_\infty}(\mathtt{m}_0,\mathtt{m}_1) 
            = & 
            \sup_{n\in \N} W_{1,D_\infty}(\pi^n_\sharp \mathtt{m}_0,\pi^n_\sharp \mathtt{m}_1)
            = 
            \sup_{n\in \N} W_{1,D_n}(\pi^n_\sharp \mathtt{m}_0,\pi^n_\sharp \mathtt{m}_1)
            \\
            = &
            \sup_{n\in \N} \ \sup_{\{\Psi\in C_c^1(\R^n): \Psi\  D_n \ 1-\operatorname{Lip}\}} \left\{ \int_{\R^n} \Psi(x) d\pi^n_\sharp \mathtt{m}_0(x) - \int_{\R^n} \Psi(y) d\pi^n_\sharp \mathtt{m}_1(y) \right\}
            \\
            = & \sup_{n, \Psi \text{ as above}} \int \Psi(\pi^n(\mathrm{x})) d(\mathtt{m}_0 - \mathtt{m}_1)(\mathrm{x}).
        \end{align*}
        The proof is then concluded by the fact that $\R^\infty \ni x \mapsto \Psi(\pi^n(x))$ is the general expression for a cylinder function from $\R^\infty\to\R$, and it is $D_\infty$ $1$-Lipschitz if and only if $\R^n \ni x \to \Psi(x)$ is $D_n$ $1$-Lipschitz. This is equivalent to ask that 
        \[\left\|\sum_{i=1}^n |\partial_i\Psi|\right\|_\infty  \leq 1 \quad \text{ and }\quad \operatorname{osc}\Psi:= \max \Psi - \min \Psi \leq 1.\]
        Moreover, up to considering a translation, we can substitute the condition on oscillation with $\|\Psi\|_\infty\leq 1/2$, which concludes the proof.
\end{proof}

In particular, the previous lemma characterizes Lipschitzianity with respect to $D_\infty$ of cylinder functions over $\R^\infty$ through conditions on the function $\Psi$ that represents it, that is
\[F = \Psi \circ \pi^n\text{ is }D_\infty \ 1\text{-Lipschitz }\iff \ \left\|\sum_{i=1}^n |\partial_i\Psi|\right\|_\infty \leq  1 \text{ and } \operatorname{osc} \Psi \leq 1.  \]

The following shows how to use the previous lemma to have a duality formula for $\hat{\mathcal{W}}_1$ using only duality with cylinder functions.

\begin{prop}\label{prop dual cyl}
    The following duality formula holds for the distance $\hat{\mathcal{W}}_1 = W_{1,\hat{W}_1}$: for all $M,N\in \PP(\PP(\R^d))$ we have 
    \begin{equation}
        \hat{\mathcal{W}}_1
        (M,N) = \sup_{F\in \operatorname{Cyl}_c^1(\PP(\R^d)), \ \operatorname{Lip}(F)\leq 1 } \int_{\PP(\R^d)} F(\mu) dM(\mu) - \int_{\PP(\R^d)}F(\nu) dN(\nu), 
    \end{equation} 
    where the Lipschitz constant has to be intended w.r.t. the $\hat{\mathcal{W}}_1$ distance.
\end{prop}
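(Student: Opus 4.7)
The idea is to transport the duality formula for $W_{1,D_\infty}$ on $\mathcal{P}(\mathbb{R}^\infty)$ provided by Lemma \ref{lemma: dual cyl R infty} along the isometric embedding $\iota:\mathcal{P}(\mathbb{R}^d)\to\mathbb{R}^\infty$ introduced in \eqref{appendix: iota Rd} and recalled in Lemma \ref{lemma: im of iota is closed}. Concretely, I would set $\mathtt{m}:=\iota_\sharp M$ and $\mathtt{n}:=\iota_\sharp N$. Since $\iota$ is an isometry from $(\mathcal{P}(\mathbb{R}^d),\hat W_1)$ to $(\iota(\mathcal{P}(\mathbb{R}^d)),D_\infty)$, the map $(\iota,\iota)$ is a bijection between $\Gamma(M,N)$ and $\Gamma(\mathtt{m},\mathtt{n})$ that preserves the cost, so
\[
\hat{\mathcal{W}}_1(M,N)=W_{1,D_\infty}(\mathtt{m},\mathtt{n}).
\]

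Next, Lemma \ref{lemma: dual cyl R infty} yields
\[
W_{1,D_\infty}(\mathtt{m},\mathtt{n})=\sup\left\{\int_{\mathbb{R}^\infty}\Psi\circ\pi_k\,d(\mathtt{m}-\mathtt{n}):k\in\mathbb{N},\ \Psi\in C_c^1(\mathbb{R}^k),\ \|\Psi\|_\infty\leq 1/2,\ \Bigl\|\sum_{i=1}^k|\partial_i\Psi|\Bigr\|_\infty\leq 1\right\}.
\]
For each admissible pair $(k,\Psi)$ in this supremum, define
\[
F_{\Psi}(\mu):=\Psi\circ\pi_k\circ\iota(\mu)=\Psi\bigl(L_{\phi_1}(\mu),\ldots,L_{\phi_k}(\mu)\bigr),\qquad\mu\in\mathcal{P}(\mathbb{R}^d).
\]
Since $\phi_1,\ldots,\phi_k\in\mathcal{A}\subset C_c^1(\mathbb{R}^d)$ and $\Psi\in C_c^1(\mathbb{R}^k)\subset C_b^1(\mathbb{R}^k)$, the function $F_\Psi$ belongs to $\mathrm{Cyl}_c^1(\mathcal{P}(\mathbb{R}^d))$ by Definition \ref{def cyl func and wass grad}. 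Moreover, because $\Psi\circ\pi_k$ is $1$-Lipschitz with respect to $D_\infty$ (this is exactly the content of the $\|\Psi\|_\infty$ and $\sum|\partial_i\Psi|$ bounds, as pointed out after Lemma \ref{lemma: dual cyl R infty}), and $\iota$ is an isometry between $\hat W_1$ and $D_\infty$, $F_\Psi$ is $1$-Lipschitz with respect to $\hat W_1$. By the change of variables formula,
\[
\int_{\mathbb{R}^\infty}\Psi\circ\pi_k\,d(\mathtt{m}-\mathtt{n})=\int_{\mathcal{P}(\mathbb{R}^d)}F_\Psi\,d(M-N),
\]
which produces the inequality
\[
\hat{\mathcal{W}}_1(M,N)\leq\sup_{\substack{F\in\mathrm{Cyl}_c^1(\mathcal{P}(\mathbb{R}^d))\\ \mathrm{Lip}_{\hat W_1}(F)\leq 1}}\int F\,d(M-N).
\]

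The reverse inequality is an immediate consequence of Kantorovich duality applied directly in $(\mathcal{P}(\mathbb{R}^d),\hat W_1)$: for any $F\in\mathrm{Cyl}_c^1(\mathcal{P}(\mathbb{R}^d))$ with $\mathrm{Lip}_{\hat W_1}(F)\leq 1$ and any $\pi\in\Gamma(M,N)$,
\[
\int F\,d(M-N)=\int\bigl(F(\mu)-F(\nu)\bigr)\,d\pi(\mu,\nu)\leq\int\hat W_1(\mu,\nu)\,d\pi(\mu,\nu),
\]
and minimizing over $\pi$ yields $\int F\,d(M-N)\leq\hat{\mathcal{W}}_1(M,N)$. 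Combining the two inequalities proves the proposition. No step looks especially technical here: the only delicate point is the verification that every cylinder test function on $\mathbb{R}^\infty$ appearing in Lemma \ref{lemma: dual cyl R infty} descends, through $\iota$, to a legitimate element of $\mathrm{Cyl}_c^1(\mathcal{P}(\mathbb{R}^d))$ with the correct Lipschitz bound — and this is guaranteed by the choice of the countable family $\mathcal{A}\subset C_c^1(\mathbb{R}^d)$ in Appendix \ref{appendix R^infty}.
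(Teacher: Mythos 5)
Your proof is correct and follows essentially the same route as the paper: push $M,N$ forward by $\iota$, invoke the isometry identity $\hat{\mathcal{W}}_1(M,N)=W_{1,D_\infty}(\iota_\sharp M,\iota_\sharp N)$, apply Lemma \ref{lemma: dual cyl R infty}, and observe that the resulting cylinder test functions on $\R^\infty$ pull back under $\iota$ to $\hat{W}_1$-Lipschitz elements of $\operatorname{Cyl}_c^1(\PP(\R^d))$. The only difference is that you spell out the reverse inequality (via Kantorovich duality on $(\PP(\R^d),\hat W_1)$), whereas the paper leaves that standard step implicit; your version is slightly more complete but not a different argument.
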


\begin{proof}
    It follows easily by Lemma \ref{lemma: dual cyl R infty}, indeed, thanks to \eqref{dual formula R^infty}, we can conclude
    \begin{align*}
        W_{1,\hat{W}_1}&(M,N) =  W_{1,D_\infty}(\iota_{\sharp }M,\iota_\sharp  N)
        =  \sup \int_{\R^\infty} F d(\iota_\sharp M - \iota_\sharp N) 
        \\
        = & \sup \bigg\{\int_{\PP(\R^d)} \Psi(L_{\phi_1}(\mu),\dots,L_{\phi_n}(\mu)) dM(\mu) 
        - \int_{\PP(\R^d)} \Psi(L_{\phi_1}(\nu),\dots,L_{\phi_n}(\nu)) dM(\nu)\bigg\},
    \end{align*}
    noticing that if $F\in\operatorname{Cyl}^1(\R^\infty)$ and it is $D_\infty$ $1$-Lipschitz then, again by the fact that $\iota$ is an isometry, we have that $\Psi(L_{\phi_1}(\mu),\dots,L_{\phi_n}(\mu)) \in \operatorname{Cyl}_c^1(\PP(\R^d))$ is $\hat{\mathcal{W}}_1$ $1$-Lipschitz.
\end{proof}

\begin{oss}\label{countable duality wass on wass}
    We actually proved a stronger result: the Wasserstein distance $W_{1,W_1}$ between $M$ and $N$ can be recovered by the cylinder functions depending only on the functions $(L_{\phi_n})_{n\in \N}$, where $(\phi_n)_{n\in \N}\subset C_c^1(\R^d)$ are $1$-Lipschitz functions from $\R^d$ to $\R$ (w.r.t. $|\cdot|\wedge 1$) such that 
    \[W_{1,|\cdot|\wedge 1}(\mu,\nu) = \sup_{n\in \N} \int_{\R^d} \phi_n(x) d(\mu-\nu)(x).\]
    Moreover, the last equality in the proof of Lemma \ref{lemma: dual cyl R infty}
    \[W_{1,D_{\infty}}(\mathtt{m}_0,\mathtt{m}_1) = \sup_{n, \Psi} \int \Psi(p^n(\mathrm{x})) d(\mathtt{m}_0 - \mathtt{m}_1)(\mathrm{x}),\]
    is telling us that for any $n\in \N$, we can consider a countable family $\mathcal{F}_n$ of $1$-Lipschitz functions (w.r.t. $D_n$) such that the $\sup$ is realized taking $F\in \mathcal{F}_n$. Defining the countable family $\mathcal{F}:= \cup \mathcal{F}_n$ and taking in considerations what has been said above, we have that 
    \begin{equation}\label{equation countable duality}
        \hat{\mathcal{W}}_1(M,N) = \sup_{F\in \mathcal{F}} \int_{\PP(\R^d)} F(L_{\psi_1}(\mu),\dots,L_{\psi_n}(\mu)) d(M-N)(\mu).
    \end{equation}
\end{oss}

\begin{lemma}\label{abs cont sol of CE}
    Let $(M_t)_{t\in[0,T]}\subset\PP(\PP(\R^d)) $ and $(B_t)_{t\in [0,T]}$ be an $L^1(M_t \otimes dt)$-derivation such that \eqref{CE meas 2} holds. Then there exists a curve $t\mapsto \overline{M}_t$ such that $\overline{M}_t = M_t$ for a.e. $t\in [0,T]$ and $(\overline{M}_t)\in AC_T\big(\PP(\PP(\R^d)),\hat{\mathcal{W}}_1\big) \subset C_T(\PP(\PP(\R^d)))$. 
\end{lemma}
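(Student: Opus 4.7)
\smallskip\noindent\textbf{Proof sketch.} The plan is to use the distributional equation \eqref{CE meas 2} test-function by test-function to build a continuous (indeed AC) representative, and then to upgrade pointwise regularity to joint regularity in $\hat{\mathcal{W}}_1$ via the countable duality formula of Remark \ref{countable duality wass on wass}.

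First, fix any $F\in\operatorname{Cyl}_c^1(\PP(\R^d))$ and define $g_F(t):=\int F\,\d M_t$. By hypothesis, $(B_t)$ is an $L^1(M_t\otimes \d t)$-derivation, so the bound $|B_t[F](\mu)|\le c_t(\mu)\,\|\nabla_W F(\cdot,\mu)\|_{L^\infty(\mu)}$ and \eqref{time integrability family of derivations} with $p=1$ yield $t\mapsto h_F(t):=\int B_t[F]\,\d M_t\in L^1(0,T)$. The continuity equation \eqref{CE meas 2} states that $g_F'=h_F$ in $\mathscr D'(0,T)$, so $g_F$ admits a (unique) absolutely continuous representative $\bar g_F$ with $\bar g_F=g_F$ $\mathcal L^1$-a.e.

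Next, take the countable family $\mathcal F\subset\operatorname{Cyl}_c^1(\PP(\R^d))$ singled out in Remark \ref{countable duality wass on wass}: every $F\in\mathcal F$ has the form $F=\Psi\circ L_{\phi_1,\dots,\phi_n}$ with $\phi_k\in\mathcal A$ (so $\|\nabla\phi_k\|_\infty\le 1$) and $\|\sum_i|\partial_i\Psi|\|_\infty\le 1$, and
\[
\hat W_1(\mu,\nu)=\sup_{F\in\mathcal F}\int F\,\d(\mu-\nu).
\]
A direct computation using \eqref{wass gradient cyl} gives the uniform bound $|\nabla_W F(x,\mu)|\le \sum_i|\partial_i\Psi(L_\Phi(\mu))||\nabla\phi_i(x)|\le 1$ for all $x,\mu$. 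Consequently, for every $F\in\mathcal F$,
\[
|h_F(t)|\le \bar c(t):=\int c_t(\mu)\,\d M_t(\mu)\in L^1(0,T),
\]
an $F$-independent integrable dominator. Let $N_F\subset[0,T]$ be the $\mathcal L^1$-null set where $g_F\ne\bar g_F$ and set $N:=\bigcup_{F\in\mathcal F}N_F$; by countability, $|N|=0$.

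Now apply Proposition \ref{prop dual cyl} in its countable-duality form \eqref{equation countable duality}: for every $s,t\in[0,T]\setminus N$,
\[
\hat{\mathcal{W}}_1(M_s,M_t)=\sup_{F\in\mathcal F}\bigl|g_F(s)-g_F(t)\bigr|=\sup_{F\in\mathcal F}\bigl|\bar g_F(s)-\bar g_F(t)\bigr|\le\left|\int_s^t\bar c(r)\,\d r\right|.
\]
Since the right-hand side defines a uniformly continuous modulus that vanishes on $s=t$, the restriction of $t\mapsto M_t$ to $[0,T]\setminus N$ is uniformly continuous with respect to $\hat{\mathcal{W}}_1$; as $(\PP(\PP(\R^d)),\hat{\mathcal{W}}_1)$ is Polish (hence complete), it extends uniquely to a continuous curve $\bar M:[0,T]\to\PP(\PP(\R^d))$. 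By density of $[0,T]\setminus N$, passing to the limit in the displayed inequality yields
\[
\hat{\mathcal{W}}_1(\bar M_s,\bar M_t)\le\int_s^t\bar c(r)\,\d r\qquad\text{for all }0\le s\le t\le T,
\]
so $\bar M\in AC_T\bigl(\PP(\PP(\R^d)),\hat{\mathcal W}_1\bigr)$, with metric derivative bounded by $\bar c\in L^1(0,T)$. By construction, $\bar M_t=M_t$ for every $t\in[0,T]\setminus N$, that is for $\mathcal L^1$-a.e.~$t$.

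The only delicate point is ensuring that the collection of test functions used to test $\hat{\mathcal{W}}_1$ is simultaneously (i) countable (for the common null set $N$), (ii) dense enough to recover $\hat{\mathcal{W}}_1$ by duality, and (iii) equi-Lipschitz in $x$ so that $\|\nabla_W F(\cdot,\mu)\|_\infty$ is uniformly bounded — all three being exactly what Remark \ref{countable duality wass on wass} combined with the choice of the generating family $\mathcal A\subset C_c^1(\R^d)$ provides.
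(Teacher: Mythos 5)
Your proof is correct and follows the same route as the paper: reduce to a single full-measure set $I$ by intersecting, over the countable family $\mathcal F$ of Remark~\ref{countable duality wass on wass}, the sets where the $W^{1,1}$ map $t\mapsto\int F\,\d M_t$ agrees with its absolutely continuous representative; then bound the increment uniformly in $F\in\mathcal F$ via $|\nabla_W F|\le 1$ and the derivation estimate to obtain $\hat{\mathcal W}_1(M_s,M_t)\le\int_s^t\bar c$ on $I$, which extends to an $AC$ curve. The only slip is notational: the displayed duality formula should read $\hat{\mathcal W}_1(M,N)=\sup_{F\in\mathcal F}\int F\,\d(M-N)$ (a duality for measures on $\PP(\R^d)$), not $\hat W_1(\mu,\nu)$; this does not affect the argument, which you apply correctly in the next display.
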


\begin{proof}
    For any $F\in \operatorname{Cyl}_c^1(\PP(\R^d))$, the map $[0,T]\ni t \mapsto \int F(\mu) dM_t(\mu)$ is $W^{1,1}(0,T)$ with distributional derivative $\int B_t[F](\mu) dM_t(\mu)$. In particular, there exists $I_{F}\subset (0,T)$ of full Lebesgue measure such that for all $s,t \in I_F$
    \[\int FdM_t - \int FdM_s =  \int_s^t \int B_r[F](\mu) dM_r(\mu) dr.\]
    Consider $\mathcal{C}\subset \operatorname{Cyl}_c^1(\PP(\R^d))$ the countable set in the supremum of \eqref{equation countable duality} and take the full Lebesgue measure set $I= \cap_{F\in \mathcal{C}} I_F$. Then for any $s,t \in I$ we have 
    \begin{align*}
        \hat{\mathcal{W}}_1(M_t,M_s) 
        = &
        \sup_{F \in \mathcal{C}} \int F\big(L_{\psi_1},\dots,L_{\psi_n}\big) d(M_t-M_s)
        \\
        = &
        \sup_{F \in \mathcal{C}} \int_{s}^t \frac{d}{dr}\int F\big(L_{\psi_1}(\mu),\dots,L_{\psi_n}(\mu)\big) dM_r(\mu) dr 
        \\
        = & \sup_{F \in \mathcal{C}} \int_s^t \int B_r[F](\mu) dM_r(\mu) dr 
        \leq  \int_s^t \int c_r(\mu)dM_r dr,
    \end{align*}
    where the last inequality follows from the fact that $|F(x)-F(y)|\leq D_n(x,y)\leq \sup_{i\leq n}|x_i-y_i|$, which implies that the $1$-norm in $\R^n$ of the gradient of $F$ is bounded by $1$, i.e.
    \[\sum_{i=1}^n|\partial_i F(x)|\leq 1 \quad \forall x\in \R^n. \]
    This implies that $\forall x\in \R^d$ and $\forall \mu\in \PP(\R^d)$
    \begin{align*}
        |\nabla_W F(x,\mu)| \leq & \sum_{i=1}^n |\partial_i F\big(L_{\psi_1}(\mu),\dots,L_{\psi_n}(\mu)\big)| |\nabla \psi_i|
        \leq 
        \sum_{i=1}^n |\partial_i F\big(L_{\psi_1}(\mu),\dots,L_{\psi_n}(\mu)\big)| \leq 1,
    \end{align*}
    following also from the fact that for all $n\in \N$, $\psi_n$ is $|\cdot|\wedge 1$ $1$-Lipschitz, so in particular they are $1$-Lipschitz w.r.t. the usual Euclidean norm $|\cdot|$ in $\R^d$.
    \\
    Then, there exists a curve of random measures $t \mapsto \overline{M}_t$ that is absolutely continuous w.r.t. $W_{1,\hat{W}_1}$ and $M_t = \overline{M}_t$ for all $t\in I$.
\end{proof}

\section{Measurability in the spaces of measures}\label{app: measurability}
In this section, we state some general results of Borel measurability in the spaces $\mathcal{M}_+(Y)$, $\mathcal{M}(Y,\R^d)$ and their product, where $Y$ is a Polish space. In $\mathcal{M}_+(Y)$ and $\mathcal{M}(Y,\R^d)$ we consider the weak topology in duality with $C_b$ functions, and the product topology over product spaces.

\subsection{Equivalence between \texorpdfstring{$\sigma$}{}-algebras over \texorpdfstring{$\mathcal{M}_+(Y)$}{}}

Let $(Y,\tau)$ be a Polish space. The goal of this subsection is to prove that the Borel $\sigma$-algebra of $\mathcal{M}_+(Y)$, endowed with the narrow topology, is the same as the smallest $\sigma$-algebra $\mathcal{S}$ that makes measurable the evaluation on Borel sets, i.e. such that for any Borel set $A\subset Y$ the map $\mathcal{M}_+(Y) \ni \mu \mapsto \mu(A) \in \R$ is $\mathcal{S}$-measurable.

\begin{lemma}[Measurability of $\mu\mapsto \int gd\mu$]\label{measurability int g d mu}
    Let $Y$ be a Polish space. For each Borel map $g:Y\to [0,+\infty]$ 
     \[G: \mathcal{M}_+(Y)\to[0,+\infty], \quad G(\mu):= \int_Y g \ d\mu\]
     is a Borel function. In particular, the set 
     \[\PP_g(Y):= \left\{\mu \in \PP(Y) \ : \ \int_Y g d\mu <+\infty\right\}\]
     is a Borel subset of $\PP(Y)$.
\end{lemma}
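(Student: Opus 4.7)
The plan is to upgrade Borel measurability of $G$ from the easy case $g \in C_b(Y;[0,\infty))$ (where $G$ is continuous by the very definition of the narrow topology) to the general Borel case by a Dynkin / monotone class argument in $g$, followed by monotone approximation. Crucially, $\mu(Y) < +\infty$ for every $\mu \in \mathcal{M}_+(Y)$, so signed manipulations are allowed.

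\emph{Step 1 (continuous bounded $g$).} If $g\in C_b(Y;[0,\infty))$, then $G(\mu)=\int g\,d\mu$ is continuous on $\mathcal{M}_+(Y)$ by definition of the narrow topology, hence Borel.

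\emph{Step 2 (open sets).} For $U\subseteq Y$ open, Urysohn's lemma in the metrizable Polish space $Y$ gives a sequence $f_n\in C_b(Y;[0,1])$ with $f_n\uparrow \mathbf{1}_U$ pointwise. By monotone convergence
\[
\mu(U)=\sup_{n}\int f_n\,d\mu,
\]
which exhibits $\mu\mapsto \mu(U)$ as a countable supremum of continuous functions, hence lower semicontinuous, hence Borel.

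\emph{Step 3 (Dynkin class on Borel sets).} Let $\mathcal{D}:=\{A\in \mathcal{B}(Y):\mu\mapsto \mu(A)\text{ is Borel on }\mathcal{M}_+(Y)\}$. By Step 2, $\mathcal{D}$ contains all open sets, which form a $\pi$-system generating $\mathcal{B}(Y)$. I claim $\mathcal{D}$ is a Dynkin class:
\begin{itemize}
\item $Y\in \mathcal{D}$ by Step 1 with $g\equiv 1$.
\item If $A\in \mathcal{D}$, then $\mu(A^c)=\mu(Y)-\mu(A)$ is Borel as the difference of two Borel real-valued functions (well-defined since $\mu(Y)<+\infty$).
\item If $A_n\in \mathcal{D}$ are pairwise disjoint, $\mu(\bigcup_n A_n)=\sum_n \mu(A_n)$ is a pointwise limit of Borel functions, hence Borel.
\end{itemize}
The Dynkin $\pi$-$\lambda$ theorem yields $\mathcal{D}=\mathcal{B}(Y)$.

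\emph{Step 4 (general nonnegative Borel $g$).} For $A\in\mathcal{B}(Y)$, $\mu\mapsto \int \mathbf{1}_A\,d\mu=\mu(A)$ is Borel by Step 3. Linearity extends this to nonnegative simple Borel functions. For a general Borel $g:Y\to[0,+\infty]$, choose a standard increasing sequence of simple functions $s_n\uparrow g$ pointwise. Then by monotone convergence
\[
G(\mu)=\int g\,d\mu=\sup_n\int s_n\,d\mu,
\]
a countable supremum of Borel functions on $\mathcal{M}_+(Y)$, hence Borel with values in $[0,+\infty]$.

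\emph{Step 5 (the set $\PP_g(Y)$).} The inclusion $\PP(Y)\subset \mathcal{M}_+(Y)$ is closed (hence Borel), and the restriction of $G$ to $\PP(Y)$ remains Borel. Therefore
\[
\PP_g(Y)=\PP(Y)\cap G^{-1}\big([0,+\infty)\big)
\]
is Borel in $\PP(Y)$.

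The main technical obstacle is Step 3: one must be careful that $\mathcal{D}$ is actually closed under complement, which is why we use that $\mathcal{M}_+(Y)$ consists of finite measures and that $\mu\mapsto \mu(Y)$ is itself Borel (in fact continuous). Everything else is a routine bootstrapping via monotone convergence.
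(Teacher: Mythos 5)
Your proof is correct, and it reaches the result by a slightly different monotone‐class route than the paper. The paper also starts from the continuity of $\mu\mapsto\int g\,d\mu$ for $g\in C_b(Y)$ and also finishes by truncating $g_n=g\wedge n$, but the bootstrapping in between is done differently: the paper defines the class $\mathcal{H}$ of bounded Borel $h$ for which $\mu\mapsto\int h\,d\mu$ is Borel, notes $\mathcal{H}\supseteq C_b(Y)$ is closed under bounded monotone limits, and applies a \emph{functional} monotone class theorem (Bogachev Thm.\ 2.12.9(iii)) in a single stroke to deduce $\mathcal{H}$ is all bounded Borel functions. You instead pass through sets: Urysohn approximation on open sets, the $\pi$-$\lambda$ theorem on $\{A\in\mathcal B(Y): \mu\mapsto\mu(A)\text{ Borel}\}$, simple functions, then monotone limits. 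Both are standard and sound; the paper's argument is shorter because it cites a stronger off-the-shelf theorem, while yours is more self-contained and elementary. Your Step 2 in fact gives the slightly stronger observation, used elsewhere in such arguments, that $\mu\mapsto\mu(U)$ is lower semicontinuous for $U$ open. You were also right to flag that closure under complements in the $\lambda$-system requires $\mu(Y)<\infty$; that is exactly why the argument is phrased on $\mathcal{M}_+(Y)$ (finite measures) rather than on $\sigma$-finite ones.
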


\begin{proof}
    Define \[\mathcal{H}:= \{h:Y\to\R \ : \ h\text{ is Borel and bounded, } \mu \mapsto \int_Y h \ d\mu \text{ is Borel}\}.\]
    Obviously $\mathcal{H}$ contains $C_b(Y)$. Moreover, $\mathcal{H}$ is closed under monotone limits, indeed if $\mathcal{H}\ni h_n \to h$ monotonically, then by dominated convergence theorem, for any $\mu \in \mathcal{M}_+(Y)$ 
    \[H(\mu):= \int_Y h \ d\mu = \lim_{n\to+\infty} \int_Y h_n d\mu,\]
    so $H$ is the pointwise limit of Borel functions, thus it is Borel, which implies that $h\in \mathcal{H}$. Then we can apply \cite[Theorem 2.12.9, (iii)]{Bogachev07} to conclude that $h\in \mathcal{H}$ for any bounded and Borel function $h:Y \to \R$. Then, by monotone convergence theorem, we conclude that $G$ is a Borel function approximating $g$ pointwise with $g_n:= g\wedge n$.
\end{proof}

\begin{lemma}\label{weak conv is metrizable}
    The narrow topology over $X := \mathcal{M}_+(Y)$ is metrizable. 
\end{lemma}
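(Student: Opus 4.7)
The strategy is to realize $\mathcal{M}_+(Y)$ as an open subset of a Polish space of probability measures, for which metrizability of the narrow topology is already known via Wasserstein distances.

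Let $\hat d$ be a bounded complete compatible metric on $Y$ (for instance $\hat d=d\wedge 1$ for any complete compatible metric $d$), and set $Y^\dagger:=Y\sqcup\{\ast\}$, equipped with the metric $\hat d^\dagger$ which restricts to $\hat d$ on $Y\times Y$ and satisfies $\hat d^\dagger(y,\ast)=1$ for every $y\in Y$. Then $\{\ast\}$ is both open and closed in $Y^\dagger$, and $Y^\dagger$ remains separable and complete, hence Polish; consequently $(\PP(Y^\dagger),W_{1,\hat d^\dagger})$ is a Polish space whose topology coincides with the narrow one, by the discussion in §\ref{OT and Wass}.

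Let $\iota:Y\hookrightarrow Y^\dagger$ be the inclusion and define
\[
\Phi:\mathcal{M}_+(Y)\to \PP(Y^\dagger),\qquad
\Phi(\mu):=\frac{1}{1+\mu(Y)}\bigl(\iota_\sharp \mu+\delta_\ast\bigr).
\]
A direct computation shows that $\Phi$ is a bijection onto
$U:=\{\eta\in \PP(Y^\dagger):\eta(\{\ast\})>0\}$,
with inverse $\Phi^{-1}(\eta)=\eta|_{Y}/\eta(\{\ast\})$. Openness of $U$ in $\PP(Y^\dagger)$ follows from the fact that $\eta\mapsto\eta(\{\ast\})$ is narrowly continuous, which in turn comes from Portmanteau together with $\partial\{\ast\}=\emptyset$ (as $\{\ast\}$ is open and closed).

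The key step is to verify that $\Phi$ is a homeomorphism between $\mathcal{M}_+(Y)$ (with the narrow topology) and $U$ (with the topology induced by $\PP(Y^\dagger)$). For the continuity of $\Phi$, I would use that $\mu\mapsto\iota_\sharp\mu$ is narrowly continuous (every $f\in C_b(Y^\dagger)$ restricts to $f|_Y\in C_b(Y)$), that the total mass $\mu\mapsto 1+\mu(Y)$ is continuous and bounded below by $1$, and that renormalization is a continuous operation. For the continuity of $\Phi^{-1}$ the decisive point is again that $\{\ast\}$ is isolated: Portmanteau gives the narrow continuity of $\eta\mapsto\eta(\{\ast\})$, and for each $g\in C_b(Y)$ the extension $g^\dagger\in C_b(Y^\dagger)$ defined by $g^\dagger|_Y=g$ and $g^\dagger(\ast)=0$ is continuous (because $\{\ast\}$ is open) and satisfies $\int g\,d(\eta|_Y)=\int g^\dagger\,d\eta$, so that the restriction map $\eta\mapsto\eta|_Y\in\mathcal{M}_+(Y)$ is narrowly continuous. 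Once $\Phi$ is a homeomorphism, $U$ being Polish (as an open subset of the Polish space $\PP(Y^\dagger)$) transports any compatible metric back through $\Phi^{-1}$ to a metric on $\mathcal{M}_+(Y)$ generating the narrow topology.

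The main obstacle I expect is the continuity of $\Phi^{-1}$: the factor $\eta(\{\ast\})^{-1}$ is unbounded as $\eta(\{\ast\})\to 0^+$, so one has to argue locally, using that at each $\eta\in U$ the quantity $\eta(\{\ast\})$ is both narrowly continuous and bounded below on a neighbourhood of $\eta$. Every such difficulty is unlocked by the choice of $\hat d^\dagger$, which makes $\ast$ an isolated point and thereby turns the restriction, the evaluation on $\{\ast\}$, and the extension $g\mapsto g^\dagger$ into continuous operations compatible with the narrow topology.
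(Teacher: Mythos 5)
Your argument is correct, and it is genuinely different from the paper's: the paper disposes of the lemma in one line by invoking \cite[Theorem 8.3.2]{Bogachev07}, which identifies the bounded--Lipschitz (Kantorovich--Rubinshtein) metric as inducing the narrow topology on $\mathcal{M}_+(Y)$. You instead reduce the problem to the already-known metrizability of $\PP(Y^\dagger)$ by constructing the homeomorphism $\Phi$ onto the open set $U=\{\eta:\eta(\{\ast\})>0\}$. Your route is more self-contained given the tools already on the page (it reuses §\ref{OT and Wass} rather than a new external citation) and cleanly isolates why the construction works: adding an isolated point turns the total mass into a continuous linear quantity of a probability measure. Two small remarks on the write-up: since $\{\ast\}$ is clopen, $\mathbf 1_{\{\ast\}}\in C_b(Y^\dagger)$, so narrow continuity of $\eta\mapsto\eta(\{\ast\})$ is immediate from the very definition of the narrow topology and there is no need to invoke Portmanteau (which is a statement about sequences/nets, a slightly heavier tool here); and the ``renormalization is continuous'' step is best phrased directly by checking that for every $f\in C_b(Y^\dagger)$ the map $\mu\mapsto\int f\,d\Phi(\mu)=\tfrac{1}{1+\mu(Y)}\bigl(\int f|_Y\,d\mu+f(\ast)\bigr)$ is continuous as a composition of the continuous maps $\mu\mapsto\mu(Y)$ and $\mu\mapsto\int f|_Y\,d\mu$ with a continuous function on $[0,\infty)\times\R$ — this avoids any implicit appeal to sequential characterizations, which are not available until metrizability has been proved. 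With those cosmetic changes the argument is fully rigorous and a valid alternative to the paper's citation.
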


\begin{proof}
    Thanks to \cite{Bogachev07}, Theorem 8.3.2, the narrow topology over $\mathcal{M}_+(Y)$ is induced by the norm 
    \begin{equation}
        \lVert \mu \rVert_{BL}:= \sup  \left\{ \int_Y \phi d\mu \ : \ \phi\in \operatorname{Lip}_b(Y), \ \operatorname{LIP}(\phi)\leq 1 \right\}.
    \end{equation}
\end{proof}

Before moving to the main goal of this section, we need a general lemma.

\begin{lemma}
    Let $Z$ be a Polish space. Then $\mathcal{B}(Z)$ is the smallest $\sigma$-algebra under which continuous functions from $Z$ to $\R$ are measurable. 
\end{lemma}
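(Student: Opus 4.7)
The plan is to verify the two inclusions separately. One direction is automatic: continuous functions between topological spaces are Borel measurable, so every continuous $f:Z\to\R$ is $\mathcal B(Z)$-measurable, which shows that $\mathcal B(Z)$ belongs to the class of $\sigma$-algebras making all continuous real-valued functions measurable.

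For the converse inclusion, let $\mathcal S$ be any $\sigma$-algebra on $Z$ such that every $f\in C(Z,\R)$ is $\mathcal S$-measurable; I would show that $\mathcal S\supset \mathcal B(Z)$ by proving that every closed subset of $Z$ belongs to $\mathcal S$. The key observation is that $Z$, being Polish, admits a compatible metric $d$: for each closed set $C\subset Z$, the distance function
\[
f_C(z):=d(z,C)=\inf_{c\in C}d(z,c)
\]
is $1$-Lipschitz, hence continuous, and satisfies $C=f_C^{-1}(\{0\})$. Since $\{0\}\subset\R$ is Borel and $f_C$ is continuous, the measurability hypothesis on $\mathcal S$ yields $C\in\mathcal S$. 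As the closed sets generate $\mathcal B(Z)$, this gives $\mathcal B(Z)\subset\mathcal S$.

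The two inclusions together show $\mathcal B(Z)$ is the smallest such $\sigma$-algebra. There is no serious obstacle here; the only point worth emphasizing is the use of the metric structure (guaranteed by the Polish assumption) to represent every closed set as a continuous preimage of a Borel set in $\R$. The statement would fail on a general topological space where closed sets are not necessarily zero sets of continuous functions, so the Polish (or at least completely regular) hypothesis is essential.
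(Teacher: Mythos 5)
Your proof is correct and takes essentially the same route as the paper's: both directions are handled by the standard Borel-measurability of continuous maps on one side, and on the other by representing each closed set $C$ as the zero set of the (continuous, $1$-Lipschitz) distance function $z\mapsto d(z,C)$ for a compatible metric $d$.
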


\begin{proof}
    Let $C\subset Z$ be a closed subset. Define the continuous function $\delta_C(z):= \inf\{d_Z(y,z) \ : \ y \in C\}$. Notice that $C = \delta_C^{-1}(\{0\}).$ Let $\mathcal{S}$ be the smallest $\sigma$-algebra on $Z$ such that all continuous functions from $Z$ to $\R$ are measurable. For sure $\mathcal{S}\subset \mathcal{B}(Z)$. On the other hand, for any closed set $C\subset Z$, we have that $C = \delta_C^{-1}(\{0\}) \in \mathcal{S}$, so $\mathcal{B}(Z)\subset \mathcal{S}$.
\end{proof}

We will use this lemma with $Z=\mathcal{M}_+(Y)$, where $Y$ is a Polish space. To this aim, define the class of functions 
\[\mathcal{F}:=\{F :\mathcal{M}_+(Y) \to \R \ : \ F(\mu) = \int f d\mu, \ f\in C_b(Y)\}.\]

\begin{lemma}
    Let $\mathcal{C}$ be the collection of all narrowly continuous functions from $\mathcal{M}_+(Y)$ to $\R$.
    Let $\mathcal{S}_\mathcal{F}$ (resp. $\mathcal{S}_\mathcal{C}$) be the smallest $\sigma$-algebra over $\mathcal{M}_+(Y)$ that makes measurable the functions in $\mathcal{F}$ (resp. $\mathcal{C}$).
    Then $\mathcal{S}_\mathcal{F} = \mathcal{S}_\mathcal{C}=\mathcal{B}(\mathcal{M}_+(Y)) $.
\end{lemma}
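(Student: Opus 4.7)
The two straightforward inclusions are $\mathcal{S}_\mathcal{F}\subseteq \mathcal{S}_\mathcal{C}\subseteq \mathcal{B}(\mathcal{M}_+(Y))$: indeed, by the very definition of the narrow topology (Subsection~\ref{subsub space of measures}), every $F\in\mathcal{F}$ is narrowly continuous, so $\mathcal{F}\subseteq \mathcal{C}$; and every $\R$-valued continuous function on a topological space is Borel measurable, which gives $\mathcal{S}_\mathcal{C}\subseteq \mathcal{B}(\mathcal{M}_+(Y))$.

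For the reverse inclusion $\mathcal{B}(\mathcal{M}_+(Y))\subseteq \mathcal{S}_\mathcal{C}$, I would invoke the preceding lemma with $Z=\mathcal{M}_+(Y)$. This lemma requires $Z$ to be Polish: metrizability is given by Lemma~\ref{weak conv is metrizable}, and separability is a standard consequence of $Y$ being Polish (alternatively one may observe that the bounded-Lipschitz norm produces a separable metric since $Y$ is separable). Together with the chain of easy inclusions, this already yields $\mathcal{S}_\mathcal{C}=\mathcal{B}(\mathcal{M}_+(Y))$.

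The non-trivial step is $\mathcal{B}(\mathcal{M}_+(Y))\subseteq \mathcal{S}_\mathcal{F}$. The key observation is that, by definition, the narrow topology on $\mathcal{M}_+(Y)$ is the initial topology generated by $\mathcal{F}$, i.e.~the coarsest topology making every functional in $\mathcal{F}$ continuous. Hence a subbase of the narrow topology is given by the collection
\[
\mathcal{U}:=\bigl\{F^{-1}(U)\ :\ F\in\mathcal{F},\ U\subseteq\R\ \text{open}\bigr\}\subseteq \mathcal{S}_\mathcal{F}.
\]
Consequently, every open set of $\mathcal{M}_+(Y)$ is a union of \emph{finite} intersections of elements of $\mathcal{U}$. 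Since $\mathcal{M}_+(Y)$ is Polish (hence second-countable and in particular Lindel\"of), any such union can be reduced to a \emph{countable} union: given an open set $V$ and any $\mu\in V$, pick $F_1,\dots,F_k\in\mathcal{F}$ and open $U_1,\dots,U_k\subseteq\R$ with $\mu\in \bigcap_{i=1}^k F_i^{-1}(U_i)\subseteq V$; the collection of all such finite intersections contained in $V$ is an open cover of $V$, from which Lindel\"of's property extracts a countable subcover. Each finite intersection lies in $\mathcal{S}_\mathcal{F}$ since $\mathcal{U}\subseteq \mathcal{S}_\mathcal{F}$, so $V\in\mathcal{S}_\mathcal{F}$, whence $\mathcal{B}(\mathcal{M}_+(Y))\subseteq\mathcal{S}_\mathcal{F}$.

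The only potential obstacle is justifying the reduction from arbitrary finite intersections to a countable subfamily; this is handled cleanly by Lindel\"of, which follows from second-countability of the Polish space $\mathcal{M}_+(Y)$. No further analytic input on the structure of $C_b(Y)$ is needed. Putting the three inclusions together gives the desired equality $\mathcal{S}_\mathcal{F}=\mathcal{S}_\mathcal{C}=\mathcal{B}(\mathcal{M}_+(Y))$.
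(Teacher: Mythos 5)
Your proof is correct, and it handles the one non-trivial inclusion $\mathcal{B}(\mathcal{M}_+(Y))\subseteq\mathcal{S}_\mathcal{F}$ by a route that differs from the paper's. The paper instead invokes an explicit countable subclass $\mathcal{C}_0\subset\mathcal{F}$ (from \cite[Remark 5.1.1]{ambrosio2005gradient}) that already generates the narrow topology, and then writes $\mathcal{B}(\mathcal{M}_+(Y))=\sigma\bigl(\{F^{-1}((a,b)): F\in\mathcal{C}_0,\, a<b\in\mathbb{Q}\}\bigr)\subseteq\mathcal{S}_\mathcal{F}$ directly. You avoid that external reference by observing that $\mathcal{U}=\{F^{-1}(U): F\in\mathcal{F},\ U\text{ open}\}$ is a subbase of the narrow topology (initial topology for $\mathcal{F}$), then reducing the a priori arbitrary union of finite intersections defining an open set to a countable one via the Lindel\"of property of the Polish space $\mathcal{M}_+(Y)$. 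This is an entirely abstract topological argument: it works for any initial topology on a second-countable space and does not depend on the specific structure of $C_b(Y)$, whereas the paper's version leans on a known separability result for bounded Lipschitz functions. Your argument is arguably cleaner as a black-box and makes the role of second countability explicit; the paper's is more concrete and gives a ready-made countable generating family that it re-uses elsewhere (e.g. in Remark \ref{countable duality wass on wass}). Both are valid.
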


 \begin{proof}
    The fact that $\mathcal{B}(\mathcal{M}_+(Y)) = \mathcal{S}_\mathcal{C}$ follows from the previous lemma.
     Regarding $\mathcal
     S_\mathcal{F}$, trivially we have $\mathcal{S}_\mathcal{F} \subset \mathcal{S}_\mathcal{C}$. On the other hand, notice that the countable class of functions $\mathcal{C}_0\subset \mathcal{F}$, introduced in \cite[Remeark 5.1.1]{ambrosio2005gradient}, is sufficient to describe the narrow topology $\mathcal{T}$ of $\mathcal{M}_+(Y)$. This means that $\mathcal{T}$ coincides with the smallest topology that makes continuous the functions in $\mathcal{C}_0$. Then 
    \begin{align*}
        \mathcal{B}(\mathcal{M}_+(Y)) = \sigma(\tau) = \sigma\left( \left\{ 
        F^{-1}((a,b)) \ : \ F\in \mathcal{C}_0, \ a<b, \ a,b\in \mathbb{Q} \right\} \right) \subset \mathcal{S}_\mathcal{F}.
    \end{align*}
 \end{proof}

\begin{prop}\label{equiv of sigma alg prop}
    Let $Y$ be a Polish space and $\mathcal{S}$ be the smallest $\sigma$-algebra on $\mathcal{M}_+(Y)$ that makes measurable the functions $\mu\mapsto \mu(A)$ for any $A\in \mathcal{B}(Y)$. Then $\mathcal{S}$ coincides with $\mathcal{B}(\mathcal{M}_+(Y))$.
\end{prop}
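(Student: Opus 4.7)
The plan is to prove the two inclusions $\mathcal{S} \subset \mathcal{B}(\mathcal{M}_+(Y))$ and $\mathcal{B}(\mathcal{M}_+(Y)) \subset \mathcal{S}$ separately, using the preceding lemma which identifies $\mathcal{B}(\mathcal{M}_+(Y))$ with $\mathcal{S}_\mathcal{F}$, the smallest $\sigma$-algebra making all maps $\mu \mapsto \int f\, d\mu$ with $f\in C_b(Y)$ measurable.

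For the inclusion $\mathcal{S}\subset \mathcal{B}(\mathcal{M}_+(Y))$, the key observation is that, for a Borel set $A\subset Y$, the map $\mu\mapsto \mu(A)$ coincides with $\mu \mapsto \int \mathbf 1_A\, d\mu$. Since $\mathbf 1_A$ is a non-negative Borel function, Lemma \ref{measurability int g d mu} directly yields Borel measurability of this map on $\mathcal{M}_+(Y)$. Hence every generator of $\mathcal{S}$ is Borel, so $\mathcal{S}\subset\mathcal{B}(\mathcal{M}_+(Y))$.

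For the reverse inclusion, by the lemma immediately preceding the statement it suffices to show that for every $f\in C_b(Y)$ the functional $F_f:\mu\mapsto \int f\,d\mu$ is $\mathcal{S}$-measurable. I would first handle the case $f\ge 0$ by approximating it from below with simple Borel functions $f_n = \sum_{i=1}^{N_n} c_i^{(n)}\mathbf 1_{A_i^{(n)}}$ with $A_i^{(n)}\in\mathcal B(Y)$, $c_i^{(n)}\ge 0$, and $f_n\uparrow f$ pointwise (this is the standard dyadic construction). Each $F_{f_n}(\mu)=\sum_i c_i^{(n)}\mu(A_i^{(n)})$ is, by definition, $\mathcal S$-measurable, and by monotone convergence $F_{f_n}(\mu)\to F_f(\mu)$ for every $\mu$; pointwise limits of $\mathcal S$-measurable functions are $\mathcal S$-measurable. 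The case of general $f\in C_b(Y)$ reduces to this by writing $f=f^+-f^-$.

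Combining these two facts gives $\mathcal B(\mathcal{M}_+(Y))=\mathcal{S}_\mathcal F\subset\mathcal S$, and together with the first inclusion we conclude equality. There is no real obstacle here: the argument is essentially the standard monotone class / approximation-by-simple-functions tool, and all the heavy lifting (continuity of the maps $F_f$ generating the Borel $\sigma$-algebra, and Borel measurability of $\mu\mapsto \int g\,d\mu$ for Borel $g$) has already been carried out in the previous two lemmas.
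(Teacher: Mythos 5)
Your proof is correct and follows essentially the same route as the paper's: the forward inclusion via Lemma \ref{measurability int g d mu}, and the reverse inclusion by observing that $\mu\mapsto\int f\,d\mu$ for $f\in C_b(Y)$ is a pointwise limit of $\mathcal{S}$-measurable integrals of simple Borel functions, then invoking the preceding lemma identifying $\mathcal{B}(\mathcal{M}_+(Y))$ with $\mathcal{S}_\mathcal{F}$. The paper states this in three terse sentences; you merely spell out the dyadic approximation and the $f=f^+-f^-$ split explicitly.
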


\begin{proof}
    Thanks to Lemma \ref{measurability int g d mu}, it holds that $\mathcal{S}\subset \mathcal{B}(\mathcal{M}_+(Y))$. On the other hand, the integral of step functions is $\mathcal{S}$-measurable, and then also $\mu \mapsto \int fd\mu$ is measurable for any $f \in C_b(Y)$. This implies that $\mathcal{S}_\mathcal{F}\subset \mathcal{S}$, and thanks to the previous lemma we are done.
\end{proof}

\begin{co}\label{equiv of sigma alg cor}
    Let $X$ be a topological space and  $X \ni x \mapsto \mu_x \in \mathcal{M}_+(Y)$ be a map taking values in $\mathcal{M}_+(Y)$. Then 
    \[x\mapsto \mu_x \text{ is Borel } \iff  \ x\mapsto \mu_x(A) \text{ is Borel }\forall A\in \mathcal{B}(Y).\]
\end{co}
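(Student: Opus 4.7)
The plan is to deduce the corollary directly from Proposition \ref{equiv of sigma alg prop}, which identifies $\mathcal{B}(\mathcal{M}_+(Y))$ with the smallest $\sigma$-algebra $\mathcal{S}$ making all evaluation maps $\mathrm{ev}_A : \mu \mapsto \mu(A)$ Borel measurable, as $A$ ranges over $\mathcal{B}(Y)$. The argument is the standard ``generating family'' lemma for measurability of maps into a measurable space.

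For the forward implication ($\Rightarrow$), I would note that for every fixed $A \in \mathcal{B}(Y)$ the map $\mathrm{ev}_A : \mathcal{M}_+(Y) \to \R$ is Borel (this is the special case of Lemma \ref{measurability int g d mu} applied to $g = \mathbbm{1}_A$, or simply the content of Proposition \ref{equiv of sigma alg prop}). Hence if $x \mapsto \mu_x$ is Borel, the composition $x \mapsto \mu_x(A) = \mathrm{ev}_A(\mu_x)$ is a composition of Borel maps and therefore Borel.

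For the converse ($\Leftarrow$), I would consider the family
\[
\mathcal{G} := \bigl\{ B \in \mathcal{B}(\mathcal{M}_+(Y)) \ : \ \{x \in X \ : \ \mu_x \in B\} \in \mathcal{B}(X) \bigr\}.
\]
A routine check shows that $\mathcal{G}$ is a $\sigma$-algebra on $\mathcal{M}_+(Y)$ (it is closed under complements and countable unions because preimages commute with these operations). By the standing assumption, for every $A \in \mathcal{B}(Y)$ and every open $U \subseteq \R$,
\[
\{x \in X \ : \ \mu_x(A) \in U\} \in \mathcal{B}(X),
\]
so $\mathrm{ev}_A^{-1}(U) \in \mathcal{G}$. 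Thus $\mathcal{G}$ contains a generating family for $\mathcal{S}$, and by Proposition \ref{equiv of sigma alg prop} we get $\mathcal{G} \supseteq \mathcal{S} = \mathcal{B}(\mathcal{M}_+(Y))$, which forces $\mathcal{G} = \mathcal{B}(\mathcal{M}_+(Y))$. This means the preimage under $x \mapsto \mu_x$ of every Borel subset of $\mathcal{M}_+(Y)$ is Borel in $X$, i.e.\ $x \mapsto \mu_x$ is Borel.

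There is essentially no obstacle here: the corollary is a direct application of Proposition \ref{equiv of sigma alg prop} combined with the elementary fact that to check Borel measurability of a map into $\mathcal{M}_+(Y)$ it suffices to test against a generating family of $\mathcal{B}(\mathcal{M}_+(Y))$. The only mild care needed is to work with preimages of open sets in $\R$ (rather than with the full Borel $\sigma$-algebra of $\R$) when writing the generators, but this is routine.
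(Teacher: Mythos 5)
The paper gives no proof of this corollary, treating it as an immediate consequence of Proposition \ref{equiv of sigma alg prop}; your argument is the standard way to make that step precise and is correct. The forward direction by composing with the Borel evaluations $\mathrm{ev}_A$, and the converse by observing that $\mathcal{G}$ is a $\sigma$-algebra containing the generators $\mathrm{ev}_A^{-1}(U)$ of $\mathcal{S}$ and then invoking $\mathcal{S}=\mathcal{B}(\mathcal{M}_+(Y))$, is exactly what the authors intend.
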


This corollary directly shows the measurability of the family of measures given by the \textit{disintegration theorem}, that we recall here for completeness.

\begin{teorema}\label{disintegration theorem}
    Let $Y,X$ be Polish spaces, $\mu\in \mathcal{M}_+(Y)$ and $e:Y\to X$ a Borel function. Define $\theta := e_\sharp \mu \in \mathcal{M}_+(X)$. Then there exists a family $\{\mu_x\}_{x\in X}\subset \PP(Y)$ such that 
    \begin{enumerate}
        \item[(i)] $x\mapsto \mu_x(A)$ is Borel measurable for any $A\in \mathcal{B}(Y)$;
        \item[(ii)] $\mu(dz) = \int_X \mu_x(dy) d\theta(x)$;
        \item[(iii)] $\mu_x$ is concentrated on $e^{-1}(\{x\})$ for $\theta$-a.e. $x\in X$.
    \end{enumerate}
Moreover, such a disintegration is unique, in the sense that if another family $\{\mu'_x\}_{x\in X}$ satisfies these properties, then $\mu_x = \mu'_x$ for $\theta$-a.e. $x\in X$. 
\end{teorema}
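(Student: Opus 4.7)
\textbf{Proof plan for Theorem \ref{disintegration theorem}.}
Without loss of generality I normalise and assume $\mu\in\PP(Y)$, so $\theta\in\PP(X)$. My strategy is the classical ``Radon--Nikodym on a countable algebra, then extend'' approach, exploiting that $Y$ is Polish and hence second countable with nice regularity. The first step is to fix a countable algebra $\mathcal A\subset \mathcal B(Y)$ which generates $\mathcal B(Y)$; such an algebra exists because $Y$ is second countable. For each $A\in\mathcal A$, the finite positive measure $e_\sharp(\indi_A\mu)$ on $X$ is absolutely continuous with respect to $\theta$, so I pick a Borel version $f_A:X\to[0,1]$ of its Radon--Nikodym derivative, which, by the defining property of $f_A$, satisfies
\begin{equation*}
\int_X g(x)f_A(x)\,\d\theta(x)=\int_Y g(e(y))\indi_A(y)\,\d\mu(y)\qquad\forall g\in C_b(X).
\end{equation*}

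The next step is to build, for $\theta$-a.e.\ $x$, a genuine measure $\mu_x$ whose value on $A$ is $f_A(x)$. Since $\mathcal A$ is countable, I can combine countably many $\theta$-null sets to find a single $\theta$-null $N\subset X$ off of which the map $A\mapsto f_A(x)$ is a finitely additive $[0,1]$-valued function on $\mathcal A$ with $f_Y(x)=1$. The crucial point is to upgrade this finite additivity to countable additivity and then extend to $\mathcal B(Y)$. For this I would use the inner regularity allowed by the Polish structure: by Ulam's theorem, $\mu$ is concentrated on a countable union $K_\bullet$ of compacts, and on such a tight support one can, after a further $\theta$-null adjustment, enforce
\begin{equation*}
f_A(x)=\sup\{f_K(x)\,:\,K\subset A\text{ compact}\}\qquad\forall A\in\mathcal A,
\end{equation*}
which forces countable additivity on $\mathcal A$ and permits Carathéodory extension to a Borel probability $\mu_x$ on $Y$. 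Alternatively, one can embed $Y$ as a Borel subset of $[0,1]^\NN$ and reduce to the case of a cube, where the product structure makes the extension step transparent. The Borel measurability in~(i) of $x\mapsto\mu_x(B)$ then follows: it holds by construction for $B\in\mathcal A$, and the class of $B\in\mathcal B(Y)$ for which it holds is a monotone class containing $\mathcal A$, so it equals $\mathcal B(Y)$ by the monotone class theorem.

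For (ii), both sides define Borel measures on $Y$ agreeing on $\mathcal A$ (from the Radon--Nikodym identity integrated against $g=1$), hence they agree on $\mathcal B(Y)$. For (iii), apply (ii) with the set $e^{-1}(B)$ for $B\in\mathcal B(X)$:
\begin{equation*}
\theta(B)=\mu(e^{-1}(B))=\int_X \mu_x(e^{-1}(B))\,\d\theta(x),
\end{equation*}
which, together with the analogous identity for $B^c$, forces $\mu_x(e^{-1}(B))=\indi_B(x)$ for $\theta$-a.e.\ $x$; choosing $B$ in a countable base of $X$ and intersecting the null sets yields $\mu_x(e^{-1}(\{x\}))=1$ for $\theta$-a.e.\ $x$. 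Finally, uniqueness: given another family $\{\mu'_x\}$ with the same properties, for each $A\in\mathcal A$ the maps $x\mapsto\mu_x(A)$ and $x\mapsto\mu'_x(A)$ are both Borel versions of the same Radon--Nikodym derivative, hence agree $\theta$-a.e.; a single null set handles all $A\in\mathcal A$ simultaneously, and a monotone class argument upgrades this to all Borel sets, proving $\mu_x=\mu'_x$ for $\theta$-a.e.\ $x$. The main obstacle is the countable-additivity upgrade in the second step, which is exactly where the Polish hypothesis on $Y$ is indispensable.
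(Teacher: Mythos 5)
The paper does not actually prove Theorem \ref{disintegration theorem}; it is explicitly ``recalled here for completeness'' as a standard result, immediately after Corollary \ref{equiv of sigma alg cor}, which the authors use only to justify that the disintegration family is Borel in the sense of that corollary. So there is no paper proof to compare against, and your proposal must stand on its own.

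Your overall strategy (Radon--Nikodym derivatives $f_A$ on a countable generating algebra, upgrade to a genuine measure, monotone class to extend) is the classical and correct route. Two points deserve attention. First, the countable-additivity step as you first phrase it is not well-defined: you write $f_A(x)=\sup\{f_K(x):K\subset A\text{ compact}\}$, but $f_K$ only makes sense for $K\in\mathcal A$, and a countable generating algebra need not contain a cofinal family of compacts. The alternative you mention --- embedding $Y$ as a Borel subset of $[0,1]^\NN$ and working with the algebra of finite-dimensional cylinder sets with rational endpoints, where a decreasing sequence with empty intersection eventually has an empty finite subintersection by compactness --- is the clean way to make this rigorous, and you should commit to it rather than the Ulam/sup sketch.

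Second, and more substantively, your argument for (iii) has a genuine gap. You invoke only property (ii), namely $\theta(B)=\int_X\mu_x(e^{-1}(B))\,\d\theta(x)$, and claim that combined with the $B^c$ version this forces $\mu_x(e^{-1}(B))=\indi_B(x)$ a.e. It does not: taking $\mu_x\equiv\theta$ for every $x$ (with $e=\mathrm{id}$, $\mu=\theta$) satisfies (i) and (ii) but fails (iii), so (ii) alone cannot imply concentration. What you actually need is the \emph{localised} identity
\begin{equation*}
\mu\bigl(A\cap e^{-1}(B)\bigr)=\int_B\mu_x(A)\,\d\theta(x)\qquad\forall A\in\mathcal B(Y),\ B\in\mathcal B(X),
\end{equation*}
which your construction does provide: $f_A=\mathrm{d}\bigl(e_\sharp(\indi_A\mu)\bigr)/\mathrm{d}\theta$ gives this for $A\in\mathcal A$ against every bounded Borel $g$ (not just $g\in C_b$), and a monotone class argument extends it in $A$. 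With this in hand, taking $A=e^{-1}(B^c)$ gives $\int_B\mu_x(e^{-1}(B^c))\,\d\theta=0$, hence $\mu_x(e^{-1}(B))=1$ for $\theta$-a.e.\ $x\in B$; running $B$ over a countable base and using downward continuity then yields $\mu_x(e^{-1}(\{x\}))=1$ a.e. Your final step is correct, but the intermediate step must appeal to the localised identity rather than to (ii). The uniqueness argument is fine as stated.
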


\subsection{Measurability of sets and maps}
\begin{prop}[Measurability of $\mu \mapsto f_\sharp \mu$]\label{meas of push forward}
    Let $X,Y$ be two Polish spaces and $f:X\to Y$ be a Borel measurable map. Then $f_\sharp :\mathcal{M}_+(X) \to \mathcal{M}_+(Y)$ is Borel measurable. The same holds for the restriction $f_\sharp :\PP(X) \to \PP(Y)$.
    If, additionally, $f$ is continuous, we have that $f_\sharp : \mathcal{M}(X;\R^d) \to \mathcal{M}(Y;\R^d)$ is continuous, and in particular Borel measurable.
\end{prop}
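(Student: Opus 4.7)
The plan is to handle the three assertions separately, each with its own natural tool: for the continuous case we exploit the very definition of the narrow topology via test functions, while for the Borel case we reduce the problem, via Corollary \ref{equiv of sigma alg cor}, to testing against indicator functions of Borel sets.

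First, suppose $f:X\to Y$ is continuous, and fix $\phi\in C_b(Y;\R^d)$. Then $\phi\circ f\in C_b(X;\R^d)$, and by the change of variables formula
\[
\int_Y \phi\cdot\,\d(f_\sharp\nu)=\int_X \phi\circ f\cdot\,\d\nu\qquad \forall\,\nu\in\mathcal M(X;\R^d).
\]
The right-hand side is, by definition of the narrow topology on $\mathcal M(X;\R^d)$, a continuous function of $\nu$; hence $\nu\mapsto \int_Y \phi\cdot\,\d(f_\sharp\nu)$ is continuous for every $\phi\in C_b(Y;\R^d)$. Since the narrow topology on $\mathcal M(Y;\R^d)$ is the coarsest one making these evaluation maps continuous, $f_\sharp:\mathcal M(X;\R^d)\to\mathcal M(Y;\R^d)$ is continuous, hence Borel.

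Now assume only that $f:X\to Y$ is Borel measurable; the goal is to show that $f_\sharp:\mathcal M_+(X)\to\mathcal M_+(Y)$ is Borel. By Corollary \ref{equiv of sigma alg cor}, it suffices to verify that for every Borel set $A\subset Y$ the map
\[
\mathcal M_+(X)\ni\mu\;\longmapsto\;(f_\sharp\mu)(A)=\mu(f^{-1}(A))\in[0,+\infty]
\]
is Borel measurable. But $f^{-1}(A)\in\mathcal B(X)$ by Borel measurability of $f$, and then Lemma \ref{measurability int g d mu} applied to $g=\indi_{f^{-1}(A)}$ (or equivalently Proposition \ref{equiv of sigma alg prop}) guarantees that $\mu\mapsto\mu(f^{-1}(A))$ is Borel on $\mathcal M_+(X)$. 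This gives the desired measurability of $f_\sharp$. The restriction to $\PP(X)\to\PP(Y)$ is then immediate, since $\PP(X)\subset\mathcal M_+(X)$ and $\PP(Y)\subset\mathcal M_+(Y)$ are Borel subsets (being closed) on which $f_\sharp$ restricts to the push-forward of probability measures.

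There is no serious obstacle here: everything rests on the two ingredients already proved in Appendix \ref{app: measurability}, namely Lemma \ref{measurability int g d mu} and Proposition \ref{equiv of sigma alg prop}. The only delicate point to mention is that, for generic Borel $f$, one cannot expect continuity of $f_\sharp$ with respect to the narrow topology; the need for continuity of $f$ in the statement for $\mathcal M(Y;\R^d)$ reflects precisely the fact that the narrow topology on signed vector measures is tested against $C_b(Y;\R^d)$ only, so a discontinuous $f$ would not in general produce $\phi\circ f\in C_b(X;\R^d)$.
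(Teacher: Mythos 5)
Your proof is correct and follows essentially the same route as the paper: Corollary \ref{equiv of sigma alg cor} plus Lemma \ref{measurability int g d mu} for the Borel case, and testing against $C_b(Y;\R^d)$ for the continuous case (the paper invokes a reference for the universal property of the weak topology where you argue it directly, but the content is identical). Nothing is missing.
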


\begin{proof}
    By Corollary \ref{equiv of sigma alg cor}, the map $\mu\mapsto f_\sharp  \mu $\ is measurable if and only if for all $B\in \mathcal{B}(Y)$ the map $\mu \mapsto f_\sharp \mu(B)$ is measurable. By Lemma \ref{measurability int g d mu} we conclude, since $A = f^{-1}(B) \in \mathcal{B}(X)$ and
    \[f_\sharp \mu(B) = \mu(f^{-1}(B)) = \int_X \mathds{1}_{A} d\mu\]
    is measurable. For the second part, simply notice that $f_\sharp |_{\PP(X)}$ maps $\PP(X)$ to $\PP(Y)$. 

    Thanks to \cite[Proposition 3.2]{brezis2011functional}, we only need to show that $\mathcal{M}(X;\R^d)\ni \mu \mapsto \int_Y \phi d(f_\sharp \mu) \in \R $ is continuous for all $\phi \in C_b(Y;\R^d)$. We conclude noticing that $\int_Y \phi d(f_\sharp \mu) = \int_X \phi \circ f d\mu$, which is continuous since $\phi \circ f \in C_b(X;\R^d)$.
\end{proof}

\begin{teorema}[Measurability of $\mu \mapsto f\mu$]
    Let $Y$ be a Polish space. Let $f:Y\to [0,+\infty]$ be a bounded and Borel map. Then the map
    \[F:\mathcal{M}_+(Y) \to \mathcal{M}_+(Y), \quad F(\mu) = f\mu,\]
    is Borel, endowing $\mathcal{M}_+(Y)$ with the weak topology w.r.t. the duality with $C_b(Y)$ functions.
\end{teorema}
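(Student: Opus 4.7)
The plan is to reduce the measurability of $F\colon\mu\mapsto f\mu$ to the criteria for Borel measurability of $\mathcal{M}_+(Y)$-valued maps already developed in the appendix. By Corollary \ref{equiv of sigma alg cor}, a map $x\mapsto \mu_x$ valued in $\mathcal{M}_+(Y)$ is Borel if and only if $x\mapsto \mu_x(A)$ is Borel for every $A\in \mathcal{B}(Y)$. Applying this with $x=\mu$ ranging over $\mathcal{M}_+(Y)$ and $\mu_x = F(\mu)=f\mu$, it suffices to prove that for every Borel set $A\subset Y$ the real-valued map
\[
\mathcal{M}_+(Y)\ni \mu \longmapsto (f\mu)(A) = \int_Y \indi_A(y)\, f(y)\,\d\mu(y)
\]
is Borel measurable.

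For this, I would invoke Lemma \ref{measurability int g d mu}: since $f:Y\to[0,+\infty]$ is Borel and bounded, and $\indi_A$ is Borel, the product $g:= \indi_A\cdot f$ is a Borel map $Y\to[0,+\infty]$. Lemma \ref{measurability int g d mu} then yields immediately that $\mu\mapsto \int_Y g\,\d\mu$ is a Borel function on $\mathcal{M}_+(Y)$, which is exactly the required measurability of $\mu\mapsto (f\mu)(A)$. Combining the two steps gives the conclusion.

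There is no genuine obstacle in this argument: the statement is essentially a direct corollary of the two results recalled above (the equivalence of $\sigma$-algebras on $\mathcal{M}_+(Y)$ and the measurability of the integration functional). The only minor point to note is that the boundedness of $f$ is used implicitly to ensure that $f\mu$ really belongs to $\mathcal{M}_+(Y)$ (i.e., is a finite measure) whenever $\mu\in \mathcal{M}_+(Y)$, so that the codomain of $F$ is as claimed; the Borel measurability itself would in fact still go through for any nonnegative Borel $f$, provided one interprets the target space appropriately.
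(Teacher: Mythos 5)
Your proof is correct and follows essentially the same route as the paper's: reduce via Corollary \ref{equiv of sigma alg cor} to showing $\mu\mapsto(f\mu)(A)$ is Borel for every $A\in\mathcal{B}(Y)$, then apply Lemma \ref{measurability int g d mu} with $g=f\,\indi_A$. The extra observation about the role of boundedness (ensuring $f\mu$ is finite so the codomain $\mathcal{M}_+(Y)$ is correct) is a sensible clarification that the paper leaves implicit.
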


\begin{proof}
    Thanks to Corollary \ref{equiv of sigma alg cor}, it suffices to prove that for all $A \in \mathcal{B}(Y)$, the map 
    $\mu \mapsto \int_A fd\mu$
    is measurable. We are done by Lemma \ref{measurability int g d mu}, with $g(y) = f(y) \mathds{1}_A(y)$.
\end{proof}

\begin{co}
    Assume that $f:Y \to [0,+\infty]$ is Borel. Then the map
    \[F:\mathcal{M}_{+,f}(Y) \to \mathcal{M}_+(Y), \quad F(\mu) = f\mu,\]
    is Borel, where $\mathcal{M}_{+,f}(Y):=\{\mu \in \mathcal{M}_+(Y) \ : \ f\in L^1(\mu)\}$ is endowed with the subspace topology.
\end{co}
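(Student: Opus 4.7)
The strategy is to approximate the unbounded density $f$ by the bounded truncations $f_n := f \wedge n$ and pass to the limit. Each $f_n : Y \to [0,n]$ is bounded and Borel, so by the previous theorem the map $F_n : \mathcal{M}_+(Y) \to \mathcal{M}_+(Y)$ defined by $F_n(\mu) := f_n \mu$ is Borel. Restricting to the subset $\mathcal{M}_{+,f}(Y)$ (which is a Borel subset of $\mathcal{M}_+(Y)$ by Lemma \ref{measurability int g d mu} applied to $g := f$) with its subspace topology preserves Borel measurability, so each $F_n : \mathcal{M}_{+,f}(Y) \to \mathcal{M}_+(Y)$ is Borel.

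Next I would show that $F_n(\mu) \to F(\mu)$ narrowly for every $\mu \in \mathcal{M}_{+,f}(Y)$. For any $\varphi \in C_b(Y)$, the inequality $|\varphi f_n| \le \|\varphi\|_\infty f$ together with $f \in L^1(\mu)$ and the pointwise convergence $\varphi f_n \to \varphi f$ allows me to apply dominated convergence to obtain
\begin{equation*}
\int_Y \varphi \, d(f_n\mu) = \int_Y \varphi f_n \, d\mu \;\longrightarrow\; \int_Y \varphi f \, d\mu = \int_Y \varphi \, d(f\mu),
\end{equation*}
which is exactly narrow convergence $F_n(\mu) \to F(\mu)$ in $\mathcal{M}_+(Y)$.

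To conclude, I invoke the fact that the pointwise limit of a sequence of Borel maps taking values in a separable metric space is again Borel: here the target $\mathcal{M}_+(Y)$ is Polish (hence a separable metric space) thanks to Lemma \ref{weak conv is metrizable}, so $F = \lim_n F_n$ is Borel on $\mathcal{M}_{+,f}(Y)$. There is no real obstacle: the only point to watch is that the domination $|\varphi f_n| \le \|\varphi\|_\infty f$ is only integrable on $\mathcal{M}_{+,f}(Y)$, which is precisely the reason for restricting to this Borel subset rather than working on all of $\mathcal{M}_+(Y)$.
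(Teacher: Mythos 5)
Your proof is correct and follows exactly the same strategy as the paper's: truncate $f$ to $f_n = f \wedge n$, invoke the preceding theorem for bounded densities, verify narrow convergence $f_n\mu \rightharpoonup f\mu$ on $\mathcal{M}_{+,f}(Y)$, and conclude since pointwise limits of Borel maps into a metrizable space are Borel. You merely spell out the dominated-convergence and pointwise-limit steps that the paper leaves implicit.
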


\begin{proof}
    We know that $\mathcal{M}_{+,f}(Y)$ is a Borel set. Define $F_k(\mu):= (f\wedge k) \mu $ for all $\mu \in \mathcal{M}_{+,f}(Y)$, $k\in \N$. Then $F_k$ pointwise converges to $F$; indeed, $(f\wedge k) \mu \rightharpoonup f\wedge \mu$ for all $\mu \in \mathcal{M}_{+,f}(Y)$.
\end{proof}

\begin{co}
    Assume $f:Y\to\overline{R}$ is Borel. Then the following map is Borel
    \[F:\mathcal{M}_{+,f}(Y) \to \mathcal{M}(Y), \quad F(\mu) = f\mu.\]
\end{co}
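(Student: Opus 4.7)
The plan is to reduce to the previous corollary (which covers nonnegative Borel $f$) by splitting $f$ into its positive and negative parts. Write $f = f^+ - f^-$ where $f^\pm:Y\to[0,+\infty]$ are Borel. Since $|f|=f^++f^-$, one has the identity
\[
\mathcal{M}_{+,f}(Y)=\mathcal{M}_{+,f^+}(Y)\cap \mathcal{M}_{+,f^-}(Y),
\]
and Lemma \ref{measurability int g d mu} applied to $f^+$ and $f^-$ shows that each of the two sets on the right, and thus their intersection, is a Borel subset of $\mathcal{M}_+(Y)$.

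By the previous corollary, the maps
\[
F^{\pm}:\mathcal{M}_{+,f^\pm}(Y)\to\mathcal{M}_+(Y),\qquad F^\pm(\mu):=f^\pm\mu,
\]
are Borel measurable; restricting them to the smaller Borel set $\mathcal{M}_{+,f}(Y)$ preserves Borel measurability. The map $F$ we want to study then factors as
\[
F(\mu)=F^+(\mu)-F^-(\mu)=\Delta\bigl(F^+(\mu),F^-(\mu)\bigr),
\]
where $\Delta:\mathcal{M}_+(Y)\times \mathcal{M}_+(Y)\to \mathcal{M}(Y)$ is the subtraction map $(\nu_1,\nu_2)\mapsto \nu_1-\nu_2$. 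So it suffices to check that $\Delta$ is Borel.

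The key point, which I view as the only nontrivial step, is the measurability of $\Delta$. The ambient space $\mathcal{M}(Y)$ with its narrow topology is not metrizable (Remark~\ref{measures are Polish/Lusin}), so one cannot argue by sequential continuity globally; however, the narrow topology on $\mathcal{M}(Y)$ is, by construction, the coarsest topology making all functionals $\nu\mapsto \int\phi\,d\nu$ continuous for $\phi\in C_b(Y)$. For every $\phi\in C_b(Y)$, the composition $(\nu_1,\nu_2)\mapsto \int\phi\,d\nu_1-\int\phi\,d\nu_2$ is continuous on $\mathcal{M}_+(Y)\times \mathcal{M}_+(Y)$, hence $\Delta$ is continuous with respect to the narrow topologies on source and target by the universal property of the initial topology. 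In particular $\Delta$ is Borel, and since $\mu\mapsto (F^+(\mu),F^-(\mu))$ is Borel into the product, the composition $F=\Delta\circ(F^+,F^-)$ is Borel, which concludes the proof.
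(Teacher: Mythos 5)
Your proof is correct and follows essentially the same route as the paper, which simply decomposes $F(\mu) = f^+\mu - f^-\mu$ and invokes the previous corollary for $f^\pm$. You have additionally spelled out two details the paper leaves implicit — the domain identity $\mathcal{M}_{+,f}(Y)=\mathcal{M}_{+,f^+}(Y)\cap\mathcal{M}_{+,f^-}(Y)$ and the Borel measurability of the subtraction map $\Delta$, correctly justified by the universal property of the initial (narrow) topology on $\mathcal{M}(Y)$ rather than by sequential arguments, since $\mathcal{M}(Y)$ is not metrizable.
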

\begin{proof}
    Notice that for all $\mu \in \mathcal{M}_{+,f}(Y)$, $F(\mu) = F_+(\mu) - F_-(\mu)$, where $F_{\pm}(\mu) = f_{\pm}\mu$.
\end{proof}

\begin{co}\label{measurability vector measure f mu}
    Assume $f:Y\to\R^d$ is Borel. Then the following map is Borel
    \[F:\mathcal{M}_{+,f}(Y) \to \mathcal{M}(Y;\R^d), \quad F(\mu) = f\mu.\]
\end{co}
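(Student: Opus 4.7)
The plan is to reduce the vector-valued statement to the scalar case handled in the preceding corollary by decomposing $f$ into its coordinates. Write $f = (f_1, \dots, f_d)$, where each $f_i : Y \to \R$ is Borel. Since $f \in L^1(\mu;\R^d)$ if and only if each $f_i \in L^1(\mu)$, I would first observe that
\[
\mathcal{M}_{+,f}(Y) = \bigcap_{i=1}^d \mathcal{M}_{+,f_i}(Y),
\]
which is a Borel subset of $\mathcal{M}_+(Y)$ as an intersection of Borel subsets (each appearing in the preceding corollary's statement).

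Next, I would invoke the preceding scalar corollary on each component to conclude that the coordinate maps
\[
F_i : \mathcal{M}_{+,f}(Y) \to \mathcal{M}(Y), \quad F_i(\mu) := f_i \mu,
\]
are Borel, since each is the restriction to a smaller Borel domain of the Borel map $\mu \mapsto f_i\mu$ on $\mathcal{M}_{+,f_i}(Y)$.

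Finally, identify $\mathcal{M}(Y;\R^d)$ with the product space $\mathcal{M}(Y)^d$ via the component map $\nu \mapsto (\nu^1,\dots,\nu^d)$; this identification is a homeomorphism for the narrow topologies (a sequence $\nu_n$ converges narrowly to $\nu$ in $\mathcal{M}(Y;\R^d)$ iff each component $\nu_n^i$ converges narrowly in $\mathcal{M}(Y)$, by testing against $C_b(Y;\R^d)$ functions of the form $(0,\dots,\phi,\dots,0)$ with $\phi \in C_b(Y)$). Under this identification, $F(\mu) = (F_1(\mu),\dots,F_d(\mu))$ is Borel because its components are Borel and the Borel $\sigma$-algebra of a countable product of Polish spaces coincides with the product $\sigma$-algebra. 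The only mild point to double-check is that the product-topology identification of $\mathcal{M}(Y;\R^d)$ with $\mathcal{M}(Y)^d$ is indeed a topological isomorphism, but this is immediate from the defining duality with $C_b(Y;\R^d) \cong C_b(Y)^d$, so there is no real obstacle.
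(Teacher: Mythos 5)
Your proof is correct and follows essentially the same route as the paper, which simply observes that $\mathcal{M}(Y;\R^d)$ is homeomorphic to $\mathcal{M}(Y)^d$ and leaves the component-wise reduction implicit. You have merely spelled out the details (the decomposition $\mathcal{M}_{+,f}(Y)=\bigcap_i\mathcal{M}_{+,f_i}(Y)$ and the identification of the Borel $\sigma$-algebra of the product with the product $\sigma$-algebra), all of which are accurate.
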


\begin{proof}
    Notice that $\mathcal{M}(Y;\R^d)$ is homeomorphic to $\mathcal{M}(Y)^{d}$.
\end{proof}

\begin{lemma}[Measurability of the condition $\nu=f\mu$]\label{measurability nu=f mu}
    Let $Y$ be a Polish space. For each Borel map $f:Y\to \R^d$ and any $p\geq 1$, the following set is Borel measurable:
    \[\mathfrak{D}_f:=\{(\mu,\nu) \in \mathcal{M}_+(Y)\times \mathcal{M}(Y;\R^d) \ : \ f\in L^p(\mu), \ \nu = f \mu\}.\]
\end{lemma}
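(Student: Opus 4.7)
The plan is to express $\mathfrak{D}_f$ as the graph of a Borel map defined on a Borel subset of $\mathcal{M}_+(Y)$, and then to reduce the graph condition to a countable intersection of Borel conditions via a separating family of test functions.

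First I would isolate the integrability requirement. Applying Lemma~\ref{measurability int g d mu} to the nonnegative Borel function $|f|^p$, the map $\mu \mapsto \int |f|^p\,d\mu$ is Borel from $\mathcal{M}_+(Y)$ to $[0,+\infty]$, so
\[
A := \Bigl\{\mu \in \mathcal{M}_+(Y) \ : \ \int |f|^p\,d\mu < +\infty\Bigr\}
\]
is a Borel subset. Since every $\mu \in \mathcal{M}_+(Y)$ is a finite measure, H\"older's inequality gives $A \subseteq \mathcal{M}_{+,f}(Y)$, and hence Corollary~\ref{measurability vector measure f mu} implies that the map $\Phi : A \to \mathcal{M}(Y;\R^d)$, $\Phi(\mu) := f\mu$, is Borel. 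At this point $\mathfrak{D}_f$ is exactly the graph of $\Phi$ over the Borel set $A$, since the condition $\nu = f\mu$ forces $\mu \in A$ whenever $(\mu,\nu) \in \mathfrak{D}_f$.

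To show that this graph is Borel, I would exploit a countable separating family. For any Polish $Y$, there exists a countable family $\{\phi_k\}_{k\in\N} \subset C_b(Y;\R^d)$ whose components separate elements of $\mathcal{M}(Y;\R^d)$, e.g.\ by taking componentwise the countable family $\mathcal{C}_0 \subset C_b(Y)$ used in the proof of Proposition~\ref{equiv of sigma alg prop} (cf.\ \cite[Remark 5.1.1]{ambrosio2005gradient}). One can then write
\[
\mathfrak{D}_f = A \cap \bigcap_{k\in\N} \Bigl\{(\mu,\nu) \in \mathcal{M}_+(Y)\times \mathcal{M}(Y;\R^d) \ : \ \int \phi_k \cdot d\nu = \int \phi_k \cdot f \,d\mu\Bigr\}.
\]
For each fixed $k$, the map $\nu \mapsto \int \phi_k \cdot d\nu$ is continuous on $\mathcal{M}(Y;\R^d)$ by definition of the narrow topology, while the map $\mu \mapsto \int \phi_k \cdot f \,d\mu$ is Borel on $A$: splitting $\phi_k \cdot f = (\phi_k \cdot f)_+ - (\phi_k \cdot f)_-$ into nonnegative Borel parts, Lemma~\ref{measurability int g d mu} yields Borel measurability of each corresponding integral, both of which are finite on $A$ because $|\phi_k \cdot f| \le \|\phi_k\|_\infty |f|$ with $|f| \in L^1(\mu)$ (again by H\"older's inequality and finiteness of $\mu$). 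Hence each set in the countable intersection is Borel, so is $\mathfrak{D}_f$.

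The main technical point is the selection of the countable separating family and the verification that these scalar test integrals correctly detect equality of $\R^d$-valued measures in the narrow topology; once this is settled, the rest is a routine assembly of the auxiliary results from Appendix~\ref{app: measurability}.
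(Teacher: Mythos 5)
Your proof is correct and follows essentially the same route as the paper: both reduce the condition $\nu = f\mu$ to a countable family of scalar identities via a countable separating family of bounded continuous test functions, and both handle the integrability of $f$ by reducing to a Borel set of measures where $f\mu$ is well-defined. The only cosmetic differences are that you first restrict to the Borel set $A=\{\mu:f\in L^p(\mu)\}$ and frame $\mathfrak D_f$ as the graph of $\Phi(\mu)=f\mu$ (whereas the paper keeps the ambient space and extends the functionals $\mu\mapsto\int h_n\cdot f\,d\mu$ by $+\infty$ off the integrability set), and you source the countable separating family from $\mathcal{C}_0$ of \cite[Remark 5.1.1]{ambrosio2005gradient} applied componentwise while the paper invokes \cite[Theorem 8.10.39]{Bogachev07} directly; both references deliver the same separation property.
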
 

\begin{proof}
    The condition $f\in L^p(\mu)$ is a Borel condition thanks to the previous lemma. 
    \\
    Regarding the condition $\nu = f\mu$, thanks to the equivalence (i)$\iff$(ii) in  \cite[Theorem 8.10.39]{Bogachev07}, we have that $\mathcal{M}(Y,\R^d)$ is countably separated, i.e. there exists a countable set of continuous and bounded functions $\{h_n:Y \to \R\}$ such that for all $\nu_1,\nu_2\in \mathcal{M}(X,\R^d)$
    \[\int_X h_n \cdot d\nu_1 = \int_X h_n \cdot d\nu_2 \ \forall n\in \N \ \implies \ \nu_1 = \nu_2.\]
    Then, we can rewrite 
    \[\mathfrak{D}_f = \{(\mu,\nu) \in \mathcal{M}_+(Y)\times \mathcal{M}(Y;\R^d) \ : \ f\in L^p(\mu), \ \int_Y h_n\cdot d\nu = \int_Y h_n\cdot f \ d\mu \ \forall n\}.\]
    Notice that the function $\mathcal{M}(Y,\R^d)\ni\nu\mapsto \int h_n \cdot d\nu$ is continuous, while on the function $\mathcal{M}_+(X)\ni\mu \mapsto \int h_n\cdot f \ d\mu$ some comments must be done: written like this it is not well defined, because $f$ could be not bounded. Replace it with the function
    \[H_{n,f}(\mu) := 
    \begin{cases}
        \int (h_n\cdot f)_+ - (h_n \cdot f)_- \ d\mu\quad & \text{ if } f\in L^1(\mu)
        \\
        +\infty \quad \quad \quad & \text{ otherwise}
    \end{cases}\]
    This is a Borel function, because $\{\mu \ : \ f\in L^1(\mu)\}$ is a Borel set and thanks to the previous lemma the two functions $\int (h_n\cdot f)_\pm \ d\mu$ are Borel and we are done.
\end{proof}

\begin{prop}[Measurability of the condition $\nu\ll\mu$]
    Let $Y$ be Polish. Then 
    \[\{(\mu,\nu) \in \mathcal{M}_+(Y) \times \mathcal{M}_+(Y) \ : \nu\ll\mu\}\]
    is a Borel subset of $\mathcal{M}_+(Y)\times\mathcal{M}_+(Y)$.
\end{prop}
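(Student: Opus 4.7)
The plan is to realise absolute continuity as a Borel condition by writing it as a monotone limit of Borel functionals with a clean measure–theoretic meaning. The natural quantity to use is the total mass of the measure–theoretic minimum $\min(\nu, n\mu)$: by the Jordan--Hahn decomposition one has $(\sigma_1-\sigma_2)^+(Y)=\tfrac12(|\sigma_1-\sigma_2|(Y)+(\sigma_1-\sigma_2)(Y))$, hence
\[
\psi_n(\mu,\nu):=\min(\nu,n\mu)(Y)=\nu(Y)-(\nu-n\mu)^+(Y)=\tfrac{1}{2}\bigl(\nu(Y)+n\mu(Y)-|\nu-n\mu|(Y)\bigr).
\]
Since $(\mu,\nu)\mapsto\nu(Y)$ and $(\mu,\nu)\mapsto n\mu(Y)$ are continuous, the Borel character of $\psi_n$ reduces to that of the total–variation functional $\eta\mapsto|\eta|(Y)$ on $\mathcal{M}(Y)$.

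For the total variation, I would exploit the variational formula
\[
|\eta|(Y)=\sup\Bigl\{\int_Y\phi\,d\eta\;:\;\phi\in C_b(Y),\ |\phi|\le 1\Bigr\},
\]
valid on Polish $Y$: if $P$ is a positive Hahn set for $\eta$, then $\mathbf{1}_P-\mathbf{1}_{P^c}$ realises the supremum on Borel functions, and a Lusin-type approximation applied to the Radon measure $|\eta|$ produces continuous $\phi_\varepsilon:Y\to[-1,1]$ with $\int\phi_\varepsilon\,d\eta\to|\eta|(Y)$. Each $\eta\mapsto\int\phi\,d\eta$ is narrowly continuous, so $|\eta|(Y)$ is a supremum of continuous functions on $\mathcal{M}(Y)$, hence lower semicontinuous and Borel. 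Separability of $C_b(Y)$ is not required: lower semicontinuity suffices to give Borel measurability.

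The decisive identification is $\sup_n\psi_n(\mu,\nu)=\nu^a(Y)$, where $\nu=\nu^a+\nu^s$ is the Lebesgue decomposition of $\nu$ with respect to $\mu$. Setting $\lambda:=\mu+\nu$ and writing $a:=d\mu/d\lambda$, $b:=d\nu/d\lambda$ (so $a+b=1$ $\lambda$-a.e.), a direct verification shows that $\min(\nu,n\mu)$ admits density $\min(na,b)$ with respect to $\lambda$, and monotone convergence gives
\[
\psi_n(\mu,\nu)=\int_Y\min(na,b)\,d\lambda\ \uparrow\ \int_Y b\,\mathbf{1}_{\{a>0\}}\,d\lambda=\nu^a(Y),
\]
because $\nu|_{\{a>0\}}\ll\mu$ while $\nu|_{\{a=0\}}\perp\mu$.

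Since $\nu\ll\mu$ is equivalent to $\nu^s(Y)=0$, i.e.\ to $\sup_n\psi_n(\mu,\nu)=\nu(Y)$, the set in question coincides with
\[
\bigl\{(\mu,\nu)\in\mathcal{M}_+(Y)\times\mathcal{M}_+(Y)\ :\ \nu(Y)-\sup_{n\in\mathbb{N}}\psi_n(\mu,\nu)\le 0\bigr\},
\]
which is Borel because both $\nu(Y)$ and $\sup_n\psi_n(\mu,\nu)$ are Borel functions of $(\mu,\nu)$. The main technical obstacle is the Borel measurability of $|\eta|(Y)$; lower semicontinuity via the variational formula is the cleanest way around the potential non-separability of $C_b(Y)$.
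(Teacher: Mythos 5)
Your proof is correct and takes a genuinely different route from the paper.

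The paper works from the $\varepsilon$--$\delta$ characterization of absolute continuity: it introduces the functional $\mathcal{F}_\delta(\nu|\mu)=\sup\{\nu(A):A\text{ open},\ \mu(A)\le\delta\}$, shows $\nu\ll\mu$ is equivalent to $\inf_{n}\mathcal{F}_{1/n}(\nu|\mu)=0$, and then argues that each $\mathcal{F}_\delta$ is lower semicontinuous in $(\mu,\nu)$ by writing it as a supremum over open sets $A$ of $\nu(A)+\chi_{(-\infty,\delta]}(\mu(A))$. The route you take is measure-theoretic rather than topological: you form $\psi_n(\mu,\nu)=\min(\nu,n\mu)(Y)$, express it by Jordan--Hahn as $\tfrac12\bigl(\nu(Y)+n\mu(Y)-|\nu-n\mu|(Y)\bigr)$, reduce Borel measurability to the lower semicontinuity of the total-variation functional $\eta\mapsto|\eta|(Y)$ on $\mathcal{M}(Y)$ (which you get from the variational formula over $C_b$ functions with $\|\phi\|_\infty\le 1$, valid since finite Borel measures on a Polish space are Radon), and then identify $\sup_n\psi_n(\mu,\nu)$ with the mass $\nu^a(Y)$ of the absolutely continuous part of $\nu$ with respect to $\mu$ via densities relative to $\lambda=\mu+\nu$. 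The set in question is then $\{\nu(Y)-\sup_n\psi_n(\mu,\nu)\le 0\}$, visibly Borel.

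What each approach buys: your argument actually yields a strictly stronger conclusion, namely that the map $(\mu,\nu)\mapsto\nu^a(Y)$ (and hence the Lebesgue decomposition itself, in an appropriate sense) is Borel, which is potentially useful elsewhere; it also concentrates all the analytic work in one clean and standard lemma (l.s.c.\ of total variation). The paper's approach avoids the Jordan--Hahn and Lebesgue decompositions, which might be seen as lighter prerequisites, but it pays for this by requiring a separate semicontinuity argument for $\mathcal{F}_\delta$. One small detail worth spelling out in your write-up: the identity $\min(\nu,n\mu)=\nu-(\nu-n\mu)^+$ that underlies your formula for $\psi_n$ should be noted explicitly, since a reader might otherwise wonder where $\nu(Y)-(\nu-n\mu)^+(Y)$ comes from; this is immediate from the density computation you already give. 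The argument is otherwise complete.
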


\begin{proof}
    The proof follows the same line of Lemma \ref{meas of AC^p}, with some more refinements. 
    \\
    Recall that the condition $\nu \ll \mu$ is equivalent to 
    \[\forall\varepsilon>0 \exists \delta>0 \ : \ \forall B\in\mathcal{B}(X), \ \mu(B)\leq \delta \implies \nu(B)\leq \varepsilon.\]
    Thanks to the outer regularity of any finite Borel measure, it is equivalent to require
    \[\forall\varepsilon>0 \exists \delta>0 \ : \ \forall A\subset X \text{ open}, \ \mu(B)\leq \delta \implies \nu(B)\leq \varepsilon.\]
    Define the functional
    \[\mathcal{F}_\delta(\nu | \mu) := \sup_{A\subset X \text{ open}, \mu(A)\leq \delta} \nu(A),\]
    and notice that, thank to what has already been said, 
    \[\nu \ll \mu \iff \inf_{n\in\N} \mathcal{F}_{\frac{1}{n}}(\nu|\mu) =0.\]
    Then, we are done if we prove that $(\mu,\nu) \mapsto \mathcal{F}_\delta(\nu|\mu)$ is Borel. We prove that, for all $\delta>0$, such function is actually l.s.c. in the couple $(\mu,\nu)$. Notice that such functional can be rewritten as
    \[\mathcal{F}_{\delta}(\nu|\mu) = \sup_{A\subset X \text{ open}} \left( \nu(A) + \chi_{(-\infty,\delta]}(\mu(A)) \right),\qquad \chi_I(x) = 
    \begin{cases}
        -\infty \quad &\text{ if } x\notin I
        \\
        0 \quad &\text{ if } x\in I
    \end{cases}\]
   
    Now notice that 
    \begin{itemize}
        \item $\nu\mapsto \nu(A)$ is l.s.c. for any $A\subset X$ open:
        \item the function $x \mapsto \chi_{(-\infty,\delta]}(x)$ is l.s.c. and non-decrasing. This, together with the lower semi-continuity of $\mu\mapsto \mu(A)$, implies that 
        $\mu\mapsto \chi_{(\infty,\delta]}(\mu(A))$ is lower semicontinuous.
    \end{itemize}
    Then, $\mathcal{F}_\delta(\nu|\mu)$ is the supremum of l.s.c. functionals, which implies that it is l.s.c. as well. In particular, it is a Borel map.
\end{proof}

\begin{co}\label{abs cont meas R^d}
    Given $X$ Polish, the set 
    \[\{(\mu,\nu) \in \mathcal{M}_+(X) \times \mathcal{M}(X;\R^d) \ : \nu\ll\mu\}\]
    is a Borel subset of $\mathcal{M}_+(X)\times\mathcal{M}(X;\R^d)$.
\end{co}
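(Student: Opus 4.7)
The plan is to reduce the statement to the positive case established in the preceding proposition, via a two-step decomposition of $\nu$.

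First, I would reduce to $d=1$: the space $\mathcal{M}(X;\R^d)$ is canonically homeomorphic to $\mathcal{M}(X)^d$ through the component maps $\nu\mapsto(\nu^1,\dots,\nu^d)$ obtained by pairing with the coordinate projections; each such component map is continuous in the narrow topology, hence Borel. Since $\nu\ll\mu$ if and only if every scalar component $\nu^j\ll\mu$, the set in question is a finite intersection of preimages of scalar sets, so it is enough to prove that
\[
\{(\mu,\eta)\in\mathcal{M}_+(X)\times\mathcal{M}(X)\ :\ \eta\ll\mu\}
\]
is Borel.

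Next, I would use the Jordan decomposition $\eta=\eta_+-\eta_-$: then $\eta\ll\mu$ if and only if $|\eta|\ll\mu$ (equivalently $\eta_\pm\ll\mu$). Hence the scalar set above is the preimage of the Borel set $\{(\mu,\rho)\in\mathcal{M}_+(X)^2\ :\ \rho\ll\mu\}$ (Borel by the previous proposition) under the map $(\mu,\eta)\mapsto(\mu,|\eta|)$. The remaining task is therefore to show that the total-variation map $T:\eta\mapsto|\eta|$ is a Borel map from $\mathcal{M}(X)$ to $\mathcal{M}_+(X)$.

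For $T$ to be Borel, Corollary \ref{equiv of sigma alg cor} reduces the problem to checking that $\eta\mapsto |\eta|(A)$ is Borel for every $A\in\mathcal{B}(X)$. For each such $A$, the restriction $\eta\mapsto\eta\llcorner A$, defined by $(\eta\llcorner A)(B):=\eta(A\cap B)$, is Borel from $\mathcal{M}(X)$ to itself, because its scalar evaluations $\eta\mapsto\eta(A\cap B)$ are Borel by Lemma \ref{measurability int g d mu}. Using $|\eta|(A)=|\eta\llcorner A|(X)$, it is enough to prove that the total mass $\eta\mapsto|\eta|(X)$ is Borel. Fix a countable algebra $\mathcal{A}_0\subset\mathcal{B}(X)$ generating $\mathcal{B}(X)$, which exists since $X$ is Polish. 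Regularity of finite Radon measures on $X$ yields the representation
\[
|\eta|(X)=\sup\Bigl\{\sum_{i=1}^{n}|\eta(A_i)|\ :\ n\in\mathbb{N},\ \{A_1,\dots,A_n\}\text{ a finite partition of }X\text{ in }\mathcal{A}_0\Bigr\},
\]
which is a countable supremum of Borel functions of $\eta$ (each $\eta\mapsto\eta(A_i)$ being Borel by Lemma \ref{measurability int g d mu}), hence Borel.

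The main obstacle is precisely this last reduction: the defining supremum for the total variation is a priori indexed by the uncountable family of all finite Borel partitions of $X$, and Borel measurability in the narrow topology is not preserved by uncountable suprema. The trick is to invoke that $X$ is Polish (so $\mathcal{B}(X)$ admits a countable generating algebra) together with the Radon regularity of finite signed Borel measures on $X$ to cut the supremum down to a countable one. Once this step is done, the rest of the argument consists of a clean chain of routine reductions through the component and Jordan decompositions.
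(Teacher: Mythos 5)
Your reduction to $|\nu|\ll\mu$ and then to the previous proposition is exactly the paper's route. Where you diverge is that the paper cites an external reference (Remark 2.4 of \cite{AmIkLuPa24}) for the Borel measurability of the total-variation map $\nu\mapsto|\nu|$, whereas you prove this fact from scratch via a countable generating algebra $\mathcal{A}_0$ and the approximation of Hahn sets; this makes your proof self-contained, which is a genuine improvement. The detour through $d=1$ is harmless but unnecessary — one can take the total variation of the vector-valued $\nu$ directly, as the paper does.

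Two small imprecisions. First, you invoke Lemma \ref{measurability int g d mu} to get Borel measurability of $\eta\mapsto\eta(A)$ for $\eta\in\mathcal{M}(X)$, but that lemma is stated for \emph{positive} measures and \emph{nonnegative} $g$; for signed $\eta$ you need the analogous statement on $\mathcal{M}(X)$, which does hold (the same functional monotone class argument works, using dominated rather than monotone convergence and the uniform bound on the $h_n$), but should be stated rather than imported from the positive case. Second, what powers the countable-supremum representation of $|\eta|(X)$ is not ``Radon regularity'' per se but the standard $\sigma$-algebra approximation theorem: for a finite measure and a countable generating algebra $\mathcal{A}_0$, every Borel set is approximable in $|\eta|$-measure by sets of $\mathcal{A}_0$, so one can approximate a Hahn decomposition $\{P,P^c\}$ by a two-element $\mathcal{A}_0$-partition. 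With those two points tightened, the argument is complete and gives a cleaner, reference-free proof than the paper's.
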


\begin{proof}
    The condition $\nu\ll\mu$ is equivalent to $|\nu|\ll\mu$, where $\nu$ is the total variation measure of $\nu$. We conclude thanks to the previous Lemma and the fact that $\mathcal{M}(X;\R^d)\ni\nu\mapsto |\nu|\in\mathcal{M}_+(X)$ is Borel (see Remark 2.4, \cite{AmIkLuPa24}).
\end{proof}

\printbibliography

{\small
		
		\vspace{15pt} (Alessandro Pinzi) Universit\`{a} Commerciale Luigi Bocconi, Dipartimento di Scienze delle Decisioni, \par
		\textsc{via Roentgen 1, 20136 Milano, Italy}
		\par
		\textit{e-mail address}: \textsf{alessandro.pinzi@phd.unibocconi.it}
		\par
		\textit{Orcid}: \textsf{https://orcid.org/0009-0007-9146-5434}
		\par

        \vspace{15pt} (Giuseppe Savar\'e) Universit\`{a} Commerciale Luigi Bocconi, Dipartimento di Scienze delle Decisioni and BIDSA, \par
		\textsc{via Roentgen 1, 20136 Milano, Italy}
		\par
		\textit{e-mail address}: \textsf{giuseppe.savare@unibocconi.it}
		\par
		\textit{Orcid}: \textsf{https://orcid.org/0000-0002-0104-4158}
		\par
		
	}

\end{document}